  \titlespacing*{\section} {0pt}{1em}{2.3ex plus .2ex}
\titlespacing*{\subsection}{0pt}{1em}{2.3ex plus .2ex}
\newtheorem{thm}{Theorem}[section]
\newtheorem{cor}[thm]{Corollary}
\newtheorem{lem}[thm]{Lemma}
\newtheorem{prop}[thm]{Proposition}
\theoremstyle{remark}
\theoremstyle{definition}
\newtheorem{defn}{Definition}[section]
\newtheorem{rem}{Remark}[section]
\numberwithin{equation}{section}
\numberwithin{figure}{section}
\font\nt=cmr7
\def\note#1
\renewcommand{\Im}{\operatorname{Im}}
\newcommand{\di}{\partial}
\newcommand{\ra}{\rightarrow}
\def\lra{\longrightarrow}
\def\ssk{\smallskip}
\def\msk{\medskip}
\def\bsk{\bigskip}
\def\nin{\noindent}
\newcommand{\diam}{\operatorname{diam}}
\newcommand{\dist}{\operatorname{dist}}
\renewcommand{\mod}{\operatorname{mod}}
\newcommand{\id}{\operatorname{id}}
\newcommand{\Per}{\operatorname{Per}}
\newcommand{\Id}{\operatorname{Id}}
\newcommand{\Orb}{\operatorname{Orb}}
\newcommand{\tip}{\operatorname{tip}}
\newcommand{\const}{\mathrm{const}}
\def\loc{{\mathrm{loc}}}
\newcommand{\eps}{{\varepsilon}}
\newcommand{\de}{{\delta}}
\newcommand{\la}{{\lambda}}
\newcommand{\La}{{\Lambda}}
\newcommand{\si}{{\sigma}}
\newcommand{\bde}{{\boldsymbol{\delta}}}
\newcommand{\bxi}{{\boldsymbol{\xi}}}
\newcommand{\boldeta}{{\boldsymbol{\eta}}}
\newcommand{\AAA}{{\mathcal A}}
\newcommand{\CC}{{\mathcal C}}
\newcommand{\II}{{\mathcal I}}
\newcommand{\OO}{{\mathcal O}}
\newcommand{\TT}{{\mathcal T}}
\newcommand{\UU}{{\mathcal U}}
\newcommand{\N}{{\mathbb N}}
\newcommand{\R}{{\mathbb R}}
\newcommand{\Z}{{\mathbb Z}}
\def\Bv{{\mathbf{v}}}
\def\Bz{{\mathbf{z}}}
\def\Bd{{\mathbf{d}}}
\def\Bu{{\mathbf{u}}}
\def\Br{{\mathbf{r}}}
\def\BPhi{{\boldsymbol{\BPhi}}}
\def\B0{{\mathbf{0}}}
\newcommand{\Jac}{\operatorname{Jac}}
\newcommand{\Dom}{\operatorname{Dom}}
\def\Empty{}
\newcommand\oplabel[1]{
  \def\OpArg{#1} \ifx \OpArg\Empty {} \else
  	\label{#1}
  \fi}
\newcommand{\comm}[1]{}
\newcommand{\comment}[1]{}
\begin{document}

\bigskip\bigskip

\title[H\'enon renormalization]{Renormalization of H\'enon map in arbitrary dimension I $ \colon $ Universality and reduction of ambient space }
\author{Young Woo Nam}

\address {Young Woo Nam \\ \quad e-mail : namyoungwoo\,@hongik.ac.kr, ellipse7\,@daum.net}

\date{February 4, 2015}

\begin{abstract}
Period doubling H\'enon renormalization of strongly dissipative maps is generalized in arbitrary finite dimension. In particular, a small perturbation of toy model maps with dominated splitting has invariant $ C^r $ surfaces embedded in higher dimension and the Cantor attractor has unbounded geometry with respect to full Lebesgue measure on the parameter space. It is an extension of dynamical properties of three dimensional infinitely renormalizable H\'enon-like map in arbitrary finite dimension. 
\end{abstract}

\maketitle


\thispagestyle{empty}


\setcounter{tocdepth}{1}

\tableofcontents


\renewcommand{\labelenumi} {\rm {(}\arabic{enumi}{)}}

\section{Introduction}
Universality of one dimensional dynamical system was discovered by Feigenbaum and independently by Coullet and Tresser in the mid 1970's. Moreover, the universality of the higher dimensional maps is conjectured by Coullet and Tresser in \cite{CT}. 
The similar universality properties are expected in higher dimensional maps which are strongly dissipative and close to the one dimensional maps. In particular, renormalizable maps with {\em period doubling type} are interesting in higher dimension. Universality of two dimensional strongly dissipative infinitely renormalizable H\'enon-like maps was introduced in \cite{CLM}. The Cantor attractor of H\'enon-like map is the counterpart of that of one dimensional maps but it has typically unbounded geometry. The same geometric properties are common in certain classes of the sectionally dissipative three dimensional H\'enon-like family in \cite{Nam1,Nam2,Nam3}. There exists the universal expression of Jacobian determinant of infinitely renormalizable three dimensional H\'enon-like maps but it does not imply the universal expression. However, {\em unbounded geometry} of Cantor attractor were generalized in the special invariant subspace of infinitely renormalizable maps in \cite{Nam1,Nam3}. 
\ssk \\
This paper is about the generalization of H\'enon renormalization in arbitrary finite dimension. In the viewpoint of a perturbation of one dimensional map in higher dimension, Higher dimensional renormalizable H\'enon-like map is a counterpart of the perturbation of Misiurewicz maps in higher dimension which appear in \cite{WY}.\footnote{The claim for counterpart might require renormalizable higher dimensional H\'enon-like map has {\em rank one} attractor. Renormalizable two dimensonal H\'enon-like maps have one dimensional global attracting set in \cite{LM} and by slight modifying proof can show that it is true for higher dimensional H\'enon-like maps. But we would not deal with this fact in this paper. } 
A certain invariant class of infinitely renormalizable three dimensional maps is generalized and every results in \cite{Nam1,Nam3} are extended in arbitrary dimension. For instance, the following results are extended in arbitrary dimension. \ssk
\begin{itemize}
\item[---] Universality of Jacobian determinant of renormalized maps. \ssk
\item[---] Existence of single invariant surfaces under certain conditions. \ssk
\item[---] Existence of $ C^r $ renormalizable two dimensonal H\'enon-like map with invariant $ C^r $ surface. \ssk
\item[---] Unbounded geometry of Cantor attractor. \ssk
\end{itemize}
\nin H\'enon-like map $F$ from the hypercube $ B $ to $ \R^{m+2} $ is defined as follows
$$ F \colon (x,y,\Bz) \mapsto (f(x) - \eps(x,y,\Bz),\ x,\ \bde(x,y,\Bz)) $$
where $ f(x) $ is a unimodal map, $ \Bz = (z_1,z_2,\ldots ,z_m) $ and $ \bde = (\de^1,\de^2,\ldots,\de^m) $ is a map from $ B \Subset \R^{m+2} $ to $ \R^m $. Let us assume that $ F $ has two hyperbolic fixed points, $ \beta_0 $ which has positive eigenvalues and $ \beta_1 $ which has both positive and negative eigenvalues. In this paper, we assume that both $ \| \eps \| $ and $ \| \bde \| $ are bounded above by $ O(\bar \eps) $ where $ \bar \eps $ is a small enough positive number. Since $ \| \:\! \bde \| $ is sufficiently small, each fixed point has only one expanding direction. We assume that the product of any two different eigenvalues is strictly less than one at fixed points of $ F $, namely, {\em sectionally dissipative} at fixed points. H\'enon-like map is called {\em renormalizable} if $ W^u(\beta_0) $ intersects $ W^s(\beta_1) $ at the orbit of a single point. Thus $ F $ has one dimensional unstable manifold and codimension one stable manifold at fixed points. 

\begin{thm}[Universality of $ \Jac R^nF $]
Let $ F $ be the $ m+2 $ dimensional infinitely renormalizable H\'enon-like map. Then 
$$ \Jac R^nF = b^{2^n}a(x)(1+ O(\rho^n))$$
where $ b = b_F $ is the average Jacobian of $ F $, $ a(x) $ is the universal function for $ \rho \in (0,1) $.
\end{thm}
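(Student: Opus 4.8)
The plan is to follow the architecture of the two-dimensional case (\cite{CLM}, i.e. $m=0$) and its three-dimensional version (\cite{Nam1,Nam3}, $m=1$), the genuinely new point being the control of the $m$ transversal coordinates $\Bz$ through the dominated splitting and strong dissipativity. Recall from the construction of $R$ and the a priori bounds (to be established in the preceding sections) that the $n$-th renormalization factors as $R^nF=\Lambda_n\circ G_n\circ\Lambda_n^{-1}$ on $B$, where $\Lambda_n$ is affine (a scaling on $x$, linear on $(y,\Bz)$), $G_n=\HH_n\circ F^{2^n}\circ\HH_n^{-1}$ is the $n$-th pre-renormalization with $F^{2^n}$ taken on the $n$-th periodic piece $B^n$, and $\HH_n$ is the non-linear straightening diffeomorphism built from the horizontal and transversal coordinate changes at levels $0,\dots,n-1$. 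Since an affine conjugacy preserves the Jacobian, $\Jac R^nF(z)=\Jac G_n(w)$ with $w=\Lambda_n^{-1}z$, and the chain rule together with $\Jac\HH_n(\HH_n^{-1}\,\cdot)\cdot\Jac\HH_n^{-1}(\,\cdot)=1$ gives
\begin{equation*}
\Jac R^nF(z)\;=\;\frac{\Jac\HH_n^{-1}(w)}{\Jac\HH_n^{-1}(G_nw)}\cdot\Jac F^{2^n}\bigl(\HH_n^{-1}w\bigr),\qquad w=\Lambda_n^{-1}z.
\end{equation*}
So the theorem reduces to two claims: \textbf{(A)} the orbit product $\Jac F^{2^n}\circ\HH_n^{-1}$ equals $b^{2^n}\bigl(1+O(\rho^n)\bigr)$ uniformly on $B$; and \textbf{(B)} the coordinate-change ratio converges, at geometric rate, to a universal function of $x$ alone, which we then take to be $a(x)$.

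For \textbf{(A)}, write $\Jac F^{2^n}(\HH_n^{-1}w)=\prod_{k=0}^{2^n-1}\Jac F\bigl(F^k\HH_n^{-1}w\bigr)$. As $w$ varies, the point $F^k\HH_n^{-1}w$ stays in the $k$-th piece $B^n_k$ of the $2^n$-cycle, whose $x$-extent is a generation-$n$ interval of the Feigenbaum Cantor set and whose $(y,\Bz)$-extent is no larger (by strong dissipativity the $\Bz$-extent of $B^n_k$ is $O(\bare)$ times that of $B^n_{k-1}$, and its $y$-extent equals the $x$-extent of $B^n_{k-1}$). Since each generation-$n$ interval contains its two children separated by a gap of definite relative size --- bounded geometry of the Feigenbaum attractor, a consequence of the a priori bounds --- one gets $\sum_k\diam(B^n_k)=O(\rho^n)$; as $\log|\Jac F|$ is Lipschitz, the function $w\mapsto\Jac F^{2^n}(\HH_n^{-1}w)$ then has multiplicative oscillation $1+O(\rho^n)$, hence equals $C_n\bigl(1+O(\rho^n)\bigr)$ for a constant $C_n$. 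To identify $C_n$ I integrate $\log|\Jac R^nF|$ against the unique $R^nF$-invariant measure $\mu_n$ on its Cantor attractor: pushing $\mu_n$ forward by $\Lambda_n^{-1}$ to the $G_n$-invariant measure $\nu_n$, the two $\HH_n^{-1}$-terms in the displayed formula are of the form $g-g\circ G_n$ and integrate to $0$, whence $\log b_{R^nF}=\int\log|\Jac F^{2^n}\circ\HH_n^{-1}|\,d\nu_n=\log C_n+O(\rho^n)$. Finally $b_{R^nF}=b^{2^n}$ exactly: the same telescoping applied once to $R$, together with the fact that $\mu_F$ restricted to the piece $B^v$ returning under $F^2$ is itself $F^2$-invariant and carries mass $\tfrac12$, gives $\log b_{RF}=2\log b_F$, hence $b_{R^nF}=b_F^{2^n}$ by induction; therefore $C_n=b^{2^n}\bigl(1+O(\rho^n)\bigr)$.

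For \textbf{(B)}, by the hyperbolicity of renormalization (preceding sections) $R^nF\to F_*$ at geometric rate $\rho$, where $F_*(x,y,\Bz)=(f_*(x),x,0)$ is the degenerate fixed point and $f_*$ the one-dimensional Feigenbaum map. The Jacobian of the straightening, viewed as a function on $B$ and read in the rescaled coordinates, evolves under $R$ by a contraction whose fixed point is determined by $f_*$; since $R^nF$ converges geometrically to $F_*$, the associated Jacobian-ratio function $w\mapsto\Jac\HH_n^{-1}(w)/\Jac\HH_n^{-1}(G_nw)$ converges, with error $O(\rho^n)$, to the fixed point $a_*$, and because $F_*$ and its straightening act on $(y,\Bz)$ only through $x$ (they collapse the transversal directions), $a_*$ is a function of $x$ alone. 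Set $a:=a_*$; it is universal because $f_*$, and hence $F_*$ and $a_*$, are the same for every $F$ in the class --- in particular $a$ agrees with the function of \cite{CLM,Nam1,Nam3}. Combining \textbf{(A)} and \textbf{(B)} in the displayed formula gives $\Jac R^nF(x,y,\Bz)=b^{2^n}a(x)\bigl(1+O(\rho^n)\bigr)$, uniformly on $B$.

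The main obstacle is the $\Bz$-part of \textbf{(B)}: one must show that the transversal components of the straightening diffeomorphisms genuinely degenerate as $n\to\infty$, so that no residual dependence on $y$ or $\Bz$ survives in the limiting Jacobian ratio and the renormalization fixed point is the honest one-dimensional Feigenbaum map $F_*$ rather than a map with leftover higher-dimensional structure. This is exactly where the dominated splitting of the toy model and the bounds $\|\eps\|,\|\bde\|=O(\bare)$ enter: they force the dynamics transverse to the $x$-axis to contract at a definite rate, which both legitimizes the limit in \textbf{(B)} and supplies the geometric constant $\rho$ in the distortion estimate of \textbf{(A)}. Making these bounds uniform in the dimension $m$ --- so that one universal $\rho$ and one universal $a(x)$ work across all $m$, consistently with the $m=0,1$ cases --- is the technical heart of the argument.
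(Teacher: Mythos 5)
Your overall factorization is precisely the paper's starting point, equation \eqref{chain rule}: $\Jac R^nF(w)=\Jac F^{2^n}(\Psi^n_{\tip}w)\cdot\Jac\Psi^n_{\tip}(w)/\Jac\Psi^n_{\tip}(F_nw)$, and your part \textbf{(A)} reproduces the Distortion Lemma \ref{distortion} together with Corollary \ref{average} (small multiplicative oscillation of $\Jac F^{2^n}$ on a level-$n$ piece, constant identified by integrating against $\mu$ with $\mu(B^n_{\bf w})=2^{-n}$). That half is sound and matches the paper. The genuine gap is in part \textbf{(B)}, which is where the entire content of ``universality'' lives, and there your argument is not the paper's and would not close as stated.

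The paper does not obtain the limit of the coordinate-change ratio from a contraction/fixed-point argument on Jacobian-ratio functions. It translates the tip to the origin and decomposes $\Psi^n_{\tip}=D^n_0\circ(\id+{\bf S}^n_0)$ with ${\bf S}^n_0=(S^n_0,\,0,\,{\bf R}_{n,0}(y))$; the triangular structure of this nonlinear part gives $\Jac(\id+{\bf S}^n_0)=1+\di_xS^n_0$ \emph{exactly}, the constant $\det D^n_0$ cancels in the ratio, and the whole problem reduces to showing $1+\di_xS^n_0\to v_*'$ exponentially fast while $\di_yS^n_0,\di_{z_i}S^n_0=O(\bar\eps)$. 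That convergence is the output of the recursive estimates of Lemmas \ref{bounds on domain}--\ref{asymptotics of non linear part} (bounds on $s_k$ and ${\bf R}_{n,k}$, the recursion $S^n_k\mapsto S^n_{k+1}$, the super-exponential decay $O(\bar\eps^{2^k})$ of the couplings $t_k,\Bu_k,\Bd_k$), combined with Lemma \ref{composition of the presentation function} on compositions of presentation functions. Your proposed mechanism --- ``the Jacobian of the straightening evolves under $R$ by a contraction whose fixed point is determined by $f_*$'' --- is not a statement about any operator actually defined in the theory: $\HH_n$ is a composition of $n$ \emph{distinct} level-$k$ coordinate changes, its Jacobian ratio at level $n$ is not the $n$-th iterate of a fixed map applied to a fixed initial function, and the issue is precisely to control the accumulation of nonlinear errors across all levels $k=0,\dots,n-1$ simultaneously; making your heuristic rigorous would amount to redoing those recursive estimates. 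You also never identify $a$ (it is $v_*'(x-c_*)/v_*'(f_*(x)-f_*(c_*))$ after normalization) nor establish its positivity, both of which are part of what the theorem asserts. Flagging part \textbf{(B)} as the ``technical heart'' is accurate, but it does not supply the missing argument.
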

\ssk 

\nin Let the H\'enon-like map with the condition $ \di_{z_j} \eps \equiv 0 $ for all $ 1 \leq j \leq m $ be the {\em toy model map}\,, say $ F_{\mod} $ as follows
$$ F_{\mod}(x,y,\Bz) = (f(x) - \eps(x,y),\ x,\ \bde(x,y,\Bz)).  $$
If $ F_{\mod} $ is infinitely renormalizable, then $ n^{th} $ renormalization of $ F_{\mod} $ contains two dimensional renormalized map with universality
$$ \pi_{xy} \circ R^nF_{\mod}(x,y,\Bz) = R^nF_{2d}(x,y) = (f_n(x) + b_1^{2^n}a(x)\,y\, (1+O(\rho^n)),\ x) $$ 
where $ b_1 $ is the average Jacobian of the two dimensional H\'enon-like map for some $ 0 < \rho <1 $. If $ \| \bde \| \ll b_1 $, then there exists the continuous invariant plane field over the critical Cantor set under the $ DF_{\mod} $.  Moreover, $ F $ is a {\em small perturbation of the model map}, that is, $ \eps(x,y,\Bz) = \eps_{2d}(x,y) + \widetilde \eps(x,y,\Bz) $ and $ \max \{\,\| \:\!\di_{z_j} \eps \| \;|\; 1\leq j \leq m \,\} $, then it also has continuous invariant plane field. Furthermore, there exists a single surface $ Q $ invariant under $ F $. Additionally if $ F $ is infinitely renormalizable, then there exists an invariant surface $ Q_n $ under $ R^nF $ for each $ n \in \N $ as the graph of $ C^r $ map $ \bxi = (\xi^1, \xi^2, \ldots, \xi^m) $ from $ xy- $plane to $ z_j- $axis for all $ 1 \leq j \leq m $ for $ 2 \leq r < \infty $(Lemma \ref{invariant surfaces on each deep level}). Then two dimensional $ C^r $ H\'enon-like map is defined as follows
\begin{equation} \label{eq-cr Henon map with inv surface}
F_{2d,\:\bxi} (x,y) = (f(x) - \eps(x,y, \bxi),\ x )
\end{equation}
where $ Q \equiv \textrm{graph} (\bxi) $ is a $ C^r $ invariant surface under $ F $. Then 
universality theorem of infinitely renormalizable $ C^r $ H\'enon-like maps are obtained. 
\begin{thm}[Universality of\; $ C^r $ H\'enon-like maps with invariant single surfaces for $ 2 \leq r < \infty $] 
Let H\'enon-like map $ F_{2d,\, \bxi} $ be the $ C^r $ map for some $ 2 \leq r < \infty $ which is defined in \eqref{eq-cr Henon map with inv surface}. Suppose that $ F_{2d,\, \bxi} $ is infinitely renormalizable. Then
$$ R^nF_{2d,\, \bxi} = (f_n(x) - b_{1,\, 2d}^{2^n}\, a(x)\, y (1+ O(\rho^n)),\ x) $$
where $ b_{1,\,2d} $ is the average Jacobian of $ F_{2d,\, \bxi} $ and $ a(x) $ is the universal function for some $ 0 < \rho < 1 $.
\end{thm}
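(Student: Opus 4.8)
The statement is, in essence, the universality theorem of \cite{CLM} transplanted to the $C^r$ category, applied to a H\'enon-like map that happens to arise as the restriction of a higher-dimensional one to an invariant surface; so the plan is to recognize $F_{2d,\,\bxi}$ as a genuine two-dimensional $C^r$ H\'enon-like map, run the \cite{CLM} argument, and check that finite smoothness never forces a loss of derivatives.

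\emph{Step 1: identifying the renormalizations.} By Lemma \ref{invariant surfaces on each deep level} the surface $Q_n=\mathrm{graph}(\bxi_n)$ is invariant under $R^nF$, and $R$ carries $Q_n$ to $Q_{n+1}$: period-doubling renormalization is a geometric operation (pass to $G^2$, restrict to the renormalization box, apply a near-affine change of coordinates and an affine rescaling), and each ingredient respects the family of graphs over the $xy$-plane. Reading $(R^nF)\big|_{Q_n}$ in the chart $\pi_{xy}$ therefore gives
$$ R^n F_{2d,\,\bxi}\;=\;\pi_{xy}\circ (R^nF)\big|_{Q_n}\;=\;\big(f_n(x)-\eps_n(x,y),\ x\big), $$
a tower $G_n:=R^nF_{2d,\,\bxi}$ of $C^r$ two-dimensional H\'enon-like maps with $G_{n+1}=RG_n$. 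It thus suffices to prove the universality formula for this tower.

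\emph{Step 2: renormalization in $C^r$ and a priori bounds.} The conjugacy and rescaling realizing $RG$ are built from the first coordinate of $G$ by composition and by the inverse function theorem, hence are $C^r$, so $RG$ is $C^r$ with \emph{no loss of derivatives}. Two inputs then control the tower. (i) \emph{One-dimensional part}: for $r\ge 2$, an infinitely renormalizable unimodal map of period-doubling type has uniformly bounded geometry and its renormalizations converge exponentially fast in the $C^r$ topology to the Feigenbaum--Cvitanovi\'c fixed point $f_*$; invoking this $C^r$ renormalization theory (as in \cite{Nam1,Nam3} and the references therein) gives $f_n\to f_*$. (ii) \emph{Two-dimensional correction}: the estimate of \cite{CLM}, which is purely real-variable, gives $\|\eps_n\|_{C^r}\le C\,\bar\eps^{\,2^n}$, so past finitely many levels $G_n$ is an arbitrarily small $C^r$ perturbation of the one-dimensional map $(f_n(x),x)$. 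Together (i) and (ii) keep $(G_n)$ in a compact family of $C^r$ H\'enon-like maps.

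\emph{Step 3: thinning of the correction and passage to the limit.} Expanding $G_n^2$ with $G_n=(f_n-\eps_n,\,x)$, the $\eps_n$-contribution to the first coordinate is $-f_n'(f_n(x))\,\eps_n(x,y)-\eps_n(f_n(x),x)+O(\|\eps_n\|^2)$; under the near-affine conjugacy and rescaling these terms recombine so that $\eps_{n+1}(x,y)$ is, up to a relative error $O(\rho^n)$, a product $a_{n+1}(x)\,y$, whose size is pinned down by the distortion argument of \cite{CLM}: $\log|\Jac G|$ integrates multiplicatively against the invariant measure, so $b_{G_n}=b_{1,\,2d}^{\,2^n}$ and $\partial_y\eps_n$ equals this quantity times a universal profile to leading order. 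Since $f_n\to f_*$ and $\eps_n/\|\eps_n\|$ converges in $C^r$, the profiles $a_n$ converge to the universal analytic function $a$ of \cite{CLM}, yielding
$$ R^nF_{2d,\,\bxi}=\big(f_n(x)-b_{1,\,2d}^{\,2^n}\,a(x)\,y\,(1+O(\rho^n)),\ x\big). $$

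\emph{Main obstacle.} The delicate point is carrying the \cite{CLM} analysis out of the real-analytic category. Only the one-dimensional Feigenbaum universality genuinely requires hard input, which for the period-doubling combinatorics and $r\ge 2$ is available and already in force in \cite{Nam1,Nam3}; granted that, everything transverse to the unimodal direction concerns a super-exponentially small $C^r$ perturbation, for which the \cite{CLM} estimates reduce to soft real-variable bounds combined with the exponential contraction of $R$ along that direction. The one place needing genuine care is checking that finite smoothness does not produce a \emph{different} universal function: this holds because $f_n$ converges to the \emph{analytic} fixed point $f_*$, which forces the limiting rescaled correction to coincide with the analytic universal function of \cite{CLM}.
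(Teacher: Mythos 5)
Your reduction is the same as the paper's: identify $R^nF_{2d,\,\bxi}$ with $\pi_{xy}^{\bxi_n}\circ R^nF|_{Q_n}\circ(\pi_{xy}^{\bxi_n})^{-1}$, note that for a two-dimensional H\'enon-like map $\Jac = \di_y\eps_n$, and integrate in $y$ (absorbing the $y$-independent part into $f_n$). But the entire content of the theorem lives in the claim $\Jac R^nF_{2d,\,\bxi}=b_{2d}^{2^n}a(x)(1+O(\rho^n))$ (the paper's Lemma \ref{Universal Jacobian determinant of Cr Henon map}), and your derivation of it has a genuine gap. The distortion argument only pins down the \emph{constant} $b_{2d}^{2^n}$, because it controls $\Jac F_{2d,\,\bxi}^{2^n}$ on a piece $B^n_{\bf w}$; the $x$-dependent factor $a(x)$ comes entirely from the ratio $\Jac{}_{2d}\Psi^n_{0,\bxi}(w)/\Jac{}_{2d}\Psi^n_{0,\bxi}(F_n w)$ of the non-linear scaling maps, whose convergence to $v_*'(\cdot)/v_*'(f_*(\cdot))$ rests on the asymptotics of $S^n_0$ (Lemma \ref{asymptotics of S for k}, Lemma \ref{asymptotics of non linear part}). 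You never invoke this mechanism; instead you assert that ``$\eps_n/\|\eps_n\|$ converges in $C^r$'' and hence the profiles $a_n$ converge to $a$ --- but the convergence of the normalized profile \emph{is} the universality statement you are trying to prove, so this step is circular. Moreover, in the invariant-surface setting the Jacobian of ${}_{2d}\Psi^n_{0,\bxi}$ carries extra terms $\si_{n,0}\Bu_{n,0}\cdot\di_x\bxi_n$ (equation \eqref{Jacobian of scope map of xi}); the paper must show these contribute a common factor $\big(1-\Bu_{*,0}\cdot\di_x\bxi(\tau_F)\big)^{-1}$ that cancels in the ratio, leaving the same $a(x)$. Your proposal does not see this issue at all.

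A secondary concern: you propose to run the \cite{CLM} machinery on the abstract $C^r$ tower $G_n=R^nF_{2d,\,\bxi}$, invoking $C^r$ one-dimensional renormalization theory and asserting that the bound $\|\eps_n\|\le C\bar\eps^{2^n}$ is ``purely real-variable.'' In \cite{CLM} the quadratic recursion $\|\eps_{n+1}\|=O(\|\eps_n\|^2)$ controls derivatives of $\eps_n$ by sup norms via analyticity, so transplanting it to $C^r$ is not soft; this is precisely why the paper avoids that route and instead derives all estimates from the \emph{analytic} ambient map $F$ through the graph maps $\pi_{xy}^{\bxi_n}$ and the already-established higher-dimensional asymptotics. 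Re-deriving the key lemma along the paper's lines --- or at least quoting it --- would close the gap.
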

\nin 
Cantor attractor of $ C^r $ H\'enon-like map also has the geometric properties which are the same as that of Cantor attractor for analytic maps. In particular, the critical Cantor set has unbounded geometry (Theorem \ref{Unbounded geometry for model maps}). Moreover, all of these dynamical properties of Cantor attractor are generalized in higher dimensional H\'enon-like map $ F $ through its invariant surfaces.
\msk

\subsection{Notations}
For the given map $ F $, we denote  the set $ A $ is related to $F$ to be $ A(F) $ or $ A_F $. $ F $ can be omitted if there is no confusion without $ F $. The domain of the function $ F $ is denoted to $ \Dom(F) $ and the image of the set $ B $ under a function $ F $ is denoted to be $ \Im(F) $. $ F|_A $ is called the restriction $ F $ on $ A $ where $ A \subset \Dom(F) $. If $ F(B) \subset B $, then we call $ B $ is an (forward) invariant set under $ F $. 
\ssk \\
Let the projection from $ \R^{m+2} $ to its $ x- $axis, $ y- $axis and $ z- $axis be $ \pi_x $, $ \pi_y $ and $ \pi_z $ respectively. Moreover, let the projection from $ \R^3 $ to $ xy- $plane be $ \pi_{xy} $ and so on. 
The derivative of the map $ f $ is expressed as $ Df $. 
The chain rule implies that $ D(f\circ g)(w) = Df \circ g(w) \cdot Dg(w) $. In this paper, the boldfaced letter means the condensed expression with $ m $ coordinates. For example, 
\begin{align*}
\Bz = (z_1,z_2, \ldots,z_m),\ \ \bde = (\de^1,\de^2,\ldots,\de^m) \ \text{and} \ \bxi =(\xi^1,\xi^2,\ldots,\xi^m) .
\end{align*}
\nin The dot product of two objects presented boldfaced letters, say $ {\bf A} $ and $ {\bf B} $ means the inner product of them. Denote it by $ {\bf A} \cdot {\bf B} $. Let the set distance $ \dist_{\min}(R,S) $ be the minimal distance between two sets, $ R $ and $ S $ 
$$ \dist_{\min} (R, S) = \inf \left\{ \ \dist(r,s) \ \text{for all} \ r \in R \ \text{and} \ s \in S \ \right\} . $$

\nin Denote the set of periodic points of $F$ to be $ \Per_F $. The orbit of the point $ w $ under the map $ f $ is denoted to be $ \Orb(w, f) $. 
Denote the (complete) orbit of $ w $ to be $ \Orb(w) $ unless the map is emphasized or is ambiguous on the context in the related description. 
$ A = O(B) $ means that there exists a positive number $ C $ such that $ A \leq CB $. Moreover, $ A \asymp B $ means that there exists a positive number $ C $ which satisfies $ \dfrac{1}{C} B \leq A \leq CB$.

\bsk

\section{H\'enon renormalization in higher dimension} \label{maps in general dimension}

\subsection{H\'enon-like maps in higher dimension}
Let $ B_{2d} $ be the domain of two dimensional H\'enon-like map and it is the square region with the center origin. Let $B$ be the box domain which is a thickened domain of two dimensional H\'enon-like map, that is, $B = B_{2d} \times [-c,c]^m$ for some $c>0$ and a fixed positive number $ m $. 
Let us define the {\em $ m+2 $ dimensional H\'enon-like map} on the hypercube $B$ 
as follows 
\begin{equation}  \label{higher diml Henon map}
\begin{aligned}
F(x,y,\Bz) = (f(x) - \eps(x,y,\Bz), \ x, \ \bde(x,y,\Bz))
\end{aligned}
\end{equation}
where $f : I^x \rightarrow I^x $ is a unimodal map, $ \Bz $ is $ (z_1,z_2,\ldots ,z_m) $ and $ \bde = (\de^1,\,\de^2,\ldots, \de^m) $ is the map from $ B $ to $ \R^m $ $  $. For simplicity, let us assume that the length of each side of $B$ is same. Denote the domain, $B =I^x \times  {\bf I}^v $ where $I^x$ is the line parallel to $x$-axis and ${\bf I}^v = I^y \times I^{\Bz}$ where $I^y$ is the line parallel to $y$-axis and $I^{\Bz}$ is the hypercube $ [-c, c]^m $. 
\ssk 
\begin{rem}
On the following sections, some objects defined on the two dimensional space has the subscript $2d$. For example, $B_{2d}$ is the square domain of the two dimensional H\'enon-like map and $F_{2d}$ is the two dimensional H\'enon-like map defined on $B_{2d}$. However, same notation without any index indicates the $ m+2 $ dimensional object. For instance, $F$ and $B$ are the higher dimensional H\'enon-like map and its box domain respectively.
\end{rem}
\nin Observe that the image of the codimension one hyperplane, $ \lbrace x = C \rbrace $ under $F$ is contained in the codimension one hyperplane, $ \lbrace y = C \rbrace  $. 

\begin{figure}[htbp]
\begin{center}
\psfrag{M1}[c][c][0.7][0]{\Large $F $}

\includegraphics[scale=1.0]{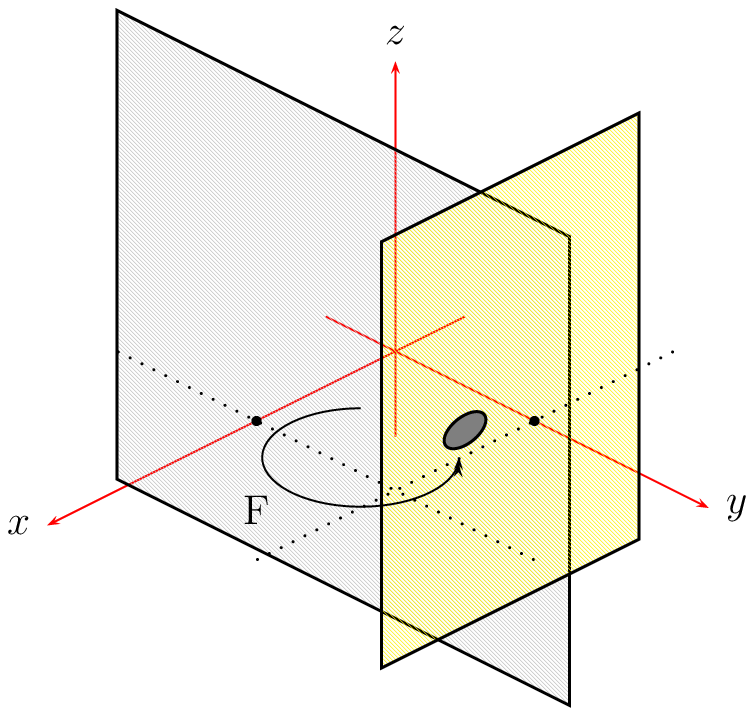}
\end{center}
\caption{Image of $ \lbrace x=const. \rbrace$ under higher dimensional H\'enon-like map}
\label{fig:imageofplane}
\end{figure}

\nin In this paper, higher dimensional analytic H\'enon-like maps in \eqref{higher diml Henon map} have the following properties.
\begin{itemize}
\item[---] $ F $ is orientation preserving map. \msk
\item[---] $ f $ has non flat unique critical point in $ I^x $. \msk
\item[---] Both $ \| \eps \| \leq \bar \eps $ and $\| \bde \| \leq \bar \de$ with sufficiently small positive numbers $ \bar \eps $ and $ \bar \de $. \msk
\item[---] $ F $ has only two fixed points one of which, say $ \beta_0 $, has only positive eigenvalues. \msk
\item[---] $ F $ is {\em sectionally dissipative} at fixed points, that is, any product of two eigenvalues of each fixed point has the absolute value strictly less than one. \msk
\item[---] The fixed points, $\beta_0$ and $\beta_1$ have codimension one stable manifold and one dimensional unstable manifold. \msk
\end{itemize}
\nin The orientation preserving higher dimensional H\'enon-like map is called \textit{renormalizable} if $ W^u(\beta_0) $ and $W^s(\beta_1) $ intersects in the single orbit of a point. Observe that {\em sectional dissipativeness} imply that each fixed point has condimension one stable manifold and one dimensional unstable manifold. 

\comm{******************
\begin{figure}[htbp]

\begin{center}
\psfrag{b0}[c][c][0.7][0]{\Large $\beta_0 $}
\psfrag{b1}[c][c][0.7][0]{\Large $\beta_1 $}
\psfrag{Ws} [c][c][0.7][0]{\Large $W_{loc}^s$}
\includegraphics[scale=1.0]{domain3D}
\end{center}
\caption{The local stable manifold of $ \beta_1 $, $W_{loc}^s(\beta_1)$ and the unstable manifold of $ \beta_0 $, $ W^u(\beta_0)  $ }
\label{fig:domain3D}
\end{figure}
*********************************}



\nin Let $p_0 \in \Orb_{\Z}(w)$ be the point  which is farthest point from $\beta_1$ along the component of $ W^s_{\loc}(\beta_1)$ which contains $ \beta_1 $. Let $p_k = F^k(p_0)$ for each $k \in \Z$. Then $ p_0 $ and the forward orbit of $ p_0 $ are on $  W^s_{\loc}(\beta_1) $. The local stable manifold of $p_{-n}$, $ W^s_{\loc}(p_{-n}) $ where $ n \leq 0 $ is pairwise disjoint component of  $ W^s(\beta_1)$ and $ W^s_{\loc}(p_{-n}) $ converges to $ W^s(\beta_1)$ because $ p_{-n} $ converges to $\beta_0 $ as $ n \rightarrow +\infty $. 
Let $M_{-n}$ be the connected component of $ W^s(\beta_1) $ which contains $ p_{-n} $, say $ W^s_{\loc}(p_{-n}) $, to be  for every $n \geq 0$. For instance, $M_0$ denotes $ W^s_{\loc}(\beta_1)$. Moreover, we can define $M_1$ as the component of $ W^s(\beta_1)$ whose image under $F$ is contained in $M_{-1}$ such that it does not have any point of $\Orb_{\Z}(w)$. $ M_{1} $ is on the opposite side of $M_{-1}$ from $M_0$. We may assume that $M_1$ is a curve connecting the up and down sides of the square domain $B$ inside. Then we can easily check the curves $ [p_0, p_1]^u_{\beta_0} $ and  $ [p_1, p_2]^u_{\beta_0} $ which are parts of $ W^u(\beta_0) $ does not intersect $M_1$ and $M_{-1}$ respectively when $F$ is renormalizable.

\begin{figure}[htbp]
\begin{center}
%
\psfrag{M1}[c][c][0.7][0]{\normalsize $M_1$}
\psfrag{M-1}[c][c][0.7][0]{\normalsize $M_{-1}$}
\psfrag{M-2}[c][c][0.7][0]{\normalsize $M_{-2}$}
\psfrag{M-3}[c][c][0.7][0]{\normalsize $M_{-3}$}
\psfrag{Wb0}[c][c][0.7][0]{\normalsize $W^s_{\loc}(\beta_1)$}
\psfrag{M0}[c][c][0.7][0]{\normalsize $W^s_{\loc}(\beta_1)$}
\psfrag{b0}[c][c][0.7][0]{\large $\beta_0$}
\psfrag{p0}[c][c][0.7][0]{\large $p_0$}
\psfrag{p1}[c][c][0.7][0]{\large $p_1$}
\psfrag{p2}[c][c][0.7][0]{\large $p_2$}
\psfrag{p-1}[c][c][0.7][0]{\large $p_{-1}$}
\psfrag{p-2}[c][c][0.7][0]{\large $p_{-2}$}
\psfrag{p-3}[c][c][0.7][0]{\large $p_{-3}$}
\psfrag{b1}[c][c][0.7][0]{\large $\beta_1$}
\includegraphics[scale =0.8]{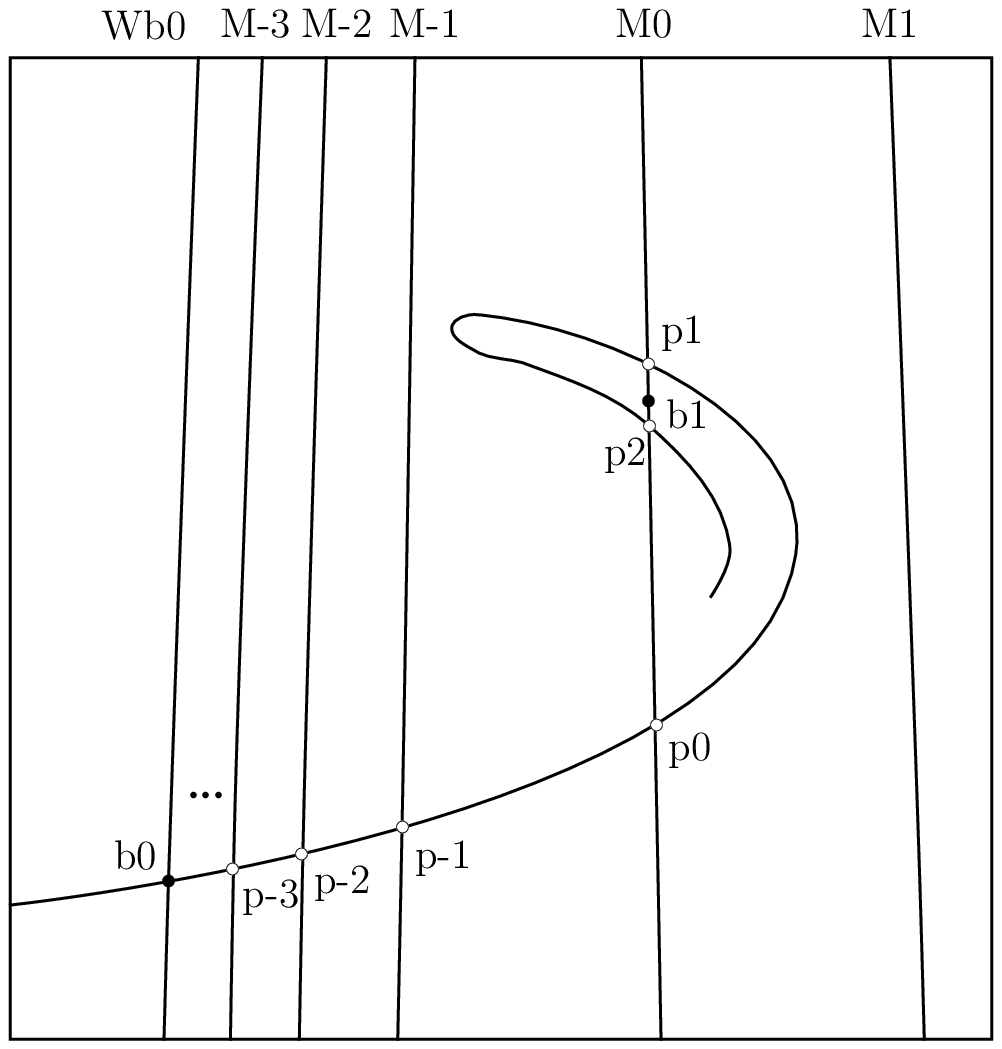}
\end{center}
\caption{Stable and unstable manifolds at fixed points}
\label{fig:invariant domain}

\end{figure}

\msk

\subsection{Renormalization of $ m+2 $ dimensional H\'enon-like maps} \label{def of renormal}
The analytic definition of period doubling renormalization of H\'enon-like map requires conjugation which is not just dilation because $F^2$ is not H\'enon-like map, that is, the image of the hyperplane, $ \lbrace x=C \rbrace $ in $B$ under $F^2$ is not the part of the hyperplane, $ \lbrace y=C \rbrace $. Thus we need non-linear coordinate change map for renormalization. Define the \textit{horizontal-like diffeomorphism} as follows
\begin{align} \label{horizontal-like diffeo}
  H(x,y,z) = (f(x) - \eps (x,y,\Bz), \ y, \ \Bz - \bde (y, f^{-1}(y), \B0)) .
\end{align}
The inverse of $ H $, $ H^{-1} $ is as follows
\begin{align*}
H^{-1}(x,y,\Bz) & \equiv  (\phi^{-1}(w),\ y,\ \Bz + \bde(y,\,f^{-1}(y),\,\B0))  
\end{align*}
where $ \phi^{-1} $ is the straightening map satisfying $ \phi^{-1} \circ H(w) = x $ for $ w = (x, y, \Bz) $. 
\ssk \\
Let us define $ \Dom(H) $ as the region enclosed by hypersurfaces, $ \{f(x) - \eps(x,y,\Bz) = \const .\} $, $ \{y = \const . \} $ and $ \{z_j = \const . \} $ for $ 1 \leq j \leq m $ such that the image of this region under $ H $ is $ V \times {\bf I}^v $. Let $ \UU_{\,U} $ be the space of unimodal maps defined on the set $U$.

\ssk
\begin{prop} \label{preconv}
Let $F(w) = (f(x) - \eps(w), \  x, \ \bde(w)) $ be a higher dimensional H\'enon-like map and  $H$ be the horizontal-like diffeomorphism defined in \eqref{horizontal-like diffeo} where $\| \:\! \eps \| \leq \bar\eps$ and $\| \:\! \bde\| \leq \bar\de$ with small enough positive numbers $ \bar \eps $ and $ \bar \de $. Suppose that the unimodal map $ f $ is renormalizable. Then there exists a unimodal map $f_1 \in \UU_{\,V}$ such that $\| f_1 -f^2 \|_V < C\bar\eps$ and the map $H \circ F^2 \circ H^{-1}$ is the H\'enon-like map $(x,y,z) \mapsto (f_1(x) - \eps_1(x,y,\Bz), \ x, \ \bde_1(x,y,\Bz))$ with the norm, $\| \:\! \eps_1 \| = O(\bar\eps^2 + \bar\eps \bar\de) $ and $\| \:\! \bde_1 \| = O(\bar\eps \bar\de + \bar\de^2)$.
\end{prop}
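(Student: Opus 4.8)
The plan is to compute $H\circ F^2\circ H^{-1}$ directly in coordinates and read off its structure, tracking the sizes of each new error term. First I would establish the key algebraic identity: for $w=(x,y,\Bz)$, one has $F(w)=(f(x)-\eps(w),\,x,\,\bde(w))$, so $H$ is designed precisely so that $H$ maps the graph-like region over which $F$ acts onto a product region, and $\pi_x\circ H = f(x)-\eps(w) = \pi_x\circ F$ while $\pi_y\circ H = y$. The crucial point is that $H\circ F(w)$ has $x$-coordinate equal to $\pi_x\circ F^2(w)$ (up to the horizontal straightening) and $y$-coordinate equal to $\pi_x\circ F(w) = f(x)-\eps(w)$, which is exactly the form required: the second coordinate of the conjugated map is the first coordinate of the previous iterate. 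So the conjugation $H\circ F^2\circ H^{-1}$ restores the ``H\'enon-like'' normal form $(x,y,\Bz)\mapsto(f_1(x)-\eps_1,\,x,\,\bde_1)$. This is the two-dimensional argument of \cite{CLM} with the extra $\Bz$-block carried along; I would state it as the organizing principle and then verify it coordinate-by-coordinate.

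Next I would identify $f_1$ and estimate $\eps_1$. Since $\|\eps\|\le\bar\eps$, the map $F$ is a $C\bar\eps$-perturbation of the ``decoupled'' map $(x,y,\Bz)\mapsto(f(x),x,\bde)$, and $F^2$ is then a $C\bar\eps$-perturbation in the $x$-direction of $(x,y,\Bz)\mapsto(f^2(x),f(x),\ldots)$ together with contributions of order $\bar\eps\bar\de$ and $\bar\de^2$ in the $\Bz$-direction coming from two applications of $\bde$. Because $f$ is renormalizable, $f^2$ restricted to the appropriate subinterval is conjugate to a unimodal map; composing with the straightening map $\phi^{-1}$ built into $H^{-1}$ produces a genuine unimodal $f_1\in\UU_V$ with $\|f_1-f^2\|_V<C\bar\eps$. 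For $\eps_1$: every term in the $x$-coordinate of $H\circ F^2\circ H^{-1}$ beyond $f_1(x)$ involves at least one factor of $\eps$ and then either another factor of $\eps$ (from the second application of $F$ or from $H$, $H^{-1}$) or a factor of $\bde$ (from the $\Bz$-dependence propagating through $F$), giving $\|\eps_1\|=O(\bar\eps^2+\bar\eps\bar\de)$. Similarly $\bde_1$ is built from two successive applications of $\bde$, together with the correction $-\bde(y,f^{-1}(y),\B0)$ in $H$ and its inverse in $H^{-1}$; each surviving term carries a factor $\bde$ times either $\bde$ or $\eps$, so $\|\bde_1\|=O(\bar\eps\bar\de+\bar\de^2)$.

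The main obstacle I expect is the bookkeeping for the $\Bz$-coordinate and, in particular, verifying that the $-\bde(y,f^{-1}(y),\B0)$ subtraction in $H$ is the right choice to make $\bde_1$ genuinely second order rather than merely $O(\bar\de)$. The point is that on the relevant region the $y$- and $x$-coordinates of the $F$-orbit are approximately related by $x\approx f^{-1}(y)$ and $\Bz$ is small (of order $\bar\de$), so $\bde(x,y,\Bz)$ agrees with $\bde(y,f^{-1}(y),\B0)$ up to an error controlled by $\|D\bde\|$ times the discrepancies, which are themselves $O(\bar\eps)$ and $O(\bar\de)$; the subtraction in $H$ cancels the leading $O(\bar\de)$ part. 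Making this precise requires the a priori control on the geometry of $\Dom(H)$ and on $f^{-1}$ near the relevant fixed point, which is where the renormalizability hypothesis on $f$ and the smallness of $\bar\eps,\bar\de$ enter. Once these discrepancy estimates are in hand, the norm bounds follow by the chain rule and the triangle inequality, and the analyticity of $f_1,\eps_1,\bde_1$ is inherited from that of $f,\eps,\bde$ together with analyticity of $\phi^{-1}$ (which follows from the implicit function theorem since $\partial_x(f(x)-\eps)$ is bounded away from zero on the relevant domain). I would also note that the estimate $\|f_1-f^2\|_V<C\bar\eps$ is immediate once $\eps_1$ is isolated, since $f_1(x)-f^2(x)$ is, up to the straightening, the part of $\eps_1$ that does not vanish when $y=\Bz=0$ on the appropriate slice.
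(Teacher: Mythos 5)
Your proposal follows essentially the same route as the paper: compute $H\circ F^2\circ H^{-1}$ explicitly in coordinates, use the identity $\pi_x\circ F=\pi_x\circ H$ (so that $F\circ H^{-1}$ has first coordinate exactly $x$) to recover the H\'enon-like form, absorb the $x$-only terms of the Taylor expansion into $f_1$, and bound $\eps_1$ and $\bde_1$ by observing that every residual term carries a partial derivative of $\eps$ (resp.\ of $\bde$) times a coordinate discrepancy of size $O(\bar\eps)$ or $O(\bar\de)$ --- exactly the paper's bookkeeping. One small slip in your motivating paragraph: the second coordinate of $H\circ F(w)$ is $\pi_y F(w)=x$, not $\pi_x F(w)=f(x)-\eps(w)$; this does not affect the argument, since the identity actually needed is $\pi_y\bigl(H\circ F^2\circ H^{-1}(w)\bigr)=\pi_x\bigl(F\circ H^{-1}(w)\bigr)=x$.
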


\begin{proof}
\comm{*********************
Let us calculate $ \phi^{-1}(w) - f^{-1}(x) $ first. By \eqref{f comp phi-1}, we obtain that
\begin{align*}
\phi^{-1}(w) &= f^{-1}(x + \eps \circ H^{-1}(w)) \\
&= f^{-1}(x) + (f^{-1})'(x) \cdot \eps \circ H^{-1}(w) + \rm{higher \ order \ terms} .
\end{align*}
Then we get
\begin{align} \label{phi inverse - f inverse}
\phi^{-1}(w) - f^{-1}(x) = (f^{-1})'(x) \cdot \eps \circ H^{-1}(w) + \rm{higher \ order \ terms} .
\end{align}
\nin Let us calculate $ \eps \circ F \circ H^{-1} $ and $ \eps \circ F^2 \circ H^{-1} $ as preparation estimating $ \| \:\! \eps_1 \| $ and $ \| \:\! \bde_1 \| $. Use the equation \eqref{F comp H-1} and linear approximation. Then
\begin{equation} \label{epsilon F H}
\begin{aligned}
& \eps \circ F \circ H^{-1}(w) \\[0.2em]
= \ &\eps(x,\ \phi^{-1}(w),\ \bde \circ H^{-1}(w)) \\[0.4em]
= \ &\eps(x, f^{-1}(x), \B0) + \di_y \eps \circ (x, f^{-1}(x), \B0) \cdot (\phi^{-1}(w) - f^{-1}(x)) \\
 \quad & + \sum_{j=1}^m \di_{z_j} \eps \circ (x, f^{-1}(x), \B0) \cdot \de^j \circ H^{-1}(w) + h.o.t. \\[-0.3em]
= \ & v(x) + \di_y \eps \circ (x, f^{-1}(x), \B0) \cdot (f^{-1})'(x) \cdot \eps \circ H^{-1}(w)
 + \sum_{j=1}^m \di_{z_j} \eps \circ (x, f^{-1}(x), \B0) \cdot \de^j \circ H^{-1}(w) \\[-0.4em]
 \quad & + h.\,o.\,t.
\end{aligned}
\end{equation}
Similarly, let us estimate $ \eps \circ F^2 \circ H^{-1} $.
\msk
\begin{equation} \label{epsilon FF H}
\begin{aligned}
& \eps \circ F^2 \circ H^{-1}(w) \\[0.2em]
= \ & \eps( f(x) - \eps \circ F \circ H^{-1}(w),\ x,\ \bde \circ F \circ H^{-1}(w)) \\[0.4em]
= \ & \eps(f(x),\,x,\, \B0) + \di_x \eps \circ (f(x),\,x,\, \B0) \cdot \eps \circ F \circ H^{-1}(w) \\
\quad & + \sum_{j=1}^m \di_{z_j} \eps \circ (f(x),\,x,\, \B0) \cdot \de^j \circ F \circ H^{-1}(w) + h.\,o.\,t. \\[-0.3em]
= \ & v \circ f(x) +  \di_x \eps \circ (f(x),\,x,\, \B0) \cdot \eps \circ F \circ H^{-1}(w) 
 + \sum_{j=1}^m \di_{z_j} \eps \circ (f(x),\,x,\, \B0) \cdot \de^j \circ F \circ H^{-1}(w) \\[-0.4em]
 \quad & + h.\,o.\,t.
\end{aligned} \msk
\end{equation}
**********************************}
By the straightforward calculation, $H \circ F^2 \circ H^{-1} $ is 
\comm{********************
\begin{equation*}
\begin{aligned}
(x,\ y,\ \Bz) \ 
\xmapsto{\hspace*{3pt}H^{-1}} & \ (\phi^{-1}(w),\ y,\ z + \bde(y, f^{-1}(y),\B0))  \\
\xmapsto{\hspace*{7pt} F \hspace*{7pt}} & \ (x,\ \phi^{-1}(w),\ \bde \circ H^{-1}(w)) \\
\xmapsto{\hspace*{7pt} F \hspace*{7pt}} & \ (f(x) - \eps \circ F \circ H^{-1}(w),\ x,\ \bde \circ F \circ H^{-1}(w)) \\
\xmapsto{\hspace*{6pt} H \hspace*{7pt}} & \ (f(f(x) - \eps \circ F \circ H^{-1}(w)) - \eps \circ F^2 \circ H^{-1}(w),\\
& \hspace{2.5in} x,\ \bde \circ F \circ H^{-1}(w) - \bde(x, f^{-1}(x),\B0))
\end{aligned} \msk
\end{equation*} 
**********************************}
\begin{equation*}
(f(f(x) - \eps \circ F \circ H^{-1}(w)) - \eps \circ F^2 \circ H^{-1}(w),\ \ x,\ \ \bde \circ F \circ H^{-1}(w) - \bde(x, f^{-1}(x),\B0))
\end{equation*}
Thus the first coordinate function of $H \circ F^2 \circ H^{-1}$ is 
$$ f(f(x) - \eps \circ F \circ H^{-1}(w)) - \eps \circ F^2 \circ H^{-1}(w) . $$
By the linear appoximation, we obtain the estimation of first coordinate function. \ssk
\begin{align*}
& f(f(x)) - \eps \circ F \circ H^{-1}(w)) - \eps \circ F^2 \circ H^{-1}(w) \\[0.4em]
= \ & f^2(x) - f'(f(x)) \cdot \eps \circ F \circ H^{-1}(w) - \big[\,\eps(f(x),\,x,\, \B0) + \di_x \eps \circ (f(x),\,x,\, \B0) \cdot \eps \circ F \circ H^{-1}(w) \\
\quad & + \sum_{j=1}^m \di_{z_j} \eps \circ (f(x),\,x,\,\B0) \cdot \de^j \circ F \circ H^{-1}(w)\,\big] + h.\,o.\,t. \\[0.4em]
= \ & f^2(x) - v \circ f(x) - [\,f'(f(x)) - \di_x \eps \circ (f(x),\,x,\, \B0)\, ] \cdot v(x) \\[0.4em]
\quad & - \big[\,f'(f(x)) - \di_x \eps \circ (f(x),\,x,\, \B0)\,\big] \cdot \Big[\,\di_y \eps \circ (x, f^{-1}(x), \B0) \cdot (f^{-1})'(x) \cdot \eps \circ H^{-1}(w) \\
 & \qquad + \sum_{j=1}^m \di_{z_j} \eps \circ (x, f^{-1}(x), \B0) \cdot \de^j \circ H^{-1}(w) \,\Big]
 \\[-0.4em]  &
  - \sum_{j=1}^m \di_{z_j} \eps \circ (f(x),\,x,\, \B0) \cdot \de^j \circ F \circ H^{-1}(w) + h.\,o.\,t. 
\end{align*} 
Let $ v(x) = \eps(x,\, f^{-1}(x),\,\B0) $. 
Then the unimodal map, $ f_1(x)$ of the first component of $H \circ F^2 \circ H^{-1}$ is the following
$$ f^2(x) - v \circ f(x) - \big[\,f'(f(x)) - \di_x \eps \circ (f(x),\,x,\, \B0)\,\big] \cdot v(x) . $$ 
Thus $ \| f_1(x) - f^2(x) \| = O(\| \:\!\eps \|)$. Moreover, the norm of $ \eps_1(w) $ is $ O \big(\| \:\!\eps \|^2 + \| \:\! \eps \| \, \| \:\! \bde \| \,\big) $. 
Let us estimate the the from third to $ (m+2)^{th} $ coordinate maps of $H \circ F^2 \circ H^{-1}$. Recall $ \bde_1(w) = \bde \circ F \circ H^{-1}(w) - \bde(x,\,f^{-1}(x),\,\B0) $. The estimation of each coordinate map of $ \bde_1 $ is
\msk
\begin{equation*}
\begin{aligned}
& \de^i \circ F \circ H^{-1}(w) - \de^i(x,\,f^{-1}(x),\,\B0) \\[0.6em]
= \ & \de^i \big(x,\ \phi^{-1}(w),\ \bde \circ H^{-1}(w) \big) - \de^i(x,\,f^{-1}(x),\,\B0) \\[0.3em]
= \ & \di_y \de^i \circ (x,\,f^{-1}(x),\,\B0) \cdot (\phi^{-1}(w) - f^{-1}(x)) + \sum_{j=1}^m \di_{z_j} \de^i \circ (x,\,f^{-1}(x),\,\B0) \cdot \de^j \circ H^{-1}(w) +  h.\,o.\,t. \\[0.3em]
= \ & \di_y \de^i \circ (x,\,f^{-1}(x),\,\B0) \cdot (f^{-1})'(x) \cdot \eps \circ H^{-1}(w) \\
& \qquad + \sum_{j=1}^m \di_{z_j} \de^i \circ (x,\,f^{-1}(x),\,\B0) \cdot \de^j \circ H^{-1}(w) +  h.\,o.\,t. 
\end{aligned} 
\end{equation*}
for $ 1 \leq i \leq m $. \ssk Then $ \| \:\! \de_1^i \| $ is $ O(\| \:\!  \eps \| \, \| \:\!  \bde \| + \| \:\!  \bde \|^2) $ for all $ 1 \leq i \leq m $. Hence, $ \| \:\! \bde_1 \| $ is also $ O(\| \:\!  \eps \| \, \| \:\!  \bde \| + \| \:\!  \bde \|^2) $.

\comm{****************
\begin{equation}
\begin{aligned} \label{phi}
\phi_{y,z}(x) \equiv \phi(x,y,z) &= f(x) - \eps(x,y,z)  
\end{aligned}
\end{equation} 
By the straightforward calculation
$$
H \circ F^2 \circ H^{-1} = ((f(x) - \eps)^2, \ x, \ \de(x, \phi^{-1}, \de) - \de(x, f^{-1}(x),0))
$$
where  $ \phi \equiv \phi^{-1}(x, y, z)$ is the first coordinate function of $H^{-1} $. The first term of the third coordinate of $ H \circ F^2 \circ H^{-1} $ means the following.
$$\de(x, \phi^{-1}, \de) = \de(x,\ \phi^{-1}(x,y,z), \ \de( \phi^{-1}(x,y,z),y,z+\de(y, f^{-1}(y), 0))$$ 
Let
$$
v(x) = - \eps (x, f^{-1}(x),0)
$$
Then  
\begin{align*}
& (f(x) - \eps)^2 \\
= \ & f^2(x) - f'(f(x)) \cdot \eps(x,f^{-1}(x),0) - \eps(f(x),x,0) \\
\qquad & -f'(f(x)) \left ( \frac{\di \eps}{ \di y} |_{(x,f^{-1}(x),0)} \cdot (\phi^{-1} - f^{-1}) + \frac{\di \eps}{\di z} |_{(x,f^{-1}(x),0)} \cdot (\phi^{-1} - f^{-1}) \right) \\
\qquad &+ \frac{\di \eps}{\di x}|_{(x,f^{-1}(x),0)} \cdot \eps -  \frac{\di \eps}{\di z}|_{(x,f^{-1}(x),0)} \cdot \de  + \, \text {higher  order   terms} \\
= \ & f^2(x) + v(f(x)) + f'(f(x))\cdot v(x) + O(\| \eps \|^2) + O(\| \eps \| \| \de \|) + \, \text{higher order terms}.
\end{align*}
If we take $f_1$ to be $f^2(x) + v(f(x)) + f'(f(x))\cdot v(x)$, then $\| f_1 -f^2 \|_V < C\bar\eps$ and $\eps_1 \equiv (f(x) - \eps)^2 - f_1 $ has the norm bounded above by $ O(\| \eps \|^2 + \| \eps \| \| \de \|) $.
\msk
\\
Moreover, the fact that norms of each partial derivatives of $\eps$ and $\| \phi^{-1} - f^{-1} \|$ are bounded above by $C \bar\eps$ implies that 
\begin{align*} \label{RF}
& \de (x, \phi^{-1}, \de) - \de(x, f^{-1}(x),0) \\
= \ & \frac{\di \de}{\di y}|_{ (x, f^{-1}(x),0)} \cdot ( \phi^{-1} - f^{-1}) + \frac{\di \de}{\di z} |_{(x, f^{-1}(x),0)} \cdot \de 
    + \, \text {higher  order   terms}.
\end{align*}
Hence,  $\de_1 \equiv \de(x, \phi^{-1}, \de) - \de(x, f^{-1}(x),0) = O(\| \de\| \| \eps \| + \| \de^2\|)$. 
****************************}
\end{proof}

\nin Define {\em pre-renormalization} of $F$ as $ PRF \equiv H \circ F^2 \circ H^{-1}$ on $ H(B^1_v) $. Define  $ H(B^1_v) $ as $ \Dom(PRF) $ unless any other statements are specified. Since $ H(B^1_v) $ is the hypercube, domain $ B $ is recovered as the image of $ H(B^1_v) $ under the appropriate linear expanding map $\La(x,y,\Bz) = (sx, sy, s\Bz)$ for some $s<-1$. Thus we see that $ \Dom(PRF) $ is $\La^{-1}(B)$.
\comm{*******************
Notice that the map $ H^{-1} $ from $\La^{-1}(B)$ to $B^1_v$ preserves each hyperplane, $ \{\,y =C \} $ and the map $ F \circ  H^{-1} $ from $\La^{-1}(B)$ to $B^1_c$ preserves each hyperplane, $ \{\,x =C \} $ for every constant $ C $. Let $B^1_c$ be 
$ F(B^1_v) $. 
\begin{equation*}
 \begin{aligned}
 H^{-1} : & \ \La^{-1}(B) \lra B^1_v ,  & (x,y,\Bz) & \mapsto (\phi^{-1}(x,y,\Bz) ,\ y,\ \Bz + \bde(y, f^{-1}(y),\B0)) & \\
 F \circ  H^{-1} : & \ \La^{-1}(B) \lra B^1_c,  &  (x,y,\Bz) &\mapsto  (x,\ \phi^{-1}(x,y,\Bz),\ \bde \circ H^{-1}(w)) &
 \end{aligned} \ssk
 \end{equation*}
 ****************************}
\begin{defn}[Renormalization]
Let  $V$ be the (minimal) closed subinterval of $I^x$ such that  $V \times {\bf I}^v$ is invariant under $ H \circ F^2 \circ H^{-1}$ and let $s \colon V \ra I $ be the orientation reversing affine rescaling. With the rescaling map $\La(x,y,\Bz) = (sx, sy, s\Bz)$, The {\it renormalization} of the $ m+2 $ dimensional H\'enon-like map is defined as $\La \circ H \circ F^2 \circ H^{-1} \circ \La^{-1}$ on the domain $ B \equiv I^x \times \bf{I}^v $
\begin{align*}
RF = \La \circ H \circ F^2 \circ H^{-1} \circ \La ^{-1} .
\end{align*}
If $RF$ is also renormalizable, we can define the second renormalization of $F$ as the renormalization of $RF$. Then if $F$ is $n$ times renormalizable, then the $n^{th}$ renormalization is defined successively
\begin{align*}
 R^{n}F =  \La_{n-1} \circ H_{n-1} \circ (R^{n-1}F)^2 \circ H^{-1}_{n-1} \circ \La_{n-1} ^{-1} .
\end{align*}
where $ R^{n-1}F $ is the $(n-1)^{th}$th renormalization of $F$ for $n \geq 1 $.
\end{defn}
\nin Let the set of infinitely renormalizable higher dimensional H\'enon-like map be $ \II(\bar \eps) $ where $ \max \{ \,\| \eps \|,\,\| \bde \| \ \} \leq \bar \eps $ for small enough positive number $ \bar \eps $. We call the higher dimensional H\'enon-like map just H\'enon-like map unless it makes any confusion in the following sections.

\bsk
\section{Critical Cantor set} \label{critical cantor set}
The minimal attracting set for two dimensional infinitely renormalizable H\'enon-like map $ F $ is the Cantor set on which $ F $ acts as the dyadic adding machine. The topological construction of the invariant Cantor set of higher dimensional H\'enon-like map is exactly same as that of two-dimensional H\'enon-like map (Corollary \ref{dyadic adding machine} below). Thus we use the same definitions and notions of the two dimensional case in this section. 
\ssk \\
Denote $\Psi^1_v $ by $ \psi^1_v \equiv H^{-1} \circ \La^{-1} $. Thus it is the non-linear scaling map which conjugates $F^2$ to $RF$ on $\Psi^1_v (B)$. Denote $\Psi^1_c $ by $ \psi^1_c \equiv F \circ \psi_v$. The subscript $v$ and $c$ are associated to the maps with the {\it critical value} and the {\it critical point} respectively.
Similarly, let $\psi^2_v$ and $\psi^2_c$ be the non linear scaling maps conjugating $RF$ to $R^2F$. Let
$$
 \Psi^2_{vv} = \psi^1_v \circ \psi^2_v, \ \  \Psi^2_{cv} = \psi^1_c \circ \psi^2_v, \ \  \Psi^2_{vc} = \psi^1_v \circ \psi^2_c,  \ldots
$$
and so on. Successively we can define the non-linear scaling map of the $n^{th}$ level for any $n \in \N$ as follows 
$$
 \Psi^n_{\bf w} = \psi^1_{w_1} \circ \cdots \circ \psi^n_{w_n},  \ \ {\bf w}=(w_1, \ldots , w_n) \in \lbrace v, c \rbrace^n 
$$
where ${\bf w}=(w_1, \ldots , w_n)$ is the word of length $n$ and $W^n = \lbrace v, c \rbrace^n$ is the $n$-fold Cartesian product of $\lbrace v, c \rbrace$. 

\begin{lem} \label{diameter}
Let the H\'enon-like map, $F $ be in $ \II(\bar{\eps})$. Then the derivative of the map $\Psi^n_{\bf w}$ is exponentially shrinking for $ n \in \N $ with $ \si $, that is, $\| D\Psi^n_{\bf w} \| \leq C \si^n$ for every words ${\bf w} \in W^n$ where $C>0$ depends only on $B$ and $\bar{\eps}$.
\end{lem}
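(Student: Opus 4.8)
The plan is to expand $D\Psi^n_{\bf w}$ by the chain rule into a product of the derivatives of the one–level scaling maps, bound each factor uniformly, and then upgrade the resulting crude estimate $C_2^{\,n}$ to a genuine geometric bound $C\sigma^n$ by a distortion argument that transfers to the derivative the geometric decay of the diameters of the renormalization pieces $\Psi^n_{\bf w}(B)$.

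First I would set up the decomposition. For $p\in B$ write $q_i=\psi^{i+1}_{w_{i+1}}\circ\cdots\circ\psi^n_{w_n}(p)$, so that $D\Psi^n_{\bf w}(p)=D\psi^1_{w_1}(q_1)\cdots D\psi^n_{w_n}(q_n)$. Recall $\psi^i_v=H_{i-1}^{-1}\circ\La_{i-1}^{-1}$ and $\psi^i_c=R^{i-1}F\circ\psi^i_v$. Since $F\in\II(\bar\eps)$, every $R^jF$ again lies in $\II(\bar\eps)$ with $\|\eps_j\|,\|\bde_j\|\le\bar\eps$; hence $\|DR^jF\|$, $\|DH_j^{-1}\|$ and $\|D\La_j^{-1}\|=|s_j|^{-1}$ are all bounded by a constant $C_1=C_1(B,\bar\eps)$, uniformly in $j$. (For $H_j^{-1}$ one uses that $\Dom(H_j)$ stays away from the critical point of $f_j$ in the $x$–direction, so the straightening $\phi_j^{-1}$ has bounded derivative; for $|s_j|^{-1}$ one uses that the invariant interval $V_j$ is a definite fraction of $I^x$ because $f_j$ is unimodal-renormalizable and $\eps_j,\bde_j$ are $O(\bar\eps)$.) In particular the $c$–case is reduced to the $v$–case, $\|D\psi^i_{w_i}\|\le C_2(B,\bar\eps)$ for all $i$ and $w_i$, and the product gives the crude bound $\|D\Psi^n_{\bf w}\|\le C_2^{\,n}$.

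To improve $C_2^{\,n}$ to $C\sigma^n$ I would argue as follows. Each $\psi^i_{w_i}$ is univalent and, because $\Dom\big(PR\,R^{i-1}F\big)=\La_{i-1}^{-1}(B)$ is compactly contained in a fixed neighbourhood on which $R^{i-1}F$ extends analytically, it extends univalently to a definite complex neighbourhood of $B$; granting that the nested renormalization pieces $B^n_{\bf w}=\Psi^n_{\bf w}(B)$ carry uniform ``collars'', the composition $\Psi^n_{\bf w}$ then also extends univalently to a definite neighbourhood of $B$, and a standard distortion estimate yields $\|D\Psi^n_{\bf w}(p)\|\asymp\|D\Psi^n_{\bf w}(q)\|\asymp\diam B^n_{\bf w}/\diam B$ for $p,q\in B$, with constants depending only on $B,\bar\eps$. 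It thus suffices to prove $\diam B^n_{\bf w}\le C\sigma^n$. For this one uses bounded geometry of the nested family $\{B^n_{\bf w}\}$ together with the fact that for every $G\in\II(\bar\eps)$ the piece $B^1_{w}(G)$ occupies at most a fixed fraction $\theta_0<1$ of the $x$–extent of $B$ (the renormalization window of a unimodal map, and hence of an $O(\bar\eps)$ Hénon-like perturbation, is a definite fraction of the dynamical interval); iterating along $B^n_{\bf w}(F)=\psi^1_{w_1}\big(B^{n-1}_{w_2\cdots w_n}(RF)\big)$ and feeding back the distortion bound gives the geometric decay with some $\sigma\in(0,1)$ depending only on $B,\bar\eps$.

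The main obstacle is exactly the uniform distortion control of the long composition $\Psi^n_{\bf w}$, equivalently the a priori bounds: that the nested renormalization pieces $B^n_{\bf w}$ have definite space around them, uniformly in $n$, in the word $\bf w$, and in $F\in\II(\bar\eps)$. This is the Hénon analogue of Sullivan's a priori bounds; once it is available the derivative estimate follows mechanically from the distortion inequality and the elementary fact about renormalization windows. I would either establish these bounds directly from compactness of the renormalization orbit $\{R^jF\}$ in $\II(\bar\eps)$ and smallness of $\bar\eps$, as in \cite{CLM,Nam1}, or deduce them from the two–dimensional statement via the projection $\pi_{xy}$ together with $\|\bde\|=O(\bar\eps)$.
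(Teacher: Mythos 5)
Your first step — writing $D\Psi^n_{\bf w}$ as the product of the one-level derivatives $D\psi^i_{w_i}$ and bounding $\|DH_j^{-1}\|$, $\|D(F_j\circ H_j^{-1})\|$ uniformly in $j$ — is exactly how the paper begins. But the way you then try to convert the crude bound $C_2^{\,n}$ into $C\si^n$ departs from the paper and contains a genuine gap. The factor $\si^n$ is not something that has to be recovered by distortion theory afterwards: it is already sitting in the product you wrote down, because each $\psi^{k+1}_w$ contains the affine rescaling $\La_k^{-1}$, whose derivative has norm $|s_k|^{-1}=\si(1+O(\rho^k))$, while the remaining factor $H_k^{-1}$ (resp. $F_k\circ H_k^{-1}$) has derivative bounded independently of $k$ because $\di_x\phi_k^{-1}\asymp (f_k^{-1})'$ on $\pi_x(\La_k^{-1}(B))$, the off-diagonal entries are $O(\bare^{2^k})$ by the explicit formulas for $D\phi_k^{-1}$, and $f_k\to f_*$ exponentially fast. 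By lumping $|s_j|^{-1}$ into the constant $C_1$ you discard precisely the contraction the lemma asserts, and then have to reconstruct it by much heavier means.

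The reconstruction you propose does not close. First, it is circular relative to the logical order here: the diameter estimate $\diam B^n_{\bf w}\le C\si^n$ is Corollary \ref{diameter 2}, deduced \emph{from} this lemma, and your independent route to it rests on ``bounded geometry of the nested family'' and ``uniform collars/a priori bounds'' which you explicitly flag as the main obstacle and do not establish (note also that the pieces' geometry is exactly what degenerates in Theorem \ref{Unbounded geometry for model maps}, so one should be wary of assuming regularity of the family $\{B^n_{\bf w}\}$ beyond what is proved). Second, the distortion inequality $\|D\Psi^n_{\bf w}(p)\|\asymp \diam B^n_{\bf w}/\diam B$ is a Koebe-type statement for conformal maps; $\Psi^n_{\bf w}$ is a strongly anisotropic map of $\R^{m+2}$ (by Lemma \ref{decomposition of derivative} it contracts like $\si^{2n}$ in $x$ and like $\si^n$ in $y$ and $\Bz$), and there is no higher-dimensional Koebe theorem that bounds the operator norm pointwise by the diameter ratio; the paper's Distortion Lemma controls only the Jacobian determinant. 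Finally, deferring the a priori bounds to ``compactness of the renormalization orbit, as in \cite{CLM,Nam1}'' is citing the conclusion rather than proving it. The fix is simply to keep the factor $|s_k|^{-1}=\si(1+O(\rho^k))$ explicit in each term of your chain-rule product and verify, via the formulas for $\di_x\phi_k^{-1}$, $\di_y\phi_k^{-1}$, $\di_{z_i}\phi_k^{-1}$ and the super-exponential decay of $\|\eps_k\|,\|\bde_k\|$, that the non-affine factors contribute only a convergent product of constants.
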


\begin{proof}
The identical equation $ H \circ H^{-1} = \id $ implies that
\footnote{ The first coordinate map of $ H^{-1}(w) $, $ \phi^{-1}(x,y,z) $ is {\em not} the {\em inverse function} of the some function $ \phi(w) $. However, $ \phi^{-1}(w) $ is a $ \eps - $perturbation of $ f^{-1}(x) $ as follows 
$$ f \circ \phi^{-1}(w) - \eps \circ H^{-1}(w) = x . $$  }
$$ f \circ \phi^{-1}(w) - \eps \circ H^{-1}(w) = x . $$
Thus we have $ \phi^{-1}(w) = f^{-1}(x + \eps \circ H(w)) $. Recall that 
$$ \eps \circ H^{-1}(w) = \eps ( \phi^{-1}(w),\ y,\ z + \de(y, f^{-1}(y), \B0)) .$$ 
Then by the chain rule, each partial derivatives of $ \phi^{-1} $ is as follows \msk
\begin{equation*}
\begin{aligned}
\di_x \phi^{-1}(w) &= \ (f^{-1})'(x + \eps \circ H^{-1}(w)) \cdot \big[\, 1 + \di_x \eps \circ H^{-1}(w) \cdot \di_x \phi^{-1}(w) \,\big] \\[0.4em]
\di_y \phi^{-1}(w) &= \ (f^{-1})'(x + \eps \circ H^{-1}(w)) 
\cdot \Big[\, \di_x \eps \circ H^{-1}(w) \cdot \di_y \phi^{-1}(w) + \di_y \eps \circ H^{-1}(w) 
\\ 
&\qquad + \sum_{j=1}^m \di_{z_j} \eps \circ H^{-1}(w) \cdot \frac{d}{dy}\, \de(y,f^{-1}(y),\B0) \,\Big] \\
\di_{z_i} \phi^{-1}(w) &= \ (f^{-1})'(x + \eps \circ H^{-1}(w)) \cdot \big[\, \di_x \eps \circ H^{-1}(w) \cdot \di_{z_i} \phi^{-1}(w) + \di_{z_i} \eps \circ H^{-1}(w) \,\big] .
\end{aligned} \msk
\end{equation*}
for $ 1 \leq i \leq m $. Then
\begin{equation} \label{partial derivatives of phi-1}
\begin{aligned}
\di_x \phi^{-1}(w) &= \ \frac{(f^{-1})'(x + \eps \circ H^{-1}(w))}{1 - (f^{-1})'(x + \eps \circ H^{-1}(w)) \cdot \di_x \eps \circ H^{-1}(w)} \\ 
\di_y \phi^{-1}(w) 
 &= \ \di_x \phi^{-1}(w) \cdot  \Big[\,\di_y \eps \circ H^{-1}(w) + \sum_{j=1}^m \di_{z_j} \eps \circ H^{-1}(w) \cdot \frac{d}{dy}\, \de^j(y,f^{-1}(y),\B0)\,\Big] \\[0.3em]
\di_{z_i} \phi^{-1}(w) 
&= \ \di_x \phi^{-1}(w) \cdot \di_{z_i} \eps \circ H^{-1}(w) .
\end{aligned} \msk
\end{equation}
for $ 1 \leq i \leq m $. Let us estimate $ \| \:\! D \phi^{-1} \| $. The above equation \eqref{partial derivatives of phi-1} implies that $ \| \:\! \di_x \phi^{-1} \| \asymp \| (f^{-1})' \| $ \ and furthermore, $  \phi^{-1}(x,y_0,\Bz_0) \asymp f^{-1}(x) $ for every $ (y_0,\Bz_0) \in {\bf I}^v $. The fact that $ \phi^{-1} \colon \La^{-1}(B) \ra \pi_x(B^1_v) $ implies that the domain of $ f^{-1} $ is $ \pi_x(\La^{-1}(B)) $. \ssk Then $ \| (f^{-1})' \| $ is away from one and $ \| \:\! \di_y \phi^{-1} \| $ and $ \| \:\! \di_{z_i} \phi^{-1} \| $ are $ O(\bar \eps) $ for $ i=1,2,\ldots,m $ by the equation \eqref{partial derivatives of phi-1}. 
\ssk \\
The norm of derivatives of $ \phi^{-1}_n(w) $ for each $ n $ has the same upper bounds because $ f_n \ra f_* $ exponentially fast and $ \| \:\! \di_y \phi^{-1}_n \| $ and $ \| \:\! \di_{z_i} \phi^{-1}_n \| $ for $ i=1,2,\ldots,m $ are bounded by $ O(\bar \eps^{2^n}) $. The dilation, $ \La^{-1} $ contracts by the factor $ \si(1 + O(\rho^n)) $ where $ \rho = \dist(F, F_*) $. The above estimations imply that $ \| DH^{-1}_n \| $ and $ \| D(F\circ H^{-1}_n) \| $ are uniformly bounded and the upper bounds are independent of $ n $. Thus $ \| \psi^n_w \| \leq C\si $ for $ w =v, c $. Hence, the composition of $ \psi^k_w $ for $ k = 1,2,\ldots, n $ contracts by the factor $ C\si^n $, that is, $ \| D\Psi_{\bf w} \| \leq C\si^n $ for some $ C>0 $.

\end{proof}
\nin Let $B_v^1 \equiv B_v^1(F)$ and $B_c^1 \equiv B_c^1(F)$. Thus by the definitions on Section \ref{def of renormal}, $ B_v^1(F) = \psi^1_v(B) $ and $ B_c^1(F) = F \circ \psi^1_v(B) $. Moreover, $F(B_c^1) \subset B_v^1 $. 
If the H\'enon-like map $F$ is $ n $ times renormalizable, we can define $ B_v^1(R^nF) $ and $ B_c^1(R^nF) $ as $ \psi^{n+1}_v(B) $ and $F_n \circ \psi^{n+1}_v(B) $ respectively for each $n \geq 1$. We call the regions $B^n_{\bf w} \equiv B^n_{\bf w}(F) = \Psi^n_{\bf w}(B)$ {\em pieces} of the $n^{th}$ level or $n^{th}$ {\em generation} where ${\bf w} \in W^n$. 
Moreover, $W^n$ can be a additive group under the following correspondence from $W^n$ to the numbers with base 2 of mod $2^n$
$$ 
{\bf w} \mapsto \sum^{n-1}_{k=0} w_{k+1} 2^k \qquad  (\mod 2^n)
$$
where the symbols $v$ and $c$ are corresponding to $0$ and $1$ respectively. 
\begin{cor} \label{diameter 2}
The diameter of each piece shrinks exponentially fast for each $ n \geq 1 $, that is, $\diam (B_{\bf w}^n) \leq C\si^n$ for all ${\bf w} \in W^n$ where the constant $C > 0$ depends only on $B$ and $\bar \eps$.
\end{cor}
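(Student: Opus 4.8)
The plan is to deduce this directly from Lemma \ref{diameter} together with convexity of the hypercube domain $B$. Recall $B^n_{\bf w} = \Psi^n_{\bf w}(B)$ by definition, so the statement is really a statement about how much the map $\Psi^n_{\bf w}$ can distort distances on $B$.

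First I would fix a word ${\bf w} \in W^n$ and take two arbitrary points $p, q \in B$. Since $B = I^x \times {\bf I}^v$ is a hypercube, it is convex, so the straight segment $\gamma(t) = (1-t)p + tq$, $t \in [0,1]$, lies entirely in $B$, hence in $\Dom(\Psi^n_{\bf w})$. Applying the fundamental theorem of calculus along $\gamma$ and the mean value inequality gives
$$
|\Psi^n_{\bf w}(p) - \Psi^n_{\bf w}(q)| \;=\; \Big| \int_0^1 D\Psi^n_{\bf w}(\gamma(t)) \cdot (q-p)\, dt \Big| \;\leq\; \Big(\sup_{w \in B} \| D\Psi^n_{\bf w}(w) \|\Big)\, |p-q|.
$$
By Lemma \ref{diameter}, the supremum on the right is bounded by $C\si^n$ with $C$ depending only on $B$ and $\bar\eps$, and $|p-q| \leq \diam(B)$. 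Taking the supremum over all $p, q \in B$ yields $\diam(B^n_{\bf w}) \leq C\, \diam(B)\, \si^n$, which after renaming the constant is the claimed bound, uniformly in ${\bf w} \in W^n$.

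There is essentially no obstacle here: the only point requiring a word of care is that the estimate of Lemma \ref{diameter} is uniform over ${\bf w} \in W^n$ and over points of $B$ (which it is, since the bounds on $\|DH^{-1}_n\|$, $\|D(F\circ H^{-1}_n)\|$ and the contraction factor of $\La^{-1}_n$ established in that proof are all uniform), so the same constant $C$ works for every piece. Hence the corollary follows immediately, with $C$ depending only on $B$ and $\bar\eps$.
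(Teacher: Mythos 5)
Your proof is correct and is exactly the argument the paper intends: the corollary is stated without proof as an immediate consequence of Lemma \ref{diameter}, and your mean value inequality along a segment in the convex hypercube $B$, using the uniform bound $\|D\Psi^n_{\bf w}\| \leq C\si^n$, is the standard way to fill in that step.
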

The construction of the critical Cantor set for higher dimensional H\'enon-like map in $ \II(\bar \eps) $ is the same as that of two dimensional H\'enon-like map. See Section 5 in \cite{CLM}. Define the critical Cantor set of the infinitely renormalizable H\'enon-like map $F$ as follows
$$
\OO \equiv \OO_F = \bigcap_{n=1}^{\infty} \bigcup_{{\bf w} \in W^n} B^n_{\bf w}.
$$
Infinitely renormalizable H\'enon-like map, $ F $ acts as the dyadic adding machine on the above invariant Cantor set. For detailed construction of the dyadic group as Cantor set, see ~\cite{BB}.

\msk

\section{Average Jacobian}
\nin Let us consider the average Jacobian of  the infinitely renormalizable map $F$. The definition and properties of average Jacobian of higher dimensional H\'enon-like maps is the same as those of two dimensional ones. For the sake of completeness, we describe Lemma and Theorem in this section below. Let the Jacobian determinant of $ F $ at $w$ be $ \Jac F(w) $. Thus
$$
\log \left| \frac{\Jac F(y)}{\Jac F(z)} \right| \leq C \quad \text{for any} \ \ y,z \in B
$$
by some constant $C$ which is not depending on $y$ or $z$. The diameter of the domain $B^n_{\bf w} $ converges to zero exponentially fast by Lemma \ref{diameter}. It implies the following lemma.

\begin{lem}[Distortion Lemma]  \label{distortion}
There exist a constant $C$ and the positive number $\rho <1$ satisfying the following estimate
$$
\log \left| \frac{\Jac F^k(y)}{\Jac F^k(z)} \right| \leq C \rho^n \quad \textrm{for any} \ \ y,z \in B^n_{\bf w}
$$
where $k= 1, 2, 2^2, \ldots , 2^n$.
\end{lem}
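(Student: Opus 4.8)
\emph{Plan.} The plan is to pass to logarithms and use the chain rule, so that the distortion of $\Jac F^{k}$ becomes a sum of $k$ distortions of the single map $F$ along an orbit, and then to bound each term by the exponentially small diameter of a deep piece that the corresponding pair of iterates jointly occupies.

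\emph{Step 1 (chain rule and Lipschitz control of $\log\Jac F$).} Since $F$ is orientation preserving and analytic, $\Jac F>0$ on the compact box $B$, so $\log\Jac F$ is well defined and $C^{1}$ there; fix a Lipschitz constant $L$ for $\log\Jac F$ on $B$, uniform over $\II(\bar\eps)$ and depending only on $B$ and $\bar\eps$. (This is the quantitative form of the a priori bound $\log|\Jac F(y)/\Jac F(z)|\le C$ recorded above, which is just $L\,\diam B$.) By multiplicativity of the Jacobian determinant, for every $k\le 2^{n}$ and all $y,z\in B^{n}_{\mathbf w}$,
\[
\log\left|\frac{\Jac F^{k}(y)}{\Jac F^{k}(z)}\right|=\sum_{i=0}^{k-1}\Big(\log\Jac F\big(F^{i}y\big)-\log\Jac F\big(F^{i}z\big)\Big),
\]
every iterate appearing here remaining inside $B$.

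\emph{Step 2 (the orbit of a piece stays in pieces, and summation).} Here I would invoke the combinatorics of the pieces, which is the same as in dimension two (Section 5 in \cite{CLM}): for renormalizable $F$ the family $\{B^{n}_{\mathbf u}\}_{\mathbf u\in W^{n}}$ is cyclically permuted by $F$, so $F^{i}(B^{n}_{\mathbf w})\subset B^{n}_{\mathbf w+i}$ for $0\le i\le 2^{n}-1$ (addition in the additive group $W^{n}$). Hence $F^{i}y$ and $F^{i}z$ lie in one level-$n$ piece, and Corollary~\ref{diameter 2} bounds their distance by $\diam(B^{n}_{\mathbf w+i})\le C_{1}\sigma^{n}$ with $C_{1}=C_{1}(B,\bar\eps)$. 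Feeding this into the sum, each of the $\le 2^{n}$ terms is at most $L\,C_{1}\sigma^{n}$, so
\[
\log\left|\frac{\Jac F^{k}(y)}{\Jac F^{k}(z)}\right|\le 2^{n}\,L\,C_{1}\,\sigma^{n}=L\,C_{1}\,(2\sigma)^{n},
\]
and since $2\sigma<1$ (the diameter ratio $\sigma$ of Corollary~\ref{diameter 2} is below $1/2$, being essentially $1/|s|$ with the expanding rescaling slope $|s|>2$; cf.\ \cite{CLM}), the claim follows with $\rho:=2\sigma\in(0,1)$ and $C:=L\,C_{1}$, which depend only on $B$ and $\bar\eps$ — for $k=1,2,2^{2},\dots,2^{n}$, in fact for every $k\le 2^{n}$.

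\emph{Main obstacle.} The delicate point I anticipate is the competition between the $2^{n}$ iterates and the $\sigma^{n}$ shrinking of the pieces: the summed estimate is of any use only because the pieces contract strictly faster than the dyadic scale, i.e. $2\sigma<1$ (if one only knew the pieces are pairwise disjoint inside $B$ one would get merely an $O(1)$ bound, which is useless here). Everything therefore hinges on the renormalization/adding-machine structure of the pieces imported from \cite{CLM}: it is because $F^{i}(B^{n}_{\mathbf w})$ is contained in a \emph{single} level-$n$ piece — not merely in a Lipschitz image of $B^{n}_{\mathbf w}$, whose diameter could grow like $\|DF\|^{i}$ — that the uniform per-iterate bound $C_{1}\sigma^{n}$ may be applied simultaneously for all $i<2^{n}$. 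Granting that cyclic-permutation property together with the geometry $\diam(B^{n}_{\mathbf w})\le C_{1}\sigma^{n}$, $2\sigma<1$, the remainder is the one-line deduction the text announces.
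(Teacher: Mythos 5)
Your proof is correct and is exactly the argument the paper gestures at (the paper offers only the one-line remark that the exponential shrinking of the pieces ``implies the lemma'', deferring to \cite{CLM}): you supply the chain-rule decomposition, the cyclic permutation $F^i(B^n_{\bf w})\subset B^n_{{\bf w}+i}$, and --- crucially --- the observation that the $2^n$-term sum is only useful because $2\sigma<1$, a point the paper leaves implicit. The one quibble is your claim that the Lipschitz constant of $\log\Jac F$ depends only on $B$ and $\bar\eps$: since $\Jac F$ is of order $\bar\eps\,\bar\de^{\,m}$ and is not bounded below uniformly over $\II(\bar\eps)$, that constant really depends on $\inf_B \Jac F$ for the individual map $F$, but as the lemma only asserts the existence of some constant $C$ this does not affect correctness.
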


\nin Existence of the unique invariant probability measure, say $\mu$, on $\OO_F$ enable us to define the average Jacobian.
$$
b_F \equiv b = \exp \int_{\OO_F} \log \Jac F \; d\mu \, .
$$
On each level $ n $, the measure $\mu$ on $\OO_F$ satisfies that $\mu (B^n_{{\bf w}_n} \cap \OO_F) = 1/ 2^n$ for every word ${\bf w}_n$ of length $ n $. 
\msk
\begin{cor} \label{average}
For any piece of $B^n_{{\bf w}}$ on the level $ n $ and any point $ w \in B^n_{{\bf w}} $,
$$
\Jac F^{2^n} \!(w) = b^{2^n} (1 + O(\rho ^n))
$$
where $ b $ is the average Jacobian of $ F $ for some positive $ \rho < 1 $.
\end{cor}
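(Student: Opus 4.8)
The plan is to run the standard argument from the two-dimensional case (see \cite{CLM}), whose only inputs are the Distortion Lemma~\ref{distortion}, the action of $F$ on $\OO_F$ as the dyadic adding machine (so that $F$ sends $B^{n}_{\bf w}\cap\OO_F$ onto $B^{n}_{{\bf w}+1}\cap\OO_F$, indices taken modulo $2^{n}$), and the normalization $\mu(B^{n}_{\bf w}\cap\OO_F)=2^{-n}$. First I would reduce to the Cantor set: for an arbitrary point $w\in B^{n}_{\bf w}$ and any point $w'\in B^{n}_{\bf w}\cap\OO_F$, Lemma~\ref{distortion} with $k=2^{n}$ gives $\log\big|\Jac F^{2^{n}}(w)/\Jac F^{2^{n}}(w')\big|\le C\rho^{n}$, so it suffices to prove $\Jac F^{2^{n}}(w_{\bf w})=b^{2^{n}}(1+O(\rho^{n}))$ for one representative $w_{\bf w}\in B^{n}_{\bf w}\cap\OO_F$ of each piece; set $c_{\bf w}=\log\Jac F^{2^{n}}(w_{\bf w})$.

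Next I would bring in the invariant measure. By the chain rule $\Jac F^{2^{n}}=\prod_{j=0}^{2^{n}-1}(\Jac F)\circ F^{j}$, and since $\mu$ is $F$-invariant,
\[
\int_{\OO_F}\log\Jac F^{2^{n}}\,d\mu=\sum_{j=0}^{2^{n}-1}\int_{\OO_F}(\log\Jac F)\circ F^{j}\,d\mu=2^{n}\int_{\OO_F}\log\Jac F\,d\mu=\log b^{2^{n}}.
\]
Splitting this integral over the $2^{n}$ pieces of level $n$, using $\mu(B^{n}_{\bf w}\cap\OO_F)=2^{-n}$ and replacing $\log\Jac F^{2^{n}}$ on each piece by $c_{\bf w}$ up to $O(\rho^{n})$ (Lemma~\ref{distortion}), I obtain
\[
\log b^{2^{n}}=\frac{1}{2^{n}}\sum_{{\bf w}\in W^{n}}c_{\bf w}+O(\rho^{n}).
\]

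The main point, and the step that needs care, is that all the numbers $c_{\bf w}$ agree up to $O(\rho^{n})$, so that the average above may be replaced by any single $c_{\bf w}$. Fix ${\bf w}$ and an integer $0\le k\le n-1$. From the commutation $F^{2^{n}}\circ F^{2^{k}}=F^{2^{k}}\circ F^{2^{n}}$ and the chain rule,
\[
\log\Jac F^{2^{n}}(F^{2^{k}}w_{\bf w})-\log\Jac F^{2^{n}}(w_{\bf w})=\log\Jac F^{2^{k}}(F^{2^{n}}w_{\bf w})-\log\Jac F^{2^{k}}(w_{\bf w}).
\]
Since $F^{2^{n}}$ preserves $B^{n}_{\bf w}\cap\OO_F$, both $w_{\bf w}$ and $F^{2^{n}}w_{\bf w}$ lie in it, and the exponent $2^{k}$ is one of $1,2,2^{2},\ldots,2^{n}$, so the right-hand side is $O(\rho^{n})$ by Lemma~\ref{distortion}; as $F^{2^{k}}w_{\bf w}\in B^{n}_{{\bf w}+2^{k}}$, one more application of Lemma~\ref{distortion}, now with exponent $2^{n}$, yields $c_{{\bf w}+2^{k}}=c_{\bf w}+O(\rho^{n})$ with a constant uniform in ${\bf w}$. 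For arbitrary ${\bf w},{\bf w}'\in W^{n}$ one passes from ${\bf w}$ to ${\bf w}'$ by adding, one at a time, the powers of $2$ occurring in the binary expansion of ${\bf w}'-{\bf w}$, that is in at most $n$ such steps, whence $|c_{{\bf w}'}-c_{\bf w}|\le C\,n\,\rho^{n}$; enlarging $\rho$ slightly so as to absorb the factor $n$, all the $c_{\bf w}$ coincide up to $O(\rho^{n})$.

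Combining this with the displayed identity for $\log b^{2^{n}}$ gives $c_{\bf w}=\log b^{2^{n}}+O(\rho^{n})$ for every ${\bf w}$, and together with the first reduction step this is exactly $\Jac F^{2^{n}}(w)=b^{2^{n}}(1+O(\rho^{n}))$ for all $w\in B^{n}_{\bf w}$. I expect the cross-piece comparison of the third paragraph to be the only subtle point; everything else is routine bookkeeping with Lemma~\ref{distortion} and the invariance of $\mu$, exactly as in the two-dimensional setting of \cite{CLM}.
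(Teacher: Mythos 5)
Your argument is correct, but it is organized differently from the paper's proof, and the difference is worth noting. The paper integrates the $2^n$-step cocycle over a \emph{single} piece: since the sets $F^j(B^n_{\bf w})$, $j=0,\dots,2^n-1$, tile $\OO_F$ up to $\mu$-measure zero, invariance of $\mu$ gives $\int_{B^n_{\bf w}}\log\Jac F^{2^n}\,d\mu=\int_{\OO_F}\log\Jac F\,d\mu=\log b$ exactly; the mean value theorem then produces a point $\eta\in B^n_{\bf w}$ with $\log\Jac F^{2^n}(\eta)=2^n\log b$ on the nose, and a single application of Lemma~\ref{distortion} finishes. You instead compute the integral over all of $\OO_F$, which only identifies $2^n\log b$ with the \emph{average} of the constants $c_{\bf w}$ over all $2^n$ pieces, and you must then supply the extra cross-piece comparison via the commutation identity $\log\Jac F^{2^n}(F^{2^k}w_{\bf w})-\log\Jac F^{2^n}(w_{\bf w})=\log\Jac F^{2^k}(F^{2^n}w_{\bf w})-\log\Jac F^{2^k}(w_{\bf w})$. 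That step is correct and rather elegant (it is essentially a pointwise version of the tiling fact the paper uses at the level of integrals), but it costs you a factor of $n$ from the $\le n$ binary-expansion steps, which you then absorb by enlarging $\rho$; the paper's route avoids both the cross-piece lemma and the loss of the factor $n$. In short: same ingredients (Lemma~\ref{distortion}, the adding-machine action, $\mu(B^n_{\bf w}\cap\OO_F)=2^{-n}$), but the paper localizes the integral to one piece while you globalize and then re-localize, which is strictly more work.
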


\begin{proof}
Since
$$
\int _{B^n_{\bf w}} \log \Jac F^{2^n} d\mu = \int _{\OO} \log \Jac F \; d\mu = \log b,
$$
there exists a point $\eta \in B^n_{\bf w}$ such that  $ \log \Jac F^{2^n} \!(\eta) = \dfrac{\log b}{\mu(B^n_{\bf w})}  = 2^n \log b$ \\
For any $ w \in B^n_{\bf w}$, $\log \, \Jac F^{2^n} \! (z) \leq C \rho^n + \log \, \Jac F^{2^n} \!(\eta)$, and $ O(\rho^n) = \log (1 + O(\rho^n))$ for a fixed constant $\rho$. Then
\begin{align*}
\log \Jac F^{2^n} (w) &= \log (1 + O(\rho^n)) + \log \Jac F^{2^n} (\eta)\\
                            &= \log \big[(1 + O(\rho^n)) \cdot  b^{2^n} \big]
\end{align*}
\msk
Hence, \  $ \Jac F^{2^n} (w) = b^{2^n} (1 + O(\rho^n)) $. 
\end{proof}
\nin Since $ F $ is the $ m+2 $ dimensional map, it has Lyapunov exponents $\chi_0, \chi_1, \ldots , \chi_{m+2}$. Let $\chi_0$ be the maximal one. 
Since $F$ is ergodic with respect to the invariant finite measure $\mu$ on the critical Cantor set, we get the following inequality for any Lyapunov exponent $ \chi $
$$
| \: \! \mu |\, \chi(x) \leq \int_{\OO_F} \log \| DF(x) \| \; d\mu(x)
$$
where $| \:\!\mu| $ is the total mass of $ \mu $ on $ \OO_F $.
\msk
\begin{thm} \label{max exponent is 1}
The maximal Lyapunov exponent of $F$ on $\OO_F$ is 0.
\end{thm}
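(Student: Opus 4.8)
The plan is to identify $\chi_0$ with $\lim_{n}\tfrac1{2^n}\log\|D_wF^{2^n}\|$ and to bound $\|D_wF^{2^n}\|$ by a quantity that is subexponential in $2^n$, so that the inequality displayed just above the statement forces $\chi_0\le 0$; the matching bound $\chi_0\ge0$ will follow from non-atomicity of $\mu$. For the upper bound I would first record that, iterating the definition of renormalization, $F^{2^n}$ preserves the piece $B^n_{v\cdots v}=\Psi^n_{v\cdots v}(B)$ and $F^{2^n}|_{B^n_{v\cdots v}}=\Psi^n_{v\cdots v}\circ R^nF\circ(\Psi^n_{v\cdots v})^{-1}$ with $\Psi^n_{v\cdots v}=\psi^1_v\circ\cdots\circ\psi^n_v$, $\psi^j_v=H_{j-1}^{-1}\circ\La_{j-1}^{-1}$; crucially no factor of the nearly non-invertible map $R^{j-1}F$ appears here. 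By Lemma~\ref{diameter}, $\|D\Psi^n_{v\cdots v}\|\le C\si^n$; and since \eqref{partial derivatives of phi-1} exhibits $DH_{j-1}^{-1}$ as an $O(\bar\eps^{2^{j-1}})$-perturbation of $\mathrm{diag}\big((f_{j-1}^{-1})',1,\Id\big)$, whose smallest singular value stays bounded below uniformly in $j$ (because $f_j\to f_*$ exponentially and $(f_*^{-1})'$ is bounded away from $0$ on the relevant interval), the same computation gives the companion bound $\|(D\Psi^n_{v\cdots v})^{-1}\|\le C\gamma^{-n}$ for a fixed $\gamma>0$. As $R^nF$ converges to the renormalization fixed point, $\|DR^nF\|$ is bounded above uniformly in $n$, so the chain rule applied to the conjugacy gives, with $\kappa:=\si/\gamma$, the estimate $\|D_wF^{2^n}\|\le C\kappa^n$ for every $w\in B^n_{v\cdots v}$. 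By the distortion estimates underlying Lemma~\ref{distortion} and Corollary~\ref{average}, applied to the cocycle $DF^{2^n}$ and not merely to $\Jac F^{2^n}$ (exactly as in \cite{CLM}), this bound propagates to $\mu$-a.e.\ $w\in\OO_F$; feeding $\|DF^{2^n}\|\le C\kappa^n$ into the displayed inequality applied to $F^{2^n}$ --- whose maximal Lyapunov exponent is $2^n\chi_0$ --- yields $2^n\chi_0\le\log(C\kappa^n)$, hence $\chi_0\le0$.

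For the lower bound, suppose $\chi_0<0$. Then all $m+2$ Lyapunov exponents of the ergodic measure $\mu$ are negative, so by Pesin's stable-manifold theorem $\mu$-a.e.\ point of $\OO_F$ has a full-dimensional local stable manifold; by Poincar\'e recurrence and ergodicity $\mu$ would be carried by a single attracting periodic orbit, hence atomic --- contradicting $\mu(B^n_{\bf w}\cap\OO_F)=2^{-n}$ for every ${\bf w}\in W^n$. Therefore $\chi_0\ge0$, and together with the previous step $\chi_0=0$.

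The genuinely delicate point is the transfer of the estimate from $B^n_{v\cdots v}$ to typical pieces: the naive bound $\|D_wF^{2^n}\|\le\|D\Psi^n_{\bf w}\|\,\|DR^nF\|\,\|(D\Psi^n_{\bf w})^{-1}\|$ degenerates when the address ${\bf w}$ contains many symbols $c$, for then $\Psi^n_{\bf w}=F^{N({\bf w})}\circ\Psi^n_{v\cdots v}$ carries the strongly non-invertible factors $R^{j-1}F$ with $\Jac R^{j-1}F\asymp b^{2^{j-1}}$ and $\|(D\Psi^n_{\bf w})^{-1}\|$ can be as large as $b^{-N({\bf w})}$. Showing that this crude estimate is far from sharp --- that the strongly contracting and strongly expanding blocks cancel --- is the bounded-geometry control of the non-conformal scaling maps $\Psi^n_{\bf w}$, which is the technical heart of the two-dimensional theory \cite{CLM}. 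One can also sidestep it: since $F|_{\OO_F}$ is the minimal, uniquely ergodic dyadic adding machine, $\tfrac1m\log\|D_wF^m\|\to\chi_0$ for \emph{every} $w\in\OO_F$, so it suffices to compute along $m=2^n$ at the tip $\tau_F=\bigcap_n B^n_{v\cdots v}$, where the clean conjugacy above applies directly and gives $\tfrac1{2^n}\log\|D_{\tau_F}F^{2^n}\|\to0$.
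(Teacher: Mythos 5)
Your lower bound (Pesin theory plus the fact that the adding machine on $\OO_F$ supports no periodic orbit, so $\mu$ cannot be atomic) is exactly the paper's argument and is fine. The upper bound, however, has a genuine gap, and it sits precisely where you flag it. Your pointwise estimate $\|D_wF^{2^n}\|\le C\kappa^n$ is only obtained on the single piece $B^n_{v^n}$, which has $\mu$-measure $2^{-n}$; to conclude $\chi_0\le 0$ from the inequality $2^n\chi_0\le\frac{1}{2^n}\cdot 2^n\chi_0(F^{2^n})\le\int\log\|DF^{2^n}\|\,d\mu$ you need control of the integrand on \emph{all} pieces. The ``distortion estimates applied to the cocycle $DF^{2^n}$'' that you invoke for this transfer do not exist in the paper --- Lemma \ref{distortion} controls only $\Jac F^{2^n}$, a scalar quantity, and as you yourself observe the naive transfer through $\Psi^n_{\bf w}=F^{N({\bf w})}\circ\Psi^n_{v^n}$ picks up factors $\|(DF^j)^{-1}\|$ that can be of order $b^{-2^n}$, i.e.\ genuinely exponential in $2^n$. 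Your proposed sidestep does not repair this: for a uniquely ergodic system the subadditive cocycle $\log\|D_wF^m\|$ is \emph{not} guaranteed to converge to $\chi_0$ at every point (Furman's theorem gives only the uniform inequality $\limsup_m\frac1m\log\|D_wF^m\|\le\chi_0$ for all $w$; the reverse can fail off a null set). Consequently, computing $\frac{1}{2^n}\log\|D_{\tau_F}F^{2^n}\|\to 0$ at the tip yields, via that inequality, only $0\le\chi_0$ --- the lower bound you already have --- and says nothing about $\chi_0\le 0$.

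The paper avoids all of this by never estimating $\|DF^{2^n}\|$ away from the central piece: it uses that the Lyapunov exponent itself is the invariant to be transported. Writing $\mu_n=2^n\mu|_{B^n_{v^n}}$, one has $2^n\chi_0(F,\mu)=\chi_0(F^{2^n}|_{B^n_{v^n}},\mu_n)$, and since $F^{2^n}|_{B^n_{v^n}}$ is smoothly conjugate to $R^nF$ by the diffeomorphism $\Psi^n_{v^n}$ (a conjugacy whose derivative and inverse derivative are bounded for each fixed $n$ --- no uniformity in $n$ is needed, because conjugation leaves the exponent unchanged exactly), this equals $\chi_0(R^nF,\nu_n)\le\int\log\|DR^nF\|\,d\nu_n\le C$ with $C$ independent of $n$ by the uniform $C^1$ bound on the renormalizations. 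Dividing by $2^n$ gives $\chi_0\le 0$ with no distortion control and no analysis of the pieces $B^n_{\bf w}$ with ${\bf w}\ne v^n$. You should replace your upper-bound argument with this one.
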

\begin{proof}
Let $\mu_n$ be \,$2^n \mu | _{B^n_w}$, an invariant measure under $F^{2^n}$ and let $\nu_n$ be the (unique) invariant measure on $R^nF |_{\;\!\OO_{R^nF}}$. Then
$$
2^n \chi_0(F, \mu) = \chi_0(F^{2^n} |_{B^n_{v^n}}, \mu_n) = \chi_0(R^nF, \nu_n) \leq \int_{B^n_w} \log \| D(R^nF) \|\, d\nu_n \leq C 
$$
 for every $ n \in \N$, where $C$ is a constant independent of $n$. The last inequality comes from the uniformly bounded $C^1$ norm of derivative of $R^nF$. Then the maximal Lyapunov exponent $\chi_0 \leq 0$. If $\chi_0 <0$, then the support of $\mu$ contains some periodic cycles by Pesin's theory. But $\OO_F$ does not contain any periodic cycle because $ F $ acts on $ \OO_F $ as a dyadic adding machine. Hence, $\chi_0 =0$. 
\end{proof}
\bsk

\section{Universal expression of Jacobian determinant}  \label{universality of Jacobian}

\nin The universality of average Jacobian is involved with the asymptotic behavior of the non linear scaling map $\Psi^n_{v^n}$ between the renormalized map $F_n \equiv R^nF$ and $F^{2^n}$ for each $n \in \N$. $ \Psi^n_{v^n}$ conjugate $F^{2^n}$ to $F_n$. Thus using chain rule and Corollary \ref{average}, $ \Jac F_n $ is the product of the average Jacobian of $F^{2^n}$ and the ratio of the $ \Jac \Psi^n_{v^n} $ at $ w $ and $ F_n(w) $ as follows
\begin{equation} \label{chain rule}
\begin{aligned}
\Jac F_n(w) &= \Jac F^{2^n}(\Psi^n_{v^n}(w)) \frac{\Jac \Psi^n_{v^n}(w)}{\Jac \Psi^n_{v^n}(F_n(w))}  \\
                  &= b^{2^n} \frac{\Jac \Psi^n_{v^n}(w)}{\Jac \Psi^n_{v^n}(F_n(w))} (1+ O(\rho^n)).
\end{aligned} \msk
\end{equation}
Denote the word, $ v^n $ by $ \Bv $. 
\nin Then the universality of Jacobian of $ \Psi^n_{v^n}$ implies the universality of $ \Jac F_n $ in Theorem \ref{Universality of the Jacobian} below. The asymptotic of non-linear part of $ \Psi^n_{\Bv}$ is essential to the universal expression of $ \Jac \Psi^n_{\Bv} $.

\msk 
\subsection{Asymptotic of $\Psi^n_k$ for fixed $ k^{th} $ level } \label{asymptotic coordinate}
For every infinitely renormalizable H\'enon-like map $F$, we have a well defined {\em tip}
\begin{align}
\{\tau \} \equiv \{ \tau_F \} = \bigcap_{n \ge 0} B^n_{v^n}
\end{align}
where the pieces $B^n_{v^n}$ are defined as $ \Psi^n_{\Bv}(B(R^nF)) $. The tip of the renormalized map, $ R^kF $ is denoted by $\tau_k =\tau (R^k F)$ for each $ k \in \N $. Since every $ B^n_{\Bv}(F) $ contains $ \tau_F $, let us condense the notation $ \Psi^n_{\Bv} $ into $ \Psi^n_{\tip} $. Moreover, in order to simplify the notation and calculations, we would let the tip move to the origin as a fixed point of each $\Psi^1_v (R^kF)$ for every $ k \in \N $ by conjugation of the appropriate translations. Let us define $ \Psi_k^{k+1} $ in this section.\footnote{If we need to distinguish the scaling maps, $ \Psi^n_k $ around tip from its composition with translations, then we use the notation, $ \Psi^n_{k,\, \tip} $ }
\ssk
\begin{align} \label{origin as tip}
    \Psi_k(w) \equiv \Psi_k^{k+1}(w) = \Psi^{k+1}_v (w + \tau_{k+1}) - \tau_k 
\end{align}

\nin Let the derivative of the map defined $ \Psi_k $ on \eqref{origin as tip} at the origin be $D_k \equiv D_k^{k+1}$.
\begin{equation*}
\begin{aligned}   
  \quad D^{k+1}_k \equiv D_k = D\Psi^{k+1}_k(\B0) &= D(\Psi^1_v(R^k F))(\tau_{k+1} ) & \\
       &= D(T_k \circ \Psi^1_v (R^kF) \circ T^{-1}_{k+1})(\B0) &
\end{aligned} 
\end{equation*}       
where \ $T_j : w \mapsto  w-\tau_j$  for $ j= k,\, k+1$.
Then we can decompose $ D_k $ into the matrix of which diagonal entries are 1s and the diagonal matrix as follows
\ssk
\begin{equation} \label{decomposition}
\begin{aligned}   
 \left(    \begin{array}{ccccc}
      1 &  t_k   & u_k^1 & \cdots & u_k^m    \\[0.2em]
        &  1     &       &        &          \\[0.2em]
        &  d_k^1 & 1     &        &          \\
        & \vdots &       & \ddots &          \\
        &  d_k^m &       &        & 1
    \end{array}      \right)
 \left(    \begin{array}{ccccc}
\alpha_k &       &       &        &           \\
         & \si_k &       &        &           \\
         &       & \si_k &        &           \\
         &       &       & \ddots &           \\
         &       &       &        &  \si_k 
    \end{array}       \right) 
 =
 \left(    \begin{array} {cccccc}
\alpha_k & \si_k t_k   & \si_k u_k^1 & \cdots & \si_k u_k^m  &   \\[0.2em]
         & \si_k       &             &        &              &   \\[0.2em] \cline{3-5}
         & \multicolumn{1}{c|}{\si_k d_k^1} & \multicolumn{3}{c}{\multirow{3}{*}{ $ \si_k \cdot \Id_{m \times m}  $ }} & \multicolumn{1}{|c}{}\\
         & \multicolumn{1}{c|}{\vdots}      &  &  &  & \multicolumn{1}{|c}{} \\ 
         & \multicolumn{1}{c|}{\si_k d_k^m} &  &  &  & \multicolumn{1}{|c}{} \\       \cline{3-5}
    \end{array}       \right)       
\end{aligned}  \msk
\end{equation}
where $ \Id_{m \times m} $ is the $ m \times m $ identity matrix. 
We condense the expression of $ D_k $ using the boldfaced letters. Let $ m $ dimensional vector $ (\,d_k^1 \ d_k^2 \ldots d_k^m\;)^{Tr} $ be $ \Bd_k $ and $ (\,u_k^1 \ u_k^2 \ldots u_k^m\,) $ be $ \Bu_k $ where $ Tr $ is the transpose of the matrix. Recall that $\si_k = - \si \left(1+O(\rho^k) \right)$.
Decompose $\Psi^{k+1}_k$ into the linear and non-linear parts. Then
\begin{equation}
\begin{aligned}
D_k = 
\begin{pmatrix}
1 & t_k   & \Bu_k \\
  & 1     &       \\
  & \Bd_k & 1
\end{pmatrix}
\begin{pmatrix}
\alpha_k &       &     \\
         & \si_k &     \\
         &       & \si_k \cdot \Id_{m\times m}
\end{pmatrix} =
\begin{pmatrix}
\alpha_k & \si_k \;\! t_k & \si_k \;\! \Bu_k     \\
         & \si_k          &      \\
         & \si_k \;\! \Bd_k    & \si_k
\end{pmatrix}
\end{aligned}
\end{equation}
\begin{align} \label{Psi on k level}
\Psi^{k+1}_k \equiv \Psi_k(w) = D_k \circ (\id+ {\bf s}_k)(w)
\end{align}
where $w=(x, y, \Bz)$ \ and   ${\bf s}_k (w)= (s_k(w),\; 0,\; r_k^1(y),\;r_k^2(y),\;\ldots, r_k^m(y)) = O(|w|^{\:\!2})$\ near the origin. Denote $ ( r_k^1(y),\;r_k^2(y),\;\ldots, r_k^m(y)) $ by $ {\bf r}_k $. 
Comparing the derivative of $ H^{-1}_k \circ \La^{-1}_k $ at the tip and $D_k$ and Corollary \ref{diameter 2}, we obtain the following estimations
\begin{equation} \label{bounds at 0}
\begin{aligned}
t_k &= \di_y \phi_k^{-1} (\tau_{k+1}) =  \di_x \phi_k^{-1}(\tau_{k+1}) \cdot \Big[\, \di_y \eps_k (\tau_k) + \sum_{j=1}^m \di_{z_j} \eps_k(\tau_{k}) \cdot d_k^j \,\Big] \\
u_k^i &= \di_{z_i} \phi_k^{-1} (\tau_{k+1}) = \di_x \phi_k^{-1}(\tau_{k+1}) \cdot \di_{z_i} \eps_k(\tau_k) \\[0.2em]
 \textrm{and}\quad d_k^{\,i} &= \frac{d}{dy}\,\de_k^i \left(\pi_y(\tau_{k+1}),\, f^{-1}_k(\pi_y(\tau_{k+1})),\,\B0 \right) 
\end{aligned} \ssk
\end{equation}
where $ \phi^{-1}_k(w) = \pi_x \circ H^{-1}_k(w) $ for $ 1 \leq i \leq m $. Thus the norm of each element of $ D_k $, $ |\:\!t_k| $, $ \| \:\! \Bu_k \| $ and $ \| \:\! \Bd_k \| $ is bounded by $ O(\bar \eps^{2^k}) $. Since $ \| \:\! \di_x\phi^{-1}_k \| $ at the tip exponentially converges to $\si$ as $k \ra \infty$, $\alpha_k = \si^2 \left(1+O(\rho^k) \right)$ for some $\rho \in (0,1)$.   

\msk 
\begin{lem} \label{bounds on domain}
Let $s_k$ be the function defined on \eqref{Psi on k level}. For each $k \in \N$, \msk
\begin{enumerate}
\item
$ \ | \, \partial_xs_k| \; = O(1), \qquad \ \ | \, \partial_y s_k|  \ = O(\bar \eps^{2^k}), \qquad | \, \partial_{z_i} s_k| \ = O(\bar \eps^{2^k})  $ \ssk
  \item 
$ \ | \, \partial^2_{xx} s_k| = O(1), \qquad  \ \ \! | \, \partial^2_{xy} s_k| = O(\bar \eps^{2^k}), \qquad | \, \partial^2_{yy} s_k| = O(\bar \eps^{2^k}) $ \ssk
 \item
$ \; | \, \partial^2_{yz_i} s_k| = O(\bar \eps^{2^k}), \quad \ \ | \, \partial^2_{z_ix}s_k| = O(\bar \eps^{2^k}), \quad \ \ | \, \partial^2_{z_iz_j} s_k| = O(\bar \eps^{2^k}) $ \ssk
 \item
$  \| \, \Br_k(y) \| = O(\bar \eps^{2^k}), \quad \ \; \| \, \Br'_k(y) \| = O(\bar \eps^{2^k}), \quad \ \ \| \, \Br_k''(y) \| = O(\bar \eps ^{2^k}) $ \msk
\end{enumerate}
for $ 1 \leq i,j \leq m $.
\end{lem}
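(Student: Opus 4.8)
The plan is to unwind the definition $\Psi^{k+1}_k = D_k \circ (\id + \mathbf{s}_k)$ and identify $\mathbf{s}_k$ explicitly in terms of the maps $H_k^{-1}$ and $\La_k^{-1}$ whose composition defines $\psi^{k+1}_v$, then read off the required bounds from the already-established estimates on $\phi_k^{-1}$ and its derivatives. Since $\La_k^{-1}$ is linear, the nonlinear part of $\Psi_k$ comes entirely from $H_k^{-1}$; and since $\mathbf{s}_k = D_k^{-1}\circ \Psi_k - \id$, writing $D_k^{-1}$ explicitly via the decomposition in \eqref{decomposition} reduces everything to comparing the components of $H_k^{-1}\circ\La_k^{-1}$ (after the translations $T_k$, $T_{k+1}$ normalizing the tip to the origin) with their linear parts $D_k$. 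Concretely, $s_k$ is (a rescaled version of) $\phi_k^{-1}(w) - [\,\phi_k^{-1}(\tau_{k+1}) + D\phi_k^{-1}(\tau_{k+1})\cdot(w-\tau_{k+1})\,]$ divided by $\alpha_k \asymp \si^2$, and $\mathbf{r}_k$ is the analogous second-order remainder of $\Bz \mapsto \Bz + \bde_k(y, f_k^{-1}(y), \B0)$ in the $y$-variable, divided by $\si_k\asymp\si$.

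First I would carry out this normalization carefully: record that $\Psi^{k+1}_v = H_k^{-1}\circ\La_k^{-1}$ on $B(R^kF)$, apply the translations, and invert $D_k$ (its inverse has the same block-triangular shape with entries of size $O(1)$, $O(1/\si)$, $O(\bar\eps^{2^k})$ by the estimates \eqref{bounds at 0} on $t_k,\Bu_k,\Bd_k$ and the fact that $\alpha_k,\si_k$ are bounded away from $0$ and $\infty$). This expresses $s_k$ and each $r_k^i$ as explicit second-order Taylor remainders of $\phi_k^{-1}$ and of $\de_k^i(y,f_k^{-1}(y),\B0)$ around the tip. Then the bounds follow by differentiating the formulas \eqref{partial derivatives of phi-1} for $\di_x\phi_k^{-1}$, $\di_y\phi_k^{-1}$, $\di_{z_i}\phi_k^{-1}$ once more and invoking the proof of Lemma \ref{diameter}: $\|\di_x\phi_k^{-1}\|\asymp\|(f_k^{-1})'\| = O(1)$ while $\|\di_y\phi_k^{-1}\|$, $\|\di_{z_i}\phi_k^{-1}\|$, and every partial derivative of $\eps_k$ and of $\bde_k$ are $O(\bar\eps^{2^k})$ since $\eps_k$, $\bde_k$ have norm $O(\bar\eps^{2^k})$ (this is where renormalizability and Proposition \ref{preconv}, iterated, enter). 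Differentiating once gives items (1) and (4); differentiating twice gives items (2) and (3), the point being that any second partial derivative of $s_k$ that involves at least one $y$ or $z_i$ slot inherits a factor $\di_y\eps_k$ or $\di_{z_j}\eps_k$ (hence $O(\bar\eps^{2^k})$), while $\di^2_{xx}s_k$ only sees $(f_k^{-1})''$ and $\di_x\eps_k$ and is therefore merely $O(1)$.

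The one genuinely delicate point — and the step I expect to be the main obstacle — is controlling these derivatives \emph{uniformly in $k$} on a fixed-size domain rather than just at the tip. The estimates \eqref{bounds at 0} are pointwise at $\tau_{k+1}$; to get $\|\cdot\|$-bounds I need that $\|DH_k^{-1}\|$ and $\|D(F_k\circ H_k^{-1})\|$ are bounded independently of $k$ on all of $\La_k^{-1}(B)$, together with the corresponding $C^2$ bounds. This is asserted in the course of Lemma \ref{diameter}, and it rests on two facts that must be combined: $f_k \to f_*$ exponentially fast (so $\|(f_k^{-1})'\|$, $\|(f_k^{-1})''\|$ are uniformly bounded and $\|(f_k^{-1})'\|$ stays away from $1$ on the relevant domain), and $\|\eps_k\|_{C^2}, \|\bde_k\|_{C^2} = O(\bar\eps^{2^k})$. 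The implicit-function-type formula \eqref{partial derivatives of phi-1} then shows the denominator $1 - (f_k^{-1})'\cdot\di_x\eps_k\circ H_k^{-1}$ is bounded away from zero, so all $\phi_k^{-1}$-derivatives are controlled. I would state this uniformity explicitly as the key sublemma, prove it by the same Cauchy-estimate/compactness argument used for Lemma \ref{diameter}, and then the four displayed groups of estimates drop out by direct differentiation of the remainder formulas for $s_k$ and $\mathbf{r}_k$.
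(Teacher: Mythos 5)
Your proposal is correct and follows essentially the same route as the paper: equate the two expressions $D_k\circ(\id+\mathbf{s}_k)$ and $T_k\circ H_k^{-1}\circ\La_k^{-1}\circ T_{k+1}^{-1}$ coordinate by coordinate, solve for $\si_k r_k^i$ and $\alpha_k s_k$ as the nonlinear remainders of $\de_k^i(\si_k(y+\tau_{k+1}^y),f_k^{-1}(\cdot),\B0)$ and $\phi_k^{-1}(\si_k w+\si_k\tau_{k+1})$ respectively, then differentiate and feed in the bounds \eqref{partial derivatives of phi-1} together with $\|\eps_k\|_{C^2},\|\bde_k\|_{C^2}=O(\bar\eps^{2^k})$ and the uniform control of $(f_k^{-1})'$, $(f_k^{-1})''$ coming from $f_k\to f_*$. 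Your identification of the one $O(1)$ second derivative ($\di^2_{xx}s_k$, which only sees $(f_k^{-1})''$ and $\di_x\eps_k$) versus the $O(\bar\eps^{2^k})$ mixed derivatives, and of the uniformity-in-$k$ issue, matches the paper's argument.
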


\begin{proof}
The map $\Psi_k $ has the two expressions, $D_k \circ (\id+ {\bf s}_k)(w)$ and $T_k \circ H^{-1}_k \circ \La_k \circ T_{k+1}^{-1}$, that is,
\begin{align*} 
\Psi_k &= D_k \circ (\id+ {\bf s}_k)(w) \\
&=T_k \circ H^{-1}_k \circ \La_k^{-1} \circ T_{k+1}^{-1}(w) = H^{-1}_k \circ \La_k^{-1} (w+ \tau_{k+1}) - \tau_k
\end{align*}
Recall that \msk
\begin{equation*}
\begin{aligned}
& \quad \ D_k \circ (\id+ {\bf s}_k)(w) \\[0.3em]
&=
 \left(    \begin{array} {cccccc}
\alpha_k & \si_k t_k   & \si_k u_k^1 & \cdots & \si_k u_k^m  &   \\[0.2em]
         & \si_k       &             &        &              &   \\[0.2em] \cline{3-5}
         & \multicolumn{1}{c|}{\si_k d_k^1} & \multicolumn{3}{c}{\multirow{3}{*}{ $ \si_k \cdot \Id_{m \times m}  $ }} & \multicolumn{1}{|c}{}\\
         & \multicolumn{1}{c|}{\vdots}      &  &  &  & \multicolumn{1}{|c}{} \\ 
         & \multicolumn{1}{c|}{\si_k d_k^m} &  &  &  & \multicolumn{1}{|c}{} \\       \cline{3-5}
    \end{array}       \right) 
\begin{pmatrix}
x + s(w) \\
y \\[0.2em]
z_1 + r_k^1(y) \\
\vdots \\
z_m + r_k^m(y)
\end{pmatrix} = 
\begin{pmatrix}
\alpha_k & \si_k \;\! t_k & \si_k \;\! \Bu_k     \\
         & \si_k          &      \\
         & \si_k \;\! \Bd_k    & \si_k
\end{pmatrix}
\begin{pmatrix}
x + s(w) \\
y \\
\Bz + \Br_k(y) 
\end{pmatrix}
\end{aligned} \bsk
\end{equation*}
In order to obtain the asymptotic of the non-linear part of $\Psi_k $, we need to compare the first and $ z_i $ coordinates for each $ 1 \leq i \leq m $ of above two expressions of $\Psi_k $. Let $ \tau_k = (\tau_k^x, \tau_k^y, \tau_k^{\,z_1}, \tau_k^{z_2} , \ldots , \tau_k^{z_m}) \equiv (\tau_k^x, \tau_k^y, \tau_k^{\Bz}) $ for each $ k \geq 1 $. Firstly, let us compare the $ z_i $ coordinates of two expression of $\Psi_k $. 
\begin{align*}
\si_k (d_k^{\,i}\;\! y + z_i+ r_k^i(y)) 
&= \pi_{z_i} \big(H^{-1}_k \circ \La_k^{-1} (w+ \tau_{k+1}) - \tau_k \big) \\
&= \si_k(z_i + \tau_{k+1}^{z_i}) + \de_k^{\,i} \big(\, \si_k (y+\tau_{k+1}^y), \ f^{-1}_k (\si_k (y+\tau_{k+1}^y)), \ \B0 \big) - \tau_k^{z_i} 
\end{align*}
Thus we have the following equation
\[ \si_k  r_k^i(y) = - \si_k d_k^{\,i} \:\! y + \de_k^{\,i} \big( \si_k (y+\tau_{k+1}^y), \; f^{-1}_k (\si_k (y+\tau_{k+1}^y)), \; \B0 \big) +\si_k \tau_{k+1}^{z_i} - \tau_k^{z_i} .
\]
Then $ | \:\! r_k^i(y)| \leq C \big( \,| \:\! d_k^{\,i} y | + \| \:\! \de_k^i \|_{\,C^0} \big)$ for some $C>0$ and for all $ 1 \leq i \leq m $. The domain is bounded and $ \| \:\! \bde_k \| $ is $ O(\bar \eps ^{2^k}) $. Then we obtain $ \| \;\! \Br_k(y) \| = O(\bar \eps ^{2^k}) $. Moreover, 
\[ (r_k^i)'(y) = -d_k^{\,i} + \frac{d}{dy}\,\de_k^{\,i} \big( \si_k (y+\tau_{k+1}^y), \; f^{-1}_k (\si_k (y+\tau_{k+1}^y), \; \B0 \big)
\]
Thus $ | \;\! (r_k^{i})'(y)| $ is controlled by $ \| \bde \|_{C^1} $ for all $ 1 \leq i \leq m $. Similarly, the second derivative $ | \;\! (r_k^i)''(y)| $ is also controlled by $ \| \bde \|_{C^2} $ for all $ 1 \leq i \leq m $. Then $ \| \:\! \Br_k'(y) \| = O(\bar \eps ^{2^k}) $ and $ \| \;\! \Br_k''(y)\| = O(\bar \eps ^{2^k}) $.
\ssk \\
Secondly, compare first coordinates using \eqref{decomposition} and \eqref{Psi on k level}. Thus
\begin{align}  \label{comparison}
\alpha_k x + \alpha_k \cdot s_k(w) + \si_k t_k \:\! y + \si_k \Bu_k \cdot ( \Bz + \Br_k(y)) = \phi_k^{-1}(\si_k w + \si_k \tau_{k+1}) - \pi_x(\tau_k).
\end{align}
\ssk
It implies the following equations
\begin{equation} \label{bounds of sk}
\begin{aligned}
\alpha_k \cdot \di_x s_k &= \si_k \cdot \di_x \phi^{-1}_k - \alpha_k  \\
\alpha_k \cdot \di_y s_k &= \si_k \cdot \di_y \phi^{-1}_k - \si_k t_k -\si_k \cdot \Bu_k \cdot \Br'_k(y)  \\
\alpha_k \cdot \di_{z_i} s_k &= \si_k \cdot \di_{z_i} \phi^{-1}_k - \si_k u_k^i
\end{aligned} \msk
\end{equation}
for $ 1 \leq i \leq m $. 
Then by the equation \eqref{partial derivatives of phi-1} \ssk , $ \| \;\!\di_x \phi^{-1}_k \| = O(1)$, $ \| \;\! \di_y \phi^{-1}_k \| = O \big(\bar \eps ^{2^k} \big) $ and $ \| \;\! \di_{z_i} \phi^{-1}_k \| = O\big(\bar \eps ^{2^k}\big)$ for all $ 1 \leq i \leq m $. By the equation \eqref{bounds at 0}, $ | \;\!t_k | $ and $ \| \:\! \Bu_k \| $ is $ O\big(\bar \eps ^{2^k}\big) $. Hence, $ \| \:\! \partial_xs_k \| \, = O(1) $,  $  \| \:\! \partial_y s_k \| \, = O \big(\bar \eps^{2^k} \big)$ and $ \| \:\! \partial_{z_i} s_k \| \; = O \big(\bar \eps^{2^k} \big) $ for all $ 1 \leq i \leq m $. By the above equation \eqref{bounds of sk}, each second partial derivatives of $s_k$ are comparable with the second partial derivatives of $\phi^{-1}_k$ over the same variables because $  \| \;\! \Br_k''(y) \| = O \big(\bar \eps ^{2^k} \big) $. 
\ssk \\
Let us estimate some second partial derivatives of $ \phi^{-1}_k $. Recall that 
\begin{equation*}
\begin{aligned}
\phi^{-1}_k(w) &= \ f^{-1}_k(x + \eps_k \circ H^{-1}_k(w)) \\[0.2em]
\eps_k \circ H^{-1}_k(w) &= \ \eps_k(\phi^{-1}_k(w),\,y,\,\Bz + \bde_k (y,f^{-1}(y),\B0)) .
\end{aligned}
\end{equation*}
Thus
\begin{equation*}
\begin{aligned}
\di_x \phi^{-1}_k(w) &= \ (f^{-1}_k)'(x + \eps_k \circ H^{-1}_k(w)) \cdot \big[\, 1 + \di_x (\eps_k \circ H^{-1}_k(w))\,\big] \\[0.4em]
\di_x( \eps_k \circ H^{-1}_k(w)) &= \ \di_x \eps_k \circ H^{-1}_k(w) \cdot \di_x \phi^{-1}_k(w) \\[0.4em]
\di_{xx}( \eps_k \circ H^{-1}_k(w)) &= \ \di_x( \eps_k \circ H^{-1}_k(w)) \cdot \di_x \phi^{-1}_k(w) + \di_x \eps_k \circ H^{-1}_k(w) \cdot \di_{xx} \phi^{-1}_k(w) \\
&= \ \di_x \eps_k \circ H^{-1}_k(w) \cdot \big[\,\di_x \phi^{-1}_k(w)\,\big]^2 + \di_x \eps_k \circ H^{-1}_k(w) \cdot \di_{xx} \phi^{-1}_k(w) .
\end{aligned} \msk
\end{equation*}
\nin Moreover, $ \| \:\! \eps_k \|_{C^2} $ and $ \| \:\! \bde_k \|_{C^2} $ bounds the norm of every second derivatives of $ \| \:\! \phi^{-1}_k \| $ except $ \| \:\! \di_{xx} \phi^{-1}_k \| $. Let us estimate $ \di_{xx} \phi^{-1}_k(w) $
\msk 
\begin{equation*}
\begin{aligned}
\di_{xx} \phi^{-1}_k(w) &= \ (f^{-1}_k)''(x + \eps_k \circ H^{-1}_k(w)) \cdot \big[\, 1 + \di_x (\eps_k \circ H^{-1}_k(w))\,\big] \\
& \quad \ \ + (f^{-1}_k)'(x + \eps_k \circ H^{-1}_k(w)) \cdot \di_{xx} (\eps_k \circ H^{-1}_k(w)) .
\end{aligned} \ssk
\end{equation*}
\nin Recall that $ \| \:\! \eps_k \|_{C^2} $ and $ \| \:\! \bde_k \|_{C^2} $ are $ O \big(\bar \eps^{2^k} \big) $. Since both $ \| (f^{-1})' \| $ and $ \| (f^{-1})'' \| $ are $ O(1) $, so is $ \| \:\! \di_{xx}\phi^{-1}_k \| $. Any other second derivative of $ \| \:\! \phi^{-1}_k \| $ is bounded by $ O\big( \bar \eps^{2^k} \big) $. For example, the following expression of $ \di_{yx}\phi^{-1}_k $ \msk
\begin{equation*}
\begin{aligned}
&\quad \ \di_{yx}\phi^{-1}_k(w) \\
&= \di_{xx}\phi^{-1}_k(w) \cdot \left[\, \di_y \eps_k \circ H^{-1}_k(w) + \sum_{j=1}^m \di_{z_j} \eps_k \circ H^{-1}_k(w) \cdot \frac{d}{dy}\,\de_k^j(y,f^{-1}_k(y),\B0)\,\right] \\
& \quad \ \ + \di_x \phi^{-1}_k(w) \cdot \left[\, \di_x (\di_y \eps_k \circ H^{-1}_k(w) ) + \di_x \left(\sum_{j=1}^m \di_{z_j} \eps_k \circ H^{-1}_k(w) \right) \cdot \frac{d}{dy}\,\de_k(y,f^{-1}_k(y),\B0)\,\right]
\end{aligned}
\end{equation*}
implies that $ \| \;\! \di_{yx} \phi^{-1}_k \| $ is bounded by $ O\big( \bar \eps^{2^k} \big) $. The norm estimation of other second partial derivatives of $ \phi^{-1}_k $ is left to the reader.
\end{proof}
\msk

\subsection{Estimation of non linear part of $S^n_k$}
We consider the behavior of the 
non linear scaling map from $k^{th}$ level to $n^{th}$ level. 
Let
$$
\Psi^n_k = \Psi_k \circ \cdots \circ \Psi_{n-1}, \quad B^n_k = \Im \Psi^n_k
$$
By Lemma \ref{diameter}, 
$$ \diam(B^n_k) = O(\si^{n-k}) \qquad \textrm{ for} \quad k<n
$$
Then combining Lemma \ref{diameter} and Lemma \ref{bounds on domain}, we have the following corollary.

\begin{cor}  \label {bounds at k level}
 For all points $w = (x,y,\Bz) \in B^n_{k}$ and where $k<n$, we have
\begin{align*}
& |\; \! \di_x s_k(w) | = O(\si^{n-k}) \qquad |\; \! \di_y s_k(w) | = O  \big(\bar \eps^{2^k} \si^{n-k} \big)
 \qquad |\; \! \di_z s_k(w) | = O \big( \bar \eps^{2^k} \si^{n-k} \big) \\
 & \|\; \! \Br'_k(y) \|=O \big(\bar \eps^{2^k} \si^{n-k} \big) \qquad \quad \|\; \! \Br''_k(y)\|=O \big(\bar \eps^{2^k} \si^{n-k} \big)
\end{align*}
\end{cor}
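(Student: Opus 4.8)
The plan is to bootstrap the uniform estimates of Lemma~\ref{bounds on domain} to the tiny piece $B^n_k$ by using two facts: that $\mathbf{s}_k$ vanishes to second order at the tip, and that $B^n_k$ is contained in an $O(\si^{n-k})$-neighbourhood of the tip.

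First I would record the geometric input. Since the scaling maps have been normalized so that the tip sits at the origin, each $\Psi_j = \Psi^{j+1}_j$ fixes $\B0$, hence so does $\Psi^n_k = \Psi_k \circ \cdots \circ \Psi_{n-1}$, and therefore $\B0 \in B^n_k$. Together with the bound $\diam(B^n_k) = O(\si^{n-k})$ recorded just above the corollary (which is Lemma~\ref{diameter} for the renormalization $R^kF$ along the word $v^{n-k}$), this gives $|w| \le C\si^{n-k}$, and a fortiori $|y| \le C\si^{n-k}$, for every $w = (x,y,\Bz) \in B^n_k$. Moreover, from $\mathbf{s}_k(w) = (s_k(w),\,0,\,\Br_k(y)) = O(|w|^2)$ near the origin, the first derivatives $\di_x s_k,\,\di_y s_k,\,\di_{z_i} s_k$ and also $\Br_k,\,\Br'_k$ all vanish at $\B0$.

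The four estimates involving first derivatives then follow from the mean value theorem along the segment from $\B0$ to $w$, which lies in the convex domain of $\Psi_k$, where the estimates of Lemma~\ref{bounds on domain} apply. For $\di_x s_k$: $|\di_x s_k(w)| \le |w|\,\sup \|D(\di_x s_k)\|$, and by parts (2)--(3) of Lemma~\ref{bounds on domain} the gradient $D(\di_x s_k) = (\di^2_{xx} s_k,\di^2_{xy} s_k,\di^2_{xz_i} s_k)$ has norm $O(1)$ (the $\di^2_{xx} s_k$ entry dominates), whence $|\di_x s_k(w)| = O(\si^{n-k})$. For $\di_y s_k$ the controlling second derivatives $\di^2_{xy} s_k,\di^2_{yy} s_k,\di^2_{yz_i} s_k$ are all $O(\bar\eps^{2^k})$, so $|\di_y s_k(w)| = O(\bar\eps^{2^k})\,|w| = O(\bar\eps^{2^k}\si^{n-k})$, and the bound for $\di_{z_i} s_k$ is identical using part (3). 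For $\Br'_k$: $\|\Br'_k(y)\| = \|\Br'_k(y) - \Br'_k(0)\| \le |y|\,\sup\|\Br''_k\| = O(\bar\eps^{2^k})\,|y| = O(\bar\eps^{2^k}\si^{n-k})$ by part (4).

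The estimate for $\Br''_k$ is the step I expect to be the main obstacle, since $\Br''_k$ need not vanish at the tip and the mean value argument recovers only the uniform bound $O(\bar\eps^{2^k})$ of Lemma~\ref{bounds on domain}; the extra factor $\si^{n-k}$ has to come from the structure of $r_k^i$ itself. Here I would return to the closed form found in the proof of Lemma~\ref{bounds on domain}: up to an affine term in $y$ and an additive constant, $r_k^i(y) = \si_k^{-1}\,\de_k^i\bigl(\si_k(y + \tau^y_{k+1}),\,f^{-1}_k(\si_k(y + \tau^y_{k+1})),\,\B0\bigr)$, so that $(r_k^i)''$ picks up a factor $\si_k = O(\si)$ and is a second $y$-derivative of $\de_k^i$ evaluated at $\si_k(y + \tau^y_{k+1})$, a point that, as $w$ ranges over $B^n_k$, stays within $O(\si^{n-k+1})$ of its value at the tip; combining this with $\|\bde_k\|_{C^2} = O(\bar\eps^{2^k})$ and the exponential shrinking of the pieces should give $\|\Br''_k(y)\| = O(\bar\eps^{2^k}\si^{n-k})$ on $B^n_k$. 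Carrying this bookkeeping out carefully, and in particular accounting for the contribution of the tip value of that second derivative, is the technical heart of the corollary.
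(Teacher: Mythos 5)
Your treatment of the four first-derivative estimates is correct and is exactly the paper's argument: the normalization $\Psi_k = D_k\circ(\id+{\bf s}_k)$ with ${\bf s}_k=O(|w|^2)$ forces $D s_k(\B0)=0$ and $\Br_k'(0)=\B0$, and the mean value theorem along a segment from the origin, combined with the uniform second-derivative bounds of Lemma \ref{bounds on domain} and $\diam(B^n_k)=O(\si^{n-k})$ with $\B0\in B^n_k$, yields the first four bounds. The paper's own proof is these same two observations, compressed into one line.

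You are also right that $\|\Br''_k(y)\|=O(\bar\eps^{2^k}\si^{n-k})$ is the real issue, and your proposed repair does not close it. From the closed form, $(r^i_k)''(y)=\si_k\cdot\frac{d^2}{ds^2}\,\de^i_k\big(s,f^{-1}_k(s),\B0\big)$ evaluated at $s=\si_k(y+\tau^y_{k+1})$, so the chain rule produces exactly one factor of $\si$, not $n-k$ of them; restricting $y$ to $\pi_y(B^n_k)$ only localizes the evaluation point, it does not shrink the value. Unless the second derivative of $s\mapsto\de^i_k(s,f^{-1}_k(s),\B0)$ happens to vanish at the tip, $(r^i_k)''(0)$ is a nonzero quantity of size $O(\bar\eps^{2^k})$ that is independent of $n$, which is incompatible with a bound tending to $0$ as $n\to\infty$. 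The paper's proof gets around this by asserting that $\Br'_k$ (not merely $\Br_k$) "only contains quadratic and higher order terms at the tip", i.e.\ $\Br''_k(0)=\B0$; but that is strictly stronger than what the normalization ${\bf s}_k=O(|w|^2)$ provides, it is not justified elsewhere, and even granting it one would additionally need a bound on $\Br'''_k$, which Lemma \ref{bounds on domain} does not supply. So your instinct to flag this as the technical heart is warranted: the route through the explicit formula delivers only $O(\si\,\bar\eps^{2^k})$, essentially what Lemma \ref{bounds on domain} already gives, and the final estimate of the corollary is not established by your proposal — nor, as written, by the paper.
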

\begin{proof}
By definition, $ s_k(w) $ is quadratic and higher order terms at the tip, $ \tau_k $. Similarly, $ \Br'_k(y) $ only contains quadratic and higher order terms at the tip. Then use Taylor's expansion and upper bounds of $ \diam(B^n_k) $ is $ O(\si^{n-k}) $.
\end{proof}
\msk
\nin Since the origin is the fixed point of each $ \Psi_j $ and $ D_j $ is $ \Psi_j(0) $ for every $ k \leq j \leq n $, we can let the derivative of $\Psi^n_k$ at the origin be the composition of consecutive $ D_i $s for $ k \leq i \leq n-1 $.
\begin{equation*}
 D^n_k = D_k \circ D_{k+1} \circ \cdots \circ D_{n-1} 
\end{equation*}
We can decompose $D^n_k$ into two matrices, the matrix whose diagonal entries are ones and the diagonal matrix by reshuffling. 
\begin{rem}
The notations $t_{n+1,\,n}, \Bu_{n+1,\,n}$ and $ \Bd_{n+1,\,n} $ are simplified as $t_n, \Bu_n$ and $ \Bd_n$ like the notations used in \eqref{decomposition}. Moreover, $ \alpha_{n+1,\,n}, \si_{n+1,\,n}$ are abbreviated as $\alpha_n, \si_n$ respectively. Thus  $\alpha_n = \si^2(1+ O(\rho^n)),\ \si_n = -\si (1+ O(\rho^n))$. Using the similar abbreviation, $D_n$ denote $D^{n+1}_n$ and $s_n$ is the $s^{n+1}_n$.
\end{rem}
\begin{lem} \label{decomposition of derivative}
The derivative of $ \Psi^n_k $ at the origin, $ D^n_k $ is decomposed into the dilation and non dilation parts as follows
\begin{align*}
D_k^n=
\left ( \begin{array} {ccc}
1 & t_{n,\,k} & \Bu_{n,\, k} \\[0.2em]
   & 1          & \\
   & \Bd_{n,\, k}& 1 
\end{array} \right )
\left ( \begin{array} {ccc}
\alpha_{n,\, k} &                &  \\
                & \si_{n,\,k}    &  \\
                &                & \si_{n,\,k} \cdot \Id_{m \times m}
\end{array} \right ) .
\end{align*}
\msk \\
Moreover, $\alpha_{n,\,k} = (\si^2)^{n-k}(1+O(\rho^k))$ and $\si_{n,\,k}= (-\si)^{n-k}(1+O(\rho^k)) $ for some $\rho \in (0,1)$. Each $ t_{n,\,k} $, $ \Bu_{n,\,k} $ and $ \Bd_{n,\,k} $ are comparable with the $ t_{k+1,\,k} $, $ \Bu_{k+1,\,k} $ and $ \Bd_{k+1,\,k} $ respectively and converges to the numbers $ t_{*,\,k} $, $ \Bu_{*,\,k} $ and $ \Bd_{*,\,k} $ super exponentially fast as $ n \ra \infty $.
\end{lem}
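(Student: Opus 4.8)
The plan is to obtain the decomposition of $D^n_k$ by induction on $n-k$, using the single-step decomposition \eqref{decomposition} together with repeated matrix reshuffling. First I would write $D^n_k = D^{n-1}_k \circ D_{n-1}$, assume inductively that $D^{n-1}_k$ has the claimed block form with the two factors (unit-diagonal times diagonal-dilation), and that $D_{n-1}$ itself decomposes as in \eqref{decomposition} with entries $t_{n-1}, \Bu_{n-1}, \Bd_{n-1}$ and dilations $\alpha_{n-1} = \si^2(1+O(\rho^{n-1}))$, $\si_{n-1} = -\si(1+O(\rho^{n-1}))$ controlled by \eqref{bounds at 0}. The task is then purely algebraic: multiply out
\begin{equation*}
\begin{pmatrix} 1 & t_{n-1,k} & \Bu_{n-1,k} \\ & 1 & \\ & \Bd_{n-1,k} & 1 \end{pmatrix}
\begin{pmatrix} \alpha_{n-1,k} & & \\ & \si_{n-1,k} & \\ & & \si_{n-1,k}\Id \end{pmatrix}
\begin{pmatrix} 1 & t_{n-1} & \Bu_{n-1} \\ & 1 & \\ & \Bd_{n-1} & 1 \end{pmatrix}
\begin{pmatrix} \alpha_{n-1} & & \\ & \si_{n-1} & \\ & & \si_{n-1}\Id \end{pmatrix},
\end{equation*}
push the middle dilation matrix to the right past the second unit-diagonal matrix (which only rescales its entries), and then move the product of the two unit-diagonal matrices to the left. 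Writing the product of two such unit-upper-triangular-block matrices shows that the off-diagonal blocks simply add (up to the rescaling factors picked up in the reshuffle), so one reads off recursions of the shape $\alpha_{n,k} = \alpha_{n-1,k}\,\alpha_{n-1}$, $\si_{n,k} = \si_{n-1,k}\,\si_{n-1}$, and $t_{n,k} = t_{n-1,k} + (\text{ratio of dilations})\cdot t_{n-1}$, and similarly for $\Bu$ and $\Bd$.

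From the product recursion for $\alpha$ and $\si$ together with $\alpha_j = \si^2(1+O(\rho^j))$ and $\si_j = -\si(1+O(\rho^j))$, the estimates $\alpha_{n,k} = (\si^2)^{n-k}(1+O(\rho^k))$ and $\si_{n,k} = (-\si)^{n-k}(1+O(\rho^k))$ follow by taking logarithms: $\log\alpha_{n,k} = \sum_{j=k}^{n-1}\log\alpha_j = (n-k)\log\si^2 + \sum_{j=k}^{n-1}O(\rho^j)$, and the tail $\sum_{j\ge k}\rho^j = O(\rho^k)$ is summable, so the whole error is $O(\rho^k)$ uniformly in $n$; exponentiating gives the claim. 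The key point here is that the correction terms form a geometric series dominated by its first term, which is why the error depends only on $k$ and not on $n$.

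For the claims about $t_{n,k}$, $\Bu_{n,k}$, $\Bd_{n,k}$, I would unwind the additive recursion to get $t_{n,k} = \sum_{j=k}^{n-1} c_{j}\, t_{j}$ where each coefficient $c_j$ is a bounded ratio of products of dilation constants (bounded because, in the reshuffle, the relevant ratios involve $\si_{\cdot}/\si_{\cdot}$ and $\alpha_{\cdot}/\si_{\cdot}$ type factors that stay comparable to fixed constants by the previous paragraph), while by \eqref{bounds at 0} and Lemma \ref{diameter} we have $|t_j| = O(\bar\eps^{2^j})$, and similarly $\|\Bu_j\|, \|\Bd_j\| = O(\bar\eps^{2^j})$. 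Since $\bar\eps^{2^j}$ decays super-exponentially, the series converges super-exponentially fast, its sum $t_{*,k} \equiv \sum_{j\ge k} c_j t_j$ is well defined, the partial sum $t_{n,k}$ differs from $t_{*,k}$ by $O(\bar\eps^{2^n})$, and $t_{n,k} = c_k t_k + O(\bar\eps^{2^{k+1}}) = t_{k+1,k}\cdot(1 + O(\bar\eps^{2^{k+1}-2^k}))$ so $t_{n,k}$ is comparable with $t_{k+1,k}$; the same argument applies verbatim to $\Bu$ and $\Bd$.

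The main obstacle, and the only place requiring care rather than bookkeeping, is the reshuffling step: one must verify that commuting a diagonal-dilation matrix past the next unit-diagonal block matrix produces rescaled off-diagonal entries whose rescaling factors remain \emph{uniformly bounded above and below} independently of $n$ and $k$. This is exactly where the estimates $\alpha_{n,k}\asymp(\si^2)^{n-k}$ and $\si_{n,k}\asymp(-\si)^{n-k}$ from the second paragraph are needed — without them the coefficients $c_j$ in the telescoped sums could blow up. I would therefore prove the $\alpha,\si$ asymptotics first (or simultaneously, in a single induction), and only then run the additive estimates for $t,\Bu,\Bd$ on top of them. Everything else — the block multiplications, the Taylor-type bounds of Corollary \ref{bounds at k level}, the geometric-series summations — is routine and can be left largely to the reader.
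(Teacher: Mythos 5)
Your proposal follows essentially the same route as the paper: the paper simply multiplies out $D^n_k=\prod_{j=k}^{n-1}D_j$ in closed form rather than by induction, reads off $\alpha_{n,k}=\prod\alpha_j$ and $\si_{n,k}=\prod\si_j$ (whence the $(1+O(\rho^k))$ factors by exactly your geometric-series argument), and expresses $t_{n,k}$, $\Bu_{n,k}$, $\Bd_{n,k}$ as series whose terms decay like $\bar\eps^{2^j}$, so that each sum is dominated by its first term and converges super-exponentially. One imprecision to fix: when you multiply two unit-diagonal block matrices of this shape, the off-diagonal blocks do \emph{not} simply add in the $t$-slot --- the product of $\begin{pmatrix}1&t&\Bu\\ &1&\\ &\Bd&1\end{pmatrix}$ with $\begin{pmatrix}1&t'&\Bu'\\ &1&\\ &\Bd'&1\end{pmatrix}$ carries the cross term $\Bu\cdot\Bd'$ into the new $t$-entry, so your recursion for $t_{n,k}$ is missing the terms $\Bu_j\cdot\Bd_{i}$ that appear explicitly in the paper's formula $t_{n,k}=\sum_j(-\si)^{j-k}\bigl[\Bu_j\cdot\sum_i\Bd_{i+1}+t_j+\dots\bigr](1+O(\rho^k))$. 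These cross terms are of size $O(\bar\eps^{2^j}\bar\eps^{2^{j+1}})$ and hence harmless for all the stated estimates, so the conclusion survives, but the recursion as you wrote it is not the correct one.
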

\begin{proof}
Using the definition of each derivatives of $ \Psi_j $ on the equation \eqref{decomposition} at the fixed point zero, we obtain 
\begin{align*}
D^n_{k} = \prod^{n-1}_{j=k} D_j = \prod^{n-1}_{j=k}
\begin{pmatrix}
\alpha_j & \,\si_j \,t_j  & \,\si_j\,\Bu_j \\
              & \si_j &  \\
              & \,\si_j \,\Bd_j &\si_j \cdot \Id_{m \times m}
\end{pmatrix} .
\end{align*}
By the straightforward calculation, we have following matrix,
\begin{align}
D^n_k = 
\begin{pmatrix}
\ \ \displaystyle\prod^{n-1}_{j=k} \alpha_j & T_{n,\,k} & \quad {\bf U}_{n,\,k} \phantom{**} \\
 & \displaystyle\prod^{n-1}_{j=k} \si_j & \\[2em]
 & \displaystyle\prod^{n-1}_{j=k} \si_j \sum^{n-1}_{j=k} \Bd_j &\quad \displaystyle\prod^{n-1}_{j=k} \si_j  \cdot \Id_{m \times m} \phantom{*}
\end{pmatrix}
\end{align}
where 
\begin{align*}
{\bf U}_{n,\,k} \ &= \ \si_k \,\si_{k+1} \,\si_{k+2} \cdots \si_{n-2} \,\si_{n-1}\, \Bu_k \\[0.2em]
& \qquad + {\color{blue} \alpha_k}\, \si_{k+1}\, \si_{k+2} \cdots \si_{n-2} \,\si_{n-1} \,\Bu_{k+1} \\[0.2em]
& \qquad + {\color{blue} \alpha_k \,\alpha_{k+1}}\, \si_{k+2} \cdots \si_{n-2} \,\si_{n-1}\, \Bu_{k+2} \\
& \hspace{1in} \vdots \\
& \qquad + {\color{blue} \alpha_k \,\alpha_{k+1}\, \alpha_{k+2}} \cdots {\color{blue} \alpha_{n-2}}\, \si_{n-1}\, \Bu_{n-1} \\[0.8em]
T_{n,\,k} \ &= \ \si_k \,\si_{k+1} \,\si_{k+2} \cdots \si_{n-3}\, \si_{n-2} \,\si_{n-1}\, \big[\ \Bu_k \cdot (\Bd_{k+1} + \Bd_{k+2} + \Bd_{k+3} + \cdots + \Bd_{n-1} ) + t_k \big] \\[0.2em]
& \quad \ +\ {\color{blue} \alpha_k} \, \si_{k+1}\, \si_{k+2} \cdots \si_{n-3}\, \si_{n-2} \,\si_{n-1} \,\big[\, \Bu_{k+1} \cdot( \qquad \quad \Bd_{k+2} + \Bd_{k+3} + \cdots + \Bd_{n-1} ) + t_{k+1} \,\big] \\[0.2em]
& \quad \ +\ {\color{blue} \alpha_k \,\alpha_{k+1}} \, \si_{k+2} \cdots \si_{n-3}\, \si_{n-2} \,\si_{n-1}\,\big[\, \Bu_{k+2} \cdot( \qquad \qquad \qquad \; \Bd_{k+3} + \cdots + \Bd_{n-1} ) + t_{k+2}\, \big]\\
& \hspace{2in} \vdots \\
& \quad \ +\ {\color{blue} \alpha_k \,\alpha_{k+1}\, \alpha_{k+2}} \cdots {\color{blue} \alpha_{n-3}} \,\si_{n-2}\, \si_{n-1}\,\big[\, \Bu_{n-2} \cdot \Bd_{n-1} + t_{n-2} \,\big] \\[0.2em]
& \quad \ +\ {\color{blue} \alpha_k \,\alpha_{k+1}\, \alpha_{k+2}} \cdots {\color{blue} \alpha_{n-3} \,\alpha_{n-2}}\, \si_{n-1}\cdot t_{n-1}.
\end{align*}
Moreover,
\begin{equation} \label{scaling from kth to nth level}
\begin{aligned}
\si_{n,\,k} &= \prod^{n-1}_{j=k} \si_j = \prod^{n-1}_{j=k} (-\si) (1+ O(\rho^j)) = (-\si)^{n-k}  (1+ O(\rho^k))& \\
\alpha_{n,\,k} &= \prod^{n-1}_{j=k} \alpha_j = \prod^{n-1}_{j=k} \si^2 (1+ O(\rho^j)) = \si^{2(n-k)}  (1+ O(\rho^k))& .
\end{aligned}
\end{equation}
By the definition of $ \Bd_{n,\,k} $ and \eqref{scaling from kth to nth level}, each components of the diffeomorphic part and the scaling part are separated
\begin{equation} \label{sum of d, u and t respectively}
\begin{aligned}
\Bd_{n ,\: k} =& \sum^{n-1}_{j=k} \Bd_j \\
\Bu_{n,\: k} =&  \sum_{j=k}^{n-2} (-\si)^{j-k} \Bu_j \, (1+ O(\rho^k)) \\
t_{n ,\: k} =&  \sum_{j=k}^{n-1} (-\si)^{j-k} \left[\, \Bu_j \cdot \sum_{i=j}^{n-2} \Bd_{i+1} + t_j + t_{n-1} \,\right] (1+ O(\rho^k)) .
\end{aligned}
\end{equation}
Since $ \| \:\!\Bd_j \| = O\big(\bar \eps^{2^j} \big) $, $ \| \:\! \Bu_j \| = O\big(\bar \eps^{2^j} \big) $ and $ |\:\! t_j | = O\big(\bar \eps^{2^j} \big) $ for each $ j \in \N $, each terms of the \\
series 
in \eqref{sum of d, u and t respectively} shrink super exponentially fast. Then the sum $ \Bd_{n,\,k} $, $ \Bu_{n,\,k} $ and $ t_{n,\,k} $ are comparable with the first terms of each series. Moreover, $ \Bd_{n,\,k} $, $ \Bu_{n,\,k} $ and $ t_{n,\,k} $ converge to $ \Bd_{*,\,k} $, $ \Bu_{*,\,k} $ and $ t_{*,\,k} $ as $ n \ra \infty $ super exponentially fast respectively.
\end{proof}
\msk
\nin After reshuffling of $\Psi^n_k$, we can factor out $ D^n_k $ from the map $\Psi^n_k$. Then we have
\begin{align} \label{coor change from k to n}
 \Psi_k^n=D_k^n \circ (\id + {\bf S}^n_k)
\end{align}
where  ${\bf S} ^n_k=(S_k^n(w),\ 0,\ {\bf R}_{n,\,k}(y)) =O(| \: \! w|^{\,2})$ near the origin. \ssk When we calculate directly the composition, $H^{-1}_k \circ \La^{-1}_k \circ \cdots \circ H^{-1}_{n-1} \circ \La^{-1}_{n-1}$. Observe that the map 
$$ {\bf R}_{n,\,k} = ( R^1_{n,\,k},\,  R^2_{n,\,k},\,\ldots,  R^m_{n,\,k}) $$ 
depends only on $ y $.

\msk
\begin{prop} \label{bounds of R}
The third coordinate of\, ${\bf S} ^n_k  $, $ {\bf R}_{n,\,k}(y) $ has the following norm estimations.
\begin{align*}
\|{\bf R}_{n,\,k}\| = O \big(\bar \eps^{2^k} \big), \quad
\| \: \!({\bf R}_{n,\,k})'\| = O\big(\bar \eps^{2^k} \si^{n-k} \big) \quad \text{and} \quad  \ 
\| \: \!({\bf R}_{n,\,k})''\|= O\big(\bar \eps^{2^k} \si^{2(n-k)} \big) 
\end{align*}
for all $ k<n $.
\end{prop}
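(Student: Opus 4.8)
The plan is to reduce $ {\bf R}_{n,\,k} $ to an explicit telescoping sum over the already controlled pieces $ {\bf r}_j $ and then estimate it term by term.

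First I would write $ \Psi^n_k = \Psi_k \circ \Psi^n_{k+1} $ and use the two factorizations $ \Psi_k = D_k \circ (\id + {\bf s}_k) $ and $ \Psi^n_{k+1} = D^n_{k+1}\circ(\id + {\bf S}^n_{k+1}) $ from \eqref{Psi on k level} and \eqref{coor change from k to n}. Since the third block of $ {\bf S}^n_{k+1} $ depends only on $ y $, the $ y $-coordinate of $ \Psi^n_{k+1}(x,y,\Bz) $ is $ \si_{n,\,k+1}\,y $ and its $ z_i $-coordinate is $ \si_{n,\,k+1}\big(d^i_{n,\,k+1}\,y + z_i + R^i_{n,\,k+1}(y)\big) $; composing with $ \Psi_k $, comparing with the $ z_i $-coordinate of $ D^n_k\circ(\id + {\bf S}^n_k) $, and using $ \si_{n,\,k} = \si_k\,\si_{n,\,k+1} $ together with $ \Bd_{n,\,k} = \Bd_k + \Bd_{n,\,k+1} $ from Lemma \ref{decomposition of derivative}, the linear parts cancel and leave
$$ R^i_{n,\,k}(y) = R^i_{n,\,k+1}(y) + \frac{1}{\si_{n,\,k+1}}\, r^i_k(\si_{n,\,k+1}\, y) . $$
Iterating down to the trivial level $ \Psi^n_n = \id $ gives
$$ R^i_{n,\,k}(y) = \sum_{j=k}^{n-1} \frac{1}{\si_{n,\,j+1}}\, r^i_j(\si_{n,\,j+1}\, y), \qquad \si_{n,\,j+1} = \prod_{\ell=j+1}^{n-1}\si_\ell , $$
where $ |\si_{n,\,j+1}| \asymp \si^{\,n-j-1} $ by \eqref{scaling from kth to nth level}, and the inner point $ \si_{n,\,j+1}y $ lies within $ O(\si^{\,n-j-1}) $ of the tip (the origin) whenever $ w\in B $. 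Differentiating in $ y $, the $ j $-th summands of $ {\bf R}_{n,\,k} $, $ ({\bf R}_{n,\,k})' $ and $ ({\bf R}_{n,\,k})'' $ become $ \frac{1}{\si_{n,\,j+1}}\,r^i_j(\si_{n,\,j+1}y) $, $ (r^i_j)'(\si_{n,\,j+1}y) $ and $ \si_{n,\,j+1}\,(r^i_j)''(\si_{n,\,j+1}y) $.

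The next step is to estimate $ r^i_j $ and its derivatives at $ \si_{n,\,j+1}y $. Since $ {\bf s}_j = O(|w|^2) $ near the origin, $ r^i_j(0) = (r^i_j)'(0) = 0 $, so Taylor's formula and the global bound $ \|(r^i_j)''\| = O(\bar{\eps}^{\,2^j}) $ of Lemma \ref{bounds on domain}(4) give $ |r^i_j(\si_{n,\,j+1}y)| = O(\bar{\eps}^{\,2^j}\si^{\,2(n-j-1)}) $ and $ |(r^i_j)'(\si_{n,\,j+1}y)| = O(\bar{\eps}^{\,2^j}\si^{\,n-j-1}) $. For the second derivative the global bound is not enough; here I would use Corollary \ref{bounds at k level}, which furnishes $ \|\Br''_j\| = O(\bar{\eps}^{\,2^j}\si^{\,m-j}) $ on the piece $ B^m_j $ about the tip (equivalently, the third-order vanishing of $ \Br_j $ at the tip), and since $ \si_{n,\,j+1}y $ lies in such a piece with $ m-j $ of order $ n-j $ after a fixed enlargement, $ |(r^i_j)''(\si_{n,\,j+1}y)| = O(\bar{\eps}^{\,2^j}\si^{\,n-j-1}) $. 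Inserting the prefactors $ |\si_{n,\,j+1}|^{-1}\asymp\si^{-(n-j-1)} $ and $ |\si_{n,\,j+1}|\asymp\si^{\,n-j-1} $, the $ j $-th summands are bounded by $ O(\bar{\eps}^{\,2^j}\si^{\,n-j-1}) $, $ O(\bar{\eps}^{\,2^j}\si^{\,n-j-1}) $ and $ O(\bar{\eps}^{\,2^j}\si^{\,2(n-j-1)}) $.

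Finally I would sum over $ j=k,\dots,n-1 $. The ratio of the $ (j+1) $-st summand to the $ j $-th is $ O(\bar{\eps}^{\,2^j}\si^{-c}) $ for a fixed $ c $, hence tends to $ 0 $ super-exponentially, so each series is dominated by its leading term $ j=k $; absorbing the bounded powers $ \si^{-1},\si^{-2} $ into the constants yields $ \|{\bf R}_{n,\,k}\| = O(\bar{\eps}^{\,2^k}\si^{\,n-k}) = O(\bar{\eps}^{\,2^k}) $, $ \|({\bf R}_{n,\,k})'\| = O(\bar{\eps}^{\,2^k}\si^{\,n-k}) $ and $ \|({\bf R}_{n,\,k})''\| = O(\bar{\eps}^{\,2^k}\si^{\,2(n-k)}) $, which is the assertion. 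I expect the only genuine difficulties to be: (i) verifying that the $ z_i $-coordinates of the scaling maps really compose in this telescoping way — i.e.\ that the non-linear $ y $-dependence accumulates additively under rescaling, with the $ \Bd $-terms splitting off exactly as in Lemma \ref{decomposition of derivative}; and (ii) justifying the extra factor $ \si^{\,n-j-1} $ in the second-derivative estimate, for which the crude global bound on $ \Br''_j $ is too weak and one is forced to invoke the sharper Corollary \ref{bounds at k level} — with only the global bound one obtains merely $ \|({\bf R}_{n,\,k})''\| = O(\bar{\eps}^{\,2^k}\si^{\,n-k}) $.
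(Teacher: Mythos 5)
Your proposal is correct and follows essentially the same route as the paper: the identical recursion $ {\bf R}_{n,\,k}(y) = {\bf R}_{n,\,k+1}(y) + \si_{n,\,k+1}^{-1}\,\Br_k(\si_{n,\,k+1}y) $ (the paper iterates it as recursive inequalities rather than writing the telescoped sum), with each summand controlled by the quadratic vanishing of $ \Br_j $ at the tip together with Lemma \ref{bounds on domain} and Corollary \ref{bounds at k level}, and the series dominated by its $ j=k $ term. The only difference is cosmetic: your Taylor argument even yields the slightly sharper $ \|{\bf R}_{n,\,k}\| = O(\bar\eps^{2^k}\si^{n-k}) $, of which the stated $ O(\bar\eps^{2^k}) $ is a weakening.
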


\begin{proof}
The proof comes from the recursive formula between each partial derivatives of $S^n_k$ and $S^n_{k+1}$. So before proving this lemma we need some intermediate calculations. 
For a point $w=(x,y,\Bz) \in B $, let 
$$
 w_{k+1}^n =
\left ( \begin{array} {c}
x_{k+1}^n\\ [0.3em]
y_{k+1}^n\\ [0.3em]
\Bz_{k+1}^n
\end{array} \right )
= \Psi_{k+1}^n(w) \in B^n_{k+1} 
$$
By the equation \eqref{coor change from k to n}, we have
\msk
\begin{equation*}
\begin{aligned}
\left ( \begin{array} {c}
x_{k+1}^n\\ [0.3em]
y_{k+1}^n\\ [0.3em]
\Bz_{k+1}^n
\end{array} \right )=
\left ( \begin{array} {c l l}
\alpha_{n,\,k+1}     & \si_{n,\,k+1}  \cdot t_{n,\,k+1}  & \si_{n,\,k+1} \cdot \Bu_{n,\,k+1} \\ [0.3em]
                     & \si_{n,\,k+1}              &                 \\ [0.3em]
                     & \si_{n,\,k+1} \cdot \Bd_{n,\,k+1}  &  \si_{n,\,k+1} \cdot \Id_{m \times m}
\end{array} \right )
\left ( \begin{array} {c}
x+ S _{k+1}^n(w) \\ [0.3em]
y \\ [0.3em]
\Bz+{\bf R}_{n,\,k+1}(y)
\end{array} \right )  .            
\end{aligned} \bsk
\end{equation*}
Then each coordinate of $w^n_{k+1}$ is
\msk
\begin{equation}  \label{the image of the Psi from nth level to k+1th level}
\begin{aligned} 
x_{k+1}^n =& \ \alpha_{n,\,k+1}(x+S_{k+1}^n(w))+\si_{n,\,k+1}\, t_{n,\, k+1} \cdot y + \si_{n,\,k+1} \Bu_{n,\,k+1} \cdot ( \Bz+ {\bf R}_{n,\,k+1}(y)) \\[0.2em]
y_{k+1}^n =& \ \si_{n,\,k+1} \cdot y \\[0.2em]
\Bz_{k+1}^n =& \ \si_{n,\,k+1}\, \Bd_{n,\,k+1} \cdot y+ \si_{n,\,k+1}\,(\Bz+ {\bf R}_{n,\,k+1}(y)) . 
\end{aligned} \msk
\end{equation}
For any fixed $k<n$, the recursive formula of $ \Psi^n_k$ is
\msk
\begin{equation}      \label{recursive form}
\begin{aligned} 
D_k^n \circ (\id + {\bf S}_k^n) &= \Psi _k^n = \Psi _k \circ \Psi_{k+1}^n = D_k \circ (\id + {\bf s}_k) \circ \Psi _{k+1}^n  \\[0.2em]
 &= D_k^n \circ (\id + {\bf S}_{k+1}^n) + D_k \circ {\bf s}_k \circ \Psi_{k+1}^n  \\[0.2em]
\text {Thus} \qquad  \Psi_k^n(w) &= D_k^n \circ (\id + {\bf S}_{k+1}^n)(w) + D_k \circ {\bf s}_k (w_{k+1}^n)
\end{aligned}
\end{equation}
and note that
\begin{equation*}
 D_k \circ {\bf s}_k (w_{k+1}^n) =
\left (
       \begin{array} {crr}
    \alpha_k  &  \si_k t_k  &  \si_k \,\Bu_k  \\[0.2em]
              &  \si_k &         \\[0.2em]
              &  \si_k \,\Bd_k  & \si_k \cdot \Id_{m \times m}
       \end{array}
\right )
\left (
       \begin{array}{c}
s_k(w_{k+1}^n) \\[0.3em]
0 \\[0.1em]
\Br_k (y_{k+1}^n)
        \end{array}
\right ) . 
\end{equation*}
\ssk \\
Moreover, the first partial derivatives of each coordinate are as follows
\msk
\begin{equation}  \label{1st partial}
\begin{aligned}
\frac {\partial x_{k+1}^n}{\partial x} &= \alpha_{n,\,k+1} \left (1+ \frac{\partial S_{k+1}^n}{\partial x}(w) \right) \\[0.3em]
\frac {\partial x_{k+1}^n}{\partial y} &= \alpha_{n,\,k+1} \frac{\partial S_{k+1}^n}{\partial y}(w) + \si_{n,\,k+1}\, t_{n,\,k+1} +\si_{n,\,k+1} \,\Bu_{n,\,k+1} \cdot ({\bf R}_{n,\,k+1})'(y) \\[0.3em]
\frac {\partial x_{k+1}^n}{\partial z_i} &= \alpha_{n,\,k+1} \frac{\partial S_{k+1}^n}{\partial z_i}(w) + \si_{n,\,k+1}\, u_{n,\,k+1}^i \\[0.3em]
\frac {\partial y_{k+1}^n}{\partial y} &= \frac {\partial \Bz_{k+1}^n}{\partial z_i} = \si_{n,\,k+1} \\[0.3em]
\frac {\partial \Bz_{k+1}^n}{\partial y} &= \si_{n,\,k+1}\, \Bd_{n,\,k+1}+ \si_{n,\,k+1}\cdot ({\bf R}_{n,\,k+1})'(y) \\[0.3em]
\frac {\partial y_{k+1}^n}{\partial x} &= \frac {\partial y_{k+1}^n}{\partial z_i} = \frac {\partial z_{k+1}^n}{\partial x} = 0 
\end{aligned} \msk
\end{equation}
for every $ 1 \leq i \leq m $. In order to estimate of ${\bf R}_{n,\,k}(y)$, compare the third coordinates of the functions in \eqref{recursive form}. Recall $\si^{-1} = \la $. Then
\begin{equation*}
\begin{aligned}
\Bz_{k}^n =& \ \si_{n,\,k}\, \Bd_{n,\,k} \cdot y+ \si_{n,\,k}(\Bz+{\bf R}_{n,\,k}(y)) \\[0.2em]
=& \ \si_{n,\,k} \,\Bd_{n,\,k} \cdot y+ \si_{n,\,k}(\Bz+ {\bf R}_{n,\,k+1}(y)) +\si_k \cdot \Br_k (y_{k+1}^n) 
\end{aligned} 
\end{equation*}
Then
\begin{equation*}
\begin{aligned}
 {\bf R}_{n,\,k}(y)  =& \ {\bf R}_{n,\,k+1}(y) +\si_{n,\,k}^{-1} \cdot \si_k \cdot \Br_k(y_{k+1}^n)   
\end{aligned} \msk
\end{equation*}
where $ \si_{n,\,k}^{-1} \cdot \si_k $ is $ (-\lambda)^{n-k-1}(1+O(\rho^k)) $. \ssk
By the equation \eqref{1st partial}, the recursive relation between ${\bf R}^n_k(y)$, $ {\bf R}_{k+1}^n(y) $ and the bounds of $ \Br_k(y_{k+1}^n) $, we obtain the following formulas
\msk
\begin{equation*}
\begin{aligned}
 {\bf R}_{n,\,k}(y) &= {\bf R}_{n,\,k+1}(y) + O\big( (-\lambda)^{n-k-1} \Br_k(y_{k+1}^n) \big)   \\[0.2em]
 ({\bf R}_{n,\,k})'(y) &= ({\bf R}_{n,\,k+1})'(y) + O\big( \Br_k'(y_{k+1}^n) \big)         \\[0.2em]
\textrm{and}  \quad ({\bf R}_{n,\,k})''(y) &= ({\bf R}_{n,\,k+1})''(y) + O\big( \si^{n-k} \cdot  \Br_k''(y_{k+1}^n) \big) .
\end{aligned} \msk
\end{equation*}
Hence, by the equation \eqref{the image of the Psi from nth level to k+1th level} and the chain rule
\begin{equation*}
\begin{aligned}
\|{\bf R}_{n,\,k}\| &\leq \|{\bf R}_{n,\,k+1}\| + K_0 \bar \eps^{2^k}   \\ 
\| \: \! ({\bf R}_{n,\,k})'\| &\leq  \|({\bf R}_{n,\,k+1})'\| + K_1 \bar \eps^{2^k}\si^{n-k} \\
\| \: \!({\bf R}_{n,\,k})''\| &\leq  \|({\bf R}_{n,\,k+1})''\| + K_2 \bar \eps^{2^k} \si^{2(n-k)}
\end{aligned} \msk
\end{equation*}
 for all $ k<n  $. Then, 
\begin{align*}
\|{\bf R}_{n,\,k}\| = O(\bar \eps^{2^k}), \ \
\| \: \!({\bf R}_{n,\,k})'\| = O(\bar \eps^{2^k} \si^{n-k} ) \ \ \text{and} \ \ 
\|({\bf R}_{n,\,k})''\|= O(\bar \eps^{2^k} \si^{2(n-k)}) 
\end{align*}
 for all $ k<n  $.
\end{proof}

\msk
\begin{lem} \label{asymptotics of non-linear part 1}
For $k<n$, we have \msk
\begin{enumerate}
\item $ | \; \! \partial_x S^n_k| \; = O(1), \qquad  \qquad | \; \! \partial_y S^n_k|  \ = O(\bar \eps^{2^k}), \qquad \qquad \ | \; \! \partial_{z_i} S^n_k| \ = O(\bar \eps^{2^k})  $ \ssk
\item $ | \; \! \partial^2_{xy} S^n_k| = O(\bar \eps^{2^k} \si^{n-k}), \quad \; | \; \! \partial^2_{xz_i} S^n_k| = O(\bar \eps^{2^k} \si^{n-k})$ \ssk
\item  $| \; \! \partial^2_{yz_i}S^n_k| = O(\bar \eps^{2^k} ),  \qquad \quad \ | \; \! \partial^2_{z_i z_j} S^n_k| = O(\bar \eps^{2^k} ) $ . \ssk
\end{enumerate}
for every $ 1 \leq i,j \leq m $.
\end{lem}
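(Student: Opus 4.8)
The plan is to set up a recursion that expresses $S^n_k$ and its first and second partials in terms of the corresponding objects at level $k+1$, and then to telescope down to the trivial base case $k=n$, where $\Psi^n_n=\id$ forces $\BS^n_n\equiv 0$. First I would compare the first coordinates of the two expressions for $\Psi^n_k$ in the recursive formula \eqref{recursive form}, using the block decompositions of $D^n_k$ and $D_k$ from Lemma \ref{decomposition of derivative} together with the identity $\BR_{n,k}(y)=\BR_{n,k+1}(y)+\si_{n,k}^{-1}\si_k\,\Br_k(y^n_{k+1})$ established in the proof of Proposition \ref{bounds of R}. After the $\si_{n,k}t_{n,k}\,y$– and $\si_{n,k}\Bu_{n,k}\cdot\Bz$–terms cancel and one uses $\alpha_{n,k}=\alpha_k\,\alpha_{n,k+1}$, this gives
\begin{equation*}
S^n_k(w)=S^n_{k+1}(w)+\frac{\alpha_k}{\alpha_{n,k}}\,s_k(w^n_{k+1})+\frac{\si_k}{\alpha_{n,k}}\,(\Bu_k-\Bu_{n,k})\cdot\Br_k(y^n_{k+1}),
\end{equation*}
where $w^n_{k+1}=\Psi^n_{k+1}(w)\in B^n_{k+1}$ and, by \eqref{the image of the Psi from nth level to k+1th level}, $y^n_{k+1}=\pi_y(w^n_{k+1})=\si_{n,k+1}\,y$; in particular the last summand depends on $w$ only through $y$.

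Next I would differentiate this identity once and twice, applying the chain rule via the first–derivative formulas \eqref{1st partial} for $w^n_{k+1}$ and their second–derivative consequences (obtained by differentiating \eqref{1st partial} once more; most of these vanish, since $\pi_y(w^n_{k+1})$ is affine in $w$ and $\Br_k,\BR_{n,k+1}$ depend only on $y$). Two structural features keep the estimates from blowing up: (i) because $\di_x y^n_{k+1}=\di_x z^n_{k+1}=\di_{z_i}y^n_{k+1}=0$ by \eqref{1st partial}, the $\Br_k$–term contributes nothing to $\di_x S^n_k$, $\di^2_{xy}S^n_k$, $\di^2_{xz_i}S^n_k$ (and nothing to $\di^2_{yz_i}S^n_k$, $\di^2_{z_iz_j}S^n_k$ either, since after the first differentiation it already carries the factor $\di_z y^n_{k+1}=0$); and (ii) $\frac{\alpha_k}{\alpha_{n,k}}\,\di_x x^n_{k+1}=\frac{\alpha_k\alpha_{n,k+1}}{\alpha_{n,k}}(1+\di_x S^n_{k+1})=1+\di_x S^n_{k+1}=O(1)$. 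I would then substitute the available bounds: for $s_k$ on $B^n_{k+1}$, the $C^0$ bounds of Lemma \ref{bounds on domain} on $\di^2 s_k$ combined with the Taylor argument of Corollary \ref{bounds at k level} applied on $B^n_{k+1}$ (which has diameter $O(\si^{n-k-1})$ by Lemma \ref{diameter} and contains the common fixed point $0$ of the maps $\Psi_j$), so that $\di_x s_k=O(\si^{n-k-1})$ and $\di_y s_k=\di_{z_i}s_k=O(\bar\eps^{2^k}\si^{n-k-1})$ on $B^n_{k+1}$; for $\BR_{n,k+1},\BR'_{n,k+1},\BR''_{n,k+1}$ and $\Br_k,\Br'_k,\Br''_k$, the bounds of Proposition \ref{bounds of R} and Corollary \ref{bounds at k level}; and for the linear data, $\alpha_{n,j}\asymp\si^{2(n-j)}$, $\si_{n,j}\asymp\si^{n-j}$, $|t_{n,j}|,\|\Bu_{n,j}\|,\|\Bd_{n,j}\|=O(\bar\eps^{2^j})$ from Lemma \ref{decomposition of derivative}.

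A direct computation then yields, for each of the six derivatives, a recursion $\big|\di^\bullet S^n_k-\di^\bullet S^n_{k+1}\big|\le C_\bullet\,e_\bullet(k,n)$ with $e_{\di_x}=\si^{n-k-1}$, $e_{\di_y}=e_{\di_{z_i}}=\bar\eps^{2^k}$, $e_{\di^2_{xy}}=e_{\di^2_{xz_i}}=\bar\eps^{2^k}\si^{n-k-1}$, $e_{\di^2_{yz_i}}=e_{\di^2_{z_iz_j}}=\bar\eps^{2^k}$; since the right-hand side involves only $\di^\bullet S^n_{k+1}$, this is a downward induction in $k$ from $\di^\bullet S^n_n=0$. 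Summing over $j=k,\dots,n-1$ — the series $\sum_j\si^{n-j-1}$ is geometric and bounded by $(1-\si)^{-1}$, while $\sum_j\bar\eps^{2^j}$ and $\sum_j\bar\eps^{2^j}\si^{n-j-1}$ are, for $\bar\eps$ small enough, dominated by their $j=k$ terms — gives $|\di_x S^n_k|=O(1)$, $|\di_y S^n_k|=|\di_{z_i}S^n_k|=O(\bar\eps^{2^k})$, $|\di^2_{xy}S^n_k|=|\di^2_{xz_i}S^n_k|=O(\bar\eps^{2^k}\si^{n-k})$ (since $\si^{n-k-1}=\si^{-1}\si^{n-k}$), and $|\di^2_{yz_i}S^n_k|=|\di^2_{z_iz_j}S^n_k|=O(\bar\eps^{2^k})$, which is the assertion.

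The routine part is the chain- and product-rule bookkeeping. The hard part will be tracking the cancellations that produce the extra factor $\si^{n-k}$ in part (2): it survives only because the $\Br_k$–term drops out of the $x$–derivative, because one must use the sharpened bounds of Corollary \ref{bounds at k level} for $\di s_k$ on $B^n_{k+1}$ rather than the plain $C^0$ bounds of Lemma \ref{bounds on domain}, and because of the coefficient identity $\tfrac{\alpha_k}{\alpha_{n,k}}\alpha_{n,k+1}=1$; the same accounting, carried out more crudely, explains why no such $\si$–gain appears in parts (1) and (3).
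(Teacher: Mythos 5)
Your proposal is correct and follows essentially the same route as the paper's proof: the same recursive identity for $S^n_k$ in terms of $S^n_{k+1}$ obtained by comparing first coordinates in \eqref{recursive form}, the same chain-rule differentiation through \eqref{1st partial}, the same input bounds from Lemma \ref{bounds on domain}, Corollary \ref{bounds at k level}, Proposition \ref{bounds of R} and Lemma \ref{decomposition of derivative}, and the same telescoping down from the trivial top level. The only cosmetic differences are that you pre-simplify the $\Br_k$-terms using ${\bf R}_{n,k}-{\bf R}_{n,k+1}=\si_{n,k}^{-1}\si_k\Br_k(y^n_{k+1})$ and phrase the recursion as a purely additive difference bound (closed by downward induction) rather than the paper's multiplicative-plus-additive form; both yield the stated estimates.
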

\ssk
\begin{proof}
Compare the first coordinates of $\Psi^n_k$ in \eqref{recursive form}. Thus 
\begin{align*}
x^n_k &= \alpha_{n,\,k}(x+S_k^n(w)) + \si_{n,\,k}\,t_{n,\,k} \cdot y + \si_{n,\,k}\,\Bu_{n,\,k} \cdot \big(\,\Bz+ {\bf R}_{n,\,k}(y) \big)  \\
      &= \alpha_{n,\,k}(x+S_{k+1}^n(w)) + \si_{n,\,k}\,t_{n,\,k} \cdot y + \si_{n,\,k}\,\Bu_{n,\,k} \cdot  \big(\,\Bz+ {\bf R}_{n,\,k+1}(y) \big) + \alpha_k \cdot s_k(w_{k+1}^n) \\
      & \quad \ \ + \si_k \,\Bu_k \cdot \Br_k(y_{k+1}^n) .
\end{align*}
Then we obtain the recursive formula for $S^n_k$ as follows
\begin{equation*}
\begin{aligned}
S_k^n(w) &=  S_{k+1}^n(w)+\alpha_{n,\,k}^{-1}\, \alpha_k \cdot s_k(w_{k+1}^n) +\alpha_{n,\,k}^{-1}\, \si_{n,\,k} \, \Bu_{n,\,k} \cdot \big({\bf R}_{n,\,k+1}(y)- {\bf R}_{n,\,k}(y) \big) \\
 & \quad \ \ +\alpha_{n,\,k}^{-1} \,\si_k \, \Bu_k \cdot \Br_k(y^n_{k+1}) .
\end{aligned} \ssk
\end{equation*}
\nin Let us take the first partial derivatives of each side of above equation and use \eqref{1st partial}. Then we can have the recursive formulas of each first partial derivatives of $S_k^n(w)  $. Let us take the coordinate expression of $ w^n_{k+1} $ as $ (x^n_{k+1}, y^n_{k+1}, (z^n_{k+1})_1, (z^n_{k+1})_2, \ldots , (z^n_{k+1})_m ) $. Then \msk
\begin{align*}
\frac{\di S^n_k}{\di x} = \ &   \frac{\di S^n_{k+1}}{\di x}\left(1+  \frac{\di s_k}{\di x^n_{k+1}}\right)+  \frac{\di s_k}{\di x^n_{k+1}}\\[0.6em]
\frac{\partial S_k^n}{\partial y} 
= \ & \left (1 + \frac{\partial s_k}{\partial x^n_{k+1}} \right ) \frac{\partial S_{k+1}^n}{\partial y} + 
K_1 \la^{n-k-1} \left[ \Big (t_{n,\,k+1} + \Bu_{n,\,k+1} \cdot ({\bf R}_{n,\,k+1})'(y) \Big )  \frac{\partial s_k}{\partial x^n_{k+1}}  \right. \\[0.2em]
& \quad  \left.  + \frac{\partial s_k}{\partial y^n_{k+1}} +  \sum_{j=1}^m  \Big (d_{n,\,k+1}^j + ({\bf R}_{n,\,k+1}^j)'(y) \Big ) \frac{\partial s_k}{\partial (z^n_{k+1})_j} \; \right]  \\[0.2em]
& \ + 
K_1\la^{n-k-1} \Bu_{n,\,k} \cdot \Big ( ({\bf R}_{n,\,k+1})'(y) - ({\bf R}_{n,\,k})'(y) \Big) +
 K_2 \la^{n-k} \Bu_k \cdot \Br_k'(y_n^{k+1}) \\[0.6em]
\frac{\partial S_k^n}{\partial z_i}  
= \ & \left( 1 + \frac{\partial s_k}{\partial x^n_{k+1}} \right) \frac{\partial S_{k+1}^n}{\partial z_i} + K_1 \la^{n-k-1} \left[ u_{n,\,k+1}^i  \frac{\partial s_k}{\partial x^n_{k+1}} +\frac{\partial s_k}{\partial (z^n_{k+1})_i} \; \right]
\end{align*} 
where \,$ \alpha_{n,\,k}^{-1} \;\! \alpha_k \;\! \si_{n,\,k+1} = K_1 (-\la)^{n-k-1} $\, and 
 $ \alpha_{n,\,k}^{-1} \;\! \si_{n,\,k+1} = K_2 (-\la)^{n-k+1} $ for each $ 1 \leq i \leq m $. 
By Corollary \ref{bounds at k level} and Proposition \ref{bounds of R}, $| \; \! \di s_k / \di x^n_{k+1} |$ is $ O(\si^{n-k}) $ and $| \; \! \di s_k / \di y^n_{k+1} | $ and $| \; \! \di s_k / \di (z^n_{k+1})_i | $ is $ O(\bar \eps^{2^k} \si^{n-k})$ for all $ 1 \leq i \leq m $. Moreover, $| \: \! t_{n,\,k} |,\, \| \: \! \Bu_{n,\,k} \|$ and $ \| \: \! \Bd_{n,\,k} \|$ are $ O(\bar \eps^{2^k}) $. With all these facts, the bounds of each partial derivatives of $ S^n_k $ are as follows \msk
\begin{equation*}
\begin{aligned}
\left| \frac{\di S^n_k}{\di x} \right|  \leq & ( 1+ O(\rho^{n-k}) ) \left| \frac{\di S^n_{k+1}}{\di x} \right|+  C \si^{n-k}  \\[0.2em]
\left |\dfrac{\partial S_k^n}{\partial y} \right | \leq & \big (1 + O(\rho^{n-k}) \big ) \left |\dfrac{\partial S_{k+1}^n}{\partial y} \right | + C \bar \eps^{2^k} \\[0.2em]
 \left |\dfrac{\partial S_k^n}{\partial z_i} \right | \leq & \big (1 + O(\rho^{n-k}) \big ) \left| \dfrac{\partial S_{k+1}^n}{\partial z_i} \right| + C \bar \eps^{2^k}
\end{aligned} \msk
\end{equation*}
for all $ 1 \leq i \leq m $, for some constant $ C>0 $ and $ \rho \in (0,1) $. 
Hence, using above recursive formulas we have
\msk
\begin{equation*}
\begin{aligned}
\left| \frac{\di S^n_k}{\di x} \right| =O(\si),  \quad \left | \dfrac{\partial S_k^n}{\partial y} \right | =  O( \bar \eps^{2^k}) \quad \text{and} \quad \left| \dfrac{\partial S_k^n}{\partial z} \right| = O( \bar \eps^{2^k})
\end{aligned} \msk
\end{equation*}
for all $ k<n $.
\comm{***********
\nin 
The second partial derivatives of $w^n_{k+1}$ are
\msk
\begin{equation}  \label{2nd partial}
\begin{aligned}
\frac {\partial^2 x_{k+1}^n}{\partial x^2} &= \alpha_{n,\,k+1}  \frac{\partial^2 S_{k+1}^n}{\partial x^2}(w), \qquad 
\frac {\partial^2 x_{k+1}^n}{\partial xy} = \alpha_{n,\,k+1}  \frac{\partial^2 S_{k+1}^n}{\partial xy}(w)  \\[0.3em]
\frac {\partial^2 x_{k+1}^n}{\partial xz_i} &= \alpha_{n,\,k+1}  \frac{\partial^2 S_{k+1}^n}{\partial xz_i}(w) \\[0.3em]
\frac {\partial^2 x_{k+1}^n}{\partial y^2} &= \alpha_{n,\,k+1}  \frac{\partial^2 S_{k+1}^n}{\partial y^2}(w) + \si_{n,\, k+1} \Bu_{n,\,k+1} \cdot ({\bf R}_{n,\,k+1})''(y) \\[0.3em]
\frac {\partial^2 x_{k+1}^n}{\partial yz_i} &= \alpha_{n,\,k+1}  \frac{\partial^2 S_{k+1}^n}{\partial yz_i}(w),  \qquad
\frac {\partial^2 \Bz_{k+1}^n}{\partial y^2} =  \si_{n,\, k+1}({\bf R}_{n,\,k+1})''(y)
\end{aligned} \msk
\end{equation}
for each $ 1 \leq i \leq m $ and other second order partial derivatives are identically zero. 
*************}
The second partial derivatives of $S^n_k$ are as follows by the chain rule 
\msk
\begin{equation*}
\begin{aligned}
\frac{\partial^2 S_k^n}{\partial xy}
&=  \left( 1+ \frac{\partial s_k}{\partial x^n_{k+1}} \right) \frac{\partial^2 S_{k+1}^n}{\partial xy} + \alpha_{n,\,k+1} \left( 1 + \frac{\partial S_{k+1}^n}{\partial x} \right)\frac{\partial^2 s_k }{\partial (x^n_{k+1})^2} \frac{\partial S_{k+1}^n}{\partial y}\\[0.3em]
 & \quad + \si_{n,\, k+1} \left( 1 + \frac{\partial S_{k+1}^n}{\partial x} \right)  \left[ \Big( t_{n,\,k+1} + \Bu_{n,\,k+1} \cdot ({\bf R}_{n,\,k+1})'(y) \Big) \frac{\partial^2 s_k }{\partial (x^n_{k+1})^2} + \frac{\partial^2 s_k }{\partial x^n_{k+1}y^n_{k+1}} \right.       \phantom{***}  
\\
  & \quad \left. + \sum_{j=1}^m \Big( d_{n,\,k+1}^j + ({\bf R}_{n,\,k+1}^j)'(y) \Big) \frac{\partial^2 s_k }{\partial x^n_{k+1}z^n_{k+1}} \; \right] 
\end{aligned}
\end{equation*}

\begin{align*}
 \frac{\partial^2 S_k^n}{\partial xz_i} 
& = \left( 1+ \frac{\partial s_k}{\partial x^n_{k+1}} \right) \frac{\partial^2 S_{k+1}^n}{\partial xz_i} + \alpha_{n,\,k+1} \left( 1 + \frac{\partial S_{k+1}^n}{\partial x} \right)\frac{\partial^2 s_k }{\partial (x^n_{k+1})^2} \frac{\partial S_{k+1}^n}{\partial z_i} \hspace{1.5in} \\[0.3em]
 & \quad + \si_{n,\, k+1} \left( 1 + \frac{\partial S_{k+1}^n}{\partial x} \right)  
 \cdot \left[  u_{n,\,k+1}^i \frac{\partial^2 s_k }{\partial (x^n_{k+1})^2} +  \frac{\partial^2 s_k }{\partial x^n_{k+1} (z^n_{k+1})_i} \; \right] 
\end{align*}

\begin{align*}
\frac{\partial^2 S_k^n}{\partial yz_i}  
& =  \left (1+\frac{\partial s_k}{\partial x^n_{k+1}} \right) \frac{\partial^2 S_{k+1}^n}{\partial yz_i} + \left[ \alpha_{n,\,k+1} \frac{\partial S_{k+1}^n}{\partial z_i} \frac{\partial S_{k+1}^n}{\partial y} + \si_{n,\, k+1} u_{n,\,k+1}^i \frac{\partial S_{k+1}^n}{\partial y} \right.  \\
   & \quad + \left. \si_{n,\, k+1} \Big( t_{n,\,k+1} + \Bu_{n,\,k+1} \cdot ({\bf R}_{n,\,k+1})'(y) \Big) \Big( \frac{\partial S_{k+1}^n}{\partial z_i} + K_1 (-\la)^{n-k-1} u_{n,\,k+1}^i \Big) \right]    \frac{\partial^2 s_k}{\partial (x^n_{k+1})^2}   \\
& \quad + \left( \si_{n,\, k+1} \frac{\partial S_{k+1}^n}{\partial z_i} + K_4 u_{n,\,k+1}^i \right) \frac{\partial^2 s_k}{\partial x^n_{k+1}y^n_{k+1}}  \\
& \quad + \left[\, \si_{n,\, k+1} \frac{\partial S_{k+1}^n}{\partial y} + K_4 \big(t_{n,\,k+1} + \Bu_{n,\,k+1} \cdot ({\bf R}_{n,\,k+1})'(y) \Big) \right] \frac{\partial^2 s_k}{\partial x^n_{k+1}(z^n_{k+1})_i} \\
   & \quad + \left( \si_{n,\, k+1}\frac{\partial S_{k+1}^n}{\partial z_i} + K_4 u_{n,\,k+1}^i \right)  \sum_{j=1}^m \Big( d_{n,\,k+1}^j + ({\bf R}_{n,\,k+1}^j)'(y) \Big) \cdot \frac{\partial^2 s_k}{\partial (z^n_{k+1})_j(z^n_{k+1})_i}
\end{align*}

\begin{align*}
\frac{\partial^2 S_k^n}{\partial z_i z_j}  
=  & \left (1+\frac{\partial s_k}{\partial x^n_{k+1}} \right) \frac{\partial^2 S_{k+1}^n}{\partial z_i z_j} 
 + \left[\, \alpha_{n,\,k+1} \frac{\partial S_{k+1}^n}{\partial z_i} \cdot \frac{\partial S_{k+1}^n}{\partial z_j} + \si_{n,\,k+1} u_{n,\,k+1}^j \frac{\di S^n_{k+1}}{\di z_i} \right. \phantom{********\;\,}  \\
& \quad + \left. \si_{n,\,k+1} \frac{\di S^n_{k+1}}{\di z_j} + K_4  (u_{n,\,k+1}^j)^2 \right] \frac{\di^2 s_k}{\di (x^n_{k+1})^2}  \\
& + \left[\, \si_{n,\,k+1} \frac{\di S^n_{k+1}}{\di z_i} + K_4 u_{n,\,k+1}^i \right] \frac{\di^2 s_k}{\di x^n_{k+1}(z^n_{k+1})_j} \\
& + \left[\, \si_{n,\,k+1} \frac{\di S^n_{k+1}}{\di z_j} + K_4 u_{n,\,k+1}^j \right] \frac{\di^2 s_k}{\di x^n_{k+1}(z^n_{k+1})_i} 
 + K_4 \frac{\di^2 s_k}{\di (z^n_{k+1})_i(z^n_{k+1})_j}
\end{align*}
where $ K_4 = \alpha_{n,\,k}^{-1} \, \alpha_k \, \si_{n,\,k+1}^2 = O(1)$ for every $ 1 \leq i,j \leq m $.
\ssk \\
By Lemma \ref{bounds of sk}, Corollary \ref{bounds at k level}, 
and Proposition \ref{bounds of R}, the bounds of $| \;\! \di^2 s_k / \di (x^n_{k+1})^2 | $ is $ O( \si^{n-k})$ and 
 $| \;\! \di^2 s_k / \di uv | $ is $ O(\bar \eps^{2^k} \si^{n-k})$ where $ u, v = x^n_{k+1},\, y^n_{k+1},\, (z^n_{k+1})_1,\, \ldots \,,(z^n_{k+1})_m $ but both $ u $ and $ v $ are not $ x^n_{k+1} $ simultaneously. The upper bounds of the norm of the first and the second partial derivatives of $ s_k $ and the estimation of $| \;\!t_{n,\,k}|, \| \;\! \Bu_{n,\,k} \|$ and $ \| \;\! \Bd_{n,\,k} \|$ imply the bounds of norm of second partial derivatives of $ S_k^n $ as follows.
\msk 
\begin{equation*}
\begin{aligned}
\left |\dfrac{\partial^2 S_k^n}{\partial xy} \right | & \leq \big (1 + O(\rho^{n-k}) \big ) \left |\dfrac{\partial^2 S_{k+1}^n}{\partial xy} \right | + C \bar \eps^{2^k}\si^{n-k}  \\[0.2em]
 \left |\dfrac{\partial^2 S_k^n}{\partial xz_i} \right | & \leq \big (1 + O(\rho^{n-k}) \big ) \left |\dfrac{\partial^2 S_{k+1}^n}{\partial xz_i} \right | + C \bar \eps^{2^k}\si^{n-k} \\[0.2em]
  \left |\dfrac{\partial^2 S_k^n}{\partial yz_i} \right | & \leq \big (1 + O(\rho^{n-k}) \big ) \left |\dfrac{\partial^2 S_{k+1}^n}{\partial yz_i} \right | + C \bar \eps^{2^k} \\[0.2em]
  \left |\dfrac{\partial^2 S_k^n}{\partial z_i z_j} \right | & \leq \big (1 + O(\rho^{n-k}) \big ) \left |\dfrac{\partial^2 S_{k+1}^n}{\partial z_i z_j} \right | + C \bar \eps^{2^k} \ .
\end{aligned} \msk
\end{equation*}
Hence, \
$ | \:\!\partial^2_{xy} S^n_k| = O(\bar \eps^{2^k}\si^{n-k}), \  | \:\!\partial^2_{xz_i} S^n_k| = O(\bar \eps^{2^k}\si^{n-k})$, \ 
 $| \:\!\partial^2_{yz_i}S^n_k| = O(\bar \eps^{2^k}),$ \ and   \ $| \:\!\partial^2_{z_iz_j} S^n_k| = O(\bar \eps^{2^k}) $ for every $ 1 \leq i,j \leq m $.
\end{proof}
\msk

\subsection{Universal properties of coordinate change map, $ \Psi^n_k $} \label{universal functions}
On the following Lemma \ref{asymptotics of S for k}, we would show that the non-linear part of the coordinate change map $ \id + S(x,y,\Bz) $ is a small perturbation of the one-dimensional universal function. The content of this section is to rephrase some parts of Section 7 in \cite{CLM}. 
\ssk \\
Recall the one dimensional map $f_* \colon I \ra I$ is the fixed point of the (periodic doubling) renormalization operator of the unimodal maps, namely, $R f_* =f_*$. Let the critical point of $ f_* $ be $ c_* $ and $ I = [-1,1] $. Also assume that $f_*(c_*) =1$ and $f^2_*(c_*) = -1$. Let us take the intervals $J_c^* = [-1, f^4_*(c_*)]$ and $J_v^*= f_*(J^*_c) = [f^3_*(c_*),1]$. Then these intervals are the smallest invariant intervals under $f_*^2$ around the critical point and the critical value respectively. Observe that the critical point $ c_* $ is in $ J_c^* $ and $ f_*(J_v^*) = J_c^* $. 
Let the onto map $s \colon J^*_c \ra I$ be the orientation reversing affine rescaling. Thus $s \circ f_* \colon J^*_v \ra [-1, 1]$ is an expanding diffeomorphism.  We can consider the inverse contraction
$$ g_* \colon I \ra J^*_v, \quad g_*=f^{-1}_* \circ s^{-1}
$$
where $f^{-1}_*$ is the branch of the inverse function which maps $J^*_c$ onto $J^*_v$. The map $g_*$ is called the {\em presentation function} and it has the unique fixed point at 1. 
By the definition of $g_*$ implies that 
$$ f^2_* | J^*_v = g_* \circ f_* \circ (g_* )^{-1}
$$
Then by the appropriate rescaling of the presentation function, $g_*$, we can define the {\em renormalization at the critical value}, 
$R^n_v f_*$. 
Inductively we can define $g^n_*$ on the smallest interval $J^*_v(n)$ containing the critical value 1 with period $2^n$. Let $G^n_* \colon I \ra I$ be the diffeomorphism of the rescaled map of $g_*^n$.
\msk \\
Then the fact that $g_*$ is the contraction implies the existence of the limit.
$$ u_* = \lim_{n \ra \infty} G^n_* \colon I \ra I $$
 and the convergence is exponentially fast in $C^3$ topology.
Moreover, we see the following lemmas in \cite{CLM}.

 \begin{lem} [Lemma 7.1 in \cite{CLM}]   \label{renormalization at critical value}
 For every $n \geq 1$ \msk
 \begin{enumerate}
 \item  $J^*_v(n) = g^n_*(I)$ \msk
 \item  $ R^n_v f_* = G^n_* \circ f_* \circ (G^n_*)^{-1}$ \msk
 \item  $ u_* \circ f_* = f^* \circ u_*$ \ssk
 \end{enumerate}
 \end{lem}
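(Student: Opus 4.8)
The plan is to establish items (1) and (2) together by induction on $n$, propagating the defining conjugacy $f^2_*|_{J^*_v}=g_*\circ f_*\circ g_*^{-1}$ through the successive levels of renormalization, and then to deduce item (3) by passing to the limit $n\to\infty$ in item (2). For item (1) the base case $n=1$ is the definition, since $g_*(I)=J^*_v=J^*_v(1)$. For the inductive step the key point is that $f_*$ is \emph{the} renormalization fixed point, so the presentation-function construction is self-similar: after the affine rescaling $s$ the first-return map $f^2_*|_{J^*_v}$ is again $f_*$, so the presentation function of the next level is again $g_*$ acting on the rescaled picture. Hence $g^{n+1}_*=g_*\circ g^n_*$ carries $I$ onto $g_*(J^*_v(n))$ (using the inductive hypothesis $g^n_*(I)=J^*_v(n)$), and using item (2) at level $n$ --- applied to the rescaled return map --- one checks that this interval is exactly the \emph{minimal} interval containing the critical value $1$ which is periodic of period $2^{n+1}$ for $f_*$; the minimality is where one uses that $g^n_*$ is a homeomorphism onto its image. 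This identifies $g^{n+1}_*(I)$ with $J^*_v(n+1)$.

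For item (2) I would iterate the conjugacy. Write $G^n_*=A_n\circ g^n_*$, where $A_n$ is the affine homeomorphism carrying $J^*_v(n)$ onto $I$, so that $G^n_*\colon I\ra I$ is a diffeomorphism. From $f^2_*\circ g_*=g_*\circ f_*$ on $I$ --- a rearrangement of the given identity --- one obtains by induction $f^{2^n}_*\circ g^n_*=g^n_*\circ f_*$ on $I$, taking care at each step to use the correct monotone branch of $f_*$; equivalently $f^{2^n}_*|_{J^*_v(n)}=g^n_*\circ f_*\circ (g^n_*)^{-1}$. Since $R^n_v f_*$ is by definition the $A_n$-rescaling of $f^{2^n}_*|_{J^*_v(n)}$, conjugating the last identity by $A_n$ yields $R^n_v f_*=G^n_*\circ f_*\circ (G^n_*)^{-1}$, which is item (2).

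For item (3) I would pass to the limit. By the already-established $C^3$ convergence $G^n_*\ra u_*$ (which itself comes from $g_*$ being a real-analytic contraction fixing the endpoint $1$, so that the rescaled iterates $G^n_*$ form a convergent infinite composition of exponentially contracting diffeomorphisms; concretely $u_*$ is an affine function of the Koenigs linearizer of $g_*$ at $1$), letting $n\to\infty$ in item (2) gives $R^n_v f_*=G^n_*\circ f_*\circ (G^n_*)^{-1}\ra u_*\circ f_*\circ u_*^{-1}$. Since the left-hand side converges to $f^*\equiv\lim_n R^n_v f_*$, the renormalization fixed point at the critical value, this is exactly the intertwining relation $u_*\circ f_*=f^*\circ u_*$ of item (3).

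The step I expect to be the main obstacle is the combinatorial bookkeeping inside items (1) and (2): keeping track of which monotone branch of $f_*$ is inverted at each level and which affine rescaling $A_n$ (with the correct orientation and scale factor) is applied, and especially establishing the \emph{minimality} that pins down $J^*_v(n)$. That minimality is what upgrades item (1) from a mere inclusion to an equality, and it is what guarantees that the iterated conjugacy in item (2) lands exactly on $R^n_v f_*$ rather than merely on some conjugate of it.
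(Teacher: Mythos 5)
Your argument is correct and is essentially the standard one: the paper itself gives no proof of this lemma, importing it verbatim from [CLM], so there is nothing internal to compare against, but your route (induction propagating $f_*^2\circ g_*=g_*\circ f_*$ to $f_*^{2^n}\circ g_*^n=g_*^n\circ f_*$, affine rescaling by $A_n$ to get item (2), minimality of periodic intervals transported by the homeomorphisms $s^{-1}$ and $f_*^{-1}$ for item (1), and passage to the $C^3$ limit $G_*^n\ra u_*$ for item (3)) is exactly how the cited result is established. Your reading of $f^*$ in item (3) as $\lim_n R^n_v f_*$, the fixed point of renormalization at the critical value, is the intended one, and the points you flag as delicate (branch bookkeeping and minimality of $J^*_v(n)$) are indeed the only places requiring care.
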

 
 \begin{lem} [Lemma 7.3 in \cite{CLM}]          \label{composition of the presentation function}
 Assume that there is the sequence of smooth functions  $g_k \colon I \ra I, \ \ k=1,2, \ldots n$ such that $\|g_k - g_*\|_{C^3} \leq C\rho^k$ where the $g_* = \lim_{k \ra \infty} g_k$ \, for some constant $C>0$ and $\rho \in (0,1)$. Let $g^n_k = g_k \circ \cdots \circ g_n$ and let $G^n_k = a^n_k \circ g^n_k \colon I \ra I$, where  $a^n_k$ is the affine rescaling of $\Im g^n_k$ to $I$. Then $\| G^n_k - G^{n-k}_* \|_{C^1} \leq C_1\rho^{n-k}$, where $C_1$ depends only on $\rho$ and $C$.
\end{lem}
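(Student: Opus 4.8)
The plan is to use that $g_*$ is a uniform contraction, so that the long compositions $g^n_k=g_k\circ\cdots\circ g_n$ shrink exponentially and, after the affine normalisation $a^n_k$, acquire uniformly bounded nonlinearity (``real bounds''); the estimate then comes out of a letter-by-letter telescoping comparison with the $g_*$--tower, summed as a geometric series.

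First I would record the basic quantitative facts. Since $g_*$ is a contraction and $\|g_k-g_*\|_{C^3}\le C\rho^k\to 0$, there are $\theta\in(0,1)$ and $C_2>0$, depending only on $g_*$, $C$ and $\rho$, with $\|D(g_i\circ\cdots\circ g_j)\|_{C^0}\le C_2\,\theta^{\,j-i+1}$ for all $i\le j$ (the finitely many early, possibly non--contracting factors contribute only a bounded constant). In particular $|\Im g^n_k|\asymp\theta^{\,n-k}$, so $a^n_k$ is affine of slope $\asymp\theta^{-(n-k)}$. The second fact is a bounded--distortion statement: the rescaled map $G^n_k=a^n_k\circ g^n_k$ has $\|G^n_k\|_{C^3}$ bounded independently of $k$ and $n$. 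This is the usual argument that the nonlinearity $N(\varphi)=D^2\varphi/D\varphi$ is essentially additive under composition and is damped by the contraction, so $\|N(g^n_k)\|_{C^0}$ stays bounded; this is the one place where the full $C^3$ hypothesis on the $g_k$ is used. The same bounds hold for the tower $G^m_*=a^m_*\circ g_*^{\circ m}$, and one is given that $G^m_*\to u_*$ exponentially in $C^3$, so (after perhaps enlarging $\theta$) $\|G^{m+1}_*-G^m_*\|_{C^1}\le C_3\,\theta^{\,m}$.

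Next comes the recursive structure. Writing $g^n_k=g_k\circ g^n_{k+1}$ and inserting $\Id=(a^n_{k+1})^{-1}\circ a^n_{k+1}$ gives $G^n_k=\widehat g^{\,n}_k\circ G^n_{k+1}$, with $\widehat g^{\,n}_k=a^n_k\circ g_k\circ(a^n_{k+1})^{-1}\colon I\to I$; iterating, $G^n_k=\widehat g^{\,n}_k\circ\widehat g^{\,n}_{k+1}\circ\cdots\circ\widehat g^{\,n}_n$, a composition of $n-k+1$ uniformly $C^3$--bounded diffeomorphisms of $I$ with derivative close to $\pm1$, and there is an analogous factorisation of $G^{\,n-k+1}_*$. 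I would then interpolate by a chain $W_0,\dots,W_{n-k+1}$ with $W_0=G^{\,n-k+1}_*$ and $W_{n-k+1}=G^n_k$, where $W_{l+1}$ differs from $W_l$ only in the $(l+1)$--st letter (counted from the outside), switched from the $g_*$--letter to the corresponding letter of $G^n_k$. Then $G^n_k-G^{\,n-k+1}_*=\sum_l(W_{l+1}-W_l)$ and $W_{l+1}-W_l=P_l\circ(\widehat g^{\,n}_{k+l}\circ Q_l)-P_l\circ(\widehat g_*\circ Q_l)$, where $P_l$ is the composition of the outer $l$ rescaled letters (so $\|DP_l\|$ carries a genuine contraction factor and $\|P_l\|_{C^2}$ is controlled by the distortion estimate) and $Q_l$ is the inner block. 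Applying the chain rule in $C^1$, and using the key point that the affine normalisation removes the leading (affine) part of each letter, one bounds $\|W_{l+1}-W_l\|_{C^1}$ by a $C^1$--term proportional to (contraction of $P_l$) times $\|\widehat g^{\,n}_{k+l}-\widehat g_*\|_{C^1}$, plus a $C^0$--term carrying an extra power of the relevant interval length; both are dominated by $\|g_{k+l}-g_*\|_{C^3}\le C\rho^{\,k+l}$ times a fixed power of $\theta$. Summing the resulting geometric series over $l$ gives $\|G^n_k-G^{\,n-k+1}_*\|_{C^1}\le C_4\,\rho^{\,n-k}$ (after, if needed, replacing $\rho$ by $\max\{\rho,\theta\}$), and combining with $\|G^{\,n-k+1}_*-G^{\,n-k}_*\|_{C^1}\le C_3\theta^{\,n-k}$ yields the lemma with $C_1$ depending only on $\rho$ and $C$.

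I expect the bounded--distortion step to be the main obstacle. One must guarantee that, although $a^n_k$ magnifies by the enormous factor $\theta^{-(n-k)}$, the normalised maps $G^n_k$, the intermediate words $W_l$, and their derivatives all keep uniformly bounded $C^3$ norms; otherwise the magnification destroys the telescoping. This is exactly where the $C^3$ hypothesis bites, via the non--accumulation of nonlinearity along a contracting composition, and it is also what upgrades the estimate from $C^0$ to $C^1$: one differentiates the recursion $G^n_k=\widehat g^{\,n}_k\circ G^n_{k+1}$ once and runs the same contraction argument on the derivatives. The remaining bookkeeping nuisance is that switching one $g_j$ to $g_*$ also perturbs $\Im g^n_i$, hence every affine map $a^n_i$ with $i\le j$; one checks that in the normalised picture these perturbations are again $O(\rho^{\,j})$ and so do not affect the final bound.
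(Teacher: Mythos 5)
The paper does not prove this statement---it is quoted verbatim as Lemma 7.3 of [CLM]---so your argument can only be judged against what a correct proof must contain. Your overall architecture is the right one: uniform contraction of the compositions, real bounds for the normalized words via non-accumulation of nonlinearity (correctly identified as where the $C^3$ hypothesis enters), the factorization $G^n_k=\widehat g^{\,n}_k\circ G^n_{k+1}$, a letter-by-letter interpolation between the two towers, and control of the perturbed affine normalizations. The problem is the quantitative heart of the telescoping, and it is a genuine gap rather than bookkeeping.

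Your per-term bound is ``$C\rho^{k+l}$ times a fixed power of $\theta$,'' and you then sum a geometric series to claim $C_4\rho^{n-k}$. But $\sum_{l=0}^{n-k}\rho^{k+l}=O(\rho^{k})$, which is far larger than $\rho^{n-k}$ whenever $k<n-k$; with that per-term bound the telescoping only yields $O(\rho^k)$, which is not the lemma. The supporting claim that ``$\|DP_l\|$ carries a genuine contraction factor'' is also false: in the letter-normalized picture each rescaled outer letter is a surjective diffeomorphism of $I$ with derivative $1+O(\theta^{\,n-j})$, so $\|DP_l\|\asymp 1$, while in the unnormalized picture the outer contraction $\theta^{l}$ is overwhelmed by the final rescaling $\theta^{-(n-k)}$. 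What the argument actually requires is the \emph{product} bound $\|W_{l+1}-W_l\|_{C^1}=O\bigl(\rho^{\,k+l}\,\theta^{\,n-k-l}\bigr)$: the switch at position $l$ must be charged simultaneously for $\|g_{k+l}-g_*\|$ and for the length $\asymp\theta^{\,n-k-l}$ of the interval on which that letter acts. Neither factor alone suffices, since $\sum_l\rho^{k+l}=O(\rho^k)$ and $\sum_l\theta^{\,n-k-l}=O(1)$, whereas $\sum_l\rho^{k+l}\theta^{\,n-k-l}=O\bigl((n-k)\max(\rho,\theta)^{\,n-k}\bigr)$, which gives the lemma after enlarging the base (the conclusion cannot hold with the literal $\rho$ anyway, as the case $g_k\equiv g_*$ shows). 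The product bound is obtained by comparing nonlinearities rather than the maps themselves: writing $N\varphi=D^2\varphi/D\varphi$ and $W_{l+1},W_l$ as normalizations of $P\circ g_{k+l}\circ R$ and $P\circ g_*\circ R$, one gets $NW_{l+1}-NW_l=\bigl[(NP\circ g_{k+l}R)\,D(g_{k+l}R)-(NP\circ g_*R)\,D(g_*R)\bigr]+\bigl[(Ng_{k+l}-Ng_*)\circ R\bigr]DR$, and each bracket is $O(\rho^{k+l}\theta^{\,n-k-l})$ because $\|Ng_{k+l}-Ng_*\|_{C^0}=O(\rho^{k+l})$ while $\|DR\|,\|D(g_*R)\|=O(\theta^{\,n-k-l})$ and $\|NP\|_{C^1}=O(1)$ by the real bounds; since all words are normalized surjections of $I$, the $C^0$ estimate on nonlinearities upgrades to the $C^1$ estimate on the maps. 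Without this cancellation of the affine part of each switched letter, your estimate does not close.
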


\nin Let us normalize the functions $u_*$ and $g_*$ which have the fixed point at the origin and the derivatives at the origin is 1. Let
$$ v_*(x) = \frac{u_*(x+1)-1}{u_* ' (1)} 
$$
Abusing notation, we denote the normalized function of $g_*(x)$ to be also the $g_*(x)$ in the following lemma.

\begin{lem} \label{asymptotics of S for k}
There exists the positive constant $\rho <1$ such that for all $k<n$ and for every $y \in I^y $ and $ z \in   I^z$
\begin{align*}
  | \id + S^n_k(\, \cdot \,, y,\Bz) - v_*(\,\cdot \,) \: \!| =& \ O (\bar \eps^{2^k} y + \bar \eps^{2^k} \sum_j z_j + \rho^{n-k}) \\
 \textrm{and} \quad  | \,1+ \di_x S^n_k (\,\cdot \,, y, \Bz) - v_*'(\,\cdot \,)\: \! | =& \ O (\rho^{n-k}) .
\end{align*}
\end{lem}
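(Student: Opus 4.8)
The plan is to follow Section~7 of \cite{CLM}: first reduce to the one–dimensional picture by restricting everything to the coordinate line through the tip, where the statement becomes the convergence of a rescaled composition of one–dimensional presentation functions, and then propagate the resulting $C^1$ estimate off that line using the derivative bounds of Lemma~\ref{asymptotics of non-linear part 1}.

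\emph{Reduction to the tip line.} In the tip–centered coordinates of this section the origin is a fixed point of every $\Psi_j$, and ${\bf R}_{n,\,k}(0)=\B0$ because ${\bf R}_{n,\,k}$ collects only quadratic and higher order terms at the tip (Proposition~\ref{bounds of R}). Hence \eqref{coor change from k to n} gives $\Psi^n_k(x,0,\B0)=(\alpha_{n,\,k}(x+S^n_k(x,0,\B0)),\,0,\,\B0)$, so $\Psi^n_k$ preserves the line $L=\{y=0,\ \Bz=\B0\}$ and acts on it as the one–dimensional map $x\mapsto\alpha_{n,\,k}(x+S^n_k(x,0,\B0))$; since $\id+{\bf s}_j$ has derivative the identity at the origin, $\id+S^n_k(\,\cdot\,,0,\B0)$ fixes $0$ with unit derivative there. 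Restricted to $L$ and with its linear part $D_j$ divided out, the factor $\Psi_j=H^{-1}_j\circ\La^{-1}_j$ (conjugated by the tip translations) becomes a one–dimensional diffeomorphism $g_j\colon I\to I$ that is an $\bar\eps^{2^j}$–perturbation, in $C^3$ by analyticity, of the presentation function of $f_j$: the $x$–component of $H^{-1}_j$ is $\phi^{-1}_j$, which as in the proof of Lemma~\ref{diameter} is an $\bar\eps^{2^j}$–perturbation of $f_j^{-1}$, while $\La^{-1}_j$ is the affine contraction by $\si_j$. Since unimodal renormalization converges exponentially, $f_j\to f_*$ in $C^3$, so $\|g_j-g_*\|_{C^3}=O(\rho^j)$ with $g_*$ the normalized presentation function of \S\ref{universal functions}.

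\emph{The one–dimensional estimate.} Let $G^n_k$ be the rescaling of $g_k\circ\cdots\circ g_{n-1}$; the normalization of the previous step (fixed point at the origin, unit derivative there) identifies $G^n_k$ with $\id+S^n_k(\,\cdot\,,0,\B0)$. By Lemma~\ref{composition of the presentation function} (Lemma~7.3 of \cite{CLM}), $\|G^n_k-G^{n-k}_*\|_{C^1}=O(\rho^{n-k})$; together with the exponential $C^3$–convergence $G^m_*\to u_*$ recalled before Lemma~\ref{renormalization at critical value} and the fact that the normalization sending $u_*$ to $v_*$ is exactly the one above, this yields
\begin{equation*}
\bigl|\id+S^n_k(\,\cdot\,,0,\B0)-v_*(\,\cdot\,)\bigr|+\bigl|1+\di_x S^n_k(\,\cdot\,,0,\B0)-v_*'(\,\cdot\,)\bigr|=O(\rho^{n-k}).
\end{equation*}

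\emph{Propagation off the line and conclusion.} For fixed $x$, integrating $\di_y S^n_k$ and the partials $\di_{z_i}S^n_k$ along the path $(x,0,\B0)\to(x,y,\B0)\to(x,y,\Bz)$ and invoking Lemma~\ref{asymptotics of non-linear part 1}(1) gives $|S^n_k(x,y,\Bz)-S^n_k(x,0,\B0)|=O(\bar\eps^{2^k}y+\bar\eps^{2^k}\sum_j z_j)$, which combined with the one–dimensional estimate gives the first bound. Likewise $\di^2_{xy}S^n_k$ and the $\di^2_{xz_i}S^n_k$ are $O(\bar\eps^{2^k}\si^{n-k})$ by Lemma~\ref{asymptotics of non-linear part 1}(2), so integrating over the bounded domain $|\di_x S^n_k(x,y,\Bz)-\di_x S^n_k(x,0,\B0)|=O(\bar\eps^{2^k}\si^{n-k})=O(\rho^{n-k})$ once $\rho$ is enlarged so that $\rho\ge\si$; with the one–dimensional estimate this gives the second bound. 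The only delicate point is the identification in the second paragraph: checking that restricting $\Psi^n_k$ to $L$ and factoring out $D^n_k$ reproduces, with the correct normalizations, precisely the rescaled composition of one–dimensional presentation functions to which Lemma~\ref{composition of the presentation function} applies. Everything else is bookkeeping with the estimates already established in Lemma~\ref{asymptotics of non-linear part 1} and Proposition~\ref{bounds of R}.
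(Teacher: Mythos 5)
Your proposal is correct and follows essentially the same route as the paper: reduce to the invariant tip line $\{y=0,\ \Bz=\B0\}$, identify the normalized restriction $\id+S^n_k(\,\cdot\,,0,\B0)$ with a rescaled composition of near-presentation functions so that Lemma \ref{composition of the presentation function} and the convergence $G^m_*\to u_*$ give the one-dimensional $C^1$ estimate, then propagate off the line using the first- and mixed second-derivative bounds of Lemma \ref{asymptotics of non-linear part 1}. Your write-up is in fact somewhat more explicit than the paper's (which compresses the $C^3$ closeness of the factors to the presentation function into a citation and asserts the reduction to the $x$-section in one line), but no new idea is introduced.
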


\begin{proof}
The map \,$ \id + S^n_k(\,\cdot \, , y,\Bz)$ is the normalized map of $\Psi^n_k$ such that the derivative at the origin is the identity map, and $v_*(\, \cdot \,)$ is also the normalized map of $u_*$, which is the conjugation of the renormalization fixed point at the critical point and the critical value in Lemma \ref{renormalization at critical value}. Thus the normalized map, $ \id + S^n_k(\,\cdot \, , 0,\B0)$ and the one dimensional map, $G_*^n$ converge to the same function $v_*(\, \cdot \,)$ as $ n \ra \infty $ because the critical value of $f$ and the tip of $F$ moved to the origin as the fixed point of each function $g_*^n$ by the appropriate affine conjugation. 
\ssk \\
By Lemma \ref{asymptotics of non-linear part 1} we have
\begin{align*}
| \; \! \partial_y S^n_k|  \ = O(\bar \eps^{2^k}), \qquad  | \; \! \partial_{z_i} S^n_k| \ = O(\bar \eps^{2^k}) 
\end{align*}
for all $ 1 \leq i \leq m $ and moreover,
 $$ | \;\! \partial^2_{xy} S^n_k| = O(\bar \eps^{2^k}\si^{n-k}), \qquad  | \;\! \partial^2_{xz_i} S^n_k| = O(\bar \eps^{2^k}\si^{n-k}) $$
for all $ 1 \leq i \leq m $. Thus the proof of asymptotic along the section parallel to $ x- $axis is enough to prove the whole lemma. By Lemma \ref{exponential convergence to 1d map}, 
$$  \dist_{C^3} (\, \id + s_k(\,\cdot \, , 0,\B0),\ g_*(\,\cdot \,) ) = O(\rho^k)
$$
and by Lemma \ref{composition of the presentation function}, we obtain 
\begin{align} \label{C1 convergence of coordinate change}
\dist_{C^1} (\, \id + S^n_k(\,\cdot \, , 0,\B0),\ G_*^{n-k}(\,\cdot \,) ) = O(\rho^{n-k}) .
\end{align}
Since the $G_* ^{n} \ra v_*$ exponentially fast, we have the exponential convergence of the function $\id + S^n_k(\,\cdot , 0,\B0)$ to $v_*(\,\cdot \,)$. 
Hence, the above asymptotic and the exponential convergence at the origin prove the first part of the lemma. Furthermore, $ C^1 $ convergence of \eqref{C1 convergence of coordinate change} implies that 
$$ | \, 1+ \di_x S^n_k (\,\cdot \, , 0, \B0) - v_*'(\,\cdot \,) | = O (\rho^{n-k})$$
 where $\rho \in (0,1)$. 
\end{proof}
\msk

\subsection{Estimation of the quadratic part of $S^n_k$ for $n$} \label{asymptotic coordinate for n}
We estimate the asymptotic of $S^n_k$ using the estimation of the partial derivatives and recursive formulas. Then it implies the estimation of the asymptotic of the non-linear part of $\Psi^n_k$ for $ n $. 
In order to simplify notations, we would treat the case $ k =0$ and consider the behavior of $S^n_0$ instead of $S^n_k$. 

\nin In this section, let the variable $ y $ be $ z_0 $ if we express the quadratic sum of $ y $ and $ z_j $ variables to simplify the notations.

\begin{lem} \label{asymptotics of non linear part}
The following asymptotic is true
$$ \big| \: \! [\; x + S^n_0(x,y,\Bz) ] - [\, v_*(x) + \sum_{0 \leq i,j \leq m} \!a_{F,\,ij}\,z_i \:\! z_j \,] \big| = O(\rho^n)
$$
where constants $ |a_{F,\,ij}| $ are $ O(\bar \eps) $ for all $ 0 \leq i,j \leq m $ and for some $\rho \in (0,1)$.
\end{lem}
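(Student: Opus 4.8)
The plan is to peel off the $x$-section first. Write
\[
x + S^n_0(x,y,\Bz) \;=\; \bigl[\,x + S^n_0(x,0,\B0)\,\bigr] \;+\; Q^n(x,y,\Bz), \qquad Q^n(x,y,\Bz):=S^n_0(x,y,\Bz)-S^n_0(x,0,\B0),
\]
so that $Q^n$ vanishes identically on $\{y=0,\ \Bz=\B0\}$, and adopt the convention $z_0:=y$ throughout. By Lemma \ref{asymptotics of S for k} with $k=0$ along the section through the origin, the first bracket equals $v_*(x)+O(\rho^n)$, so it remains to prove $Q^n(x,y,\Bz)=\sum_{0\le i,j\le m}a_{F,\,ij}\,z_i z_j + O(\rho^n)$ with $|a_{F,\,ij}|=O(\bar\eps)$. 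The next step is to Taylor-expand $Q^n$ in the transverse variables $(z_0,\dots,z_m)$ about the origin with integral remainder,
\[
Q^n(x,y,\Bz)=\sum_{i=0}^m\partial_{z_i}S^n_0(x,0,\B0)\,z_i+\int_0^1(1-t)\!\!\sum_{0\le i,j\le m}\!\!\partial^2_{z_iz_j}S^n_0(x,ty,t\Bz)\,z_iz_j\,dt .
\]
Here the linear coefficients are already $O(\rho^n)$: since ${\bf S}^n_0=O(|w|^2)$ near the origin one has $\partial_{z_i}S^n_0(\B0)=0$, while Lemma \ref{asymptotics of non-linear part 1}(2) gives $|\partial^2_{xz_i}S^n_0|,\,|\partial^2_{xy}S^n_0|=O(\bar\eps^{2^0}\si^{n})$, so integrating in $x$ from the origin yields $|\partial_{z_i}S^n_0(x,0,\B0)|=O(\bar\eps\,\si^{n})=O(\rho^n)$ after enlarging $\rho$ so that $\rho\ge\si$.

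The main work is to show that the transverse Hessian stabilizes: there exist $n$-independent numbers $2a_{F,\,ij}$, $|a_{F,\,ij}|=O(\bar\eps)$, with $\partial^2_{z_iz_j}S^n_0(w)=2a_{F,\,ij}+O(\rho^n)$ uniformly for $w\in B$. I would start from the recursion for $\partial^2_{z_iz_j}S^n_k$ derived inside the proof of Lemma \ref{asymptotics of non-linear part 1}, which has the shape $\partial^2_{z_iz_j}S^n_k=\bigl(1+\partial_x s_k(w^n_{k+1})\bigr)\,\partial^2_{z_iz_j}S^n_{k+1}+N^n_k$, the ``new term'' $N^n_k$ being dominated by $K_4\,\partial^2_{z_iz_j}s_k(w^n_{k+1})$ with $K_4=\alpha_{n,\,k+1}^{-1}\si_{n,\,k+1}^2=\prod_{j=k+1}^{n-1}(1+O(\rho^j))$. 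Iterating (using $\partial^2_{z_iz_j}S^n_n=0$) gives $\partial^2_{z_iz_j}S^n_0=\sum_{k=0}^{n-1}\bigl(\prod_{l=0}^{k-1}(1+\partial_x s_l(w^n_{l+1}))\bigr)N^n_k$. Three facts drive the estimate. First, since $s_l$ is quadratic at the tip and $w^n_{l+1}\in B^n_{l+1}$ with $\diam B^n_{l+1}=O(\si^{n-l-1})$ (Corollary \ref{diameter 2}, Lemma \ref{bounds on domain}), each propagation factor equals $1+O(\bar\eps^{2^l}\si^{n-l-1})$. Second, for fixed $k$ one has $w^n_{k+1}\to\B0$ as $n\to\infty$, the entries of $D^n_{k+1}$ converge (Lemma \ref{decomposition of derivative}), and $s_k$ together with its derivatives is fixed, so $N^n_k\to c_k$ with $|c_k|=O(\bar\eps^{2^k})$ and $N^n_k=c_k+O\!\bigl(\bar\eps^{2^k}(\rho^n+\si^{n-k})\bigr)$ --- the third-order bounds $\|D^3 s_k\|=O(1)$, and $O(\bar\eps^{2^k})$ for third derivatives with at least one non-$x$ index, being supplied from $C^r$-boundedness of $s_k$ and the $C^0$, $C^2$ estimates already at hand (Lemma \ref{bounds on domain}, Proposition \ref{bounds of R}).

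Third, set $2a_{F,\,ij}:=\sum_{k\ge0}c_k$: this converges with tail $O(\bar\eps^{2^n})$ and $|a_{F,\,ij}|\le\tfrac12\sum_k|c_k|=O(\bar\eps)$, while the accumulated errors reduce to sums of the type $\sum_{k<n}\bar\eps^{2^k}\si^{n-k}$ (and similar mixed products), each of which is $O(\rho^n)$ once $\rho$ is taken slightly above $\si$ (say $\rho=\si^{1/2}$), because every summand carries either a factor $\si^{n-k}$ --- exponentially small in $n$ when $k$ stays bounded away from $n$ --- or a factor $\bar\eps^{2^k}$ --- super-exponentially small when $k$ is near $n$; the polynomial-in-$n$ loss from the number of terms is absorbed by the enlargement of $\rho$. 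All of this is uniform in $w$, since $w^n_{k+1}\in B^n_{k+1}$ whatever the base point. Feeding $\partial^2_{z_iz_j}S^n_0=2a_{F,\,ij}+O(\rho^n)$ into the remainder integral (with $\int_0^1(1-t)\,dt=\tfrac12$ and $|z_i|=O(1)$) turns it into $\sum_{0\le i,j\le m}a_{F,\,ij}z_iz_j+O(\rho^n)$; combining with the linear estimate and the first bracket completes the lemma, symmetrizing $a_{F,\,ij}$ if one wishes.

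I expect the third paragraph --- the summation bookkeeping --- to be the real obstacle: each of the roughly $n$ correction contributions is individually only of size $O(\bar\eps^{2^k})$ or $O(\si^{n-k})$, so none is a priori $O(\rho^n)$, and the exponential decay in $n$ of the total must be extracted from the interplay of the super-exponentially decaying $\bar\eps^{2^k}$ family against the exponentially decaying $\si^{n-k}$ family, together with a slight enlargement of $\rho$. A subsidiary point is simply recording the third-derivative bounds on the $s_k$ that are used implicitly, which were not stated but follow the same recursive pattern as Lemma \ref{asymptotics of non-linear part 1}.
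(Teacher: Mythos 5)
Your argument is correct in outline but takes a genuinely different route from the paper's. The paper never Taylor-expands the transverse Hessian. It first uses \eqref{exponential convergence of S} (your ``first bracket'' step, derived from Lemma \ref{asymptotics of S for k}) to reduce to the section $\{x=0\}$, and then runs the recursion \emph{in $n$} coming from $\Psi^{n+1}_0=\Psi^n_0\circ\Psi^{n+1}_n$ (equation \eqref{recursive formula for n}, peeling off the innermost scaling map) rather than your recursion in $k$ from $\Psi^n_k=\Psi_k\circ\Psi^n_{k+1}$. Writing $S^n_0(0,y,\Bz)=\sum a_{n,ij}z_iz_j+A_n(y,\Bz)\cdot\big(\sum z_iz_jz_k\big)$, it obtains $a_{n+1,ij}=\tfrac{\si_n^2}{\alpha_n}a_{n,ij}+O(\bar\eps^{2^n})$ and $\|A_{n+1}\|\le\tfrac{|\si_n|^3}{|\alpha_n|}\|A_n\|+O(\bar\eps^{2^n})$; since $\si_n^2/\alpha_n\approx 1$ while $|\si_n|^3/|\alpha_n|\approx\si<1$, the quadratic coefficients converge and the cubic coefficient is contracted to zero. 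So there the exponential rate comes from a genuine one-step contraction of the cubic part, whereas in your scheme it must be extracted from the $\bar\eps^{2^k}$-versus-$\si^{n-k}$ summation that you correctly identify as the crux (and handle correctly: split at $k\approx n/2$ and take $\rho$ slightly above $\si$). Both mechanisms work; the paper's is lighter on bookkeeping, while yours exhibits the limit coefficients as an explicit convergent series over levels and makes the uniformity in $w$ transparent.

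Two gaps should be made explicit. First, the convention $z_0=y$ hides the fact that the proof of Lemma \ref{asymptotics of non-linear part 1} derives recursions only for $\partial^2_{xy},\partial^2_{xz_i},\partial^2_{yz_i},\partial^2_{z_iz_j}S^n_k$ and never for $\partial^2_{yy}S^n_k$, which your Hessian expansion needs; the $y$-coordinate of $\Psi_k$ carries the extra terms $\Br_k(y)$ and ${\bf R}_{n,\,k+1}(y)$, so that recursion has additional contributions involving $\Br_k''$ and $({\bf R}_{n,\,k+1})''$ --- controlled by Lemma \ref{bounds on domain} and Proposition \ref{bounds of R}, but the recursion itself must be written down. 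Second, as you acknowledge, the rate $N^n_k=c_k+O(\bar\eps^{2^k}\si^{n-k})$ requires third-derivative bounds on $s_k$ (obtained by differentiating \eqref{bounds of sk} once more), which the paper does not state. A small slip that does not affect the conclusion: the propagation factor is $1+O(\si^{n-l-1})$, not $1+O(\bar\eps^{2^l}\si^{n-l-1})$, since $\partial^2_{xx}s_l=O(1)$ carries no $\bar\eps$ factor; the product over $l<k$ is still $1+O(\si^{n-k})$, which is all you use.
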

\begin{proof}
For any fixed $k \geq 0$, the recursive formula for $n > k$ comes from the $\Psi^{n+1}_k = \Psi^n_k \circ \Psi^{n+1}_n$. Thus
\begin{align} \label{recursive formula for n}
{\bf S}^{n+1}_k (w) = {\bf s}_n (w) + D^{-1}_n \circ {\bf S}^n_k \circ D_n \circ (\id + {\bf s}_n) (w) .
\end{align}
Let $k=0$ for simplicity, and compare each coordinates of the both sides. Then \ssk
\begin{equation*}
\begin{aligned}
& \  (S^{n+1}_0(w),\ 0 ,\ {\bf R}_{n+1,\,0}(y))  \\[0.3em]
 = & \ (s_n(w),\ 0,\ \Br_n(y)) 
 +  \left (
   \begin{array} {ccc}
    \alpha_n^{-1}  & \alpha_n^{-1}\,(-t_n + \Bu_n \cdot \Bd_n)  & - \alpha_n^{-1} \Bu_n \\ [0.3em]
              &\ \  \si_n^{-1} &         \\ [0.3em]
              & -\si_n^{-1} \Bd_n  & \ \  \si_n^{-1} \cdot \Id_{m \times m}
       \end{array}
\right )     
\left (
       \begin{array}{c}
S^n_0(w) \\ [0.3em]
0 \\ [0.3em]
{\bf R}_{n,\,0}(y)
        \end{array}
\right ) \\[0.8em]
& \hspace{2.5in} \circ  
\left( 
       \begin{array} {ccc}
 \alpha_n & \si_n t_n & \si_n \Bu_n  \\ [0.3em]
               & \si_n       &      \\ [0.3em]
               & \si_n \Bd_n & \si_n \cdot \Id_{m \times m}
       \end{array}  
\right)       
\left(
       \begin{array} {c}
       x+ s_n(w) \\ [0.3em]
       y \\ [0.3em]
       \Bz + \Br_n(y)
       \end{array}  
\right) .
\end{aligned} \msk
\end{equation*}
\nin By the direct calculation, we obtain the following equation
\msk
\begin{equation*}
\begin{aligned}
& \  (S^{n+1}_0(w),\ 0 ,\ {\bf R}_{n+1,\,0}(y))  \\[0.3em]
 = & \ (s_n(w),\ 0,\ \Br_n(y))  
  +  \left( \dfrac{1}{\alpha_n}\,S^n_0(w) - \dfrac{1}{\alpha_n}\,\Bu_n \cdot {\bf R}_{n,\,0}(y),\ 0 ,\ \dfrac{1}{\si_n}\, {\bf R}_{n,\,0}(y) \right) \circ \\
  & \quad  \Big( \alpha_n (x+ s_n(w)) + \si_n t_n \:\! y + \si_n \Bu_n \cdot (\Bz + \Br_n(y)), \ \si_n y, \  \si_n \Bd_n \:\! y + \si_n (\Bz + \Br_n(y)) \Big) \\[0.5em]
 = & \  (s_n(w),\ 0,\ \Br_n(y))  \\
 & + \left(  \dfrac{1}{\alpha_n}\,S^n_0 \Big( \alpha_n (x+ s_n(w)) + \si_n t_n \:\! y + \si_n \Bu_n \cdot (\Bz + \Br_n(y)), \  \si_n y, \  \si_n \Bd_n \:\! y + \si_n (\Bz + \Br_n(y)) \Big) \right. \\
 & \qquad \left. - \:\! \dfrac{1}{\alpha_n}\, \Bu_n \cdot {\bf R}_{n,\,0}(\si_n y) ,\ 0 , \ \dfrac{1}{\si_n}\, {\bf R}_{n,\,0}(\si_n y) \right) .
\end{aligned} \msk
\end{equation*}

\nin Firstly, let us compare from third to $ m+2 $ coordinates of each side of the above equation. Using the Taylor's expansion and Lemma \ref{bounds on domain}, we obtain
\begin{align*}
{\bf R}_{n+1,\,0}(y) & = \Br_n(y) + \dfrac{1}{\si_n}\,{\bf R}_{n,\,0}(\si_n y) 
\end{align*}
\ssk
Recall the coordinate expression of $ {\bf R}_{n,\,0}(y) = \big(\,{R}_{n,\,0}^1(y),\ {R}_{n,\,0}^2(y),\ \ldots ,\ {R}_{n,\,0}^m(y)\,\big) $. 
Then we have the following form of ${R}_{n,\,0}^j(y)$.
\begin{align*}
{R}_{n,\,0}^j(y) & = a_n^j\, y^2 + A_n^j(y)\, y^3 \\
{R}_{n+1,\,0}^j(y) & = \dfrac{1}{\si_n}\, \Big( a_n^j \cdot (\si_n y)^2 + A_n^j(\si_n y) \cdot (\si_n y)^3 \Big) 
+ c_n^j \,y^2 + O(\bar \eps^{2^n} y^3) .
\end{align*}
for each $ 1 \leq j \leq m $. Thus $a_{n+1}^j = \si_n a_n^j + c_n^j $  and $ \| A_{n+1}^j \| \leq \| \:\!\si_n \|^2   \| A_n^j \| + O(\bar \eps^{2^n}) $. 
for all $ 1 \leq j \leq m $. Hence,  $A_n^j \ra 0$ \ and \ $ a_n^j \ra 0$ \ exponentially fast as $n \ra \infty$.

\nin Moreover, the second part of Lemma \ref{asymptotics of S for k} implies the following equation
\begin{align} \label{exponential convergence of S}
 \big| \: \! [\,x + S^n_0(x,y,\Bz) ] - [ \,v_*(x) + S^n_0 (0,y,\Bz) ] \big| = O(\rho^n) .
\end{align}
\msk
Secondly, compare the first coordinates of \eqref{recursive formula for n} at $(0,y,z)$
\begin{align*}
 & \ S^{n+1}_0(0,y,\Bz) \\[0.2em]
  = & \ s_n(0,y,\Bz)   \\
 \quad & + \dfrac{1}{\alpha_n}\,S^n_0 \Big( \alpha_n s_n(0,y,\Bz) + \si_n t_n y + \si_n \Bu_n \cdot (\Bz + \Br_n(y)),\ \si_n y,\ \si_n \Bd_n y + \si_n (\Bz + \Br_n(y)) \Big)   \\
 &    -  \dfrac{1}{\alpha_n}\, \Bu_n \cdot {\bf R}_{n,\,0}(\si_n y) .
\end{align*}
The estimation of\, $| \: \! \partial^2_{xy} S^n_k|,\,  |  \:\! \partial^2_{xz_i} S^n_k|$ \,and\, $|\: \! \partial^2_{yz_i}S^n_k|,\, | \; \! \partial^2_{z_i z_j} S^n_k| $ for $ 1 \leq i,j \leq m $ in Lemma \ref{asymptotics of non-linear part 1} implies that 
$$ \frac{\di S^n_0}{\di x} (0,y,\Bz) = O\Big(\si^{n} y + \si^n \sum_{j=1}^m z_j\,\Big) \quad \textrm{and} \quad
 \frac{\di S^n_0}{\di z_i} (0,y,\Bz) = O( y + z_i)
$$
respectively. Recall that the variable $ z_0 $ is the variable $ y $. The order of the $t_n, \Bu_n, \Br_n$ and Taylor's expansion of\, $S^n_0$\, at $(0, \si_n y, \si_n \Bz)$ implies that 
\begin{align*}
 & S^{n+1}_0(0,y,\Bz) \\[0.5em]
  = & \ s_n(0,y,\Bz)   \\[0.3em]
  \quad & + \dfrac{1}{\alpha_n} \bigg[ S^n_0(0,\,\si_n y,\, \si_n \Bz) +  \frac{\di S^n_0}{\di x} (0,\,\si_n y,\, \si_n \Bz) \cdot \Big(\alpha_n s_n(0,y,\Bz) + \si_n t_n y + \si_n \Bu_n \cdot (\Bz + \Br_n(y)) \Big) \\[0.3em]
  \quad & + \sum_{j=1}^m \frac{\di S^n_0}{\di z_j} (0,\,\si_n y,\,\si_n \Bz) \cdot \Big( \si_n d_n^j \,y + \si_n r_n^j(y) \Big) \bigg] -  \dfrac{1}{\alpha_n}\, \Bu_n \cdot {\bf R}_{n,\,0}(\si_n y) + O\bigg( \bar \eps^{2^n} \!\!\!\!\!\! \sum_{\substack {0 \leq \,i,\,j,\;k \leq \,m}} \!\!\!\! z_i z_j z_k \bigg)   \\
  = & \ \dfrac{1}{\alpha_n}\, S^n_0(0,\,\si_n y,\, \si_n \Bz) + \sum_{0 \leq \,i,j \leq \,m} e_{n,\,ij}\,z_i z_j +  O\bigg( \bar \eps^{2^n} \!\!\!\!\!\! \sum_{\substack {0 \leq \,i,\,j,\;k \leq \,m}} \!\!\!\! z_i z_j z_k \bigg) 
\end{align*} 
where $e_{n,\,ij} = O(\bar \eps^{2^n})$ \ for all\ $ 0 \leq i,j \leq m $. 
Then we can express $S^n_0(0,y,\Bz)$ as the quadratic and higher order terms, 
\begin{equation*}
\begin{aligned}
 S^n_0(0,y,\Bz) &= \sum_{0 \leq \,i,\,j \leq \,m} a_{n,\,ij}\,z_i z_j  + A_n(y,\Bz) \cdot \bigg(\, \sum_{\substack {0 \leq \,i,\,j,\;k \leq \,m}} \!\!\!\! z_i z_j z_k \bigg) .
\end{aligned} 
\end{equation*}
The recursive formula for $S^n_0(0,y,z)$ implies that
\begin{align*}
 \quad & S^{n+1}_0(0,y,\Bz) \\
=& \ \dfrac{1}{\alpha_n} \left[\,\sum_{0 \leq \,i,j \leq \,m} a_{n,\,ij}\,(\si_n z_i)(\si_n z_j)
+ A_n(\si_n y, \si_n \Bz) \cdot \bigg(\, \sum_{\substack {0 \leq \,i,\,j,\;k \leq \,m}} \!\!\!\! (\si_n z_i) 
(\si_n z_j) (\si_n z_k) \bigg) \right] \\
\quad &  + \sum_{0 \leq \,i,\,j \leq \,m} \!\!\! e_{n,\,ij}\,z_i\, z_j +  O\bigg( \bar \eps^{2^n} \!\!\!\!\!\! \sum_{\substack {0 \leq \,i,\,j,\;k \leq \,m}} \!\!\!\! z_i\, z_j\, z_k \bigg)  .
\end{align*}
Hence, $a_{n+1,\,ij} = \dfrac{\,\si^2}{\,\alpha_n} \, a_{n,\,ij} + \!\!\! \displaystyle\sum_{0 \leq \,i,j \leq \,m} \!\!\! e_{n,\,ij}$ \;for $ 0 \leq i,j \leq m $ \,and moreover,
$$ \|A_{n+1} \| \leq \|A_n\| \cdot \dfrac{\ | \;\! \si_n|^3}{| \;\!\alpha_n|} + O(\bar \eps^{2^n}) . $$
It implies that $a_{n,\,ij} \ra a_{F,\,ij}$ \  for $ 0 \leq i,j \leq m $\, and\ $\| A_n\| \ra 0$ exponentially fast as $n \ra \infty$. The exponential convergence of $ S^n_0(0,y, \Bz) $ to the quadratic function of $y$ and $ \Bz$ and the equation \eqref{exponential convergence of S} show the asymptotic of $S^n_0(x,y, \Bz)$.
\end{proof}

\comm{*********
\ssk
\begin{rem}
The above Lemma can be generalized for $S^n_k$ as follows.
$$ \big| \: \! [\; x + S^n_k(x,y,z) ] - [\, v_*(x) +  \sum_{0 \leq i,j \leq m} \!a_{F,\,ij}\,z_i \:\! z_j ] \big| = O(\rho^{n-k})
$$
The constants $ | \:\!a_{F,\,ij}| $ for $ 0 \leq i,j \leq m $ \,of $S^n_k$ are $O(\bar \eps^{2^k})$.
\end{rem}

\begin{rem}
The recursive formula of $ S^n_0(0,y,z) $ can be written using matrices, which expression may be easily extendible for the perturbed H\'enon-like map of the general dimension.
\begin{align*}
& S^{n+1}_0(0,y,z) \\
  = & \ s_n(0,y,z)    + \dfrac{1}{\alpha_n} S^n_0(0,\si_n y, \si_n z) -  \dfrac{1}{\alpha_n} u_n R^n_0(\si_n y) \\
  \quad & + \dfrac{\si^n}{\alpha_n} \Big(\di_x S^n_0(0,\si_n y, \si_n z) \ \ \di_x S^n_0(0,\si_n y, \si_n z) \Big)
\left(
  \begin{array} {cc}
  t_n& u_n \\
  d_n& 0
  \end{array}
\right)
\left(
\begin{array} {c}
y \\
z+ r_n(y)
\end{array}
\right)   \\
  \quad & + \Big(\di_x S^n_0(0,\si_n y, \si_n z) \ \ \di_x S^n_0(0,\si_n y, \si_n z) \Big)
 \left(
\begin{array} {c}
s_n(0,y,z) \\
\si_n / \alpha_n \cdot r_n(y)
\end{array} 
\right)      +  O\left( \bar \eps^{2^n} \sum^3_{j=0} y^{3-j}z^j \right) 
\end{align*}
\end{rem}
*********}
\msk

\subsection{Universality of $ \Jac R^nF $}
Let the $n^{th}$ renormalized map of $F$ be $R^nF \equiv F_n = (f_n -\eps_n,\;x,\; \bde_n)$. Recall that $\Psi^n_{\tip} \equiv \Psi^n_{\Bv}$ from $n^{th}$ level to $0^{th}$ level and the tip $\tau_F $ is contained in $\in B^n_{v^n}$ for all $n \in \N $. Thus $\Psi^n_{\tip}$ is the original coordinate change rather than the normalized function $\Psi^n_{\Bv}$ conjugated by translations $T_n$. 
\\
Recall the equation \eqref{chain rule} again
\begin{align*}
\Jac F_n(w) &= \Jac F^{2^n}(\Psi^n_{\tip} (w)) \frac{\Jac \Psi^n_{\tip} (w)}{\Jac \Psi^n_{\tip}(F_nw)}  \\
                  &= b^{2^n} \frac{\Jac \Psi^n_{\tip}(w)}{\Jac \Psi^n_{\tip}(F_nw)} (1+ O(\rho^n)).
\end{align*} 

\begin{thm} [Universal expression of Jacobian determinant] \label{Universality of the Jacobian}
For the function $F \in \II (\bar \eps)$ for sufficiently small $\bar \eps >0$, we obtain that
$$ \Jac F_n = b^{2^n} a(x) \: (1+ O(\rho^n))
$$
where $b$ is the average Jacobian of \;$F$ and $a(x)$ is the universal positive function for some $\rho \in (0, 1)$.
\end{thm}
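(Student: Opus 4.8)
The plan is to build on the chain--rule identity \eqref{chain rule}, which already reduces the theorem to showing that the ratio $\Jac\Psi^n_{\tip}(w)\big/\Jac\Psi^n_{\tip}(F_nw)$ converges, uniformly in $w$ and at rate $O(\rho^n)$, to a strictly positive function depending only on $x$ and on the universal one--dimensional data. Since $\Psi^n_{\tip}=T_0^{-1}\circ\Psi^n_0\circ T_n$, where $\Psi^n_0$ is the origin--fixing map of Section~\ref{asymptotic coordinate for n} and $T_j\colon w\mapsto w-\tau_j$, and translations have unit Jacobian determinant, it suffices to control $\Jac\Psi^n_0(w-\tau_n)\big/\Jac\Psi^n_0(F_n(w)-\tau_n)$.

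First I would use the factorization \eqref{coor change from k to n}, $\Psi^n_0=D^n_0\circ(\id+{\bf S}^n_0)$, where ${\bf S}^n_0(w)=(S^n_0(w),\,0,\,{\bf R}_{n,0}(y))$ and $D^n_0$ is the constant matrix of Lemma~\ref{decomposition of derivative}, whose Jacobian determinant is a nonzero constant and therefore cancels in the ratio. For the nonlinear factor, the Jacobian matrix of $\id+{\bf S}^n_0$ has $(1+\partial_x S^n_0,\,0,\ldots,0)$ as its first column, because the $y$-- and $\Bz$--components of ${\bf S}^n_0$ do not involve $x$; deleting its first row and column leaves a lower--triangular matrix with unit diagonal, since ${\bf R}_{n,0}$ depends on $y$ alone. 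Hence $\Jac(\id+{\bf S}^n_0)(w)=1+\partial_x S^n_0(w)$ exactly, so
$$
\frac{\Jac\Psi^n_{\tip}(w)}{\Jac\Psi^n_{\tip}(F_nw)}=\frac{1+\partial_x S^n_0(w-\tau_n)}{1+\partial_x S^n_0\big(F_n(w)-\tau_n\big)}.
$$

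Next I would insert the universality of the coordinate change. By the second estimate of Lemma~\ref{asymptotics of S for k} (with $k=0$), $1+\partial_x S^n_0(\,\cdot\,,y,\Bz)=v_*'(\,\cdot\,)+O(\rho^n)$ uniformly in $(y,\Bz)$, so the numerator equals $v_*'(x-\tau_n^x)+O(\rho^n)$. For the denominator, $\pi_x(F_n(w))=f_n(x)-\eps_n(w)$; since $\|\eps_n\|=O(\bar\eps^{2^n})$ is super--exponentially small, while $f_n\to f_*$ and $\tau_n\to\tau_*$ exponentially fast (standard consequences of the hyperbolicity of the renormalization operator, $\dist(R^nF,F_*)=O(\rho^n)$), and $v_*'$ is Lipschitz, the denominator equals $v_*'\big(f_*(x)-\tau_*^x\big)+O(\rho^n)$. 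As $v_*$ is a diffeomorphism of a compact interval, $v_*'$ is bounded away from $0$, so the ratio is well defined for large $n$ and
$$
\frac{\Jac\Psi^n_{\tip}(w)}{\Jac\Psi^n_{\tip}(F_nw)}=a(x)+O(\rho^n),\qquad a(x):=\frac{v_*'\big(x-\tau_*^x\big)}{v_*'\big(f_*(x)-\tau_*^x\big)},
$$
a function of $x$ alone, assembled only from the universal objects $v_*$, $f_*$ and the universal tip $\tau_*$ (with the normalizations inherited from \cite{CLM}). Finally $a>0$ because $F$, hence $F_n=R^nF$, is orientation preserving, so $\Jac F_n>0$ and $b^{2^n}>0$; substituting into \eqref{chain rule} gives $\Jac F_n(w)=b^{2^n}\big(a(x)+O(\rho^n)\big)\big(1+O(\rho^n)\big)=b^{2^n}a(x)\big(1+O(\rho^n)\big)$, as claimed.

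The determinant computation above is routine linear algebra; the step that demands genuine care is the one controlling the argument of $v_*'$ in the denominator, i.e.\ verifying that $\pi_x(F_n(w))-\tau_n^x$ tends to the universal value $f_*(x)-\tau_*^x$ at the precise rate $O(\rho^n)$. This combines the exponential convergence $f_n\to f_*$ and $\tau_n\to\tau_*$ with the super--exponential smallness of $\eps_n$, and it is exactly here that one must check that the limiting $a(x)$ depends only on the renormalization fixed point rather than on the individual map $F$ --- which is what makes it the \emph{universal function}.
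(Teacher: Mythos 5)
Your argument is correct and is essentially the proof the paper intends (the printed proof defers to the two-dimensional Universality Theorem of \cite{CLM}, and the argument kept in the source follows exactly your route): reduce via the chain-rule identity \eqref{chain rule} and the factorization $\Psi^n_0=D^n_0\circ(\id+{\bf S}^n_0)$ to the ratio $\bigl(1+\di_xS^n_0(w-\tau_n)\bigr)/\bigl(1+\di_xS^n_0(F_n(w)-\tau_n)\bigr)$, then apply Lemma \ref{asymptotics of S for k} together with the exponential convergences $f_n\to f_*$, $\tau_n\to\tau_*$ and the super-exponential smallness of $\eps_n$. Your explicit verification that $\Jac(\id+{\bf S}^n_0)=1+\di_xS^n_0$ (via the triangular structure forced by ${\bf R}_{n,0}$ depending only on $y$) is a detail the paper asserts without computation, and your limit $a(x)=v_*'(x-\tau_*^x)/v_*'(f_*(x)-\tau_*^x)$ is the correct normalization.
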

\begin{proof}
For the higher dimensional H\'enon-like map, Lemma \ref{asymptotics of S for k} and Lemma \ref{asymptotics of non linear part} are the essentially same as for the two dimensional H\'enon-like maps. Then the proof of theorem is also the same as two dimensional nmaps. See Universality theorem in \cite{CLM}. 
\comm{***************
Let us consider the affine maps 
$$ T \colon w \mapsto w- \tau, \qquad T_n \colon w \mapsto w - \tau_n $$
where $\tau_n$ is the tip of $R^nF$.      
Then we can consider the map
$$ L^n \colon w \mapsto (D^n_0)^{-1}(w - \tau)  $$
as the local chart of $B^n$. On these local charts, we write maps with the boldfaced letters if the maps are conjugated by its local charts in this proof.
$$ {\bf F}_n = T_n \circ F_n \circ T_n^{-1}, \qquad \id + \ {\bf S}^n_0 = L^n \circ \Psi^n_{\tip} \circ T_n^{-1}
$$
By the definition of the coordinate change map, $\Psi^n_{\tip}$ and the normalized map, $\Psi^n_0$, the following diagram is commutative.
\begin{displaymath}
\xymatrix @C=1.5cm @R=1.5cm
 {
T_n(B)  \ar[d]_*+{{\Psi^n_0}}& B \ar[l]_*+{{T_n}} \ar[r]^*+{{F_n}}  \ar[d]_*+{{\Psi^n_{\tip}}} & F_n(B) \ar[r]^*+{{T_n} } \ar[d]^*+{{\Psi^n_{\tip}}} &(T_n \circ F_n)(B)  \ar[d]^*+{{\Psi^n_0}} \\
 T(B_n)                   & B^n \ar[l]_*+{T}  \ar[r]^*+{{F^{2^n}}}            &F^{2^n}(B^n) \ar[r]^*+{T}         & (T \circ F^{2^n})(B^n)  }
\end{displaymath}
\ssk \\
Since any translation does not affect Jacobian determinant, the ratio of Jacobian determinant of the coordinate change map is as follows
\ssk
\begin{equation} \label{ratio of jacobian}
\begin{aligned} 
\frac{\Jac \Psi^n_{\tip} (w)}{\Jac \Psi^n_{\tip}(F_nw)} = \frac{\Jac \Psi^n_0 ({\bf w}_n)}{\Jac \Psi^n_0({\bf F}_n{\bf w}_n)}
= \frac{1 + \di_x S^n_0({\bf w}_n)}{1 + \di_x S^n_0({\bf F}_n{\bf w}_n)}
\end{aligned} \ssk
\end{equation}
where ${\bf w}_n = T_n(w)$. 
By Theorem \ref{exponential convergence to 1d map}, the tip $\tau_n$ converges to $ \tau_{\infty} = (f_*(c_*),\; c_*,\; \B0)$ \,exponentially fast where $ c_* $ is the critical point of $ f_*(x) $. It implies the following limits
\begin{align*}
T_n & \ra  T_{\infty} \colon w \mapsto w- \tau_{\infty} \\
{\bf w}_n = T_n(w) & \ra  T_{\infty}(w) \\
{\bf F}_n{\bf w}_n  & \ra {\bf F}_* \circ T_{\infty}(w) = T_{\infty} \circ F_*(w) = (f_*(x) - f_*(c_*),\ x- c_*,\ \B0) 
\end{align*}
and each convergence is exponentially fast. Hence, Lemma \ref{asymptotics of non linear part} implies that the following convergence 
\begin{align} \label{convergence of dS over x}
1+ \di_xS^n_0 \ra v_* '
\end{align}
is exponentially fast. The equations \eqref{ratio of jacobian}, \eqref{convergence of dS over x} and convergence of ${\bf F}_n{\bf w}_n$ to the ${\bf F}_* \circ T_{\infty} $ imply the following convergence
\begin{align} \label{universal 1dim limit}
\frac{\Jac \Psi^n_{\tip} (w)}{\Jac \Psi^n_{\tip}(F_nw)} \lra  
 \frac{v_* '\big( x- c_* \big)}{v_* ' \big( f_*(x)- f_*(c_*) \big)} \,\equiv a(x)
\end{align}
where $w=(x,y,\Bz)$. 
Moreover, this convergence is exponentially fast. 
\ssk \\ 
The positivity of $a(x)$ comes from two facts. Firstly, the Jacobian determinant of the orientation preserving diffeomorphism is non-negative at every point and we assumed that each infinitely renormalizable map, $F \in \II(\bar \eps)$, is orientation preserving on each level. Secondly, the renormalization theory of the one dimensional map at the critical value implies the non vanishing property of $v_* '$ with the sufficiently small perturbation.
************************}
\end{proof}

\bsk

\section{Toy model H\'enon-like map in higher dimension} \label{Tangent bundle splitting}
\msk
Let higher dimensional H\'enon-like map satisfying $ \eps(w) = \eps(x,y) $, that is, $ \di_{z_i} \eps \equiv 0 $ for all $ 1 \leq i \leq m $ be the {\em toy model map}. Denote the toy model map by $ F_{\mod} $. Then the projected map $ \pi_{xy} \circ F_{\mod} = F_{2d} $ is exactly two dimensional H\'enon-like map. Let the horizontal-like diffeomorphism of $ F_{\mod} $ be $ H_{\mod} $. Thus we see that $ \pi_{xy} \circ H_{\mod} = H_{2d} $. Then we obtain that $ \pi_{xy} \circ RF_{\mod} = RF_{2d} $. 
\msk

\begin{prop} \label{renormalization of toy model}
Let $ F_{\mod} = (f(x) - \eps(x,y),\ x,\ \bde(w)) $ be a toy model map in $ \II(\bar \eps) $. Then $ n^{th} $ renormalized map $ R^nF_{\mod} $ is a toy model map, that is,
$$ \pi_{xy} \circ R^nF_{\mod} = R^nF_{2d} $$
for every $ n \in \N $. Moreover, $ \eps_n $ is of the following form
$$ \eps_n(w) = \ b_1^{2^n}a(x)\:\! y (1+O(\rho^n)) $$ 
where $ b_1 $ is the average Jacobian of the two dimensional map, $ F_{2d} = \pi_{xy} \circ F_{\mod} $ 
and $ a(x) $ is the non vanishing diffeomorphism on $ \pi_x(B) $. 
\end{prop}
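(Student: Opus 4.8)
The plan is to prove by induction on $n$ that $\pi_{xy}\circ R^nF_{\mod}=R^nF_{2d}\circ\pi_{xy}$; this simultaneously shows that $R^nF_{\mod}$ is again a toy model map and identifies $\eps_n$ with the error term of the two-dimensional renormalized map $R^nF_{2d}$, after which the stated formula for $\eps_n$ is just the two-dimensional H\'enon renormalization universality theorem.

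First I would record the structural fact that the toy model fibers over $\pi_{xy}$. Since $\di_{z_i}\eps\equiv 0$, the first coordinate $f(x)-\eps(x,y)$ of $F_{\mod}$ is independent of $\Bz$, so $\pi_{xy}\circ F_{\mod}=F_{2d}\circ\pi_{xy}$; likewise the first coordinate of the horizontal-like diffeomorphism in \eqref{horizontal-like diffeo} is $f(x)-\eps(x,y)$, so $\pi_{xy}\circ H_{\mod}=H_{2d}\circ\pi_{xy}$, and pre-composing with $H_{\mod}^{-1}$ gives the formal consequence $\pi_{xy}\circ H_{\mod}^{-1}=H_{2d}^{-1}\circ\pi_{xy}$. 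I would also note that the straightening map $\phi^{-1}=\pi_x\circ H_{\mod}^{-1}$, characterized by $f\circ\phi^{-1}(w)-\eps\big(\phi^{-1}(w),\,y\big)=x$ in the toy model case, depends only on $(x,y)$; this will be needed to see that pre-renormalization preserves the toy model form.

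Next I would propagate these identities through one renormalization step. Composing them,
\[
\pi_{xy}\circ PRF_{\mod}=\pi_{xy}\circ H_{\mod}\circ F_{\mod}^2\circ H_{\mod}^{-1}=H_{2d}\circ F_{2d}^2\circ H_{2d}^{-1}\circ\pi_{xy}=PRF_{2d}\circ\pi_{xy}.
\]
Because $\phi^{-1}$ is $\Bz$-independent, the first coordinate $f\big(f(x)-\eps\circ F_{\mod}\circ H_{\mod}^{-1}(w)\big)-\eps\circ F_{\mod}^2\circ H_{\mod}^{-1}(w)$ of $PRF_{\mod}$ depends only on $(x,y)$ and coincides with the first coordinate of $PRF_{2d}$; hence the minimal invariant interval $V\subset I^x$ and the orientation-reversing affine rescaling $s\colon V\to I$ are the same for $F_{\mod}$ and $F_{2d}$, and $\La(x,y,\Bz)=(sx,sy,s\Bz)$ satisfies $\pi_{xy}\circ\La=\La_{2d}\circ\pi_{xy}$. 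Therefore $\pi_{xy}\circ RF_{\mod}=RF_{2d}\circ\pi_{xy}$, and since the first coordinate of $RF_{\mod}$ is exactly that of $RF_{2d}$ in the $(x,y)$-variables, $RF_{\mod}$ is again a toy model map whose error term $\eps_1$ depends only on $(x,y)$. The renormalizability of $F_{2d}$ needed at each step is inherited from that of $F_{\mod}\in\II(\bar\eps)$, so the iteration is well defined; running it yields $\pi_{xy}\circ R^nF_{\mod}=R^nF_{2d}\circ\pi_{xy}$ for all $n\in\N$, and in particular $R^nF_{\mod}$ is a toy model map with $\eps_n(x,y,\Bz)=\eps_{n,2d}(x,y)$, where $R^nF_{2d}=\big(f_n(x)-\eps_{n,2d}(x,y),\,x\big)$.

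Finally I would quote the two-dimensional theory. By the H\'enon renormalization universality theorem of \cite{CLM} --- equivalently, the $m=0$ specialization of the estimates of \S\ref{universality of Jacobian} together with Lemma \ref{asymptotics of non linear part} --- the infinitely renormalizable two-dimensional map $F_{2d}$ satisfies $R^nF_{2d}=\big(f_n(x)-b_1^{2^n}a(x)\,y\,(1+O(\rho^n)),\ x\big)$, where $b_1=b_{F_{2d}}$ is the average Jacobian of $F_{2d}$ and $a(x)$ is the universal non-vanishing function on $\pi_x(B)$. Combined with $\eps_n=\eps_{n,2d}$ this gives $\eps_n(w)=b_1^{2^n}a(x)\,y\,(1+O(\rho^n))$, as claimed. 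I expect the only genuinely delicate point to be the verification that the horizontal-like diffeomorphism of the toy model fibers over $\pi_{xy}$ --- i.e. that $\phi^{-1}$, and hence the first coordinate of $PRF_{\mod}$ and the rescaling data $V,s$, depend only on $(x,y)$ --- since once this is in place renormalization commutes with $\pi_{xy}$ and the statement reduces to the known two-dimensional H\'enon renormalization theory.
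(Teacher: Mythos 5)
Your proposal is correct and follows essentially the same route as the paper, which justifies the proposition by the observation that $\pi_{xy}\circ F_{\mod}=F_{2d}\circ\pi_{xy}$ and $\pi_{xy}\circ H_{\mod}=H_{2d}\circ\pi_{xy}$ force renormalization to commute with the projection, and then invokes the two-dimensional universality theorem of \cite{CLM} for the form of $\eps_n$. You supply more detail than the paper does (in particular the $\Bz$-independence of the straightening map $\phi^{-1}$ and the coincidence of the rescaling data $V,s$), which is exactly the right point to check.
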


\msk

\subsection{Tangent bundle splitting under $ DF_{\mod} $}
Let $ DF $ and $ D\de $ be the Fr\'echet derivative of $ F $ and $ \de $ respectively. 
For the given point $ w = (x,\, y,\, \Bz) $, let us denote $ w_i = (x_i,\, y_i,\, \Bz_i) = F^i(x,\, y,\, \Bz) $. The (Fr\'echet) derivative of $ F $ has the block matrix form
\[
\renewcommand{\arraystretch}{1.3}
DF_{\mod} =  \left(
\begin{array} {cc|c}
\multicolumn{2}{c|}{\multirow{2}{*}{$DF_{2d}$}} & { \B0}  \\ 
 & & { \B0} \\    \hline  
\di_x \bde & \di_y \bde &\di_{\Bz} \bde
\end{array}
\right) 
\]
where $ \di_{\Bz} \bde $ is the $ m \times m $ block matrix 
\begin{equation*}
\begin{pmatrix}
\di_{z_1} \de^1 & \di_{z_2} \de^1 & \cdots & \di_{z_m} \de^1 \\
\di_{z_1} \de^2 & \di_{z_2} \de^2 & \cdots & \di_{z_m} \de^2 \\
\vdots & \vdots & \ddots & \vdots \\
\di_{z_1} \de^m & \di_{z_2} \de^m & \cdots & \di_{z_m} \de^m 
\end{pmatrix}
\end{equation*}
\ssk
The Lemmas in this section using block matrices are due to the same notions in \cite{Nam1} because the construction of invariant cone field could be applied to the object of any finite dimension. In the below we use the same notations in \cite{Nam1}. 
\begin{equation} \msk   \label{eq-block matrix form of derivative}
\begin{aligned}
DF_{\mod}(x,y,\Bz) =
\begin{pmatrix} 
A(w)& {\bf 0} \\
C(w)& D(w)
\end{pmatrix} \equiv
\begin{pmatrix}
A_1 & {\bf 0} \\
C_1 & D_1
\end{pmatrix} .
\end{aligned} 
\end{equation} 
where $ A_w = DF_{2d}(x,y) $, 
${\bf 0} = \bigl(\begin{smallmatrix}
{ \B0} \\ 
{ \B0}
\end{smallmatrix} \bigr)$ , 
$ C_w = (\di_x \bde(w) \ \, \di_y \bde(w)) $ and $ D_w = \di_{\Bz} \bde(w) $. 
Since we assume that $ F_{\mod} $ and $ F_{2d} $ are diffeomorphisms, $ DF_{\mod} $ and $ A_w $ are invertible. It implies that $ D_w $ is invertible at each $ w $. Let $ w_N $ be $ F^N(w) $ and the derivative of the $ N^{th} $ iterated map $ F^N_{\mod} $ be $ DF^N_{\mod} $. Denote $ DF^N_{\mod} $ as the block matrix form as follows \msk
\begin{equation} \label{block matrix of DF-mod infty}
\begin{aligned}
DF^N_{\mod}(x,y,\Bz) = 
\begin{pmatrix}
A_N(w) & {\bf 0} \\
C_N(w) & D_N(w) 
\end{pmatrix} \equiv
\begin{pmatrix}
A_N& {\bf 0} \\
C_N& D_N
\end{pmatrix} .
\end{aligned} 
\end{equation}

\nin 
Then for each $ N \geq 1 $, 
\begin{equation*}
\begin{aligned}
\begin{pmatrix}
A_N& {\bf 0} \\
C_N& D_N
\end{pmatrix} =
\begin{pmatrix}
A_1(w_{N-1}) & {\bf 0} \\
C_1(w_{N-1}) & D_1(w_{N-1})
\end{pmatrix} \cdot
\begin{pmatrix}
A_{N-1}& {\bf 0} \\
C_{N-1}& D_{N-1}
\end{pmatrix} .
\end{aligned} \msk
\end{equation*}

\nin Let $ A_0 \equiv 1 $, $ C_0 \equiv 1 $, $ D_0 \equiv 1 $ and $ w = w_0 $ for notational compatibility. Then by direct calculations, we obtain
\begin{equation} \label{component of DF-n infty}
\begin{aligned}
A_N &= A_1(w_{N-1})\;\! A_{N-1} = \prod_{i=0}^{N-1} A_1(w_{N-i-1}) \\
D_N &= D_1(w_{N-1})\;\! D_{N-1} = \prod_{i=0}^{N-1} D_1(w_{N-i-1}) \\[0.3em]
C_N &= C_1(w_{N-1})\;\! A_{N-1} + D_1(w_{N-1})\;\! C_{N-1} \\
&= \sum_{i=0}^{N-1} D_i(w_{N-1-i})\, C_1(w_{N-1-i}) \, A_{N-1-i} . \\
\end{aligned} 
\end{equation}

\nin We see that $ [DF^N_{\mod}(w)]^{-1} = DF^{-N}_{\mod}(F^{N}(w)) $ by \ssk inverse function theorem. Thus using block matrix expressions, $ [DF^N_{\mod}(w)]^{-1} $ is
\begin{equation} \label{block matrix of DF-1 infty}
\begin{aligned}
DF^{-N}_{\mod} = 
\begin{pmatrix}
A^{-1}_N & {\bf 0} \\[0.4em]
- D^{-1}_N \;\! C_N \;\! A_N^{-1} & D^{-1}_N
\end{pmatrix}
\end{aligned}
\end{equation}
at the point, $ F^{N}(w) $. 
\msk \\ Let the cone at $ w $ with some positive number $ \gamma $ to be
\begin{align} \label{complement cone infty}
\CC(\gamma)_w = \{ u + v \; | \; u \in \R^2 \times \{\B0\},\; v \in \{\B0\} \times \R^m \ \ \textrm{and} \ \ \dfrac{1}{\gamma}\,\| \:\! u\| >  \|v\|  \; \} .
\end{align}
The cone field over a given compact invariant set $ \Gamma $ is the union of the cones at every points in $ \Gamma $
\begin{align} \label{complement cone field infty}
\CC(\gamma) = \bigcup_{w \in \,\Gamma} \CC(\gamma)_w .
\end{align}
Let $ \| DF \| $ be the operator norm of $ DF $. \footnote{The operator norm is defined on the linear operator at each point. For example, 
$$ \| DF_w \| = \sup_{ \| v \| =1} \{ \| DF_w v \| \} $$
The value $ \| DF \| $ is defines as $ \sup_{ w \in B } \| DF_w \| $. } The minimum expansion rate (or the strongest contraction rate) of $ DF $ is defined by the equation, $ \| DF^{-1} \| = \dfrac{1}{m(DF)} $.

\msk
\begin{lem} \label{upper bound of C-N A-N infty}
Let $ A_N $, $ {\bf 0} $, $ C_N $ and $ D_N $ be components of $ DF^N_{\mod} $ defined on \eqref{block matrix of DF-mod infty}. Suppose that $ \| D_1 \| < m(A_1) $. Then $ \| \;\! C_N \;\! A_N^{-1} \| < \kappa $ for some $ \kappa > 0 $ independent of $ N $.
\end{lem}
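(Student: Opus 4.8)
The plan is to read $C_N$ off the block formula \eqref{component of DF-n infty}, right--multiply by $A_N^{-1}$, and exploit the fact that the partial product $A_{N-1-i}$ is exactly the tail of $A_N$, so that $A_{N-1-i}A_N^{-1}$ collapses to a short product of inverses of $A_1$. This converts $\|C_N A_N^{-1}\|$ into a geometric series whose ratio is $<1$ precisely because of the hypothesis $\|D_1\|<m(A_1)$.

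First I would fix the uniform constants coming from compactness of $B$ and smoothness of $F_{\mod}$: put $a^{*}=\|A_1^{-1}\|=\sup_{w\in B}\|A_1(w)^{-1}\|$, $d^{*}=\|D_1\|=\sup_{w\in B}\|D_1(w)\|$ and $c^{*}=\sup_{w\in B}\|C_1(w)\|$, all finite since $A_1(w)$ is invertible at every $w$. Because $m(A_1)=1/\|A_1^{-1}\|=1/a^{*}$, the assumption $\|D_1\|<m(A_1)$ is exactly the statement that $\theta:=a^{*}d^{*}<1$.

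Next, expanding $DF^N_{\mod}(w)=DF_{\mod}(w_{N-1})\cdots DF_{\mod}(w_0)$ in block form --- equivalently, iterating \eqref{component of DF-n infty} --- the term of index $i$ in $C_N$ is an ordered product of $i$ factors $D_1$ evaluated along the orbit, one factor $C_1$, and the partial product $A_{N-1-i}=A_1(w_{N-2-i})\cdots A_1(w_0)$ consisting of $N-1-i$ factors $A_1$. Since $A_N=\bigl(A_1(w_{N-1})\cdots A_1(w_{N-1-i})\bigr)\,A_{N-1-i}$, we get $A_{N-1-i}A_N^{-1}=\bigl(A_1(w_{N-1})\cdots A_1(w_{N-1-i})\bigr)^{-1}$, a product of $i+1$ matrices of operator norm $\le a^{*}$, so $\|A_{N-1-i}A_N^{-1}\|\le (a^{*})^{i+1}$. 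Together with $\|D_i(\cdot)\|\le (d^{*})^{i}$ (submultiplicativity applied to the $i$-fold product) and $\|C_1(\cdot)\|\le c^{*}$, this yields
\[
\|C_N A_N^{-1}\|\;\le\;\sum_{i=0}^{N-1}(d^{*})^{i}\,c^{*}\,(a^{*})^{i+1}\;=\;c^{*}a^{*}\sum_{i=0}^{N-1}\theta^{\,i}\;\le\;\frac{c^{*}a^{*}}{1-\theta}\;=:\;\kappa,
\]
a bound depending only on $B$ and $\bar\eps$ through the uniform norms, and in particular independent of $N$.

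The only point that needs care is the bookkeeping in $A_{N-1-i}A_N^{-1}$: one must check that the $A_1$--factors making up $A_{N-1-i}$ are precisely the rightmost $N-1-i$ factors of the ordered product $A_N$, so that the cancellation leaves exactly an $(i+1)$--fold product of inverses and the geometric ratio comes out as $a^{*}d^{*}$. Once the block expansion \eqref{component of DF-n infty} is in hand this is purely formal, and no dynamical information beyond the lower--triangular block structure and the uniform operator-norm bounds enters the argument.
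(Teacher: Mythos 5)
Your argument is correct and is essentially the paper's own proof (deferred there to Lemma 7.2 of \cite{Nam1}): expand $C_N$ via \eqref{component of DF-n infty}, cancel $A_{N-1-i}$ against the tail of $A_N$, and sum the resulting geometric series with ratio $\|D_1\|/m(A_1)<1$. Your bookkeeping giving $i+1$ inverse factors of $A_1$ (hence $\kappa = c^*a^*/(1-\theta)$ rather than the paper's $c^*/(1-\theta)$) is the more careful count, and the extra factor of $\|A_1^{-1}\|$ is immaterial to the conclusion.
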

\begin{proof}
See Lemma 7.2 in \cite{Nam1}. 
\comm{******************
By \eqref{block matrix of DF-1 infty},
\begin{equation} \label{A-N -1 prod}
A_N^{-1}(w_N) = \prod_{i=0}^{N-1} A_1^{-1}(w_i) = \prod_{j=0}^{N-1-i} A_1^{-1}(w_j) \, \prod_{j=N-i}^{N-1} A_1^{-1}(w_j) = A_{N-1-i}^{-1} \, A^{-1}_i(w_{N-i})
\end{equation}
By \eqref{component of DF-n infty}, $ \| D_k \| \leq \| D_1 \|^k $ and by \eqref{A-N -1 prod} $ m(A_k) \geq m(A_1)^k $ for any $ k \in \N $.
Then \msk
\begin{equation}
\begin{aligned}
& \quad \ \ \| \;\! C_N \;\! A_N^{-1}(w_N) \| \\
&=  \ \Big\| \sum_{i=0}^{N-1} D_i(w_{N-1-i})\, C_1(w_{N-1-i}) \, A_{N-1-i}  \;\! A_N^{-1}(w_N) \Big\| \\
&=  \ \Big\| \sum_{i=0}^{N-1} D_i(w_{N-1-i})\, C_1(w_{N-1-i}) \,A^{-1}_i(w_{N-i}) \Big\| \\
&\leq \ \sum_{i=0}^{N-1} \| D_i \| \| C_1 \| \| A^{-1}_i \| = \ \| \:\! C_1 \| \sum_{i=0}^{N-1} \frac{\| D_i \|}{m(A_i)} \\
&\leq \ \| \:\! C_1 \| \sum_{i=0}^{N-1} \left( \frac{\| D_1 \|}{m(A_1)} \right)^i \leq \ \| \:\! C_1 \| \sum_{i=0}^{\infty} \left( \frac{\| D_1 \|}{m(A_1)} \right)^i \\[0.4em]
& = \ \frac{\| \:\! C_1 \| \cdot  m(A_1)}{m(A_1) - \| D_1 \| }
\end{aligned} \msk
\end{equation}
Then we can choose $ \kappa = \dfrac{\| \:\! C_1 \| \cdot  m(A_1)}{m(A_1) - \| D_1 \| } $ which is independent of $ N $.
*****************************************}
\end{proof}

\msk
\begin{lem} \label{Invariance of cone field infty}
Let $ F_{\mod} \in \II(\bar \eps) $ with small enough $ \bar \eps >0 $. Suppose that $ \| D_1 \| \leq \frac{\ \rho}{\ 2} \cdot m(A_1) $ for some $ \rho \in (0,1) $. Let $ \CC(\gamma) $ over the given compact invariant set $ \Gamma $ be the cone field which is defined on \eqref{complement cone field infty} with cones in \eqref{complement cone infty}. Then $ \CC(\gamma) $ is invariant under $ DF^{-1}_{\mod} $ for all sufficiently small $ \gamma > 0 $. More precisely, any given invariant compact set $ \Gamma $ has the 
dominated splitting.
\end{lem}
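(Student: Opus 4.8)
The plan is to read the splitting off the block decomposition of $DF^{N}_{\mod}$ in \eqref{block matrix of DF-mod infty}--\eqref{block matrix of DF-1 infty}, together with the uniform bound of Lemma \ref{upper bound of C-N A-N infty}. Two ingredients are needed. First, since $\|D_1(w)\|\le\frac{\rho}{2}\,m(A_1(w))$ at every point, applying $\|XY\|\le\|X\|\,\|Y\|$ and $m(XY)\ge m(X)\,m(Y)$ to the products in \eqref{component of DF-n infty} gives
\[
\frac{\|D_N\|}{m(A_N)}\ \le\ \prod_{i=0}^{N-1}\frac{\|D_1(w_i)\|}{m(A_1(w_i))}\ \le\ \Big(\frac{\rho}{2}\Big)^{N}\ <\ \frac12\qquad(N\ge1),
\]
an exponential gap whose rate does not depend on $\Gamma$. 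Second, Lemma \ref{upper bound of C-N A-N infty} supplies a constant $\kappa>0$, also independent of $N$ and of $\Gamma$, with $\|C_N A_N^{-1}\|<\kappa$.

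Next I would check cone invariance. Writing a tangent vector at $w=(x,y,\Bz)$ as $u+v$ with $u\in\R^{2}\times\{\B0\}$ and $v\in\{\B0\}\times\R^{m}$, formula \eqref{block matrix of DF-mod infty} gives $DF^{N}_{\mod}(u+v)=A_N u+(C_N u+D_N v)$, in which $A_N u$ is the $\R^{2}\times\{\B0\}$-component. If $u+v\in\CC(\gamma)_w$, i.e.\ $\|v\|<\gamma^{-1}\|u\|\le\gamma^{-1}m(A_N)^{-1}\|A_N u\|$, then
\[
\|C_N u+D_N v\|\ \le\ \|C_N A_N^{-1}\|\,\|A_N u\|+\|D_N\|\,\|v\|\ <\ \Big(\kappa+\frac{\|D_N\|}{\gamma\,m(A_N)}\Big)\|A_N u\|,
\]
so $DF^{N}_{\mod}(u+v)\in\CC(\gamma)_{F^{N}(w)}$ as soon as $\gamma\kappa+\|D_N\|/m(A_N)<1$. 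By the first paragraph the left side is below $\gamma\kappa+\frac12$, so once $\gamma<\frac{1}{2\kappa}$ the inclusion holds with a definite margin for every $N\ge1$; hence for all sufficiently small $\gamma>0$ the cone field $\CC(\gamma)$ over $\Gamma$ is strictly preserved by every iterate $DF^{N}_{\mod}$. Equivalently its complement $\CC(\gamma)^{c}=\{\,u+v:\|u\|\le\gamma\|v\|\,\}$ is strictly preserved by each $DF^{-N}_{\mod}$, which is the invariance under $DF^{-1}_{\mod}$ asserted in the statement.

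The splitting is then assembled in the standard way: put $E^{cu}_w=\bigcap_{N\ge0}DF^{N}_{\mod}\big(\CC(\gamma)_{F^{-N}(w)}\big)$ and $E^{ss}_w=\bigcap_{N\ge0}DF^{-N}_{\mod}\big(\CC(\gamma)^{c}_{F^{N}(w)}\big)$. Strict cone invariance forces $E^{cu}$ to be a $2$-dimensional and $E^{ss}$ an $m$-dimensional continuous $DF_{\mod}$-invariant subbundle over $\Gamma$, with $E^{cu}_w\cap E^{ss}_w=\{0\}$ because the two cones are disjoint. Domination follows from the gap estimate: if $0\ne u'\in E^{cu}_w$ has $\R^{2}\times\{\B0\}$-part $u_0$ then $\|u'\|\le(1+\gamma^{-1})\|u_0\|$, whence $\|DF^{N}_{\mod}u'\|\ge\|A_N u_0\|\ge c\,m(A_N)\|u'\|$; while for $0\ne v'\in E^{ss}_w$, using that $v'$ and its forward images lie in $\CC(\gamma)^{c}$ together with $\|C_N a\|\le\kappa\|A_N a\|$ for $a\in\R^{2}\times\{\B0\}$, one gets $\|DF^{N}_{\mod}v'\|\le C\,\|D_N\|\,\|v'\|$, so
\[
\frac{\|DF^{N}_{\mod}v'\|}{\|DF^{N}_{\mod}u'\|}\ \le\ \frac{C}{c}\cdot\frac{\|D_N\|}{m(A_N)}\cdot\frac{\|v'\|}{\|u'\|}\ \le\ \frac{C}{c}\Big(\frac{\rho}{2}\Big)^{N}\frac{\|v'\|}{\|u'\|},
\]
which decays exponentially. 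This is the \emph{dominated splitting} $E^{cu}\oplus E^{ss}$ over $\Gamma$, with a rate $\rho/2$ uniform over all compact invariant sets.

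The only genuinely load-bearing point is the uniform-in-$N$ control of the off-diagonal block: a priori $\|C_N\|$ grows with $N$ and would break both the cone estimate and the upper bound on $\|DF^{N}_{\mod}v'\|$, and it is precisely Lemma \ref{upper bound of C-N A-N infty}, through $\|C_N A_N^{-1}\|<\kappa$, that neutralizes it. Granting that, the remainder is the routine cone-field construction of a dominated splitting; the only care required is to keep $\kappa$ and $\rho/2$ independent of $\Gamma$, which they plainly are.
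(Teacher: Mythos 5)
Your proof is correct and rests on the same two ingredients as the paper's argument (which defers to Lemma 7.3 of \cite{Nam1}): the uniform bound $\|C_N A_N^{-1}\|<\kappa$ from Lemma \ref{upper bound of C-N A-N infty} and the pointwise gap $\|D_N\|/m(A_N)\le(\rho/2)^N$. The only real difference is the direction of iteration: the paper applies the block form \eqref{block matrix of DF-1 infty} of $DF^{-N}_{\mod}$ directly to vectors with $\|u\|\le\gamma\|v\|$ (i.e.\ to the \emph{complement} of the cone as written in \eqref{complement cone infty}), whereas you push the horizontal cone forward under $DF^{N}_{\mod}$ and dualize; the two are equivalent. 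Your reading is in fact the consistent one --- \eqref{complement cone infty} as written is a cone about $\R^{2}\times\{\B0\}$, which is what $DF^{N}_{\mod}$ preserves, while what $DF^{-1}_{\mod}$ preserves is its complement --- and your explicit derivation of the domination estimate supplies a step the paper only asserts.
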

\begin{proof}
See Lemma 7.3 in \cite{Nam1}. 
\comm{******************
Let us take a vector \footnote{The vector $ u \in \R^2 $ depends on each point $ w \in \R^2 \times \R^m $. Then $ w \mapsto u(w) $ is a map from $ \R^m $ to $ \R^2 $.} $ (u \ v) \in \R^2 \times \R^m $ in the cone field $ \CC(\gamma) $ with small enough $ \gamma > 0 $. Since $ \| u \| < \gamma \| v \| $, we may assume that $ \| v \| =1 $ and $ \| u \| < \gamma $. Thus for cone field invariance, it suffice to show that \footnote{Use the matrix form in \eqref{block matrix of DF-1 infty} with a vector on the cone field $ \CC(\gamma) $.
\begin{align*}
\begin{pmatrix}
A^{-1}_N & {\bf 0} \\[0.4em]
- D^{-1}_N \;\! C_N \;\! A_N^{-1} & D^{-1}_N
\end{pmatrix}
\begin{pmatrix}
u \\
v
\end{pmatrix}
\end{align*}    
  }
\msk
\begin{equation*}
\big\| A_N^{-1} \;\! u \;\! \big( -D_N^{-1} \;\! C_N \;\! A_N^{-1} \;\! u + D_N^{-1}v \big)^{-1} \big\| \leq \rho_1 \| u\|
\end{equation*}
for some $ \rho_1 \in (0,1) $ 
. Let us calculate the lower bound of $ m( -D_N^{-1} \;\! C_N \;\! A_N^{-1} \;\! u + D_N^{-1} v ) $. The equation
\begin{equation*}
-D_N^{-1} \;\! C_N \;\! A_N^{-1} \;\! u + D_N^{-1} v = D_N^{-1} \big( - C_N \;\! A_N^{-1} \;\! u + v \big) 
\end{equation*}
and Lemma \ref{upper bound of C-N A-N infty}, $ \| - C_N \;\! A_N^{-1} \;\! u \| < \kappa \gamma $. Then with the small enough $ \gamma > 0 $ and the definition of minimum expansion rate, 
\begin{equation*}
m( -D_N^{-1} \;\! C_N \;\! A_N^{-1} \;\! u + D_N^{-1} v) \geq m( D_N^{-1})\cdot \frac{1}{1+\kappa \gamma}
\end{equation*} 

\nin Then
\begin{equation}
\begin{aligned}
& \ \big\| A_N^{-1} \;\! u \;\! \big( -D_N^{-1} \;\! C_N \;\! A_N^{-1} \;\! u + D_N^{-1}\;\! v \big)^{-1} \big\| \\[0.3em]
\leq & \  \| A_N^{-1} \;\! u \| \;\! \big\| \big( -D_N^{-1} \;\! C_N \;\! A_N^{-1} \;\! u + D_N^{-1}\;\! v \big)^{-1} \big\| \\[0.3em]
\leq & \ \frac{\| A_N^{-1} \| \|u \| }{ m \big( -D_N^{-1} \;\! C_N \;\! A_N^{-1} \;\! u + D_N^{-1} v\big) } \leq \ \frac{\| A_N^{-1} \| \|u \| }{ m ( D_N^{-1}) \;\! m(- C_N \;\! A_N^{-1} \;\! u + v ) } \\[0.3em]
\leq & \ \frac{\| A_N^{-1} \| \|u \|(1 + \kappa \gamma) }{m(D_N^{-1})} \ =  \ \frac{ \|D_N \|(1 + \kappa \gamma)}{  m(A_N)} \, \|u \| \\[0.3em]
\leq & \ \left( \frac{\| D_1 \|}{ m( A_1)} \right)^N \|u \| \cdot (1 + \kappa \gamma)
\end{aligned} \msk
\end{equation}

\nin Thus for all small enough $ \rho >0 $ and $ \gamma > 0 $ satisfying $ \kappa \gamma \ll 1 $, we see
\begin{equation} \label{contracting rate of cone}
(1 + \kappa \gamma) \; \frac{\| D_1 \|}{ m( A_1)} \leq \frac{\rho_1}{2}
\end{equation}
for some $ \rho_1 \in (0,1) $. Hence, there exists decomposition of the tangent bundle $ T_{\Gamma}B $ which is invariant under $ DF_{\mod} $ and invariant subbundles satisfies the dominated splitting conditions. Moreover, the dominated splitting implies the continuity of invariant sections.
*****************************}
\end{proof}
\msk

\begin{rem}
In Lemma \ref{upper bound of C-N A-N infty}, \ssk components of matrix form, $ A_1 $, $ D_1 $, $ A_N $ and $ D_N $ depends on each point $ w \in \Gamma $. Then $ \dfrac{\| D_1(w) \|}{ m(A_1(w))} \leq \dfrac{\;1}{\;2}\,\rho_w $ for some positive $ \rho_w < 1 $. Then the actual assumption is that the set of $ \rho_w > 0 $ for $ w \in \Gamma $ is totally bounded above by the number less than $ \frac{\,1}{\;2} $. However, since $ \Gamma $ is compact, $ \{\, \rho_w \,|\; w \in \Gamma \,\} $ is the precompact set. Then $ \rho $ can be chosen as the supremum of $ \{\, \rho_w \,|\; w \in \Gamma \,\} $. Then $ \kappa $ in Lemma \ref{upper bound of C-N A-N infty} is independent of $ w \in \Gamma $. Thus $ \kappa $ is also uniform number independent of $ w $. Moreover, the cone field $ \CC(\gamma) $ in Lemma \ref{Invariance of cone field infty} is contracted in uniform rate by $ DF^{-1} $.
\end{rem}

\msk
\subsection{Tangent bundle splitting under a small perturbation of toy model map}
The existence of the invariant cone field under $ DF_{\mod} $ is still true when a small perturbation of $ DF_{\mod} $ is chosen. Let us consider the block diagonal matrix form of $ DF $. Let the following map be a {\em perturbation} of the toy model map, $ F_{\mod} (w) = (f(x) - \eps_{2d}(x,y),\;x,\;\bde(w)) $ 
\begin{equation}  \label{perturbation of model maps infty}
F(w) = (f(x) - \eps_{2d}(x,y) - \widetilde{\eps}(w),\ x,\ \bde(w))
\end{equation}
where $ \eps(w) = \eps_{2d}(x,y) + \widetilde{\eps}(w) $. Thus $ \di_{z} \eps(w) = \di_{z} \widetilde{\eps}(w) $. \msk
\begin{equation} \label{matrix symbolic expression of DF infty}
\begin{aligned}
DF  = 
\renewcommand{\arraystretch}{1.3}
 \left(
\begin{array} {cc|c}
\multicolumn{2}{c|}{\multirow{2}{*}{$D \widetilde {F}_{2d}$}} & \di_{\Bz} \eps  \\ 
 & & { \B0} \\    \hline  
\di_x \bde & \di_y \bde &\di_{\Bz} \bde
\end{array}
\right)
= \left( \renewcommand{\arraystretch}{1.3} \begin{array}{c|c}
A & B \\
\hline
C & D
\end{array} \right)
\end{aligned} \bsk
\end{equation}
where $ D\widetilde{F}_{2d} = \begin{pmatrix}
f'(x) - \di_x \eps(w) & -\di_y \eps(w) \\[0.3em]
1  & 0
\end{pmatrix} $ and $ \di_{\Bz} \eps $ is the row vector $ (\di_{z_1} \eps \ \; \di_{z_2} \eps \ \ldots \ \di_{z_m} \eps ) $. 
If $ B \equiv {\bf 0} $, then $ F $ is $ F_{\mod} $. 
\footnote{If the bounded operator $ T $ has $ \| T \| <1 $, then $ \Id - T $ is invertible. Moreover,
\begin{equation*}
(\Id - T)^{-1} = \sum_{n=0}^{\infty} T^{\,n}
\end{equation*}
Since, $ \| T^n \| \leq \| T \|^n $ for every $ n \in \N $, $ \| (\Id - T)^{-1} \| \leq \dfrac{1}{1 - \| T \|}  $. Equivalently, we get the lower bound of the minimum expansion rate, $ m( \Id - T) \geq 1 - \| T \| $. }

\msk
\begin{lem} \label{Invariance of cone filed perturbation infty}
Let $ F $ be a perturbation of the toy model map $ F_{\mod} $ defined in \eqref{perturbation of model maps infty} and $ A $, $ B $, $ C $ and $ D $ are components of block matrix form of $ DF $ defined in \eqref{matrix symbolic expression of DF infty}. Suppose that $ \| D_1 \| \leq \frac{\rho_1}{2} \cdot m(A_1) $ for some $ \rho_1 \in (0,1) $. Suppose \ssk also that $ \| B \| \| C \| \leq \rho_0 \cdot m(A) \cdot m(D) $ where $ \rho_0 < \frac{\kappa \gamma}{2} $ for sufficiently small $ \gamma > 0 $. Then the cone field $ \CC(\gamma) $ defined on \eqref{complement cone field infty} is invariant under $ DF^{-1} $ .
\end{lem}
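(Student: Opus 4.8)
The plan is to reduce the statement to the toy-model estimate of Lemma~\ref{Invariance of cone field infty} by treating the new upper-right block $B$ as a controlled perturbation of the block-triangular structure. First, fix $w \in \Gamma$, write $DF_w = \left(\begin{smallmatrix} A & B \\ C & D\end{smallmatrix}\right)$ as in \eqref{matrix symbolic expression of DF infty} (so $A=A_1(w)$, $D=D_1(w)$ and $A$ is invertible), and compute $DF^{-1}_w$ through the Schur complement $S = D - C A^{-1}B$ of the block $A$:
\[
DF^{-1}_w(u,v) \;=\; \bigl( A^{-1}u - A^{-1}B\,v',\ v'\bigr), \qquad v' = S^{-1}\bigl(v - C A^{-1}u\bigr),
\]
which is exactly the toy-model formula underlying Lemma~\ref{Invariance of cone field infty}, with $D$ replaced by $S$, plus the single correction term $-A^{-1}Bv'$ in the first coordinate. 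The role of the hypothesis $\|B\|\,\|C\| \leq \rho_0\,m(A)\,m(D)$ is precisely that $\|S - D\| = \|C A^{-1}B\| \leq \|C\|\,\|B\|/m(A) \leq \rho_0\,m(D)$, so that $m(S) \geq (1-\rho_0)\,m(D)$ and $\|S\| \leq (1+\rho_0)\,\|D\|$: up to a $(1\pm\rho_0)$ factor, $S$ behaves like $D$.

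Next I would run the same cone computation as in Lemma~\ref{Invariance of cone field infty}. Take $(u,v)$ in the cone, i.e.\ $\|u\| < \gamma\|v\|$, normalized so $\|v\|=1$, and estimate $(u',v') := DF^{-1}_w(u,v)$. Using $\|C A^{-1}\| \leq \kappa$ with the $\kappa$ of Lemma~\ref{upper bound of C-N A-N infty} (whose estimate applies here, since $C$ is unchanged by the perturbation and $A$ is a $C^1$-small perturbation of $DF_{2d}$) one gets $\|v - C A^{-1}u\| \geq 1 - \gamma\kappa$, hence $\|v'\| \geq (1-\gamma\kappa)/\bigl((1+\rho_0)\|D\|\bigr)$, while $\|u'\| \leq \|u\|/m(A) + (\|B\|/m(A))\,\|v'\|$. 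Dividing and using the hypothesis $\|D\| = \|D_1\| \leq \tfrac{\rho_1}{2}\,m(A_1) = \tfrac{\rho_1}{2}\,m(A)$,
\[
\frac{\|u'\|}{\|v'\|} \;\leq\; \frac{\rho_1}{2}\cdot\frac{1+\rho_0}{1-\gamma\kappa}\,\gamma \;+\; \frac{\|B\|}{m(A)} .
\]
Since $\rho_1 < 1$, the first summand is strictly below $\tfrac{1}{2}\gamma$ once $\gamma$ and $\rho_0$ are small; and $\|B\|\,\|C\| \leq \rho_0\,m(A)\,m(D)$ together with $\rho_0 < \kappa\gamma/2$ and the explicit value $\kappa = \|C\|\,m(A)/(m(A)-\|D\|) \leq \|C\|/m(D)$ (valid because $\|D\| \leq \tfrac{\rho_1}{2}m(A)$ and $m(D)$ is tiny) forces $\|B\|/m(A) \leq \rho_0\,m(D)/\|C\| < \tfrac{1}{2}\gamma$. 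Hence $\|u'\| < \gamma\|v'\|$, i.e.\ $DF^{-1}_w(\CC(\gamma)_w) \subset \CC(\gamma)_{F^{-1}(w)}$; all bounds being uniform over the compact invariant set $\Gamma$, this is the asserted invariance, and iterating the strict contraction of the ratio gives the dominated splitting of $T_\Gamma B$ exactly as in Lemma~\ref{Invariance of cone field infty}.

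The main obstacle is precisely that the tidy block-triangular bookkeeping of the toy-model proof — the products $A_N = \prod A_1(w_{N-i-1})$, $D_N = \prod D_1(\cdot)$ and the geometric-series estimate of Lemma~\ref{upper bound of C-N A-N infty} — is no longer available once $B \neq 0$ couples the $\R^m$-direction back into the $\R^2$-direction. The Schur-complement rewriting above is what repairs this: it isolates the entire effect of the perturbation into the correction $C A^{-1}B$ inside $S$ and into $A^{-1}Bv'$ inside $u'$, and the two quantitative hypotheses ($\|D_1\| \leq \tfrac{\rho_1}{2}m(A_1)$ and $\|B\|\,\|C\| \leq \rho_0\,m(A)\,m(D)$ with $\rho_0 < \kappa\gamma/2$) are calibrated exactly so that these are dominated, respectively, by $m(D)$ and by $\gamma\,m(A)$. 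What remains is only the routine verification of the two numerical inequalities for an explicit small $\gamma$, carried out just as in the corresponding step of~\cite{Nam1}; alternatively one could work $N$ steps at a time, deriving recursions for the blocks $A_N,B_N,C_N,D_N$ of $DF^N$ as in \eqref{component of DF-n infty} and checking inductively that $\|B_N\|$ and $\|C_N A_N^{-1}\|$ stay bounded uniformly in $N$ under the stated assumptions, which requires the same key inequality.
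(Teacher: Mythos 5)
Your proposal is correct and follows essentially the same route as the paper: both invert the block matrix $DF$ explicitly, push a cone vector $(u,v)$ with $\|u\|<\gamma\|v\|$ through $DF^{-1}$, and use the two hypotheses to bound the ratio $\|u'\|/\|v'\|$ by $\gamma$ (the paper organizes the inverse via the Schur complement $A-BD^{-1}C$ and correction terms $\zeta_{11},\zeta_{12},\zeta_{22}$, while you use the equivalent complement $S=D-CA^{-1}B$, which isolates the perturbation slightly more cleanly but yields the same estimates). The only point to keep uniform is the bound $\|CA^{-1}\|\le\kappa$ and the smallness of $m(D)$ over the compact set $\Gamma$, which you note and which the paper handles identically.
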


\begin{proof}
See Lemma 7.4 in \cite{Nam1}. 
\comm{******************
The matrix form of $ DF^{-1} $ is
\begin{equation}  \label{DF-1 matrix form infty}
\begin{aligned}
\begin{pmatrix}
A^{-1} + \zeta_{11} & \zeta_{12} \\[0.3em]
-D^{-1}C (A^{-1} + \zeta_{11}) & D^{-1} \zeta_{22}
\end{pmatrix}
\end{aligned}  \msk
\end{equation}

\nin where $ \zeta_{12} = -(A- BD^{-1}C)^{-1}BD^{-1} $,\; $ \zeta_{11} = -\zeta_{12}\;\! CA^{-1} $\, and \, $ \zeta_{22} = \Id - C \;\! \zeta_{12} $. Thus \bsk
\begin{equation} \label{zeta-12 C norm upper bound}
\begin{aligned}
\| \:\! \zeta_{12} \| \| C \| & = \| -(A- BD^{-1}C)^{-1}BD^{-1} \| \| C \| \\
& \leq \| (\Id - A^{-1}BD^{-1}C)^{-1} \| \| A^{-1} \| \| B \| \| D^{-1} \| \| C \| \\[0.3em]
& \leq \frac{1}{1 - \| A^{-1}BD^{-1}C \| } \cdot \frac{\| B \| \|C \| }{ m(A)\;\! m(D)} \\
& \leq \frac{1}{1 - \rho_0}\cdot \rho_0 < \kappa \gamma .
\end{aligned} \msk
\end{equation}

\nin Let us calculate the upper bound of $ \| B \| $. Then \ssk
\begin{equation} \label{B norm upper bound}
\begin{aligned}
\| B \| \| C \| &< \frac{\kappa \gamma}{2} m(A)\;\! m(D) \\
\| B \| &< \frac{m(A)}{m(A) - \| D \|} \cdot m(A)\;\! m(D) \cdot \frac{\kappa \gamma}{2} \\
\frac{\| B \|}{ m(D)} &< \frac{[m(A)]^2}{m(A) - \| D \|} \cdot \frac{\kappa \gamma}{2} \\
&\leq \frac{m(A)}{ 2(1 - \frac{\rho_1}{2})} \cdot \gamma = \frac{m(A)}{2 - \rho_1} \cdot \gamma < m(A)\cdot \gamma
\end{aligned}
\end{equation}

\nin Thus by \eqref{zeta-12 C norm upper bound} and \eqref{B norm upper bound},
\begin{equation*}
\begin{aligned}
\| \;\!\zeta_{12} \| &< \frac{ 1}{1- \rho_0} \cdot \frac{ \| B \| }{m(A)\;\! m(D) } \cdot \gamma < \frac{\gamma}{m(A)} \cdot \frac{2}{2- \kappa \gamma} \cdot \frac{\| B \|}{m(D)} \\
&< \frac{\gamma}{m(A)} \cdot \frac{2 }{2- \kappa \gamma} \cdot m(A)\cdot \gamma .
\end{aligned} \msk
\end{equation*}

\nin Take a vector $ (u \ v) \in \R^2 \times \R^m $ such that $ \| u \| < \gamma \| v \| $ in the cone $ \CC(\gamma) $ at a point $ w \in \Gamma $. We may assume that $ \| v \| = 1 $, that is, $ \| u \| < \gamma $. For the invariance of the cone field under $ DF^{-1} $, it suffice to show that 
\begin{equation*}
\big\| \big[\,(A^{-1} + \zeta_{11}) u + \zeta_{12} v\,\big] \big[\, -D^{-1}C (A^{-1} + \zeta_{11})\;\! u + D^{-1} \zeta_{22} v \,\big]^{-1} \big\| < \rho_2 \gamma 
\end{equation*}
for some $ \rho_2 \in (0,1) $. Let us estimate the upper bound of the norm of the first factor
\msk
\begin{equation} \label{numerator invariant cone infty}
\begin{aligned}
\| (A^{-1} + \zeta_{11})\;\! u + \zeta_{12} v\| &\leq  \| A^{-1}\;\! u \| + \| \;\!\zeta_{11}\;\! u + \zeta_{12} v\| \\[0.3em]
&=  \| A^{-1} u \| + \| \;\!\zeta_{12} \;\!(-CA^{-1} \;\! u + v ) \| \\
& \leq \frac{\gamma}{ m(A)} + \frac{\gamma}{m(A)} \cdot \frac{2 \;\! m(A)}{2- \kappa \gamma} \cdot \gamma \cdot ( 1 + \kappa \gamma ) \\[0.3em]
& = \frac{\gamma}{ m(A)} \,\left[\,1 + \frac{2(1 + \kappa \gamma) \gamma}{2- \kappa \gamma} \cdot m(A)  \, \right] .
\end{aligned} \msk
\end{equation}
Let us consider the lower bound of the second factor 
\begin{equation} \label{denominator invariant cone infty}
\begin{aligned}
 m( - D^{-1}C(A^{-1} + \zeta_{11} )\;\!u + D^{-1} \zeta_{22}\;\! v ) \geq & \ m(  D^{-1})\, m(CA^{-1} u + C \;\!\zeta_{11}\;\! u - \zeta_{22}\;\! v ) \\
= & \ m(  D^{-1})\, m(CA^{-1} u - C \;\!\zeta_{12}\;\!CA^{-1} u - v + C\;\! \zeta_{12}\;\! v)   \\
= & \ m(  D^{-1})\, m( CA^{-1} u - C \;\!\zeta_{12}\;\! (CA^{-1} u - v ) - v) \\
= & \ m(  D^{-1})\, m( CA^{-1} u  - v - \,C \;\!\zeta_{12} (CA^{-1} u - v ))  \\
= & \ m(  D^{-1})\, m(\,[ \Id -  C \;\!\zeta_{12} ]\, [ CA^{-1} u  - v ]) \\
\geq & \ m(  D^{-1})\, m( \Id -  C \;\!\zeta_{12} ) \, m( CA^{-1} u  - v ) \\
\geq & \ \frac{ (1 - \kappa \gamma) ( 1- \kappa \gamma) }{\| D \|} = \frac{(1 - \kappa \gamma)^2 }{ \| D \| } .
\end{aligned} \bsk
\end{equation}

\nin Then the inequalities, \eqref{numerator invariant cone infty} and \eqref{denominator invariant cone infty} implies that \msk
\begin{equation*}
\begin{aligned}
& \quad \ \big\| \big[\,(A^{-1} + \zeta_{11}) u + \zeta_{12}\;\! v \,\big] \big[\, -D^{-1}C (A^{-1} + \zeta_{11})\;\! u + D^{-1} \zeta_{22}\;\! v \,\big]^{-1} \big\| \\[0.4em]
& \leq \frac{\| (A^{-1} + \zeta_{11})\;\! u + \zeta_{12}\;\! v \|}{m( - D^{-1}C(A^{-1} + \zeta_{11} )\;\! u + D^{-1} \zeta_{22}\;\! v ) } \\[0.4em]
& \leq \frac{\gamma}{ m(A)} \,\left[\,1 + \frac{2(1 + \kappa \gamma) \gamma}{2- \kappa \gamma} \cdot m(A) \, \right] \cdot \frac{ \| D \| }{(1 - \kappa \gamma)^2 } \\[0.5em]
& = \frac{1}{(1 - \kappa \gamma)^2} \, \left[\,1 + \frac{2(1 + \kappa \gamma) \gamma}{2- \kappa \gamma} \cdot m(A) \, \right] \cdot \frac{\| D \| }{m(A)} \; \gamma .
\end{aligned} \msk
\end{equation*}

\nin Thus for small enough $ \gamma > 0 $, the constant, $ \dfrac{1}{(1 - \kappa \gamma)^2} \, \left[\,1 + \dfrac{2(1 + \kappa \gamma) \gamma}{2- \kappa \gamma} \cdot m(A) \, \right] $ is less than two. Hence, the cone field $ \CC(\gamma) $ is invariant under $ DF $.
*********************************}
\end{proof}

\nin Then the tangent space $ T_{\Gamma}B $ of $ DF $ has the splitting of the invariant subbundles $ E^1 \oplus E^2 $ such that

\ssk
\begin{enumerate}
\item $ T_{\Gamma}B = E^1 \oplus E^2 $. \msk
\item Both $ E^1 $ and $ E^2 $ are invariant under $ DF $. \msk
\item $ \| DF^n |_{E^1(x)} \| \|  DF^{-n} |_{E^2(F^{-n}(x))} \| \leq C \mu^n$ for some $ C>0 $ and $ 0 < \mu < 1 $ and $ n \geq 1 $.
\end{enumerate}
\msk
\nin Thus $ T_{\Gamma}B $ is {\em dominated} over the compact invariant set $ \Gamma $. Moreover, the dominated splitting implies that invariant sections $ w \mapsto E^1(w) $ and $ w \mapsto E^2(w) $ are continuous by Theorem 1.2 in \cite{New}. Then the maps, $ w \mapsto E^i(w) $ for $ i =1,2 $ are continuous.

\bsk
\section{Single invariant surfaces}

\subsection{Invariant surfaces and two dimensional ambient space}
Let us consider dominated splitting of $ DF $ over the given compact set. One of the subbundle, for instance, $ E^{ss} $ is uniformly contracted under $ DF|_{E^{ss}} $. If there exists an invariant submanifold whose codimension is at least one such that this manifold is tangent to invariant tangent subbundle and it contains the invariant compact set, then we expect the dynamics is reduced in this invariant manifold. 
\msk
\begin{defn}
A $ C^r $ submanifold $ Q $ which contains $ \Gamma $ is {\em locally invariant} under $ f $ if there exists a neighborhood $ U $ of $ \Gamma $ in $ Q $ such that $ f(U) \subset Q $. 
\end{defn}

\nin The necessary and sufficient condition for the existence of these submanifolds, see \cite{CP} and its references. 
\begin{thm}[\cite{CP}] \label{Existence of invariant submanifold} Let $ \Gamma $ be an invariant compact set with a dominated splitting $ T_{\Gamma}M = E^1 \oplus E^2 $ such that $ E^1 $ is uniformly contracted. Then $ \Gamma $ is contained in a locally invariant submanifold tangent to $ E^2 $ if and only if the strong stable leaves for the bundle $ E^1 $ intersect the set $ \Gamma $ at only one point.
\end{thm}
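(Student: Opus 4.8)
The plan is to treat the two implications separately, the forward one by a soft transversality-and-dynamics argument and the converse by an explicit construction. For the forward implication, suppose a locally invariant $C^r$ submanifold $Q\supset\Gamma$ tangent to $E^2$ exists. Since $E^1\oplus E^2=T_\Gamma M$ and the strong stable leaf $W^{ss}(x)$ is tangent to $E^1(x)$ at each $x\in\Gamma$, the leaf $W^{ss}(x)$ and $Q$ meet transversally at $x$ in complementary dimensions, hence in an isolated point near $x$; by compactness of $\Gamma$ and continuity of $E^1$, $E^2$ and of the local strong stable leaves this is uniform, so there is $\delta>0$ with $W^{ss}_{\loc}(z)\cap Q\cap B(z,\delta)=\{z\}$ for all $z\in\Gamma$, where $W^{ss}_{\loc}(z)=\{w:\dist(f^kw,f^kz)\le\delta\ \text{for all}\ k\ge0\}$ is the local leaf of size $\delta$. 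If $x\neq y$ were two points of $\Gamma$ on a common strong stable leaf, then $\dist(f^nx,f^ny)\to0$, so choosing $n$ with $\dist(f^kx,f^ky)<\delta$ for all $k\ge n$ puts $f^ny$ into $W^{ss}_{\loc}(f^nx)$, while $f^ny\in\Gamma\subset Q$ and $\dist(f^ny,f^nx)<\delta$ force $f^ny=f^nx$, contradicting injectivity of $f$. Hence every strong stable leaf meets $\Gamma$ in at most one point.

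For the converse, assume every strong stable leaf meets $\Gamma$ in at most one point. I would first invoke the plaque family theorem of Hirsch--Pugh--Shub in the dominated setting to produce a continuous, locally $f$-invariant family of $C^r$ disks $\{D_x\}_{x\in\Gamma}$ with $x\in D_x$ and $T_xD_x=E^2(x)$, together with the dual strong stable plaque family $\{D^1_x\}$ whose plaques are pieces of $W^{ss}$ and which extends to a foliation of a neighborhood of $\Gamma$ by $C^r$ leaves, transverse to every $D_x$. The crux is a \emph{coherence} property: for $x,y\in\Gamma$ sufficiently close, $D_x$ and $D_y$ coincide on a relative neighborhood of $x$. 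One argues by contradiction: if the two plaques separate near $x$, the strong stable holonomy between them is nontrivial there, and pushing this disagreement forward by $f$, using local invariance of both plaque families and the uniform contraction along the $E^1$-fibers, eventually exhibits two distinct points of $\Gamma$ lying on a single strong stable leaf, contrary to the hypothesis. Granting coherence, the disks $\{D_x\}$ are the plaques of a $C^r$ lamination containing $\Gamma$; saturating it transversally along the strong stable foliation and patching with a partition of unity subordinate to a finite subcover of $\Gamma$ yields an embedded $C^r$ submanifold $Q\supset\Gamma$ tangent to $E^2$, and it is locally invariant because each $D_x$ is.

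The step I expect to be the main obstacle is the converse, and within it the coherence claim and the patching it feeds into: one must work with a nested sequence of scales since the plaque families are only locally invariant, retain $C^r$ control through the gluing so that $Q$ is a genuine $C^r$ (not merely topological) submanifold, and make the transversal saturation compatible with the strong stable foliation so that local invariance is preserved. As these technical points are carried out in \cite{CP}, in the paper I would give only the outline above and refer there for the complete argument.
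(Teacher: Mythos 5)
The paper gives no proof of this statement at all: it is quoted from \cite{CP} and used as a black box, so there is nothing internal to compare your argument against. On its own merits, your forward implication is a genuine, essentially complete proof: transversality of $W^{ss}_{\loc}(z)$ and $Q$ in complementary dimensions gives a uniform $\delta$ with $W^{ss}_{\loc}(z)\cap Q\cap B(z,\delta)=\{z\}$ over the compact set $\Gamma$, and pushing two hypothetical points $x\neq y$ of $\Gamma$ on one leaf forward until their orbits are $\delta$-close forces $f^nx=f^ny$, a contradiction; this is more than the paper does. The converse, however, is only a plan, and the step you flag as the obstacle is not a technicality but the entire content of the theorem in \cite{CP}: the ``coherence'' of the $E^2$-plaques. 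As sketched, the contradiction argument does not yet close, because the disagreement between two nearby plaques $D_x$ and $D_y$ lives at points that need not belong to $\Gamma$, so iterating it forward does not obviously produce two \emph{points of $\Gamma$} on a common strong stable leaf; bridging that (via the strong stable holonomy between plaques and a careful choice of scales, since the plaque families are only locally invariant) is where the real work in \cite{CP} lies, and the patching into a single embedded $C^r$ submanifold is likewise nontrivial. Since the paper itself simply cites \cite{CP}, deferring these steps is consistent with its treatment, but your text should be framed as a proof of the ``only if'' direction together with a reduction of the ``if'' direction to the reference, not as a proof of the equivalence.
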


\nin Moreover, the existence of invariant submanifold is robust under $ C^1 $ perturbation.
\begin{prop}[\cite{CP}]  Let $ \Gamma $ be an invariant compact set with a dominated splitting $ E^1 \oplus E^2 $ such that $ E^1 $ is uniformly contracted. If $ \Gamma $ is contained in a locally invariant submanifold tangent to $ E^2 $, then the same holds for any diffeomorphism $ C^1 $close to $ f $ and any compact set $ \Gamma' $ contained in a small neighborhood of $ \Gamma $.
\end{prop}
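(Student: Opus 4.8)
The plan is to reduce the statement to the characterization in Theorem~\ref{Existence of invariant submanifold}, applied not to $f$ but to the perturbed diffeomorphism together with the maximal invariant set near $\Gamma$. Fix a compact isolating neighbourhood $U$ of $\Gamma$ and, for $g$ that is $C^1$-close to $f$, set $\Gamma_g=\bigcap_{n\in\Z}g^n(U)$; since every $g$-invariant compact set $\Gamma'\subset U$ satisfies $\Gamma'\subset\Gamma_g$, it is enough to produce a locally invariant submanifold tangent to $E^2$ through $\Gamma_g$, and then $\Gamma'$ is automatically contained in it. Recall also that $g\mapsto\Gamma_g$ is upper semicontinuous, i.e.\ $\Gamma_g\to\Gamma$ in the Hausdorff metric as $g\to f$.

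First I would isolate the pieces of structure that survive a $C^1$-perturbation. A dominated splitting with uniformly contracted $E^1$ is detected by a continuous $Df$-forward-invariant cone field over a neighbourhood of $\Gamma$ together with the strong-cone contraction estimate $\|Df|_{\text{strong cone}}\|\le\lambda<1$; these are open conditions in the $C^1$ topology, exactly of the type used in Lemma~\ref{Invariance of cone field infty} and Lemma~\ref{Invariance of cone filed perturbation infty}. Hence for $g$ close enough to $f$ the set $\Gamma_g$ carries a dominated splitting $E^1_g\oplus E^2_g$ with $E^1_g$ uniformly contracted, with rates uniform in $g$, and with $E^i_g$ depending continuously on $g$ near $\Gamma$ (continuity of the bundles also following from domination, cf.\ \cite{New}). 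Applying the stable manifold theorem to the uniformly contracted bundle $E^1_g$ yields local strong stable discs $W^{ss}_{\mathrm{loc},g}(x)$, $x\in\Gamma_g$, of a uniform size $\delta_0>0$, tangent to $E^1_g(x)$, with $g\bigl(W^{ss}_{\mathrm{loc},g}(x)\bigr)\subset W^{ss}_{\mathrm{loc},g}(gx)$, with $g$ contracting the intrinsic leaf metric by a uniform factor $\le\lambda_0<1$, and depending continuously in $C^1$ (uniformly) on $(g,x)$; the global leaves are $W^{ss}_g(x)=\bigcup_n g^{-n}\bigl(W^{ss}_{\mathrm{loc},g}(g^nx)\bigr)$. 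Using $g$-equivariance of $W^{ss}_g$, the hypothesis of Theorem~\ref{Existence of invariant submanifold} — that every strong stable leaf meets the invariant set at only one point — is equivalent to the purely local statement $W^{ss}_{\mathrm{loc},g}(y)\cap\Gamma_g=\{y\}$ for every $y\in\Gamma_g$.

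The heart of the proof is to show that this local single-intersection property is robust. By the hypothesis on $f$ and the ``only if'' direction of Theorem~\ref{Existence of invariant submanifold} applied to $f$ itself, $W^{ss}_{\mathrm{loc},f}(y)\cap\Gamma=\{y\}$ for every $y\in\Gamma$. Suppose the property failed along a sequence $g_k\to f$: choose $x_k\ne y_k$ in $\Gamma_{g_k}$ with $y_k\in W^{ss}_{\mathrm{loc},g_k}(x_k)$, and let $d_k\in(0,\delta_0]$ be their intrinsic leaf distance. If $d_k$ stays bounded below along a subsequence, then (a further subsequence gives) $x_k\to x$, $y_k\to y$ in $\Gamma$ by upper semicontinuity, $x\ne y$ because the leaf distance survives in the limit, and $y\in W^{ss}_{\mathrm{loc},f}(x)$ by continuity of the local lamination, contradicting the hypothesis. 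If instead $d_k\to0$, I spread the two points apart along the backward orbit: since $g_k$ contracts the intrinsic leaf metric by the uniform factor $\le\lambda_0<1$ while the points stay within leaf distance $\delta_0$, and since $\|Dg_k\|$, $\|Dg_k^{-1}\|$ are uniformly bounded, the leaf distance of $g_k^{-n}x_k$ and $g_k^{-n}y_k$ grows strictly by a factor in a fixed interval $(1,c_1^{-1}]$ at each backward step as long as it remains $\le\delta_0$; as $g_k^{-n}x_k,g_k^{-n}y_k$ stay in $\Gamma_{g_k}\subset U$ for all $n$, there is a backward time $n_k\to\infty$ at which this leaf distance is comparable to $\delta_0$ with the two points still on a common local strong stable leaf. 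Their ambient distance is then bounded below by a uniform $c_0>0$, so passing to a subsequence $g_k^{-n_k}x_k\to x'$, $g_k^{-n_k}y_k\to y'$ in $\Gamma$ with $d(x',y')\ge c_0>0$ and $y'\in W^{ss}_{\mathrm{loc},f}(x')$, again contradicting the hypothesis for $f$.

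Therefore, for every $g$ sufficiently $C^1$-close to $f$, the set $\Gamma_g$ has a dominated splitting $E^1_g\oplus E^2_g$ with $E^1_g$ uniformly contracted and with every strong stable leaf meeting $\Gamma_g$ at exactly one point; Theorem~\ref{Existence of invariant submanifold} applied to $g$ then provides a locally invariant submanifold $Q_g$ tangent to $E^2_g$ with $\Gamma_g\subset Q_g$, and any compact $g$-invariant $\Gamma'\subset U$ is contained in $\Gamma_g\subset Q_g$, which is the assertion. The main obstacle is exactly the degenerate case $d_k\to0$ in the robustness step: a priori a small perturbation might create two distinct, very close points of the invariant set on a single short strong stable leaf, and excluding this is precisely what the backward-iteration ``unfolding'' argument is designed to do, using essentially that strong stable leaves are backward-expanding at a uniform rate while the perturbed invariant set remains trapped in the isolating neighbourhood.
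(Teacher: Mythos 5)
First, a point of comparison: the paper does not prove this proposition at all --- it is imported verbatim from \cite{CP} with no argument given. So your proposal cannot be matched against an internal proof; it has to stand on its own. On its own terms, your strategy is the standard one and is essentially sound: reduce to the characterization of Theorem \ref{Existence of invariant submanifold}, observe that the dominated splitting with uniformly contracted $E^1$ and the uniform-size local strong stable discs persist under $C^1$ perturbation, reformulate the hypothesis of Theorem \ref{Existence of invariant submanifold} as the local single-intersection property $W^{ss}_{\mathrm{loc}}(y)\cap\Gamma=\{y\}$, and prove that this property is robust by the dichotomy (leaf distance bounded below versus leaf distance tending to zero, the latter handled by backward iteration using the uniform backward expansion of the leaf metric inside the trapped invariant set). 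The equivalence between the global and local single-intersection statements, the lower bound on ambient distance from a definite intrinsic leaf distance, and the limit argument via continuity of the local lamination are all correctly identified and correctly used.

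There is, however, one genuine gap in the setup. You ``fix a compact isolating neighbourhood $U$ of $\Gamma$'' and assert $\Gamma_g\to\Gamma$ in the Hausdorff metric. An isolating neighbourhood need not exist: the proposition is stated for an arbitrary invariant compact set $\Gamma$, which may sit inside a strictly larger invariant set accumulating on it (e.g.\ a fixed point inside a horseshoe). In that case $\Gamma_f=\bigcap_n f^n(U)\supsetneq\Gamma$ for every choice of $U$, upper semicontinuity only yields $\limsup_k\Gamma_{g_k}\subset\Gamma_f$, and your contradiction argument then needs the single-intersection property on $\Gamma_f$ rather than on $\Gamma$ --- which is not among the hypotheses; indeed, establishing it for the maximal invariant set in a small neighbourhood is part of what the proposition asserts (already for $g=f$). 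The repair is routine but must be made explicit: argue by contradiction with \emph{both} $g_k\to f$ in $C^1$ \emph{and} a shrinking sequence of neighbourhoods $V_k\downarrow\Gamma$, replacing $\Gamma_{g_k}$ by the maximal invariant set $\Lambda_{g_k}(V_k)$. Since for $z_k\in\Lambda_{g_k}(V_k)$ with $z_k\to z$ one has $f^n(z)\in\bigcap_j\overline{V_j}=\Gamma$ for every $n\in\Z$, the accumulation points $x',y'$ produced by your dichotomy do land in $\Gamma$, where the hypothesis on $f$ applies, and the rest of your argument goes through unchanged. With this modification the conclusion takes its correct quantified form: there exist a neighbourhood $V$ of $\Gamma$ and a $C^1$-neighbourhood of $f$ such that every $g$-invariant compact set contained in $V$ lies in a locally invariant submanifold tangent to the continuation of $E^2$.
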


\nin Let us consider the toy model map which is infinitely renormalizable. Recall that $ DF_{\mod} $ be $ \bigl(\begin{smallmatrix}
A & {\bf 0} \\ 
C & D
\end{smallmatrix} \bigr)$ where $ A $ is the derivative of $ \pi_{xy} \circ F_{\mod} $. For a given invariant compact set, let us assume that $ \Gamma $ has the dominated splitting $ T_{\Gamma}B = E^{ss} \oplus E^{pu} $ where $ DF|_{E^{ss}} $ is $ D $, $ DF|_{E^{pu}} $ is $ A $ and $ \| D \| \| A^{-1} \| < \rho < 1 $. Let $ W^{ss}(w) $ be the strong stable manifold which is tangent to $ E^{ss}(w) $ for each $ w \in \Gamma $. A pseudo unstable manifold, $ W^{pu}(w) $ is defined similarly. Observe that $ W^{pu}(w) $ is two dimensional manifold at every point $ w \in \Gamma $.

\begin{lem}[Lemma A.1 and Lemma A.2 in \cite{Nam3}] \label{lem-closure of periodic points}
Let $ F $ be an infinitely renormalizable H\'enon-like map. Then the critical Cantor set, $ \OO_F $ is the set of accumulation points of $ \Per_F $. Moreover, $ W^s(w) \cap \overline{\Per}_F = \{ w\} $ for every point $ w \in \overline{\Per}_F $.
\end{lem}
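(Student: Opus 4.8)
The statement asserts two things about an infinitely renormalizable H\'enon-like map $F$: first, that the critical Cantor set $\OO_F$ coincides with the accumulation set of the periodic points $\Per_F$; second, that for every $w \in \overline{\Per}_F$ the stable manifold $W^s(w)$ meets $\overline{\Per}_F$ only in $w$. Since the statement explicitly cites Lemma A.1 and Lemma A.2 of \cite{Nam3}, the plan is to reproduce those arguments in the present higher-dimensional setting, checking at each step that nothing used there was special to dimension three. The key point is that all the combinatorial and geometric inputs---the nested pieces $B^n_{\bf w}$, their exponentially shrinking diameters (Corollary \ref{diameter 2}), the dyadic adding machine action on $\OO_F$, and the distortion estimates---have already been set up in Sections \ref{critical cantor set}--\ref{universality of Jacobian} for arbitrary dimension, so the proof is essentially a transcription.

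For the first assertion I would argue as follows. For each $n$, the renormalization $R^nF$ has a periodic orbit: the fixed point $\beta_1(R^nF)$ (and $\beta_0(R^nF)$) pulls back under the scaling maps $\Psi^n_{\bf w}$ to a periodic point of $F$ of period $2^n$ lying in the piece $B^n_{\bf w}$, one for each word ${\bf w}\in W^n$. Since $\diam(B^n_{\bf w}) \le C\si^n \to 0$ and every point of $\OO_F$ is the nested intersection $\bigcap_n B^n_{{\bf w}_n}$ along a unique sequence of words, every point of $\OO_F$ is a limit of such periodic points; hence $\OO_F \subset \overline{\Per}_F$. Conversely, any periodic point of $F$ that accumulates must have its orbit entering pieces of arbitrarily deep level (a periodic orbit of bounded ``depth'' stays in a fixed finite union of pieces of some level $n$ and cannot accumulate on a point that is not itself periodic), and a diagonalization over levels forces the accumulation points into $\bigcap_n \bigcup_{\bf w} B^n_{\bf w} = \OO_F$. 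One must also dispose of the finitely many ``short'' periodic orbits (the fixed points $\beta_0,\beta_1$ and their preimages of bounded period) which are isolated and do not contribute accumulation points; this uses that $\OO_F$ contains no periodic cycle, which is already recorded in the excerpt (the adding-machine remark after Corollary \ref{diameter 2} and in the proof of Theorem \ref{max exponent is 1}).

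For the second assertion, fix $w \in \overline{\Per}_F$. If $w \in \OO_F$, then $w$ lies in a unique nested chain of pieces and the stable manifold $W^s(w)$ is, locally, the codimension-one strong stable leaf through $w$; because the pieces $B^n_{\bf w}$ of a fixed deep generation are pairwise disjoint and each contains exactly mass $1/2^n$ of $\mu$, and because the strong stable holonomy contracts transversally while the adding machine acts minimally, the leaf $W^s(w)$ can meet $\OO_F$ (hence $\overline{\Per}_F$, by the first part applied also to deeper levels) only at $w$ itself---this is the standard ``the adding machine has no two points on one stable leaf'' argument, and in higher dimension it is exactly the hypothesis one needs to feed into Theorem \ref{Existence of invariant submanifold}. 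If instead $w$ is one of the (pre)periodic points not in $\OO_F$, its stable set is an honest stable manifold of a periodic orbit and meets the totally disconnected set $\overline{\Per}_F$ only in the orbit's basin structure, which one checks reduces to $\{w\}$ by the local product structure near a hyperbolic periodic orbit together with the disjointness of deep pieces.

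The main obstacle I anticipate is the second assertion in the case $w\in\OO_F$: one needs that distinct points of $\OO_F$ lie on distinct strong stable leaves of the bundle $E^{ss}$ (equivalently $E^1$), and in the genuinely higher-dimensional, not-necessarily-toy-model case the relevant splitting and its holonomy are only available under the domination hypotheses of Section \ref{Tangent bundle splitting}. So the honest route is to note that the statement is local and that deep renormalized pieces are (after rescaling) $C^1$-close to the one-dimensional picture, where the separation of stable leaves over the Cantor set is classical; then transfer this back through the scaling maps $\Psi^n_{\bf w}$ using the uniform bounds of Lemma \ref{diameter} and Lemma \ref{bounds on domain}. Everything else is bookkeeping with the already-established exponential estimates.
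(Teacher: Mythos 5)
The paper does not actually prove this lemma; it is imported verbatim from Lemma A.1 and Lemma A.2 of \cite{Nam3}, so there is no in-text argument to compare yours against. Judged on its own, your proposal gets the easy half right and goes astray on the two genuinely delicate points. The inclusion $\OO_F\subset\mathrm{acc}(\Per_F)$ via pulling back the fixed points of $R^nF$ through $\Psi^n_{\bf w}$ and using $\diam(B^n_{\bf w})\le C\si^n$ is correct and is the standard argument. But the reverse inclusion is not delivered by your ``diagonalization over levels.'' To conclude that every accumulation point of $\Per_F$ lies in $\bigcap_n\bigcup_{\bf w}B^n_{\bf w}$ you must first know \emph{where all periodic orbits are}: that $\Per_F$ consists exactly of $\beta_0$ together with one saddle orbit $\Orb(q_n)$ of period $2^n$ for each $n$, each orbit meeting each level-$n$ piece. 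That classification is a theorem (the two-dimensional case is the content of \cite{LM}), not a consequence of the piece structure; a priori nothing in Sections 2--5 forbids infinitely many periodic orbits outside the renormalization pieces accumulating elsewhere. Your parenthetical about orbits of ``bounded depth'' silently assumes the conclusion.

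For the second assertion your proposed route would not prove the lemma as stated. You identify the case $w,w'\in\OO_F$ as the crux and then reach for strong stable leaves, holonomy, and the dominated splitting of Section 6 --- but the lemma is asserted for an arbitrary infinitely renormalizable H\'enon-like map, with no domination hypothesis, and indeed it is \emph{used} in Lemma \ref{transversal intersection at a single point} as an input to the splitting discussion, so an argument that presupposes the splitting is circular in context. None of that machinery is needed. Since $F$ acts on $\OO_F$ as the adding machine on the words ${\bf w}\in W^n$, two distinct points $w,w'\in\OO_F$ lie in distinct pieces $B^n_{\bf u}\ne B^n_{{\bf u}'}$ for some $n$, and $F^k$ sends these into $B^n_{{\bf u}+k}$ and $B^n_{{\bf u}'+k}$ with ${\bf u}+k\ne{\bf u}'+k$ for every $k$; as the finitely many level-$n$ pieces are pairwise disjoint compacta, $\dist(F^k(w),F^k(w'))$ is bounded below uniformly in $k$, so $w'\notin W^s(w)$. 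The remaining cases are soft: a point of $\OO_F$ cannot lie in $W^s$ of a periodic point because minimality of the adding machine forces its $\omega$-limit set to be all of $\OO_F$ rather than a finite orbit, and distinct points of the (countably many, hyperbolic) periodic orbits are never forward-asymptotic to one another. Replacing your holonomy argument by this separation-of-pieces argument both closes the gap and removes the extraneous hypotheses.
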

\nin The toy model map has invariant set of codimension two hyperplane under $ F $
\begin{equation} \label{eq-invariant hyperplane}
\bigcup_{ (x,y) \in \,I^x \times I^y} \{ (x,y,\Bz) \ | \ \Bz \in {I}^{\Bz} \}
\end{equation}
where $ {I}^{\Bz} = I^{z_1}\times I^{z_2}\times \cdots \times I^{z_m} $. Moreover, any vector which perpendicular to $ xy- $plane is invariant under $ DF_{\mod} $ up to the size. Then by Theorem \ref{Existence of invariant submanifold} and Lemma \ref{eq-invariant hyperplane} implies the following lemma.

\begin{lem} \label{transversal intersection at a single point}
Let $ F_{\mod} $ be the toy model map in $ \II(\bar \eps) $. Suppose that
$$ \sup_{w \in \overline\Per_{F_{\mod}}} \frac{\|D_1(w)\|}{m(A_1(w))\|} \leq \frac{1}{2} $$
where $ A_1 $ and $ D_1 $ are the block matrix in \eqref{eq-block matrix form of derivative}. Then there exists a locally invariant $ C^1 $ single surface $ Q $ which contains $ \overline\Per_{F_{\mod}} $. The surface $ Q $ meets transversally and uniquely strong stable manifold, $ W^{ss} $ at each $ w \in \overline\Per_{F_{\mod}} $. 
\end{lem}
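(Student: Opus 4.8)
The plan is to verify the two hypotheses of Theorem \ref{Existence of invariant submanifold}, applied to $f = F_{\mod}$ and $\Gamma = \overline{\Per}_{F_{\mod}}$: namely that $\Gamma$ carries a dominated splitting with a uniformly contracted subbundle, and that each strong stable leaf of the contracted bundle meets $\Gamma$ in exactly one point. First I would set up the splitting. The block form \eqref{eq-block matrix form of derivative} shows $DF_{\mod}$ is lower block triangular with diagonal blocks $A_1(w) = DF_{2d}(x,y)$ and $D_1(w) = \di_{\Bz}\bde(w)$. The standing assumption $\sup_{w \in \overline{\Per}}\,\|D_1(w)\|/m(A_1(w)) \le \tfrac12$ says exactly that the hypothesis of Lemma \ref{Invariance of cone field infty} holds (with $\rho$ any number in $(\tfrac12,1)$, absorbing the constant via the remark following that lemma, since $\overline{\Per}_{F_{\mod}}$ is compact and $F_{\mod}$-invariant). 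Hence the cone field $\CC(\gamma)$ of \eqref{complement cone field infty} is $DF_{\mod}^{-1}$-invariant for small $\gamma$, giving a dominated splitting $T_{\Gamma}B = E^{ss}\oplus E^{pu}$ with $DF|_{E^{ss}}$ governed by $D$, $DF|_{E^{pu}}$ by $A$, $\dim E^{pu} = 2$, and $\|D\|\,\|A^{-1}\| < \rho < 1$; in particular $E^{ss}$ is uniformly contracted. This produces the strong stable manifolds $W^{ss}(w)$ tangent to $E^{ss}(w)$ for $w \in \Gamma$.

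The second step is the one-point intersection condition. Here I would use the structure of the toy model: by \eqref{eq-invariant hyperplane} the codimension-two affine subspaces $\{x\} \times \{y\} \times I^{\Bz}$ are permuted by $F_{\mod}$, and every vector of $\{\B0\}\times\R^m$ is mapped by $DF_{\mod}$ into $\{\B0\}\times\R^m$ (up to size), so $E^{ss}(w) \subset \{\B0\}\times\R^m$ and the strong stable leaf through $w$ is contained in the vertical fiber $\{\pi_{xy}(w)\}\times I^{\Bz}$. On the other hand, Lemma \ref{lem-closure of periodic points} tells us that $\overline{\Per}_{F_{\mod}} = \OO_{F_{\mod}}$ meets the full stable manifold $W^s(w)$, a fortiori the strong stable leaf $W^{ss}(w) \subset W^s(w)$, in the single point $w$. (I should double-check here that the abstract "strong stable leaves" for the bundle $E^1$ in the sense of Theorem \ref{Existence of invariant submanifold} coincide with the $W^{ss}(w)$ obtained from the cone field, or at least are contained in $W^s(w)$; this is standard for dominated splittings with a uniformly contracted bundle, but it is the delicate point.) Having both hypotheses, Theorem \ref{Existence of invariant submanifold} yields a locally invariant $C^1$ submanifold $Q$ tangent to $E^{pu}$ and containing $\overline{\Per}_{F_{\mod}}$; since $\dim E^{pu} = 2$, $Q$ is a surface. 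Transversality of $Q$ and $W^{ss}(w)$ at each $w \in \overline{\Per}_{F_{\mod}}$ is immediate from $T_w Q = E^{pu}(w)$, $T_w W^{ss}(w) = E^{ss}(w)$ and $E^{pu}(w)\oplus E^{ss}(w) = T_w B$; uniqueness of the intersection point is the one-point condition just established.

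The main obstacle, as I see it, is not the cone-field computation (which is quoted from \cite{Nam1}) nor the application of the cited Theorem \ref{Existence of invariant submanifold}, but the bookkeeping that identifies the "strong stable leaves" appearing in that theorem's statement with objects we control on $\OO_{F_{\mod}} = \overline{\Per}_{F_{\mod}}$, so that Lemma \ref{lem-closure of periodic points} can be invoked to give the single-point intersection. Concretely one must argue that the strong stable leaf of $E^{ss}$ through a point $w\in\Gamma$ is a subset of the stable set $W^s(w)$ (using that points on it are forward-asymptotic to the orbit of $w$ at the rate dictated by $\|D\|$, which beats the rate $\|A^{-1}\|^{-1}$ on $E^{pu}$), and that distinct points of $\Gamma$ lie on distinct leaves. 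Once this identification is in place, everything else follows by assembling the pieces above.
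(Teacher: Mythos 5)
Your construction of $Q$ follows the paper's route and is, if anything, more careful: cone field $\Rightarrow$ dominated splitting over $\overline{\Per}_{F_{\mod}}$, identification of the strong stable leaves with the vertical fibers $\{\pi_{xy}(w)\}\times I^{\Bz}$, and verification of the one-point hypothesis of Theorem \ref{Existence of invariant submanifold} via Lemma \ref{lem-closure of periodic points} together with $W^{ss}(w)\subset W^s(w)$ (a step the paper leaves implicit). Up to the existence of the locally invariant $C^1$ surface $Q$ tangent to $E^{pu}$, and the transversality statement, your argument is sound.

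The gap is in your last sentence: ``uniqueness of the intersection point is the one-point condition just established.'' What you established is $\overline{\Per}_{F_{\mod}}\cap W^{ss}(w)=\{w\}$, i.e.\ the hypothesis of the Crovisier--Pujals theorem, which concerns the \emph{invariant compact set} $\Gamma$. The lemma, however, asserts that the \emph{surface} $Q$ meets $W^{ss}(w)$ in a single point, and $Q$ is strictly larger than $\Gamma$: a priori $Q$ could return and cut the vertical fiber over $\pi_{xy}(w)$ at some second point $w'\notin\overline{\Per}_{F_{\mod}}$, and nothing you have said excludes this. The paper closes exactly this gap with a separate argument: if $w'\in Q\cap W^{ss}(w)$ with $w'\ne w$, then $w'\notin\overline{\Per}_{F_{\mod}}$ by Lemma \ref{lem-closure of periodic points}; since $w'\in W^{ss}(w)$ its forward orbit is asymptotic to that of $w$, and by the Inclination Lemma the forward images of a small disc around $w'$ in the locally invariant surface $Q$ converge to discs of $Q$ around $F^n(w)$; hence $Q$ accumulates on itself and cannot be an embedded submanifold, contradicting Theorem \ref{Existence of invariant submanifold}. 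You need this (or an equivalent) argument to obtain the uniqueness assertion of the lemma; as written, your proof only proves uniqueness of the intersection with $\overline{\Per}_{F_{\mod}}$.
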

\begin{proof}
Lemma \ref{Invariance of cone field infty} implies the dominated splitting over $ \overline\Per_{F_{\mod}} $. This dominated splitting implies that the hyperplanes in \eqref{eq-invariant hyperplane} at each point $ w $ of $ \overline\Per_{F_{\mod}} $ is the strong stable manifold at $ w $. Transversal intersection of invariant cone fields implies that the surface $ Q $ tangent to $ E^{pu} $ over $ \overline\Per_{F_{\mod}} $ meets transversally each $ W^{ss}(w) $. We may assume that $ Q $ is locally invariant by Theorem \ref{Existence of invariant submanifold}. Let us show the uniqueness of the intersection point. Suppose that $ w $ and $ w' $ are points in $ Q \cap W^{ss}(w) $. If $ w \neq w' $, then $ w' \notin \overline\Per_{F_{\mod}} $ by Lemma \ref{lem-closure of periodic points}. Take a small neighborhood $ U $ of $ w' $ in the invariant surface $ Q $. Then $ U $ converges to the neighborhood of $ F^n(w) $ in $ Q $ as $ n \ra \infty $ by Inclination Lemma. Thus $ Q $ cannot be a submanifold of the ambient space because it accumulates itself. This contradicts to Theorem \ref{Existence of invariant submanifold}. Hence, $ w $ is the unique intersection point. 
\end{proof}
\nin Let $ F_{\mod} \in \II(\bar \eps) $ which is sectionally dissipative at fixed points. Recall that the invariant plane field, $ E^{pu} $ over $ \overline\Per_{F_{\mod}} $ is two dimensional. Thus $ E^{pu} $ contains the unstable direction at every periodic points. Then the invariant surface $ Q $ tangent to $ E^{pu} $ contains the set
$$ \AAA \equiv \OO \cup \bigcup_{n \geq 0} W^u(\Orb(q_n)) $$
where each $ q_n $ is the periodic point with period $ 2^n $. The set $ \AAA $ is called the {\em global attracting set}.

\msk
\subsection{Invariant surfaces containing $ \overline{\Per} $ as the graph of $ C^r $ map}
Recall $ b_1 $ be the average Jacobian of $ F_{2d} \equiv \pi_{xy} \circ F_{\mod} $. Suppose that $ \| \di_{\Bz} \bde \| \leq \rho \cdot m(F_{2d}) $ for some $ \rho < 1 $ with dominated splitting over $ \overline\Per_{F_{\mod}} $. Denote $ \overline\Per_{F_{\mod}} $ by $ \Gamma $. Then there exists a locally invariant single surface which contains $ \AAA_{F_{\mod}} $.  
The set of $ m- $dimensional hyperplanes which are perpendicular to $ xy- $plane 
\begin{equation*}
\begin{aligned}
\bigcup_{(x,\,y) \in\,\pi_{xy}(B)} \{ (x,\, y,\, \Bz ) \,|\; \Bz \in { I}^{\Bz} \,\}
\end{aligned} 
\end{equation*}
is invariant under $ F_{\mod} $. Since tangent subbundle $ E^{ss} $ for $ DF_{\mod} $ is constant and strong stable manifold is unique at each point in $ \Gamma $, the above set contains the strong stable foliation over $ \Gamma $. The angle between each tangent spaces $ E^{ss}_w $ and $ E^{pu}_w $ is (uniformly) positive. Thus the maximal angle between $ E^{pu} $ and $ T\R^2 $ is less than $ \frac{\pi}{2} $. 
\begin{rem}
If \,$ T_{\Gamma}B = E^{ss} \oplus E^{pu} $ \,is $ r- $dominated splitting, then $ Q $ which is invariant single surface tangent to $ E^{pu} $ is a $ C^r $ surface. Moreover, since the strong stable manifolds at each point is the set of perpendicular lines to $ xy- $plane, $ Q $ is the graph of $ C^r $ function from a region in $ I^x \times I^y $ to $ I^{\Bz} $. 
\end{rem}
\nin We may assume that invariant surfaces tangent to the invariant plane field has the neighborhood, say also $ Q $, of the tip, $ \tau_{F_{\mod}} $ in the given invariant single surface which satisfies the following properties by Lemma \ref{transversal intersection at a single point}. 
\ssk
\begin{enumerate}
\item $ Q $ is contractible. \ssk
\item $ Q $ contains $ \tau_{F_{\mod}} $ in its interior and is locally invariant under $ F^{2^N} $ for big enough $ N \in \N $. \ssk
\item Topological closure of $ Q $ is the graph of $ C^r $ map from a neighborhood of $ \tau \big( \pi_{xy} \circ F_{\mod} \big) $ in $ xy- $plane to $ I^z $. \ssk 
\end{enumerate}
By $ C^1 $ robustness of the existence of single invariant surfaces, if $ F $ be a {\em sufficiently small perturbation} of $ F_{\mod} $, then there exist invariant surfaces each of which is the graph of $ C^r $ map from a region in the $ xy- $plane to $ I^z $. 
\msk \\
There exists an invariant surface $ Q $ under $ F $ on $ \pi_{xy}(B^n_{0}) $ as the graph of the $ C^r $ function $ \xi $ with $ \| D\xi \| \leq C\bar \eps $ \,only if\, there exists the dominated splitting in the previous subsection. Then the image of $ Q $ under the map $ \big(\Psi^n_{\tip}\big)^{-1} $ is also an invariant surface under $ R^nF $ for every big enough $ n \in \N $ (See Lemma \ref{invariant surfaces on each deep level} below). The existence is based on the global implicit function theorem in \cite{ZG}.
\begin{thm}[Theorem 1 in \cite{ZG}] \label{Global implicit function thm} Let $ f: \R^n \times \R^m \lra \R^m $ is continuous mapping and it is continously differentiable for second variable $ u \in \R^m $. Suppose that 
\begin{equation*}
\left| \left[ \frac{\di}{\di u}\,f(x,u) \right]_{ii} \right| - \sum_{i \neq j} \left| \left[ \frac{\di}{\di u}\,f(x,u) \right]_{ij} \right| \geq d > 0
\end{equation*}
for all $ (x,u) \in \R^n \times \R^m $ and $  i = 1,2, \ldots ,m $ where $ \left[ \frac{\di}{\di u}\,f(x,u) \right]_{ij} $ is the $ ij $ entry of the Jacobian matrix of $ f $ over the second variable $ u $. Then there exists the unique mapping $ g \colon R^n \ra \R^m $ such that $ f(x, g(x)) = 0 $. Moreover, $ g $ is continuous. If $ f $ is continuously differentiable, then so is $ g $.
\end{thm}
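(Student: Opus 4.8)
The plan is to reduce the statement to the Hadamard--L\'evy global inverse function theorem applied, for each fixed $x\in\R^n$, to the map $F_x\colon\R^m\lra\R^m$, $F_x(u)=f(x,u)$, and then to extract the continuity and the $C^1$-regularity of the resulting solution map $g$ from a topological-degree argument and the classical local implicit function theorem. First I would record the linear-algebra input behind the hypothesis: for every $(x,u)$ the $m\times m$ matrix $\di_u f(x,u)$ is strictly row-diagonally dominant with dominance gap at least $d$, hence nonsingular by the L\'evy--Desplanques theorem; and, choosing an index $i$ with $|v_i|=\|v\|_\infty$ and estimating $|[\di_u f(x,u)\,v]_i|\ge |[\di_u f]_{ii}||v_i|-\sum_{j\ne i}|[\di_u f]_{ij}||v_i|\ge d\|v\|_\infty$, one obtains the \emph{uniform} bound $\|(\di_u f(x,u))^{-1}\|_\infty\le 1/d$ for all $(x,u)$.

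Next I would fix $x$ and observe that $F_x$ is a $C^1$ local diffeomorphism (its Jacobian is everywhere invertible) with $\sup_{u}\|DF_x(u)^{-1}\|\le C(m)/d$ in the Euclidean operator norm, all norms on $\R^m$ being equivalent. Hence Hadamard's integral criterion $\int_0^\infty\big(\sup_{\|u\|\le r}\|DF_x(u)^{-1}\|\big)^{-1}dr=\infty$ is satisfied, so $F_x$ is a $C^1$ diffeomorphism of $\R^m$ onto $\R^m$. In particular there is a unique $u=g(x)$ with $f(x,g(x))=0$; this defines $g\colon\R^n\to\R^m$ and already yields the uniqueness assertion.

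For continuity of $g$ at a point $x_0$: given $\eps>0$, the zero set of $f(x_0,\cdot)$ meets the closed ball $\overline B=\overline{B(g(x_0),\eps)}$ only at its center, so $\eta:=\min_{u\in\di\overline B}\|f(x_0,u)\|>0$; by joint continuity of $f$ on the compact set $\{\|x-x_0\|\le1\}\times\overline B$ there is $\de>0$ with $\|f(x,u)-f(x_0,u)\|<\eta$ whenever $\|x-x_0\|<\de$ and $u\in\overline B$. Then the straight-line homotopy between $f(x_0,\cdot)$ and $f(x,\cdot)$ never vanishes on $\di\overline B$, so the Brouwer degrees agree, $\deg(f(x,\cdot),\inter\overline B,0)=\deg(f(x_0,\cdot),\inter\overline B,0)=\pm1\ne0$ (the latter because $F_{x_0}$ is a homeomorphism with $g(x_0)$ the unique preimage of $0$); hence $f(x,\cdot)$ has a zero in $\inter\overline B$, which by uniqueness is $g(x)$, so $\|g(x)-g(x_0)\|<\eps$. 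Finally, if $f$ is jointly $C^1$, then since $\di_u f(x,g(x))$ is invertible the classical implicit function theorem applies in a neighborhood of each $x$ and shows $g\in C^1$ with $Dg(x)=-(\di_u f(x,g(x)))^{-1}\di_x f(x,g(x))$.

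The main obstacle is the passage from local invertibility of $F_x$ to a bijection onto \emph{all} of $\R^m$: both injectivity and surjectivity fail for generic local diffeomorphisms of $\R^m$, and what saves the day is precisely the uniform estimate $\|(\di_u f)^{-1}\|_\infty\le 1/d$ — i.e. the strict positivity $d>0$ of the dominance gap, not merely strict diagonal dominance — which furnishes the properness needed for Hadamard's theorem. Everything else (the degree computation for continuity, the local implicit function theorem for $C^1$-smoothness) is routine once that step is secured.
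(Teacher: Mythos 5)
Your proof is correct, but it follows a genuinely different route from the one the paper relies on. The paper simply imports this statement as Theorem~1 of \cite{ZG}, whose proof (as the paper indicates) is a global Banach contraction argument: for each fixed $x$ one iterates $\TT u = u - \frac{1}{\ell}f(x,u)$ for a suitably large $\ell>0$ on a closed ball of radius $\|f(x,0)\|/d$, the row diagonal dominance with gap $d$ making $\TT$ a uniform contraction; existence, uniqueness, continuity and $C^r$-dependence of $g$ then all fall out of the contraction mapping theorem and local continuation. You instead extract from the same hypothesis the uniform bound $\|(\di_u f(x,u))^{-1}\|_\infty\le 1/d$ (via the L\'evy--Desplanques estimate), feed it into the Hadamard--L\'evy integral criterion to conclude that $u\mapsto f(x,u)$ is a global diffeomorphism of $\R^m$ for each fixed $x$ (giving existence and uniqueness of $g(x)$), prove continuity of $g$ by a Brouwer degree homotopy argument, and get $C^1$-regularity from the classical local implicit function theorem. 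Both arguments hinge on exactly the same quantitative input --- the strict dominance gap $d>0$, not mere invertibility of $\di_u f$ --- and you are right to flag that this is what rules out the failure of global injectivity/surjectivity. The trade-off: the contraction route is more elementary, constructive, and yields the a priori bound $\|g(x)\|\le\|f(x,0)\|/d$ for free, whereas your route is conceptually cleaner for the existence/uniqueness step but imports two heavier tools (Hadamard--L\'evy and degree theory); for the purposes of Proposition~\ref{invariant surfaces on each deep level}, where only the conclusion of the theorem is used, either proof serves equally well.
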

\comm{********************
\msk
\nin The proof of above theorem in \cite{ZG} used Banach contraction mapping theorem at every point on the domain. Let $ c $ be $ f(x_0, 0) $ for each $ x_0 $ and define $ g(x_0) = 0 $. Consider the map from the closed Banach ball to $ \R^m $
$$ \TT \colon \overline{B}(O, \|c\| / d) \ra \R^m $$
where the ball $ \overline{B}(O, \|c\| / d) = \{ u \in \R^m \ | \ \|u \| \leq \| c\| / d \} $ and the map 
$$ \TT u = u - \frac{1}{\ell}\,f(x_0,u) $$
for sufficiently big constant $ \ell > 0 $. The contraction mapping theorem implies the uniqueness and continuity of $ g $. For $ C^r $ map $ f $, $ g $ is the global $ C^r $ map by $ C^r $ continuation of local maps.
\msk
****************************}
\begin{prop} \label{invariant surfaces on each deep level}
Let $ F \in \II(\bar \eps) $ and $ Q $ be an invariant surface under $ F $, which is the graph of $ C^r $ function $ \bxi = (\xi^1,\, \xi^2, \ldots , \xi^m ) $ on $ \pi_{xy}(B^n_{\tip}) $ such that $ \| D\bxi \| \leq C_0\,\bar \eps $ for some $ C_0 >0 $. Then $ Q_n \equiv \big(\Psi^n_{\tip}\big)^{-1}(Q) $ is an invariant surface under $ R^nF $ which is the graph of $ \bxi_n = (\xi^1_n,\, \xi^2_n, \ldots , \xi^m_n ) \,\colon \pi_{xy}\big(B(R^nF)\big) \ra \pi_{\Bz}\big(B(R^nF)\big) $ such that 
$$ \bxi_n(x,y) = {\bf c}y(1 + O(\rho^n)) $$
where $ {\bf c} = (c_1,c_2,\ldots,c_m) $ for some constants $ c_i $ for $ 1 \leq i \leq m $. 
\end{prop}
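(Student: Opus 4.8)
The plan is to transport the surface $Q$ to level $n$ by the conjugacy $\Psi^n_{\tip}$, to recover the graph property from the global implicit function theorem, and to read off the linear asymptotics from the known normal form of $\Psi^n_{\tip}$. First, recall from Section~\ref{universality of Jacobian} that $\Psi^n_{\tip}\equiv\Psi^n_{\Bv}$ maps $B(R^nF)$ onto the deepest piece $B^n_{\tip}=B^n_{v^n}$ and conjugates $R^nF$ to $F^{2^n}$, i.e. $F^{2^n}\circ\Psi^n_{\tip}=\Psi^n_{\tip}\circ R^nF$, while $F^{2^n}$ carries $B^n_{\tip}$ into itself. Since $Q$ is locally invariant under $F$, it is invariant under $F^{2^n}$, so $F^{2^n}(Q\cap B^n_{\tip})\subset Q$; hence for $w\in Q_n=(\Psi^n_{\tip})^{-1}(Q)$ with $R^nF(w)$ still in the domain, $\Psi^n_{\tip}(R^nF(w))=F^{2^n}(\Psi^n_{\tip}(w))\in Q\cap B^n_{\tip}$, so $R^nF(w)\in Q_n$; this gives the invariance. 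Note also $\tau_F\in\OO_F\subset Q$ by Lemma~\ref{lem-closure of periodic points}, so $\bxi(\tau_0^x,\tau_0^y)=\tau_0^{\Bz}$ for the tip $\tau_0=\tau_F=(\tau_0^x,\tau_0^y,\tau_0^{\Bz})$.

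For the graph structure I would use the normal form $\Psi^n_{\tip}=T_0^{-1}\circ D^n_0\circ(\id+{\bf S}^n_0)\circ T_n$, with $T_j\colon w\mapsto w-\tau_j$, $D^n_0$ the block matrix of Lemma~\ref{decomposition of derivative}, and ${\bf S}^n_0(w)=(S^n_0(w),\,0,\,{\bf R}_{n,0}(y))$. Writing $w'=(x',y',\Bz')=\tilde w-\tau_n$, a point $\tilde w=(\tilde x,\tilde y,\tilde\Bz)$ lies in $Q_n$ iff
$$\si_{n,0}\Bd_{n,0}\,y'+\si_{n,0}\big(\Bz'+{\bf R}_{n,0}(y')\big)+\tau_0^{\Bz}=\bxi(X,Y),$$
where $Y=\si_{n,0}y'+\tau_0^y$ and $X=\alpha_{n,0}\big(x'+S^n_0(w')\big)+\si_{n,0}t_{n,0}y'+\si_{n,0}\Bu_{n,0}\cdot\big(\Bz'+{\bf R}_{n,0}(y')\big)+\tau_0^x$ are the first two coordinates of $\Psi^n_{\tip}(\tilde w)$. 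Let $G(\tilde x,\tilde y,\tilde\Bz)\in\R^m$ be the difference of the two sides. From $\di_{\tilde\Bz}X=\si_{n,0}\Bu_{n,0}+\alpha_{n,0}\di_\Bz S^n_0$, the bounds $\|\Bu_{n,0}\|=O(\bar\eps)$ (Lemma~\ref{decomposition of derivative}), $\|\di_\Bz S^n_0\|=O(\bar\eps)$ (Lemma~\ref{asymptotics of non-linear part 1}), $\alpha_{n,0}=O(\si^{2n})$, and $\|D\bxi\|\le C_0\bar\eps$, one obtains $\di_{\tilde\Bz}G=\si_{n,0}\,\Id+O(\si^n\bar\eps^{2})$, which for $\bar\eps$ small is strictly diagonally dominant with gap $\ge\frac12|\si_{n,0}|>0$, uniformly in $\tilde w$. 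After extending $G$ to $\R^2\times\R^m$ keeping this bound (a cut-off outside a neighbourhood of the relevant domain suffices) and applying Theorem~\ref{Global implicit function thm}, one gets a unique $C^r$ map $\tilde\Bz=\bxi_n(\tilde x,\tilde y)$ solving $G=0$; restricting to $\pi_{xy}(B(R^nF))$ exhibits $Q_n$ as its graph, and $\bxi_n=O(\bar\eps)$ (as the expansion below shows) keeps the graph inside $B(R^nF)$.

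For the asymptotic form I would solve the displayed equation for $\Bz'$, using $\bxi(\tau_0^x,\tau_0^y)=\tau_0^{\Bz}$:
$$\Bz'=\si_{n,0}^{-1}\big(\bxi(X,Y)-\tau_0^{\Bz}\big)-\Bd_{n,0}\,y'-{\bf R}_{n,0}(y').$$
Since $\diam B^n_{\tip}=O(\si^n)$ (Corollary~\ref{diameter 2}), the $C^2$ map $\bxi$ coincides on $\pi_{xy}(B^n_{\tip})$ with its affine part at $\tau_0$ up to $O(\si^{2n})$; together with $Y-\tau_0^y=\si_{n,0}y'$, $X-\tau_0^x=O(\si^n\bar\eps)$ and $|S^n_0|=O(1)$ (Lemma~\ref{asymptotics of non linear part}), this yields $\si_{n,0}^{-1}\big(\bxi(X,Y)-\tau_0^{\Bz}\big)=\di_y\bxi(\tau_0)\,y'+t_{n,0}\di_x\bxi(\tau_0)\,y'+\di_x\bxi(\tau_0)\,\Bu_{n,0}\cdot\Bz'+O(\bar\eps^{2})+O(\si^n)$. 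Inserting this and solving the resulting $\R^m$-linear relation for $\Bz'$ (its coefficient matrix is $\Id+O(\bar\eps^{2})$), then combining the super-exponential convergence $t_{n,0},\Bu_{n,0},\Bd_{n,0}\to t_{*,0},\Bu_{*,0},\Bd_{*,0}$ (Lemma~\ref{decomposition of derivative}), the estimate $\|{\bf R}_{n,0}\|=O(\bar\eps)$ with ${\bf R}_{n,0}$ quadratic at $0$ (Proposition~\ref{bounds of R}), and the exponential convergence $\tau_n^y\to0$, $\tau_n^{\Bz}\to\B0$, one arrives — writing $\bxi_n(\tilde x,\tilde y)=\Bz'+\tau_n^{\Bz}$ and $y'=\tilde y-\tau_n^y$ — at $\bxi_n(\tilde x,\tilde y)={\bf c}\,\tilde y\,(1+O(\rho^n))$ with ${\bf c}=\di_y\bxi(\tau_0)+t_{*,0}\di_x\bxi(\tau_0)-\Bd_{*,0}$, so each $c_i=O(\bar\eps)$.

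The hard part will be the graph step together with the bookkeeping of the expansion: one must check that the diagonal-dominance estimate for $G$ genuinely persists after the extension to $\R^2\times\R^m$, so that Theorem~\ref{Global implicit function thm} applies and the solution is $C^r$ rather than merely Lipschitz, and, in the last step, keep scrupulous track of which error terms carry a contracting factor $\si^n$ or $\rho^n$ and which are only $O(\bar\eps)$. It is precisely the competition between the $O(\si^n)$ contraction of $\Psi^n_{\tip}$ and the $O(\bar\eps)$ smallness of $\bxi$, $\bde$ and of the entries $t_{n,0},\Bu_{n,0},\Bd_{n,0}$ that isolates the $y$-linear leading term ${\bf c}\,\tilde y$ and pushes the remaining terms into the $O(\rho^n)$ relative error.
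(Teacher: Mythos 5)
Your proposal is correct and follows essentially the same route as the paper: invariance of $Q_n$ by the conjugacy, the graph property via diagonal dominance of $\partial_{\Bz}G_n$ and the global implicit function theorem of \cite{ZG}, and the asymptotics $\bxi_n={\bf c}y(1+O(\rho^n))$ from the normal form of $\Psi^n_{\tip}$ together with the convergence of $t_{n,0},\Bu_{n,0},\Bd_{n,0}$. The only discrepancy is cosmetic: after inverting the coefficient matrix $\Id+O(\bar\eps^2)$ you drop the resulting factor, so your ${\bf c}$ misses the denominator $1-\Bu_{*,0}\cdot\di_x\bxi(\tau_F)$ appearing in the paper's formula, which does not affect the statement since only existence of the constants $c_i$ is claimed.
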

\begin{proof}
The $ n^{th}$ renormalization of $ F $, $ R^nF $ is $ \big(\Psi^n_{\tip}\big)^{-1} \circ F^{2^n} \circ \Psi^n_{\tip} $. Thus $ Q_n \equiv \big(\Psi^n_{\tip}\big)^{-1}(Q) $ is an invariant surface under $ R^nF $. Let us choose a point 
$ w' = (x',y',\Bz') \in Q \cap B^n_{\tip} $ where $ B_{\tip}^n \equiv \Psi^n_{\tip}(B(R^nF)) $ and $ \Bz' = \bxi(x',y') $ where $ \bxi(x',y') = \big( \xi^1(x',y'),\,  \xi^2(x',y'),\, \ldots,  \xi^m(x',y') \big) $. 
Thus 
\begin{align*}
\textrm{graph}(\bxi) = (x',\,y',\,\bxi(x',y')) =& \ (x',\,y',\,\Bz') .
\end{align*}
Moreover, let $ \big(\Psi^n_{\tip}\big)^{-1}(x',\,y',\,\Bz') = (x,\,y,\,\Bz) \in Q_n $. Thus by equation \eqref{the image of the Psi from nth level to k+1th level}, each coordinates of $ \Psi^n_{\tip} $ as follows
\msk
\begin{align} 
x' =& \ \alpha_{n,\,0} ( x + S^n_0(w)) + \si_{n,\,0} \,t_{n,\,0} \cdot y + \si_{n,\,0}\,\Bu_{n,\,0}\cdot (\Bz + {\bf R}_{n,\,0}(y)) \label{image under the conjugation map xi for x} \\
y' =& \ \si_{n,\,0}\cdot y \label{image under the conjugation map xi for y} \\
\Bz' =& \ \si_{n,\,0}\, \Bd_{n,\,0} \cdot y + \si_{n,\,0}\,(\Bz + {\bf R}_{n,\,0}(y)) \label{image under the conjugation map xi for z}
\end{align}
where $ w = (x,\,y,\,\Bz) $. Let us show that $ Q_n $ is the graph of a well defined function $ \bxi_n = (\xi^1_n,\,\xi^2_n, \ldots , \xi^m_n ) $ from $ \pi_{xy}(B(R^nF)) $ to $ \pi_{\Bz}(B(R^nF)) $, that is, $ z_i = \xi_n^i(x,\,y) $ for $ 1 \leq i \leq m $.  
By the equations \eqref{image under the conjugation map xi for y} and \eqref{image under the conjugation map xi for z}, we see that \ssk
\begin{equation} \label{implicit expression of z}
\begin{aligned} 
\si_{n,\,0}\cdot \Bz = &\ \Bz' - \si_{n,\,0}\,\Bd_{n,\,0}\cdot y - \si_{n,\,0}\,{\bf R}_{n,\,0}(y) \\[0.2em] 
=& \ \bxi(x',\,y') - \si_{n,\,0}\,\Bd_{n,\,0}\cdot y - \si_{n,\,0}\,{\bf R}_{n,\,0}(y) \\[0.3em]
=& \ \bxi \circ \big( \alpha_{n,\,0} ( x + S^n_0(w)) + \si_{n,\,0} \,t_{n,\,0} \cdot y + \si_{n,\,0}\,\Bu_{n,\,0}\cdot (\Bz + {\bf R}_{n,\,0}(y)),\ \si_{n,\,0}\, y \big) \\
& \quad - \si_{n,\,0}\,\Bd_{n,\,0}\cdot y - \si_{n,\,0}\,{\bf R}_{n,\,0}(y)  .
\end{aligned} \msk
\end{equation}
\nin Let us show the existence of the solution of \eqref{implicit expression of z} for $ \Bz $. Define the function $ {\bf G}_n $ from $ B $ to $ \pi_{\Bz}(B) $. Each coordinate function $ G^i_n $ of $ {\bf G}_n $ is
\begin{equation*}
\begin{aligned}
G_n^i(x,y,\Bz) =& \ \xi^i \circ \big(\alpha_{n,\,0} ( x + S^n_0(w)) + \si_{n,\,0} \,t_{n,\,0} \cdot y + \si_{n,\,0}\,\Bu_{n,\,0}\cdot (\Bz + {\bf R}_{n,\,0}(y)),\ \si_{n,\,0}\, y \big) \\
& \quad - \si_{n,\,0}\,d_{n,\,0}^{\,i}\cdot y - \si_{n,\,0}\,{ R}_{n,\,0}^i(y) - \si_{n,\,0}\cdot z_i.
\end{aligned} \msk
\end{equation*}
where $ \Bz = (z_1,z_2,\ldots, z_m) $ for $ 1 \leq i \leq m $. Then the partial derivative of $ G_n^i $ over $ z_j $ is as follows \msk
\begin{equation*}
\begin{aligned}
\di_{z_j} G_n^i(x,y,\Bz) =& \ \di_x \xi^i \circ \big( \alpha_{n,\,0} ( x + S^n_0(w)) + \si_{n,\,0} \,t_{n,\,0} \cdot y + \si_{n,\,0}\,\Bu_{n,\,0}\cdot (\Bz + {\bf R}_{n,\,0}(y)),\ \si_{n,\,0}\cdot y \big) \\
& \quad \cdot \big[\, \alpha_{n,\,0} \cdot \di_{z_j} S^n_0(w) +  u_{n,\,0}^j\,\si_{n,\,0}\,\big] 
\end{aligned} \msk
\end{equation*}
for $ i \neq j $. Moreover, if $ i=j $, then 
\begin{equation*}
\begin{aligned}
\di_{z_i} G_n^i(x,y,\Bz) =& \ \di_x \xi^i \circ \big( \alpha_{n,\,0} ( x + S^n_0(w)) + \si_{n,\,0} \,t_{n,\,0} \cdot y + \si_{n,\,0}\,\Bu_{n,\,0}\cdot (\Bz + {\bf R}_{n,\,0}(y)),\ \si_{n,\,0}\cdot y \big) \\
& \quad \cdot \big[\, \alpha_{n,\,0} \cdot \di_{z_i} S^n_0(w) +  u_{n,\,0}^i\,\si_{n,\,0}\,\big] - \si_{n,\,0}  
\end{aligned} \msk
\end{equation*}

\nin Recall that $ \alpha_{n,\,0} = \si^{2n}(1 + O(\rho^n)) $, $ \si_{n,\,0} = (-\si)^n (1 + O(\rho^n)) $, $ \| \:\! \di_{z_j} S^n_0 \| = O\big(\bar \eps \big) $ and $ | \:\! u_{n,\,0}^j | = O\big(\bar \eps \big) $ for all $ 1 \leq j \leq m $. Then for $ m \geq 2 $,
\begin{equation*}
\begin{aligned}
\| \:\! \di_{z_j} G_n^{\,i} \| \leq \| \:\!\di_x \xi^i \| \cdot \big[\, \si^{2n}C_0\, \bar \eps + \si^n C_1\, \bar \eps \,\big] 
\end{aligned} \msk
\end{equation*}
for $ i \neq j $ and for some positive numbers, $ C_0 $ and $ C_1 $. 
Recall that $ \| \:\! \Bu_{n,\,0} \| \asymp \| \:\! \Bu_{1,\,0} \| $ by the equation \eqref{sum of d, u and t respectively} and $ | \;\! u_{1,\,0}^j | \asymp | \;\! \di_{z_j} \eps | $ for each $ 1 \leq j \leq m $. Thus if small enough $ \| \di_{\Bz} \eps \| $ is chosen for a small perturbation of the toy model map, then we may assume that 
$$ \| \:\! \Bu_{n,\,0} \| \cdot |\, \si_{n,\,0} | < \frac{1}{4C m^2}\; \si^n $$
where the constant $ C \geq \max_{ 1 \leq i \leq m} \| \:\!\di_x \xi^i \| $. Then we may also assume that 
\begin{equation*}
\sum_{ i \neq j} \| \:\! \di_{z_j} G_n^{\,i} \| \leq \frac{1}{4} \,\si^n
\end{equation*}
However,
\begin{equation*}
\min_{w \in B(R^nF)} | \;\! \di_{z_i} G_n^{\,i}(w) | \geq \big| -\| \:\!\di_x \xi^i \| \cdot \big[\, \si^{2n}C_0\, \bar \eps + \si^n C_1\, \bar \eps \,\big] + |\, \si_{n,\,0} | \,\big| \geq \frac{1}{2}\, \si^n
\end{equation*}
Let us consider the Jacobian matrix $ \left( \frac{\di}{\di z_j}\, G^i(x,y,\Bz) \right)_{ij} $ of $ \Bz $ variables. Then the sum of absolute value of diagonal elements dominates the sum of all other elements for every big enough $ n \in \N $. Then applying Theorem \ref{Global implicit function thm} to the map $ {\bf G_n} $ for every sufficiently big $ n \in \N $, there exists the $ C^r $ map $ \bxi_n $ from $ \pi_{xy}(B(R^nF)) $ to $ \R^m $. Furthermore, since each surface $ Q_n $ is contractible, the function $ \bxi_n(x,y) $ is defined globally by the $ C^r $ continuation of coordinate charts.
\ssk \\
Let us calculate the bounds of the norm $ \| D\bxi_n(x,y) \|$. By \eqref{image under the conjugation map xi for y} and \eqref{image under the conjugation map xi for z} with chain rule, we obtain the following equations 
\begin{equation*}
\begin{aligned}
%
\di_x \bxi \cdot \frac{\di x'}{\di x} = & \ \si_{n,\,0} \cdot \di_x \bxi_n \\[0.3em]
%
\di_x \bxi \cdot \frac{\di x'}{\di y} + \di_y \bxi \cdot \si_{n,\,0} = & \  \si_{n,\,0}\:\!\Bd_{n,\,0} + \si_{n,\,0} \cdot \di_y \bxi_n + \si_{n,\,0} \cdot ( {\bf R}_{n,\,0})'(y) .
\end{aligned} 
\end{equation*} 
%
\nin By the equation \eqref{image under the conjugation map xi for x}, each partial derivatives of $ \xi_n $ as follows
\begin{equation} \label{exponential convergence of the invariant surface xi}
\begin{aligned}
\frac{\di \bxi_n}{\di x} = & \ \frac{1}{\si_{n,\,0}} \cdot \di_x \bxi \cdot \left[ \alpha_{n,\,0} \big( 1+ \di_{x}S^n_0(w) \big) + \si_{n,\,0} \sum_{j=1}^m u_{n,\,0}^j\cdot \frac{\di \xi_n^j}{\di x} \right] \\[0.3em]
\frac{\di \bxi_n}{\di y} = & \ \frac{1}{\si_{n,\,0}} \cdot \di_x \bxi \cdot \left[ \alpha_{n,\,0}\: \di_{y}S^n_0(w) + \si_{n,\,0} \:\!t_{n,\,0} + \si_{n,\,0} \sum_{j=1}^m u_{n,\,0}^j \Big(\: \frac{\di \xi_n^j}{\di y} + ({ R}^j_{n,\,0})'( y ) \Big) \right] \\
\qquad & + \di_y \bxi - \Bd_{n,\,0} - ( {\bf R}_{n,\,0})'(y) .
\end{aligned} 
\end{equation}
\nin Recall that $ \si_{n,\,0} \asymp (-\si)^n $, $ \alpha_{n,\,0} \asymp \si^{2n} $ for each $ n \in N $. Thus
\begin{equation*}
\Big\| \frac{\di \xi_n^i}{\di x} \Big\| \leq  \ \| \di_x \bxi \| \, C_0 \:\! \si^n \leq C \bar \eps\, \si^n 
\end{equation*}
for some $ C_0>0 $. Recall also that $ \| \di_{y}S^n_0(w) \| \leq C_3\,\bar \eps $ for some $ C_3 > 0 $ by Proposition \ref{bounds of R} and moreover the values $ t_{n,\,0} $, $ \Bu_{n,\,0} $ and $ \Bd_{n,\,0} $ converge super exponentially fast to $ t_{*,\,0} $, $ \Bu_{*,\,0} $ and $ \Bd_{*,\,0} $ respectively as $ n \ra \infty $ by Lemma \ref{decomposition of derivative}. Furthermore each partial derivatives $ \di_x \bxi (w) $ and $ \di_y \bxi (w) $ to the value of each partial derivatives at the origin exponentially fast as $ n \ra \infty $. We can show that $ ({\bf R}_{n,\,0})'(y) $ converges to zero exponentially fast by the estimation of $ {\bf R}_{n,\,0} $ in Lemma \ref{asymptotics of non linear part}. With all of these facts we obtain that
\begin{equation*}
\bxi_n(x,y) = {\bf c}y(1 + O(\rho^n))
\end{equation*}
where $ {\bf c}= \dfrac{t_{*,\,0} \cdot \di_x \bxi(\tau_F) + \di_y \bxi(\tau_F) - \Bd_{*,\,0} }{1 -  \Bu_{*,\,0} \cdot \di_x \bxi(\tau_F)} \, . $ In other words, each coordinate of $ {\bf c} $ is
\begin{equation*}
 c_i = \dfrac{t_{*,\,0} \cdot \di_x \xi^i(\tau_F) + \di_y \xi^i(\tau_F) - d^{\,i}_{*,\,0} }{1 -  \Bu_{*,\,0} \cdot \di_x \bxi(\tau_F)} 
\end{equation*}
where $ {\bf c} = (c_1,c_2,\ldots,c_m) $ for $ 1 \leq i \leq m $. 
\end{proof}



\msk

\newpage

\section{Universality of conjugated two dimensional $ C^r $ H\'enon-like map}
\subsection{Renormalization of conjugated two dimensional H\'enon-like map}
Let $F \in \II(\bar \eps) $ be a small perturbation of toy model map $ F_{\mod} \in \II(\bar \eps) $. Let $ Q_n $ and $ Q_k $ be invariant surfaces under $ R^nF $ and $ R^kF $ respectively and assume that $ k < n $. Then by Lemma \ref{invariant surfaces on each deep level}, $ \Psi^n_k $ is the coordinate change map between $ R^kF^{2^{n-k}} $ and $ R^nF $ from level $ n $ to $ k $ such that $ \Psi^n_k(Q_n) \subset Q_k $. Let us define $ C^r $ two dimensional H\'enon-like map $ _{2d}F_{n,\,\bxi} $ on level $ n $ \ssk as follows
\begin{align} \label{Cr Henon map with invariant surface}
\,_{2d}F_{n,\,\bxi} \equiv \pi_{xy}^{\bxi_n} \circ R^nF|_{\,Q_n} \circ (\pi_{xy}^{\bxi_n})^{-1} 
\end{align}
\nin where the map $ (\pi_{xy}^{\bxi_n})^{-1} : (x,y) \mapsto (x,\,y,\, \bxi_n(x,y)) $ is a $ C^r $ diffeomorphism on the domain of two dimensional map, $ \pi_{xy}(B) $. In particular, the map $ F_{2d,\,\bxi} $ is defined as follows
\begin{equation} \label{Cr Henon map with invariant surface 0}
F_{2d,\:\bxi} (x,y) = (f(x) - \eps(x,y,\bxi),\,x )
\end{equation}
where $ \textrm{graph} (\bxi) $ is a $ C^r $ invariant surface under the $ m+2 $ dimensional map $ F \colon (x,y,\Bz) \mapsto (f(x) - \eps(x,y,\Bz),\;x,\;\bde(x,y,\Bz) ) $. Let us assume that $ 2 \leq r < \infty $. 
By Lemma \ref{invariant surfaces on each deep level}, the invariant surfaces, $ Q_n $ and $ Q_k $ are the graph of $ C^r $ maps $ \bxi_n(x,y) $ and $ \bxi_k(x,y) $ respectively. 
Then we can apply techniques for two dimensional conjugated map in three dimension to the maps in $ m+2 $ dimension. All results in this and following section are the same as those of two dimensional H\'enon-like maps by invariant surfaces in three dimension. See Section 4 and Section 5 in \cite{Nam3}. \ssk \\
The map $ {}_{2d}^{}\Psi^n_{k,\,\bxi,\,\tip} $ is defined as the map which satisfies the following commutative diagram
\begin{displaymath}
\xymatrix @C=1.8cm @R=1.8cm
 {
(Q_n, \tau_n)  \ar[d]_*+ {\pi_{xy,\; n}^{\bxi_n}}  \ar[r]^*+{\Psi^n_{k,\Bv,\tip}}    & (Q_k, \tau_k) \ar[d]^*+{\pi_{xy,\; k}^{\bxi_k}}   \\
 (_{2d}B_n, \tau_{2d,\;n}) \ar[r]^*+{ {}_{2d}^{}\Psi^n_{k,\, \bxi,\tip}}      & (_{2d}B_k, \tau_{2d,\;k})
   }
\end{displaymath}
\\ 
where $ Q_n $ and $ Q_k $ are invariant $ C^r $ surfaces with $ 2 \leq r < \infty $ of $R^nF$ and $R^kF$ respectively and $ \pi_{xy,\; n}^{\bxi_n} $ and $ \pi_{xy,\; k}^{\bxi_k} $ are the inverse of the graph maps, $ (x,y) \mapsto (x,y, \bxi_n) $ and $ (x,y) \mapsto (x,y, \bxi_k) $ respectively. 
\ssk \\
Using translations\, $ T_k : w \mapsto w-\tau_k $\, and\, $ T_n : w \mapsto w-\tau_n $,\, we can let the tip move to the origin as the fixed point of the new coordinate change map, $ \Psi^n_k := T_k \circ \Psi^n_{k,\,\tip} \circ T_n^{-1} $ defined on Section \ref{asymptotic coordinate}. Thus due to the above commutative diagram, the corresponding tips in $ _{2d}B_j $ for $ j=k,n $ is changed to the origin. Let $ \pi_{xy} \circ T_j $ be $ T_{2d,\,j} $ for $ j=k,n $. This origin is also the fixed point of the map $ {}_{2d}^{}\Psi^n_{k,\, \bxi} := T_{2d,\, k} \circ {}_{2d}^{}\Psi^n_{k,\,\bxi,\,\tip} \circ T_{2d,\, n}^{-1} $ where $ T_{2d,\, j} = \pi_{xy} \circ T_j $ with $ j = k, n $. 
By the direct calculation, we obtain the expression of the map $ {}_{2d}^{}\Psi^n_{k,\, \bxi} $ as follows
%
\begin{align} \label{coordinate change map of xi-2d}
{}_{2d}^{}\Psi^n_{k,\, \bxi} &= \pi_{xy,\, k}^{\bxi_k} \circ \Psi^n_k(x,y, \bxi_n) \nonumber \\[0.3em]
& = \pi_{xy,\, k}^{\bxi_k} \circ
\left ( \begin{array} {c c c}
\alpha_{n,\,k}     &  \si_{n,\,k} \, t_{n,\,k}\, &  \si_{n,\,k}\, \Bu_{n,\,k}\,  \nonumber  \\
                 & \si_{n,\,k}              &                \nonumber     \\
                 & \si_{n,\,k}\, \Bd_{n,\,k}\,  &  \si_{n,\,k} \cdot \Id_{m \times m}
\end{array} \right )
\left ( \begin{array} {c}
x+ S^n_{k,\, \bxi}  \nonumber \\
y  \nonumber \\
\bxi_n + {\bf R}_{n,\,k}(y)
\end{array} \right )  \nonumber  \\[0.3em]
& = \left( \alpha_{n,\,k} (x + S^n_{k,\,\bxi} ) +  \si_{n,\,k} t_{n,\,k}\, y +  \si_{n,\,k} \Bu_{n,\,k}\cdot (\bxi_n + {\bf R}_{n,\,k}(y)),\ \si_{n,\,k}\, y \right)
\end{align} \\ \msk
where $ S^n_{k,\, \bxi} = S^n_k (x,y, \bxi_n(x,y)) $. 
Then 
\begin{align}  \label{Jacobian of scope map of xi}
\Jac {}_{2d}^{}\Psi^n_{k,\, \bxi} = \ & \det \left( \begin{array} {c c}
\alpha_{n,\,k}\, \Big(1+ \di_x S^n_{k,\, \bxi} + \displaystyle{\sum_{j=1}^m \di_{z_j}} S^n_{k,\, \bxi} \cdot \di_x \xi_n^j \, \Big) +  \si_{n,\,k}\, \Bu_{n,\,k}\cdot \di_x \bxi_n & \bullet \nonumber  \\[0.6em]
0 & \si_{n,\,k} 
\end{array} \right) \nonumber  \\[0.3em]
= \ & \si_{n,\,k} \Big( \alpha_{n,\,k}\, \Big(1+ \di_x S^n_{k,\, \bxi} + \sum_{j=1}^m \di_{z_j} S^n_{k,\, \bxi} \cdot \di_x \xi_n^j \, \Big) +  \si_{n,\,k}\, \Bu_{n,\,k}\cdot \di_x \bxi_n  \,\Big) . 
\end{align}
%
If $ F \in \II(\bar \eps) $ has the invariant surfaces as the graph from $ I^x \times I^y $ to $ I^{\Bz} $ on every level, then $ _{2d}\Psi^{k+1}_{k,\, \bxi} $ is the conjugation between 
$ (_{2d}F_{k,\, \bxi})^2 $ and $ _{2d}F_{k+1,\, \bxi} $ for each $ k \in \N $. Then the two dimensional map $ F_{2d, \, \bxi} $ is called the {\em formally} infinitely renormalizable map with $ C^r $ conjugation. Moreover, the map defined on the equation \eqref{coordinate change map of xi-2d} with $ n =k+1 $, $ _{2d}\Psi^{k+1}_{k,\, \bxi} $ preserves each horizontal line and is the inverse of the horizontal map 
$$ (x,\, y) \mapsto (f_k(x) - \eps_k(x,y, \bxi_k),\; y) \circ (\si_k x,\; \si_k y)$$
by Proposition \ref{2d scaling map of Cr conjugation} as follows. 
\msk
\begin{prop} \label{2d scaling map of Cr conjugation}
Let the coordinate change map $ {}_{2d}^{}\Psi^{k+1}_{k,\,\bxi} $ between $ (_{2d}F_{k,\, \bxi})^2 $ and $ _{2d}F_{k+1,\, \bxi} $ 
be $ {}_{2d}^{}\Psi^{k+1}_{k,\, \bxi} $ which is defined on \eqref{coordinate change map of xi-2d} as the conjugation. Then
$$ {}_{2d}^{}\Psi^{k+1}_{k,\, \bxi} = H_{k,\, \bxi}^{-1} \circ \La_k^{-1} $$

\nin for every $ k \in \N $ where $ H_{k,\; \bxi}(x,y) = (f_k(x) - \eps_k(x,y, \bxi_{k}),\, y) $ and $ \La_k^{-1}(x,y) = (\si_k x,\; \si_k y) $.
\end{prop}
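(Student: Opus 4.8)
The plan is to unwind the definition of $ {}_{2d}^{}\Psi^{k+1}_{k,\,\bxi} $ through its defining commutative diagram, insert the explicit formula for the higher dimensional scaling map, and then use the (already established) inclusion $ \Psi^{k+1}_k(Q_{k+1}) \subset Q_k $ to collapse the $ \Bz $-coordinate. Recall from Section \ref{def of renormal} that $ \Psi^{k+1}_k = H^{-1}_k \circ \La_k^{-1} $, where $ \La_k^{-1}(x,y,\Bz) = (\si_k x,\ \si_k y,\ \si_k \Bz) $ and, with $ w = (x,y,\Bz) $,
\begin{equation*}
H^{-1}_k(w) = \big(\phi^{-1}_k(w),\ y,\ \Bz + \bde_k(y,\,f_k^{-1}(y),\,\B0)\big), \qquad f_k\big(\phi^{-1}_k(w)\big) = x + \eps_k \circ H^{-1}_k(w).
\end{equation*}
Restricting to $ Q_{k+1} = \mathrm{graph}(\bxi_{k+1}) $ and applying $ \pi_{xy} $ (which on $ Q_k $ is $ \pi_{xy,\,k}^{\bxi_k} $), the commutative diagram defining $ {}_{2d}^{}\Psi^{k+1}_{k,\,\bxi} $ gives, for $ (x,y) \in \pi_{xy}(B(R^{k+1}F)) $,
\begin{equation*}
{}_{2d}^{}\Psi^{k+1}_{k,\,\bxi}(x,y) = \pi_{xy} \circ H^{-1}_k \circ \La_k^{-1}\big(x,\ y,\ \bxi_{k+1}(x,y)\big) = \Big(\phi^{-1}_k\big(\si_k x,\ \si_k y,\ \si_k\bxi_{k+1}(x,y)\big),\ \si_k y\Big),
\end{equation*}
so the second coordinate already matches that of $ H_{k,\,\bxi}^{-1} \circ \La_k^{-1} $; only the first coordinate needs to be identified.

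First I would extract the consistency relation between consecutive invariant surfaces. Since $ \Psi^{k+1}_k(Q_{k+1}) \subset Q_k $ and $ Q_k = \mathrm{graph}(\bxi_k) $, the $ \Bz $-component of $ H^{-1}_k \circ \La_k^{-1}(x,y,\bxi_{k+1}(x,y)) $ equals $ \bxi_k $ evaluated at the first two components; writing $ x^{\ast} := \phi^{-1}_k\big(\si_k x,\ \si_k y,\ \si_k\bxi_{k+1}(x,y)\big) $ this reads
\begin{equation} \label{eq-surface consistency plan}
\si_k\,\bxi_{k+1}(x,y) + \bde_k\big(\si_k y,\ f_k^{-1}(\si_k y),\ \B0\big) = \bxi_k\big(x^{\ast},\ \si_k y\big).
\end{equation}
Evaluating the defining equation of $ \phi^{-1}_k $ at $ \big(\si_k x,\ \si_k y,\ \si_k\bxi_{k+1}(x,y)\big) $ and using the explicit form of $ H^{-1}_k $ gives $ f_k(x^{\ast}) = \si_k x + \eps_k\big(x^{\ast},\ \si_k y,\ \si_k\bxi_{k+1}(x,y) + \bde_k(\si_k y, f_k^{-1}(\si_k y), \B0)\big) $; substituting \eqref{eq-surface consistency plan} into the last slot of $ \eps_k $ turns this into
\begin{equation*}
f_k(x^{\ast}) - \eps_k\big(x^{\ast},\ \si_k y,\ \bxi_k(x^{\ast},\,\si_k y)\big) = \si_k x.
\end{equation*}
Since $ H_{k,\,\bxi}(x,y) = \big(f_k(x) - \eps_k(x,y,\bxi_k(x,y)),\ y\big) $, this is exactly the equation characterizing the first coordinate of $ H_{k,\,\bxi}^{-1}(\si_k x,\,\si_k y) $. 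By uniqueness of the relevant monotone inverse branch — $ x \mapsto f_k(x) - \eps_k(x,y,\bxi_k(x,y)) $ is a diffeomorphism onto its image on the subinterval forming the range of $ \phi^{-1}_k $, being an $ O(\bar\eps^{2^k}) $-perturbation of the corresponding branch of $ f_\ast $ there — one concludes $ x^{\ast} = \pi_x\big(H_{k,\,\bxi}^{-1}(\si_k x,\,\si_k y)\big) $, hence $ {}_{2d}^{}\Psi^{k+1}_{k,\,\bxi}(x,y) = H_{k,\,\bxi}^{-1} \circ \La_k^{-1}(x,y) $.

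No estimates are needed; the only content is the bookkeeping identity \eqref{eq-surface consistency plan}, which translates the inclusion $ \Psi^{k+1}_k(Q_{k+1}) \subset Q_k $ into the statement that the nonlinearity $ \eps_k(x,y,\bxi_k(x,y)) $ filled in along the invariant surface is precisely what the higher dimensional horizontal-like diffeomorphism induces on $ Q_k $. The one point requiring care — and the closest thing to an obstacle — is keeping the tip-translation normalizations of Section \ref{asymptotic coordinate for n} consistent on both sides of the claimed identity (equivalently, reading it for the $ \tip $-versions appearing in the commutative diagram), together with the invertibility remark above; both follow immediately from $ F \in \II(\bar\eps) $.
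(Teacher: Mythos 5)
Your proposal is correct and follows essentially the same route as the paper's proof: compute $ {}_{2d}^{}\Psi^{k+1}_{k,\,\bxi}(x,y) = \big(\phi_k^{-1}(\si_k x, \si_k y, \si_k \bxi_{k+1}),\ \si_k y\big) $, use the inclusion $ H^{-1}_k \circ \La_k^{-1}(\mathrm{graph}(\bxi_{k+1})) \subset \mathrm{graph}(\bxi_k) $ (your identity \eqref{eq-surface consistency plan} is exactly the paper's step $(*)$) together with $ f_k \circ \phi_k^{-1} - \eps_k \circ H_k^{-1} = \id $ to show that $ H_{k,\,\bxi} $ applied to the candidate returns $ (\si_k x, \si_k y) $, and conclude by uniqueness of the inverse of $ H_{k,\,\bxi} $. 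Your write-up is if anything slightly more explicit about the surface-consistency relation and the invertibility of the one-dimensional branch, but there is no substantive difference.
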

\begin{proof}
Recall the definitions of the horizontal-like diffeomorphism $ H_k $ and its inverse, $ H^{-1}_k $ as follows
\begin{align*}
H_k(w) & = \ (f_k(x) - \eps_k(w),\ y,\ \Bz - \bde_k (y, f_k^{-1}(y), \B0)) \\
 H^{-1}_k(w) & = \ (\phi_k^{-1}(w),\ y ,\ \Bz + \bde_k (y, f_k^{-1}(y), \B0)) .
\end{align*}
Observe that $ H_k \circ H^{-1}_k = \id $ and $ f_k \circ \phi_k^{-1}(w) - \eps_k \circ H_k^{-1}(w) = x $ for all points $ w \in \La_k^{-1}(B) $. Then if we choose the set $ \si_k \cdot \text{graph}(\bxi_{k+1}) \subset \La_k^{-1}(B) $, then the \ssk similar identical equation holds. 
By the definition of the map $ {}_{2d}^{}\Psi^n_{k,\; \bxi} $, the following equation holds
\msk
\begin{equation}
\begin{aligned}
{}_{2d}^{}\Psi^{k+1}_{k,\; \bxi}(x,y) &= \ \pi_{xy}^{\bxi_k} \circ \Psi^{k+1}_k \circ (\pi_{xy}^{\bxi_{k+1}})^{-1}(x,y) \\[0.2em]
&= \ \pi_{xy}^{\bxi_k} \circ \Psi^{k+1}_k (x,y, \bxi_{k+1}) \\[0.2em]
&= \ \pi_{xy}^{\bxi_k} \circ H^{-1}_k \circ \La_k^{-1}(x,y, \bxi_{k+1}) \\[0.2em]
&= \ \pi_{xy}^{\bxi_k} \circ H^{-1}_k (\si_k x,\,\si_k y,\, \si_k \bxi_{k+1}) \\[0.2em]
(*) \quad &= \ \pi_{xy}^{\bxi_k}\, \big( \phi_k^{-1}(\si_k x,\,\si_k y,\, \si_k \bxi_{k+1}),\ \si_k y,\ \bxi_k (\phi_k^{-1} ,\si_k y)\big) \\[0.2em]
&= \ ( \,\phi_k^{-1}(\si_k x,\ \si_k y,\ \si_k \bxi_{k+1}),\, \si_k y \, ) .
\end{aligned} \msk
\end{equation}
In the above equation, $ (*) $ is involved with the fact that $ H^{-1}_k \circ \La_k^{-1}(\,\text{graph}(\bxi_{k+1})) \subset \text{graph}(\bxi_{k}) $. 
Let us calculate $ H_{k,\, \bxi} \circ {}_{2d}^{}\Psi^{k+1}_{k,\; \bxi}(x,y) $. \ssk The second coordinate function of it is just $ \si_k y $. The first coordinate function is as follows
\begin{align*}
& \quad \ \ f_k \circ \phi_k^{-1}(\si_k x,\,\si_k y,\, \si_k \bxi_{k+1}) - \eps_k \big( \phi_k^{-1}(\si_k x,\,\si_k y,\, \si_k \xi_{k+1}),\; \si_k y,\; \bxi_k (\phi_k^{-1} ,\si_k y)\big) \\
(*) \quad &= \ f_k \circ \phi_k^{-1}(\si_k x,\,\si_k y,\, \si_k \bxi_{k+1}) - \eps_k \circ H^{-1}_k (\si_k x,\,\si_k y,\, \si_k \bxi_{k+1}) \\
&= \ \si_k x .
\end{align*}
Hence, $ H_{k,\; \bxi}\, \circ \, {}_{2d}^{}\Psi^{k+1}_{k,\; \bxi}(x,y) = ( \si_k x,\, \si_k y) $. However, \ssk $  H_{k,\, \bxi}\, \circ \,\big( H^{-1}_{k,\, \bxi}(x,y) \circ \La_k^{-1}(x,y) \big) = ( \si_k x,\, \si_k y) $. Therefore, by the uniqueness of the inverse map of $H_{k,\, \bxi}(x,y) $, 
$$ {}_{2d}^{}\Psi^{k+1}_{k,\; \bxi} = H_{k,\; \bxi}^{-1} \circ \La_k^{-1} . $$
\end{proof}
\nin Proposition \ref{2d scaling map of Cr conjugation} enable us to define the renormalization of the two dimensional $ C^r $ H\'enon-like maps as the extension of renormalization of the analytic H\'enon-like maps. 
\begin{defn} \label{definition of renormalizable Cr Henon map}
 Let $ F : (x,y) \mapsto (f(x) -\eps(x,y),\ x) $ be a $ C^r $ H\'enon-like map with $ r \geq 2 $. If $ F $ is renormalizable, then $ RF $, the {\em renormalization} of $ F $ is defined as follows
$$ RF = (\La \circ H) \circ F^2 \circ (H^{-1} \circ \La^{-1}) $$
where $ H(x,y) = (f(x) -\eps(x,y),\ y) $. Define the linear scaling map $ \La(x,y) = (sx, sy) $ if $ s : J \ra I $ is the orientation reversing affine scaling and $ J $ is minimal such that $  J \times I $ is invariant under $ H \circ F^2 \circ H^{-1} $ .
\end{defn}
\msk
\nin If $ F $ is renormalizable $ n $ times, then the above definition can be applied to $ R^kF $ for $ 1 \leq k \leq n $ successively. Two dimensional map $ _{2d}F_{n,\, \bxi} $ with the $ C^r $ function $ \bxi_n $ is the same as $ R^nF_{2d,\, \bxi} $ by Lemma \ref{2d scaling map of Cr conjugation} and the above definition. Thus if the maps $ _{2d}F_{n,\, \bxi} $ are defined on every $ n \in \N $, then the map $ _{2d}F_{n,\, \bxi} $ is realized to be $ R^nF_{2d,\, \bxi} $ and it is called the $ n^{th} $ {\em renormalization} of $ F_{2d,\, \bxi} $.

\comm{************ 
\begin{cor}
Let $ {}_{2d}^{}\Psi^k_{k,\, \xi} $ be the defined on Lemma \ref{2d scaling map of Cr conjugation} and $ \La_k^{-1}(x,y) $ be the linear scaling part of the map $ {}_{2d}^{}\Psi^k_{k,\, \xi} $. Then
$$ \La_k^{-1}(x,y) = \pi_{xy} \circ \La_k^{-1}(x,y,z) $$
for every $ k \in \N $ where $ \La_k^{-1}(x,y,z) $ is the linear scaling part of three dimensional map $ \Psi^{k+1}_k $.
\end{cor}
\begin{proof}
By Proposition \ref{beta1 as a periodic point} with induction, $  H_{k}^{-1} \circ \La_k^{-1} (\beta_0(R^{k+1}F)) = \beta_1(R^kF) $ for each $ k \in \N $. This proposition is valid for any two dimensional renormalizable H\'enon-like maps.
 Then by the definition of $  H_{k}^{-1} $, the second coordinate of image of fixed point $ \beta_1 $ under $ H_{k}^{-1} \circ \La_k^{-1} $ is following. \ssk
$$ \pi_{y} \big( H_{k}^{-1} \circ \La_k^{-1} (\beta_0(R^{k+1}F)) \big) = \si_k \cdot \pi_y (\beta_0(R^{k+1}F)) \big) = \pi_y \big( \beta_1(R^kF) \big). $$ 

Since the points $ \beta_i $ for $ i =0, 1 $ are fixed points of the H\'enon-like map, we observe that $ x $ and $ y- $coordinates of each fixed points are same.
$$ \pi_x \big(\beta_0(R^{k+1}F) \big) = \pi_y \big(\beta_0(R^{k+1}F) \big) \quad \text{and} \quad \pi_x \big( \beta_1(R^kF) \big) = \pi_y \big( \beta_1(R^kF) \big) $$
Then the $ x- $coordinate shrinks with the same constant factor\, $ \si_k $, that is, \, $ \si_k \,\cdot \, \pi_x (\beta_0(R^{k+1}F)) \big) = \pi_x \big( \beta_1(R^kF) \big) $. \ssk \\
The fact that every invariant surfaces contain the fixed points of $ R^kF $ for each $ k $ implies that the fixed points of $ R^kF_{2d,\, \xi} $ is the projected image of the corresponding fixed points of $ R^kF $ for each $ k \in \N $.
\begin{align*}
\pi_{xy} (\beta_i (R^kF)) = \beta_i (R^kF_{2d,\, \xi})
\end{align*}
for $ i =0,1 $ and $ k \in \N $. Hence, $ \La_k^{-1}(x,y) = \pi_{xy} \circ \La_k^{-1}(x,y,z) $.
\end{proof}
*******************************}

\subsection{Universality of two dimensional H\'enon-like maps}
Recall that $ Q $ is an $ F $ invariant surface which is tangent to $ E^{pu} $ over the critical Cantor set $ \OO_F $. The critical Cantor set restricted to any invariant surface $ Q $, say $ \OO_{F|_Q} $, is the same as $ \OO_F $. The ergodic measure on $ \OO_{F_{2d, \bxi}} $ is defined as the push forward measure $ \mu $ on $ \OO_{F} $ by the map $ \pi_{xy}^{\bxi} $. In particular, it is defined as follows
\begin{align*}
\mu_{2d,\,\bxi} \big(\pi_{xy}^{\bxi} (\OO_F \cap B^n_{\bf w}) \big) = \mu_{2d,\,\bxi} \big(\pi_{xy}^{\bxi}(\OO_F) \cap \pi_{xy}^{\bxi}( B^n_{\bf w}) \big) = \frac{1}{\;2^n} .
\end{align*}
Since $ \pi_{xy}^{\bxi}(\OO_F) $ is independent of $ \bxi $, $ \mu_{2d,\,\bxi} $ is also independent of $ \bxi $. Then we suppress $ \bxi $ in the notation of the measure $ \mu_{2d} $. Let us define the {\em average Jacobian} of $ F_{2d, \, \bxi} $
$$
b_{2d} = \exp \int_{\OO_{F_{2d}}} \log \Jac  F_{2d, \, \bxi} \; d\mu_{2d} .
$$
Observe that this average Jacobian is independent of the surface map $ \bxi $. 
\begin{lem} \label{Universal Jacobian determinant of Cr Henon map}
Let $ F \in \II(\bar \eps) $ with a sufficiently small perturbation of toy model map satisfying $ \| \di_{\Bz} \bde \| \ll b_{2d} $. \footnote{Every matrix norm is greater than equal to the spectral radius of any given matrix.} 
Suppose that there exist $ R^nF $ invariant \, $ C^r $ surfaces each of which, say $ Q_n $, is tangent to $ E^{pu} $ over the critical Cantor set for $ 2 \leq r < \infty $. Suppose also that $ Q_n =  \text{graph}\,(\bxi_n) $ where $ \bxi_n $ is $ C^r $ map from $ I^x \times I^y $ to $ I^{\Bz} $. Let $ R^nF_{2d,\, \bxi} $ be $ \pi_{xy}^{\bxi_n} \circ F_n|_{\,Q_n} \circ (\pi_{xy}^{\bxi_n})^{-1} $ for each $ n\geq 1 $. Then 
$$ \Jac R^nF_{2d,\, \bxi} = b_{2d}^{2^n} \; a(x) (1+ O(\rho^n))
$$
where $ b_{2d} $ is the average Jacobian of \,$F_{2d, \, \bxi} $ and $ a(x) $ is the universal function of $ x $ for some positive $ \rho < 1 $.
\end{lem}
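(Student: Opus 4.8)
The plan is to follow the proof of Theorem \ref{Universality of the Jacobian} essentially verbatim, with the analytic scaling map $\Psi^n_{\tip}$ replaced by the $C^r$ conjugacy $ {}_{2d}^{}\Psi^n_{0,\,\bxi,\,\tip} $ of \eqref{coordinate change map of xi-2d}. First I would record the chain rule: since $ {}_{2d}^{}\Psi^n_{0,\,\bxi,\,\tip} $ conjugates $ F_{2d,\,\bxi}^{2^n} $ to $ R^nF_{2d,\,\bxi} $ by Proposition \ref{2d scaling map of Cr conjugation} and Definition \ref{definition of renormalizable Cr Henon map}, and translations do not affect Jacobian determinants,
$$ \Jac R^nF_{2d,\,\bxi}(w) \;=\; \Jac F_{2d,\,\bxi}^{2^n}\big( {}_{2d}^{}\Psi^n_{0,\,\bxi}(w)\big)\;\frac{\Jac {}_{2d}^{}\Psi^n_{0,\,\bxi}(w)}{\Jac {}_{2d}^{}\Psi^n_{0,\,\bxi}(R^nF_{2d,\,\bxi}\,w)} . $$
The first factor I would treat exactly as in Corollary \ref{average}: the pieces $ \pi^{\bxi_n}_{xy}(B^n_{\bf w}) $ still shrink exponentially by Lemma \ref{diameter}, so the $C^r$ analogue of the distortion estimate of Lemma \ref{distortion} applies, and together with the definition of $ b_{2d} $ it gives $ \Jac F_{2d,\,\bxi}^{2^n}(\cdot) = b_{2d}^{2^n}(1+O(\rho^n)) $ on every piece of level $n$.

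For the distortion ratio I would substitute the explicit expression \eqref{Jacobian of scope map of xi}. After cancelling the common factor $\si_{n,0}$, the ratio equals
$$ \frac{\alpha_{n,0}\big(1 + \di_x S^n_{0,\,\bxi} + \sum_j \di_{z_j}S^n_{0,\,\bxi}\cdot\di_x \xi_n^j\big) + \si_{n,0}\,\Bu_{n,0}\cdot\di_x\bxi_n}{\big[\,\text{the numerator evaluated at }R^nF_{2d,\,\bxi}\,w\,\big]} , $$
where $ S^n_{0,\,\bxi}(x,y) = S^n_0(x,y,\bxi_n(x,y)) $. Using $ \alpha_{n,0}\asymp\si^{2n} $, $ \si_{n,0}\asymp(-\si)^n $, the bounds $ \|\di_{z_j}S^n_0\| = O(\bar\eps) $ of Lemma \ref{asymptotics of non-linear part 1}, $ \|\Bu_{n,0}\| = O(\bar\eps) $ of Lemma \ref{decomposition of derivative}, and the estimate $ \|\di_x\bxi_n\| = O(\bar\eps\,\si^n) $ from Proposition \ref{invariant surfaces on each deep level}, the two correction terms are of size $O(\bar\eps)$ times the leading term $ \alpha_{n,0}(1+\di_x S^n_{0,\,\bxi}) $, and their difference at $w$ and at $ R^nF_{2d,\,\bxi}\,w $ is $O(\rho^n)$. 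Hence the ratio equals $ \big(1 + \di_x S^n_{0,\,\bxi}(w)\big)\big/\big(1 + \di_x S^n_{0,\,\bxi}(R^nF_{2d,\,\bxi}\,w)\big) $ up to a factor $ 1 + O(\rho^n) $, exactly as in the analytic two dimensional case.

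It then remains to identify the limit of this ratio. Since $ \bxi_n(x,y) = {\bf c}\,y(1+O(\rho^n)) $ by Proposition \ref{invariant surfaces on each deep level} and $ \|\di_y S^n_0\|, \|\di_{z_j}S^n_0\| = O(\bar\eps) $, one gets $ S^n_{0,\,\bxi}(x,y) = S^n_0(x,y,{\bf c}\,y) + O(\rho^n) $, so Lemma \ref{asymptotics of non linear part} together with the exponential convergence at the tip furnished by Lemma \ref{asymptotics of S for k} yields $ 1 + \di_x S^n_{0,\,\bxi}(x,y) \to v_*'(x) $ exponentially fast, uniformly in $y$. Combining this with the exponential convergence $ R^nF_{2d,\,\bxi}\to F_* $ and of the tips to $ (f_*(c_*),c_*) $, the ratio converges exponentially to the universal function $ a(x) = v_*'(x - c_*)/v_*'\big(f_*(x)-f_*(c_*)\big) $ already appearing in Theorem \ref{Universality of the Jacobian}; in particular $a$ is independent of $F$ and of $\bxi$, and positive by orientation preservation of $ F_{2d,\,\bxi} $ on every level together with the non-vanishing of $ v_*' $. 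Multiplying the three factors yields $ \Jac R^nF_{2d,\,\bxi} = b_{2d}^{2^n}a(x)(1+O(\rho^n)) $.

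The step I expect to be the main obstacle is the control of the extra terms in \eqref{Jacobian of scope map of xi} introduced by the graph $\bxi_n$, namely $ \di_{z_j}S^n_{0,\,\bxi}\cdot\di_x \xi_n^j $ and $ \Bu_{n,0}\cdot\di_x\bxi_n $: one must show that their \emph{contribution to the ratio}, not merely their absolute size, is $O(\rho^n)$. This is precisely where the hypothesis $ \|\di_{\Bz}\bde\| \ll b_{2d} $ enters, since it forces $ \Bu_{n,0} $ and $ \di_x\bxi_n $ to be small; once these estimates are in place the remainder of the argument is a transcription of the two and three dimensional $C^r$ proofs of \cite{CLM,Nam3}.
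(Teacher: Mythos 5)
Your overall architecture (chain rule, distortion estimate for the first factor, explicit formula \eqref{Jacobian of scope map of xi} for the scaling-map Jacobian) is the same as the paper's. However, there is a genuine gap at exactly the step you yourself flag as the main obstacle, and the justification you give there does not close it. The term $\sum_j \di_{z_j}S^n_{0,\,\bxi}\cdot\di_x\xi_n^j$ is indeed harmless: it is multiplied by $\alpha_{n,0}$ and is of relative size $O(\bar\eps^2\si^n)$, hence absorbed into $O(\rho^n)$. But the term $\si_{n,0}\,\Bu_{n,0}\cdot\di_x\bxi_n$ is \emph{not} negligible relative to the leading term: dividing by $\alpha_{n,0}\asymp\si^{2n}$ and using $\si_{n,0}\asymp(-\si)^n$, $\|\Bu_{n,0}\|=O(\bar\eps)$, $\|\di_x\bxi_n\|=O(\bar\eps\,\si^n)$, its relative size is $O(\bar\eps^2)$ --- a constant in $n$, not $O(\rho^n)$. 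Moreover its values at $w$ and at $R^nF_{2d,\,\bxi}\,w$ do \emph{not} agree up to $O(\rho^n)$: the term converges to a nonzero multiple of $v_*'$ evaluated at two different points. So your claim that ``their difference at $w$ and at $R^nF_{2d,\,\bxi}\,w$ is $O(\rho^n)$'' is false, and even if it were true it would only yield a ratio of the form $1+O(\bar\eps)$, which would leave the limit function dependent on $F$ and $\bxi$ and destroy universality.

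The mechanism the paper actually uses is proportionality, not smallness: from the derivative relation \eqref{exponential convergence of the invariant surface xi} in Proposition \ref{invariant surfaces on each deep level} one solves
$$ \frac{\si_{n,\,0}}{\alpha_{n,\,0}}\;\Bu_{n,\,0}\cdot\di_x\bxi_n \;=\; \frac{\Bu_{n,\,0}\cdot\di_x\bxi(\bar x,\bar y)}{1-\Bu_{n,\,0}\cdot\di_x\bxi(\bar x,\bar y)}\;\big(1+\di_xS^n_{0,\,\bxi}\big), $$
so the correction is asymptotically the \emph{same constant multiple} $\kappa=\Bu_{*,\,0}\cdot\di_x\bxi(\pi_{xy}(\tau_F))/(1-\Bu_{*,\,0}\cdot\di_x\bxi(\pi_{xy}(\tau_F)))$ of the leading term at both $w$ and $R^nF_{2d,\,\bxi}\,w$; the factor $(1+\kappa)$ then cancels in the quotient, leaving $v_*'(x-\pi_x(\tau_\infty))/v_*'(f_*(x)-\pi_y(\tau_\infty))=a(x)$. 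You need to insert this identity (or an equivalent proportionality statement) in place of the ``difference is $O(\rho^n)$'' claim; the rest of your argument, including the treatment of $\Jac F_{2d,\,\bxi}^{2^n}$ and the identification of the limit, then goes through as in the paper.
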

\begin{proof}
By the distortion Lemma \ref{distortion} and Corollary \ref{average}, we obtain 
\begin{align*}
\Jac F^{2^n}_{2d,\, \bxi} = b_{2d}^{2^n} (1+ O(\rho^n)) .
\end{align*}
Moreover, the chain rule implies that
\begin{align*}
\Jac R^nF_{2d,\, \bxi} = b_{2d}^{2^n} \; \frac{\Jac {}_{2d} \Psi^n_{0,\,\bxi}(w)}{\Jac {}_{2d} \Psi^n_{0,\,\bxi}(R^nF_{2d,\, \bxi}(w))} (1+O(\rho^n))
\end{align*}
where $ w =(x,y, \Bz) $. After letting the tip on every level move to the origin by the appropriate linear map, the equation \eqref{Jacobian of scope map of xi} implies that 
\begin{align}
\Jac {}_{2d}^{}\Psi^n_{0,\, \bxi} = \si_{n,\,0} \big( \alpha_{n,\,0} \cdot \di_x \big(x + S^n_0(x,y,\,\bxi_n)\big) +  \si_{n,\,0} \Bu_{n,\,0}\cdot \di_x \bxi_n  \big) .
\end{align}
Then in order to have the universal expression of the Jacobian, we need the asymptotic of the following maps \msk
$$ \di_x \big(x + S^n_0(x,y,\,\bxi_n)\big) \ \ \text{and} \ \ \dfrac{\si_{n,\,0}}{\alpha_{n,\,0}}\; \di_x \bxi_n . $$ 
By Lemma \ref{asymptotics of non linear part}, 
\begin{align*}
x + S^n_0(x,y,\, \bxi_n) = v_*(x) + a_{F}\: y^2 + \sum_{j=1}^m a_{F,\,j}\: y \cdot \xi_n^j + \sum_{1 \leq i \leq j \leq m} a_{F,\,ij}\,\xi_n^i \cdot \xi_n^j + O(\rho^n) 
\end{align*} 
with $ C^1 $ convergence where $ v_*(x) $ is the universal function. Thus
\begin{align*}
\di_x \big(x + S^n_0(x,y,\,\bxi_n)\big) = v_*'(x) + \sum_{j=1}^m a_{F,\,j}\: y \cdot \di_x\xi_n^j  +  \sum_{1 \leq i \leq j \leq m} a_{F,\,ij}\,( \di_x\xi_n^i \cdot \xi_n^j + \xi_n^i \cdot \di_x \xi_n^j ) + O(\rho^n)  .
\end{align*} \ssk
By Lemma \ref{invariant surfaces on each deep level}, we see $ \| \di_x \bxi_n \| \leq C \bar \eps \, \si^n $.
Then
\begin{align} \label{exponential convergence of Sn with xi}
\di_x \big(x + S^n_0(x,y,\,\bxi_n)\big) = v_*'(x) + O(\rho^n) .
\end{align}
By the equation \eqref{exponential convergence of the invariant surface xi} in Lemma \ref{invariant surfaces on each deep level}, 
\msk
\begin{align*}
\frac{\si_{n,\,0}}{\alpha_{n,\,0}} \; \frac{\di \bxi_n}{\di x} = & \ \di_x \bxi(\bar x, \bar y) \cdot \left[ 1 + \di_x S^n_0(x,y,\,\bxi_n) + \frac{\si_{n,\,0}}{\alpha_{n,\,0}} \; \Bu_{n,\,0} \cdot \frac{\di \bxi_n}{\di x} \right] \\
\text{Thus we obtain that} \quad \frac{\si_{n,\,0}}{\alpha_{n,\,0}} \; \frac{\di \xi_n}{\di x} = & \ \frac{\di_x\bxi(\bar x, \bar y) }{1- \Bu_{n,\,0} \cdot \di_x\bxi(\bar x, \bar y)}\; \big(1 + \di_x S^n_0(x,y,\,\bxi_n) \big)
\end{align*}%
where $ (\bar x, \bar y) \in B(F_{2d,\,\xi}) $ for $ 1 \leq i \leq m $. Moreover, $ (\bar x, \bar y) $ converges to the origin $ (0,0) $ as $ n \ra \infty $ exponentially fast by Corollary \ref{diameter 2}.
\begin{align*} 
\diam(_{2d}\Psi^n_{0,\,\bxi}(B)) \leq \diam(\Psi^n_0(B)) \leq C \si^n
\end{align*}
for some $ C>0 $. In addition to the exponential convergence of $ \di_x \bxi(\bar x, \bar y) $ to $ \di_x \bxi(0,0) $, $ \Bu_{n,\,0} $ converges to $ \Bu_{*,\,0} $ super exponentially fast. Then,
\begin{align}
\frac{\si_{n,\,0}}{\alpha_{n,\,0}} \; \Bu_{n,\,0} \cdot \di_x\bxi_n = \frac{\Bu_{*,\,0} \cdot \di_x\bxi }{1- \Bu_{*,\,0} \cdot \di_x\bxi} \ v_*'(x) + O(\rho^n) .
\end{align}
Let $ (x',y') = w' = R^nF_{2d,\, \bxi}(w) $. Then we obtain
\begin{align} \label{asymptotic of the coordinate change with xi}
\frac{\Jac {}_{2d}^{}\Psi^n_{0,\, \bxi}(w)}{\Jac {}_{2d}^{}\Psi^n_{0,\, \bxi}(w')} = 
\frac{1+ \di_x(S^n_{0,\, \bxi}(w)) + \dfrac{\si_{n,\,0}}{\alpha_{n,\,0}}\; \Bu_{n,\,0}\cdot \di_x \bxi_n(x,y) } {1+ \di_x(S^n_{0,\, \bxi}(w')) + \dfrac{\si_{n,\,0}}{\alpha_{n,\,0}}\; \Bu_{n,\,0}\cdot \di_x \bxi_n(x',y') }
\end{align}
where $ S^n_0(x,y,\,\bxi_n) = S^n_{0,\, \bxi}(x,y) $. The translation does not affect Jacobian determinant and each translation from tip to the origin converges to the map $ w \mapsto \tau_{\infty} $ exponentially fast where $ \tau_{\infty} $ is the tip of two dimensional degenerate map $ F_*(x,y) = (f_*(x),\; x) $ which is the renormalization fixed point. Then by the similar calculation in Theorem \ref{Universality of the Jacobian}, the equation \eqref{asymptotic of the coordinate change with xi} converges to the following universal function exponentially fast. 
\begin{equation}
\begin{aligned}
\lim_{n \ra \infty} \frac{\Jac {}_{2d}^{}\Psi^n_{0,\, \bxi}(w)}{\Jac {}_{2d}^{}\Psi^n_{0,\, \bxi}(w')} &=  \frac{v_*'(x-\pi_x(\tau_{\infty})) + \dfrac{\Bu_{*,\,0}\cdot \di_x \bxi ( \pi_{xy}(\tau_F))}{1- \Bu_{*,\,0}\cdot \di_x \bxi ( \pi_{xy}(\tau_F))}\;v_*'(x-\pi_x(\tau_{\infty})) }{v_*'(f_*(x)- \pi_y(\tau_{\infty})) + \dfrac{ \Bu_{*,\,0}\cdot \di_x \bxi ( \pi_{xy}(\tau_F))}{1- \Bu_{*,\,0}\cdot\di_x \bxi ( \pi_{xy}(\tau_F))}\;v_*'(f_*(x)-\pi_y(\tau_{\infty}))} \\[0.5em]
&= \frac{v_*'(x-\pi_x(\tau_{\infty}))}{v_*'(f_*(x)-\pi_y(\tau_{\infty}))} 
\equiv a(x) .
\end{aligned}
\end{equation}
\end{proof}
\begin{thm}[Universality of\; $ C^r $ H\'enon-like maps with $ C^r $ conjugation for $ 2 \leq r < \infty $] \label{Universality of Cr Henon maps}
Let H\'enon-like map $ F_{2d,\, \bxi} $ be the $ C^r $ map with $ 2 \leq r < \infty $ which is defined on \eqref{Cr Henon map with invariant surface 0}. Suppose that $ F_{2d,\, \bxi} $ is infinitely renormalizable. Then
\begin{align}
R^nF_{2d,\, \bxi}(x,y) = (f_n(x) - b_{2d}^{2^n}\, a(x)\, y\, (1+ O(\rho^n)),\ x)
\end{align}
where 
$ b_{2d} $ is the average Jacobian of $ F_{2d,\, \bxi} $ and $ a(x) $ is the universal function for some $ 0 < \rho < 1 $.
\end{thm}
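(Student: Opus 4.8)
The plan is to obtain the theorem as an essentially immediate consequence of Lemma \ref{Universal Jacobian determinant of Cr Henon map}, using the elementary fact that for a two dimensional H\'enon-like map $(x,y)\mapsto(g(x)-e(x,y),\,x)$ the Jacobian determinant equals exactly $\di_y e(x,y)$. First I would record the structural identification. By Proposition \ref{2d scaling map of Cr conjugation}, Definition \ref{definition of renormalizable Cr Henon map} and Proposition \ref{invariant surfaces on each deep level}, for every $n$ the map $R^nF_{2d,\,\bxi}$ coincides with $\,{}_{2d}F_{n,\,\bxi}=\pi_{xy}^{\bxi_n}\circ F_n|_{Q_n}\circ(\pi_{xy}^{\bxi_n})^{-1}$; writing $F_n=(f_n(x)-\eps_n(x,y,\Bz),\,x,\,\bde_n)$ and restricting to the invariant surface $Q_n=\mathrm{graph}(\bxi_n)$, the $\Bz$-coordinate along $Q_n$ is $\Bz=\bxi_n(x,y)$, so
\[
R^nF_{2d,\,\bxi}(x,y)=\big(f_n(x)-\eps_n^{2d}(x,y),\ x\big),\qquad \eps_n^{2d}(x,y):=\eps_n\big(x,\,y,\,\bxi_n(x,y)\big).
\]
This is the only place where the $C^r$-conjugation machinery developed above is used; after it the argument is purely two dimensional and parallels \cite{CLM,Nam3}.

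Next I would compute $\Jac R^nF_{2d,\,\bxi}$ in two ways. From the matrix form of the derivative of a planar H\'enon-like map one has $\Jac R^nF_{2d,\,\bxi}(x,y)=\di_y\eps_n^{2d}(x,y)$, while Lemma \ref{Universal Jacobian determinant of Cr Henon map} gives $\Jac R^nF_{2d,\,\bxi}(x,y)=b_{2d}^{2^n}\,a(x)\,(1+O(\rho^n))$ with $a(x)$ the one dimensional universal function. Since $\eps_n^{2d}$ is $C^r$ with $r\ge2$, $\di_y\eps_n^{2d}$ is continuous and the fundamental theorem of calculus applies; integrating in $y$ from $0$ over the bounded fibre $I^y$, and noting that the average of an $O(\rho^n)$ quantity over $[0,y]$ is again $O(\rho^n)$ uniformly in $y$, I obtain
\[
\eps_n^{2d}(x,y)=\eps_n^{2d}(x,0)+b_{2d}^{2^n}\,a(x)\,y\,(1+O(\rho^n)).
\]

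It then remains to absorb the $y$-independent term $\eps_n^{2d}(x,0)=\eps_n(x,0,\bxi_n(x,0))$. Because $F_{2d,\,\bxi}$ arises from $F\in\II(\bar\eps)$, the renormalization estimates of Proposition \ref{preconv} (iterated) together with the definition of $\II(\bar\eps)$ give $\|\eps_n\|=O(\bar\eps^{2^n})$, hence $\eps_n^{2d}(x,0)=O(\bar\eps^{2^n})$. Setting $\hat f_n(x):=f_n(x)-\eps_n^{2d}(x,0)$, the map $\hat f_n$ is again a unimodal map converging to the renormalization fixed point $f_*$, and
\[
R^nF_{2d,\,\bxi}(x,y)=\big(\hat f_n(x)-b_{2d}^{2^n}\,a(x)\,y\,(1+O(\rho^n)),\ x\big),
\]
which is the asserted formula after relabelling $\hat f_n$ as $f_n$ (consistent with the loose use of this symbol in Proposition \ref{preconv} and with the normalization adopted for the two dimensional analytic maps in \cite{CLM}).

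I expect no genuine new obstacle here: the substantive work is already carried by Lemma \ref{Universal Jacobian determinant of Cr Henon map} and Proposition \ref{invariant surfaces on each deep level}. The two points requiring care are the structural identification in the first step — checking that projecting $R^nF|_{Q_n}$ along the graph of $\bxi_n$ really does produce a planar H\'enon-like map, which is exactly what Proposition \ref{2d scaling map of Cr conjugation} and the commutative diagram for $\,{}_{2d}\Psi^n_{k,\,\bxi}$ provide — and the bookkeeping of the constant of integration $\eps_n^{2d}(x,0)$, i.e.\ being explicit about which unimodal map is meant by $f_n$; neither of these is a serious difficulty.
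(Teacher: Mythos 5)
Your proposal is correct and follows essentially the same route as the paper's proof: identify $R^nF_{2d,\,\bxi}$ with the conjugated map $(f_n(x)-\eps_n(x,y,\bxi_n),\,x)$ on the invariant surface, read off its Jacobian as $\di_y\eps_{n,\bxi_n}$, apply Lemma \ref{Universal Jacobian determinant of Cr Henon map}, integrate in $y$, and absorb the $y$-independent term into $f_n$. Your version is slightly more careful about the constant of integration and the uniformity of the $O(\rho^n)$ error under integration, but there is no substantive difference.
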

\begin{proof}
By the smooth conjugation of two dimensional map and $ F_n|_{\,Q_n} $, we see that
\begin{align*}
R^nF_{2d,\, \bxi}(x,\ y) = (f_n(x) - \eps_n(x,y, \bxi_n) ,\ x)
\end{align*}
\ssk Let $ \eps_n(x,y, \bxi_n) $ be $ \eps_{n,\,\bxi_n}(x,y) $. Then the Jacobian of $ R^nF_{2d,\, \bxi} $ is\; $ \di_y \eps_{n,\,\bxi_n}(x,y) $. By Lemma \ref{Universal Jacobian determinant of Cr Henon map}, $ \di_y \eps_{n,\,\bxi_n}(x,y) = b_{2d}^{2^n} \, a(x) (1+ O(\rho^n)) $. Then 
$$ \eps_{n,\,\bxi_n}(x,y) = b_{2d}^{2^n}\, a(x)\, y\, (1+ O(\rho^n)) + U_n(x) . $$ 
The map $ U_n(x) $ which depends only on the variable $ x $ can be incorporated to $ f_n(x) $. 
\end{proof}

\ssk
\begin{thm} \label{universal estimation of scaling maps}
Let $ R^kF \in \II(\bar \eps^{2^k}) $ be the map with invariant surfaces $ Q_k \equiv \text{graph}(\bxi_k) $ tangent to $ E^{pu} $ over the critical Cantor set. Then the coordinate change map $ {}_{2d}^{}\Psi^n_{k,\, \bxi} $ is as follows
\begin{equation} \label{scaling of Cr Henon maps}
{}_{2d}^{}\Psi^n_{k,\, \bxi} = \big(\,\alpha_{n,\,k}\, (\,x +\, _{2d}S^n_k(w)) + \si_{n,\,k}\cdot {}_{2d}t_{n,\,k}\cdot y,\ \si_{n,\,k}\,y\,\big)
\end{equation}
where $ x+{}_{2d}S^n_k(w) $ has the asymptotic
$$ x + {}_{2d}S^n_k(w) = v_*(x) + a_{F,\:k}\, y^2 + O(\rho^{n-k}) $$  
where $ |\;\!a_{F,\:k}| = O(\eps^{2^k}) $.
\end{thm}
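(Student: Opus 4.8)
The plan is to combine the structural expression \eqref{coordinate change map of xi-2d} for ${}_{2d}\Psi^n_{k,\,\bxi}$ with the asymptotics of $S^n_k$ from Lemma \ref{asymptotics of non linear part} (rephrased at level $k$ in the remark following it) and with the size estimate $\|\di_x\bxi_k\| = O(\bar\eps^{2^k})$, $\bxi_k(x,y)=\mathbf{c}\,y(1+O(\rho^k))$ from Proposition \ref{invariant surfaces on each deep level}. Recall from \eqref{coordinate change map of xi-2d} that
$$
{}_{2d}\Psi^n_{k,\,\bxi}(x,y)=\big(\alpha_{n,\,k}(x+S^n_{k,\,\bxi})+\si_{n,\,k}t_{n,\,k}\,y+\si_{n,\,k}\Bu_{n,\,k}\cdot(\bxi_n+{\bf R}_{n,\,k}(y)),\ \si_{n,\,k}\,y\big),
$$
where $S^n_{k,\,\bxi}=S^n_k(x,y,\bxi_n(x,y))$. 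So the whole statement amounts to two claims: first, that the $y$-linear and $\bxi_n$-linear contributions to the first coordinate collapse into a single term $\si_{n,\,k}\cdot{}_{2d}t_{n,\,k}\cdot y$ with ${}_{2d}t_{n,\,k}$ a well-defined number comparable to $t_{k+1,\,k}$; and second, that $x+{}_{2d}S^n_k(w):=x+S^n_k(x,y,\bxi_n)-\si_{n,\,k}\Bu_{n,\,k}\cdot{\bf R}_{n,\,k}(y)/\alpha_{n,\,k}$ has the stated asymptotic $v_*(x)+a_{F,\,k}y^2+O(\rho^{n-k})$ with $|a_{F,\,k}|=O(\bar\eps^{2^k})$.

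First I would handle the asymptotic of $x+{}_{2d}S^n_k(w)$. By the level-$k$ version of Lemma \ref{asymptotics of non linear part}, $x+S^n_k(x,y,\Bz)=v_*(x)+\sum_{0\le i,j\le m}a_{F,\,ij}\,z_iz_j+O(\rho^{n-k})$ with $|a_{F,\,ij}|=O(\bar\eps^{2^k})$ (writing $z_0=y$), and the convergence is $C^1$. Substituting $\Bz=\bxi_n(x,y)=\mathbf{c}y(1+O(\rho^{n}))$ with $\|\mathbf{c}\|=O(\bar\eps^{2^k})$ — actually here one wants the level-$k$ statement, where $Q_k=\mathrm{graph}(\bxi_k)$ and $\bxi_n$ is the surface one level $n$, but the relevant bound for $\di_x\bxi_n$ is $O(\bar\eps^{2^k}\si^{n-k})$ from Proposition \ref{invariant surfaces on each deep level} applied to $R^kF\in\II(\bar\eps^{2^k})$ — every monomial $z_iz_j$ with $i$ or $j\ge 1$ becomes $O(\bar\eps^{2^{k+1}})y^2$, which can be absorbed into the $a_{F,\,k}y^2$ term, so only the $a_{F,\,00}y^2$ survives at order $\bar\eps^{2^k}$. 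Thus $x+S^n_k(x,y,\bxi_n)=v_*(x)+a_{F,\,k}y^2+O(\rho^{n-k})$ with $a_{F,\,k}$ of the required order. The correction term $\si_{n,\,k}\Bu_{n,\,k}\cdot{\bf R}_{n,\,k}(y)/\alpha_{n,\,k}$ is, by Lemma \ref{decomposition of derivative} ($\|\Bu_{n,\,k}\|=O(\bar\eps^{2^k})$, $\si_{n,\,k}/\alpha_{n,\,k}=O(\la^{n-k})$) and Proposition \ref{bounds of R} ($\|{\bf R}_{n,\,k}\|=O(\bar\eps^{2^k})$), bounded by $O(\la^{n-k}\bar\eps^{2^{k+1}})$; since $\la>1$ this does not obviously go to zero, so one must instead keep its quadratic-in-$y$ part inside $a_{F,\,k}y^2$ and show the genuinely cubic-and-higher remainder is $O(\rho^{n-k})$ on the exponentially small domain $B^n_k$ (diameter $O(\si^{n-k})$), using that ${\bf R}_{n,\,k}(y)$ vanishes to second order at the tip. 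This bookkeeping is the main obstacle: one has to be careful that each term that is not literally $v_*(x)+(\text{const})y^2$ is either absorbed or genuinely $O(\rho^{n-k})$ after restricting to $B^n_k$, exactly as in the proof of Lemma \ref{asymptotics of non linear part}.

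Next I would collect the linear-in-$y$ terms. From \eqref{coordinate change map of xi-2d} the first-coordinate linear part is $\si_{n,\,k}t_{n,\,k}y+\si_{n,\,k}\Bu_{n,\,k}\cdot(\bxi_n(x,y)+{\bf R}_{n,\,k}(y))$; writing $\bxi_n(x,y)=\mathbf{c}_n y(1+O(\rho^n))$ and ${\bf R}_{n,\,k}(y)=(\text{linear part})+O(y^2)$, the linear coefficient is $\si_{n,\,k}(t_{n,\,k}+\Bu_{n,\,k}\cdot\mathbf{c}_n+\Bu_{n,\,k}\cdot{\bf R}'_{n,\,k}(0))$, which I define to be $\si_{n,\,k}\cdot{}_{2d}t_{n,\,k}$. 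By Lemma \ref{decomposition of derivative} $t_{n,\,k}$, $\Bu_{n,\,k}$ are comparable with $t_{k+1,\,k}$, $\Bu_{k+1,\,k}$ and converge super-exponentially to $t_{*,\,k}$, $\Bu_{*,\,k}$; by Proposition \ref{invariant surfaces on each deep level} $\mathbf{c}_n\to\mathbf{c}_{*,\,k}$ exponentially and by Proposition \ref{bounds of R} $\|{\bf R}'_{n,\,k}\|=O(\bar\eps^{2^k}\si^{n-k})\to 0$; hence ${}_{2d}t_{n,\,k}$ is a well-defined number comparable with $t_{k+1,\,k}=O(\bar\eps^{2^k})$. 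Any $y$-dependent part of $\bxi_n$ beyond its linear term contributes at order $y^2$ and gets folded into $a_{F,\,k}y^2$. The remaining $x$-dependence of the linear-in-$y$ coefficient — coming from $\di_x\bxi_n$ inside $\Bu_{n,\,k}\cdot\bxi_n$ — is $O(\bar\eps^{2^k}\si^{n-k})$ uniformly, hence absorbable into the $O(\rho^{n-k})$ error after noting it multiplies $y$ which is $O(\si^{n-k})$ on $B^n_k$; this justifies treating ${}_{2d}t_{n,\,k}$ as a pure constant at the stated precision. Assembling the two pieces and using \eqref{Jacobian of scope map of xi} — which already shows the second coordinate is exactly $\si_{n,\,k}y$ — yields \eqref{scaling of Cr Henon maps} with the claimed asymptotic, completing the proof. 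The essential input I am leaning on throughout is that all the "non-dilation" data $t,\Bu,\Bd,{\bf R},\bxi_n$ are of size $O(\bar\eps^{2^k})$ with controlled super-exponential convergence, so that only the one-dimensional universal function $v_*$ and a single quadratic constant survive at the relevant order — precisely the mechanism already established for the three-dimensional case in \cite{Nam3} and for $\Jac$ in Lemma \ref{Universal Jacobian determinant of Cr Henon map}.
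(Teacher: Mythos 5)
Your proposal is correct and follows essentially the same route as the paper's own (second, ``alternative'') argument: starting from \eqref{coordinate change map of xi-2d}, absorbing $\si_{n,\,k}\Bu_{n,\,k}\cdot {\bf c}\,y$ into $\si_{n,\,k}\cdot{}_{2d}t_{n,\,k}\cdot y$ and the remaining $\bxi_n-{\bf c}\,y$ and ${\bf R}_{n,\,k}$ contributions into ${}_{2d}S^n_k$, then invoking Lemma \ref{asymptotics of non linear part}, Proposition \ref{invariant surfaces on each deep level} and Proposition \ref{bounds of R}. One small correction: the $(x,y)$-domain of ${}_{2d}\Psi^n_{k,\,\bxi}$ is the full box (its \emph{image} is $B^n_k$), so $y$ is not $O(\si^{n-k})$ there; the problematic factor $\la^{n-k}\,\Bu_{n,\,k}\cdot{\bf R}_{n,\,k}(y)$ is instead controlled by the uniform bound $\|({\bf R}_{n,\,k})''\|=O(\bar\eps^{2^k}\si^{2(n-k)})$ together with ${\bf R}_{n,\,k}(0)=({\bf R}_{n,\,k})'(0)=0$, which yields $\|{\bf R}_{n,\,k}\|_{\infty}=O(\bar\eps^{2^k}\si^{2(n-k)})$ and hence an $O(\si^{n-k})$ total, with no need to split off the quadratic part.
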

\begin{proof}
By Proposition \ref{2d scaling map of Cr conjugation}, the coordinate change map, $ {}_{2d}^{}\Psi^n_{k,\; \xi} $ is the composition of the inverse of horizontal diffeomorphisms with linear scaling maps as follows
$$ H_{k,\; \bxi}^{-1} \circ \La_k^{-1} \circ H_{k+1,\; \bxi}^{-1} \circ \La_{k+1}^{-1} \circ \cdots \circ H_{n,\; \bxi}^{-1} \circ \La_n^{-1} . $$ 
Then after reshuffling non-linear and linear parts separately by the direct calculations and letting the tip move to the origin by the appropriate translations on each levels, the coordinate change map is of the form \eqref{scaling of Cr Henon maps}. 
However, the calculation in Section 7.2 in \cite{CLM} can be used because analyticity is not required for any calculation of recursive formulas. Thus we have the following estimation
$$ x + \,_{2d}S^n_k(x,y) = v_*(x) + a_{F,\;k}\,y^2 + O(\rho^{n-k}) $$  
where $ |\;\! a_{F,\;k}| = O(\eps^{2^k}) $. Alternatively, let us choose the equation \eqref{coordinate change map of xi-2d}
\begin{align*}
{}_{2d}^{}\Psi^n_{k,\, \bxi} = \left( \alpha_{n,\,k} (x + S^n_{k,\,\bxi} ) +  \si_{n,\,k} t_{n,\,k}\, y +  \si_{n,\,k} \Bu_{n,\,k}\cdot (\bxi_n + {\bf R}_{n,\,k}(y)),\ \si_{n,\,k}\, y \right)
\end{align*}
where $ S^n_{k,\,\bxi}(x,y) = S^n_{k}(x,y,\bxi_n(x,y)) $. By Proposition \ref{invariant surfaces on each deep level}, the map $ \bxi_n $ is
\begin{align*}
\bxi_n(x,y) = {\bf c}y + {\boldeta}(y) + O(\rho^n)
\end{align*}
where $ {\boldeta} = (\eta_1, \eta_2, \ldots, \eta_m) $ is quadratic or higher order terms with $ \| {\boldeta} \|_{C^1} \leq C_0 \si^{n-k} $ for some $ C_0 > 0 $. Recall that $ \Bu_{n,\,k} $ converges to $ \Bu_{*,\,k} $ super exponentially fast and $ \| {\bf R}_{n,\,k} \| \leq  C_1 \si^{n-k} $ for some $ C_1 > 0 $. Recall also that $ \alpha_{n,\,k} = \si^{2(n-k)}(1+O(\rho^n)) $ and $ \si_{n,\,k} = (-\si)^{n-k}(1+O(\rho^n)) $. Hence, we define each terms of $ {}_{2d}^{}\Psi^n_{k,\, \bxi} $ appropriately
\begin{align*}
_{2d}S^n_k(x,y) &= S^n_{k,\,\bxi}(x,y) + \frac{\alpha_{n,\,k}}{\si_{n,\,k}} \,\Bu_{n,\,k}\cdot \big[ \bxi_n(x,y) -  {\bf c}y + {\bf R}_{n,\,k}(y) \big] \\
{}_{2d}t_{n,\,k} &= t_{n,\,k} + \Bu_{n,\,k}\cdot {\bf c}
\end{align*}
as desired. 
\end{proof}

\comm{*****************
In order to construct the theory of these maps like the analytic case, we need the asymptotic of the scaling map defined on \eqref{coordinate change map of xi-2d} as well as Universality of $ C^r $ H\'enon-like maps. Recall the asymptotic of $ {}_{2d}^{}\Psi^n_{k,\; \xi} $.
$$ {}_{2d}^{}\Psi^n_{k,\; \xi} = \big(\alpha_{n,\,k} (x + S^n_{k,\, \xi} ) +  t_{n,\,k}\, \si_{n,\,k}\; y +  u_{n,\,k}\, \si_{n,\,k}(\xi_n +R^n_k(y)),\, \si_{n,\,k}\, y \big) $$
By Proposition \ref{bounds of R} and Lemma \ref{invariant surfaces on each deep level}, we recall the following estimation.
\begin{align*}
|R_k^n| = O \big(\bar \eps^{2^k} \big), \quad | \: \!(R_k^n)'| = O\big(\bar \eps^{2^k} \si^{n-k} \big) \quad \text{and} \quad \| \di_x \xi_n \| = O \big(\bar \eps^{2^k} \si^{n-k} \big)
\end{align*}
Then the first component function of $ {}_{2d}^{}\Psi^n_{k,\, \xi} $ has two parts with each good enough asymptotic for the non-rigidity and the unbounded geometry of the Cantor set $ \OO_{F_{2d}} $.
\begin{equation} \label{decomposition of scaling of Cr maps}
\begin{aligned}
\quad & \alpha_{n,\,k} (x + S^n_{k,\, \xi} ) +  t_{n,\,k}\, \si_{n,\,k}\; y +  u_{n,\,k}\, \si_{n,\,k}(\xi_n +R^n_k(y)) \\
&= \ \big[\,\alpha_{n,\,k} (x + S^n_{k,\, \xi} ) + u_{n,\,k}\, \si_{n,\,k} \,\xi_n \,\big] + \big[\,  t_{n,\,k}\, \si_{n,\,k}\; y + u_{n,\,k}\, \si_{n,\,k}\, R^n_k(y) \,\big]
\end{aligned}
\end{equation}
**********************}

\msk

\section{Unbounded geometry of critical Cantor set}
The unbounded geometry of the Cantor set, $ \OO_F $ of small perturbation of the toy model map, $ F \in \II(\bar \eps) $ with $ b_1 \gg \| \:\! \di_{\Bz} \bde \| $ is involved with that of the map on the invariant surfaces of each level, $ F|_{\,Q} $. Since the $ C^r $ conjugation preserves this property, the fact that the Cantor set of $ C^r $ H\'enon-like map $ F_{2d} $ has the unbounded geometry is sufficient to show the same property of $ \OO_F $. 
\ssk \\
Recall that the minimal distance between two boxes $ B_1, B_2 $ is the infimum of the distance between all points of each boxes, $ \dist_{\min}(B_1, B_2) $.
\msk
\begin{defn}
$ F \in \II(\bar \eps) $ has {\em bounded geometry} if
\begin{align*}
 \dist_{\min}(B^{n+1}_{{\bf w}v}, B^{n+1}_{{\bf w}c}) &\asymp \diam(B^{n+1}_{{\bf w}\nu}) \quad \text{for} \ \nu \in \{v, c\} \\[0.2em]
 \diam(B^n_{\bf w}) &\asymp \diam(B^{n+1}_{{\bf w}\nu}) \quad \text{for} \ \nu \in \{v, c\}
\end{align*}
for all $ {\bf w} \in W^n $ and for all $ n \geq 0 $. 
\end{defn}
\nin By the definition of each $ B^{n}_{\bf w} $, if $ F $ does not have bounded geometry, then we call $ \OO_F $ has bounded geometry. Otherwise, we call $ \OO_F $ has {\em unbounded geometry}.
\ssk \\
\nin Let $ F_{2d} $ be an infinitely renormalizable two dimensional H\'enon-like map and $ b_1 $ be the average Jacobian of $ F_{2d} $. Then the unbounded geometry of the Cantor set depends on Universality theorem and the asymptotic of the tilt, $ - t_k \asymp b_1^{2^k} $ but it does not depend on the analyticity of the map. Moreover, unbounded geometry holds if we choose $ n>k $ such that $ b_{2d} \asymp \si^{n-k} $ for every sufficiently large $ k $. This is true on the parameter space of $ b_{2d} $ almost everywhere with respect to Lebesgue measure. 
\begin{thm}[\cite{HLM}]
The given any $ 0 < A_0 < A_1 $, $ 0< \si <1 $ and any $ p \geq 2 $, the set of parameters $ b \in [0,1] $ for which there are infinitely many $ 0<k<n $ satisfying
$$ A_0 < \frac{b^{p^k}}{\si^{n-k}} <A_1 $$
is a dense $ G_{\de} $ set with full Lebesgue measure.
\end{thm}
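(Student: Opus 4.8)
The plan is to reduce the statement to a classical recurrence property of the expanding circle endomorphism $T_p\colon\R/\Z\to\R/\Z,\ T_p(y)=py\bmod 1$. First I would take logarithms: put $\lambda=-\log\sigma>0$, $c_i=(\log A_i)/\lambda$, $\ell=c_1-c_0=\log(A_1/A_0)/\lambda>0$, and $y=y(b):=(-\log b)/\lambda$, a decreasing $C^\infty$ bijection of $(0,1)$ onto $(0,\infty)$. Then, for $b\in(0,1)$, the inequality $A_0<b^{p^k}\sigma^{-(n-k)}<A_1$ holds for some $n>k$ exactly when the open interval $(c_0+p^ky,\,c_1+p^ky)$, which has fixed length $\ell$, contains an integer $m=n-k\ge1$. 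Its left endpoint $c_0+p^ky$ tends to $+\infty$ with $k$, so the constraint $m\ge1$ is automatically satisfied for $k\ge K(b)$ and hence does not affect the ``infinitely many $k$'' quantifier. If $\ell\ge1$ this interval always contains an integer and the theorem is trivial; so assume $\ell<1$. Then the interval contains an integer iff the fractional part $\{c_0+p^ky\}>1-\ell$, and since $\{c_0+p^ky\}=\{c_0+\{p^ky\}\}$ and $\{p^ky\}=T_p^{\,k}(\{y\})$, this is equivalent to $T_p^{\,k}(\{y\})\in J$ for one fixed arc $J\subset\R/\Z$ of length $\ell$ (the rotation-preimage of $(1-\ell,1)$).

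Thus, away from the null endpoints $\{0,1\}$, the parameter set in question is $E=\{\,b\in(0,1):T_p^{\,k}(\{y(b)\})\in J\ \text{for infinitely many }k\,\}$. On each maximal interval where $y(b)$ lies in a half-open unit interval $[j,j+1)$, the map $b\mapsto\{y(b)\}=y(b)-j$ is a $C^\infty$ diffeomorphism onto $[0,1)$ with nowhere-vanishing derivative, so it carries Lebesgue-null sets and subintervals back and forth between the two lines. For the \emph{full-measure} claim I would then invoke ergodicity of $T_p$ with respect to Lebesgue measure: by the Birkhoff pointwise ergodic theorem, for a.e.\ $\bar y\in[0,1)$ one has $\#\{k\le N:T_p^{\,k}\bar y\in J\}/N\to|J|=\ell>0$, in particular $T_p^{\,k}\bar y\in J$ for infinitely many $k$; pulling this back over each unit piece shows $E$ has full Lebesgue measure in $[0,1]$. (Alternatively, strong mixing of $T_p$ plus the second Borel--Cantelli lemma gives the same, since $\mathrm{Leb}\{\bar y:T_p^{\,k}\bar y\in J\}=\ell$ for every $k$.)

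For the \emph{topological} claim I would show $E$ is a dense $G_\delta$ by Baire. Writing $E_k=\bigcup_{m\ge1}\{b\in(0,1):\log A_0<p^k\log b+m\lambda<\log A_1\}$, each set in this union is the preimage of an open interval under a continuous map, so $E_k$ is open; hence $U_N:=\bigcup_{k\ge N}E_k$ is open and $E=\bigcap_{N\ge1}U_N$ is $G_\delta$ in $(0,1)$, hence in $[0,1]$. To see $U_N$ is dense, fix a subinterval $J_0$ with $\overline{J_0}\subset(0,1)$; since $y(\cdot)$ is a homeomorphism onto its image and $p^k\to\infty$, for large $k$ the set $\{p^ky(b):b\in J_0\}$ is an interval of length $>1$, so its reduction mod $1$ is all of $[0,1)$, and choosing $b\in J_0$ with $\{p^ky(b)\}\in J$ and $k\ge N$ large enough that $c_0+p^ky(b)\ge1$ places $b$ in $E_k\subset U_N$. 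So each $U_N$ is dense open, whence $E$ is a dense $G_\delta$ of $[0,1]$; together with the previous paragraph, $E$ is a dense $G_\delta$ set of full Lebesgue measure.

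The hard part will not be the dynamics --- that almost every orbit of $T_p$ visits a fixed positive-length arc infinitely often is classical --- but the two bookkeeping reductions: translating ``$\exists\,n>k$'' faithfully into ``a fixed-length interval contains a positive integer'' and verifying that the positivity of $m$ is eventually free, and confirming that the piecewise-smooth change of variables $b\mapsto\{y(b)\}$ transports \emph{both} full measure and dense-$G_\delta$-ness from the $y$-line to the $b$-line. These are exactly the points I would treat with care; this is also why the original source \cite{HLM} can simply cite the recurrence fact.
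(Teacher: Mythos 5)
The paper offers no proof of this statement---it is quoted directly from \cite{HLM}---so there is nothing internal to compare against; judged on its own terms, your argument is correct and complete. The bookkeeping you flag as the delicate part is done faithfully: dividing the logarithmic inequality by $\lambda=-\log\sigma$ turns the condition into ``the length-$\ell$ interval $(c_0+p^ky,\,c_1+p^ky)$ contains a positive integer,'' the positivity of $m=n-k$ is indeed free for $k\ge K(b)$ because the left endpoint tends to $+\infty$ for every $b\in(0,1)$, and the translation into $T_p^{\,k}(\{y\})\in J$ for a fixed arc $J$ of length $\ell$ is exact (note also that for fixed $k$ only boundedly many $n$ can work, so ``infinitely many pairs'' and ``infinitely many $k$'' coincide). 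Ergodicity of $T_p$, or mixing plus the second Borel--Cantelli lemma, gives the full-measure statement on the $y$-line, and since $b\mapsto y(b)$ is a smooth diffeomorphism of $(0,1)$ onto $(0,\infty)$ and the reduction mod $1$ is piecewise a translation, both null sets and the dense-$G_\delta$ property transport back to the $b$-line; the Baire argument, using that $p^k y(J_0)$ eventually has length $>1$, is likewise sound. Two cosmetic points only: when $\ell=1$ exactly an \emph{open} interval of length $1$ can miss $\Z$ (precisely when its left endpoint is an integer), so the ``trivial'' case should read $\ell>1$, with $\ell=1$ absorbed into the generic argument (the exceptional parameters form a countable, hence null and nowhere dense, set); and the endpoints $b=0,1$ are a null two-point set that can be discarded, as you do. Neither affects the conclusion.
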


\begin{thm} \label{Unbounded geometry for model maps}
Let $ m+2 $ dimensional H\'enon-like map, $ F \in \II(\bar \eps) $ be a small perturbation of the toy model map with $ \| \!\: \di_{\Bz} \bde \| \ll b_{1} $ where $ b_{1} $ is the average Jacobian of $ F_{2d,\,\bxi} $. Let $ F_{b_1} $ be the parametrized $ m+2 $ dimensional H\'enon-like map for $ b_1 \in [b_{\circ}, b_{\bullet}] $ where $ \| \di_{\Bz} \bde \| \ll b_{\circ} < b_{\bullet} $. Then there exists $ G_{\de} $ subset $ S $ with full Lebesgue measure of $ [b_{\circ}, b_{\bullet}] $ such that the critical Cantor set $ \OO_{F_{b_1}} $ has unbounded geometry.
\end{thm}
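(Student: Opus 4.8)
\medskip
\noindent\textbf{Proof plan.} The strategy is to transport the problem to the two-dimensional $C^r$ family living on the invariant surfaces, and then to feed the universality statements of the previous sections into the number-theoretic lemma of \cite{HLM} quoted above.

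\emph{Step 1: reduction to the two-dimensional family.} Because $\| \di_{\Bz}\bde \| \ll b_1$, Lemma \ref{Invariance of cone filed perturbation infty} provides a dominated splitting $T_{\Gamma}B = E^{ss}\oplus E^{pu}$ over $\Gamma = \overline{\Per}_{F_{b_1}} \supset \OO_{F_{b_1}}$, and Proposition \ref{invariant surfaces on each deep level} then gives, for all large $n$, an $R^nF_{b_1}$-invariant $C^r$ surface $Q_n = \mathrm{graph}(\bxi_n)$ with $\bxi_n(x,y) = \cc\, y\,(1+O(\rho^n))$ and $\| D\bxi_n \| = O(\bar\eps\,\si^n)$. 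Hence the graph projections $\pi_{xy}^{\bxi_n}$ are uniformly bi-Lipschitz (with constant $1+O(\bar\eps)$, independent of $n$), so they map the pieces $B^n_{\bf w}(R^kF_{b_1})$ onto the pieces of $R^kF_{2d,\,b_1}$ while changing each $\diam$ and $\dist_{\min}$ appearing in the definition of bounded geometry by at most a bounded factor. Thus $\OO_{F_{b_1}}$ has unbounded geometry if and only if $\OO_{F_{2d,\,b_1}}$ does, and $b_{2d}=b_1$ by the definition of the average Jacobian on the surface. So it is enough to prove the theorem for the one-parameter $C^r$ two-dimensional family $F_{2d,\,b_1}$, $b_1\in[b_\circ,b_\bullet]$.

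\emph{Step 2: geometry of deep pieces controlled by the tilt.} By Theorem \ref{universal estimation of scaling maps}, the scaling map from level $n$ to level $k$ has the form $_{2d}\Psi^n_{k,\,\bxi} = \big(\alpha_{n,k}(x+{}_{2d}S^n_k) + \si_{n,k}\,{}_{2d}t_{n,k}\,y,\ \si_{n,k}\,y\big)$ with $x+{}_{2d}S^n_k = v_*(x) + a_{F,k}\,y^2 + O(\rho^{n-k})$, while Theorem \ref{Universality of Cr Henon maps} gives $R^kF_{2d,\,b_1} = (f_k(x) - b_1^{2^k}a(x)\,y\,(1+O(\rho^k)),\ x)$; combined with \eqref{bounds at 0} and the universal expression of $\eps_k$ this yields the tilt asymptotics $-\,{}_{2d}t_{n,k}\to -\,{}_{2d}t_{*,k}$ with $-\,{}_{2d}t_{*,k}\asymp b_1^{2^k}$. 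Fixing ${\bf w}\in W^n$ and pulling the two children $B^{n+1}_{{\bf w}v}$, $B^{n+1}_{{\bf w}c}$ back to a deep level $k$ by $\Psi^n_k$, one sees they are the two level-$(n-k+1)$ pieces of $R^kF_{2d,\,b_1}$: since this map is $f_k$ up to a $b_1^{2^k}$-small perturbation, these are essentially vertical segments of length $\asymp\si^{n-k}$ separated horizontally by $\asymp\si^{n-k}$, and applying $_{2d}\Psi^n_{k,\,\bxi}$, whose tilt term contributes $\si_{n,k}\,{}_{2d}t_{n,k}\,y\asymp \si^{n-k}b_1^{2^k}y$, shears these boxes. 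Running the \cite{CLM} computation of the geometry of the Cantor set — which, by design of Theorem \ref{universal estimation of scaling maps}, uses no analyticity — one finds that, up to bounded factors and $O(\rho^{n-k})$ errors, the ratios $\dist_{\min}(B^{n+1}_{{\bf w}v},B^{n+1}_{{\bf w}c})/\diam(B^{n+1}_{{\bf w}\nu})$ and $\diam(B^{n}_{{\bf w}})/\diam(B^{n+1}_{{\bf w}\nu})$ are governed by $\si^{n-k}/b_1^{2^k}$ and its reciprocal. In particular there are constants $0<A_0<A_1$ such that, whenever $A_0 < b_1^{2^k}/\si^{n-k} < A_1$ for a pair $k<n$, the pieces $B^{n+1}_{{\bf w}\nu}$ violate the bounded-geometry comparisons by a definite amount.

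\emph{Step 3 and the main difficulty.} Let $\si=\si(F_*)\in(0,1)$ be the universal scaling ratio and $p=2$. By the theorem of \cite{HLM} quoted above, for the constants $A_0,A_1$ of Step 2 the set
\[
S=\Big\{\,b_1\in[b_\circ,b_\bullet]\ :\ A_0<\tfrac{b_1^{2^k}}{\si^{\,n-k}}<A_1 \ \text{for infinitely many}\ 0<k<n\,\Big\}
\]
is a dense $G_\delta$ of full Lebesgue measure in $[b_\circ,b_\bullet]$. For $b_1\in S$ pick such a sequence $(k_j,n_j)$; by Step 2 the pieces on levels $n_j+1$ have geometry distorted by a fixed amount, so $\OO_{F_{2d,\,b_1}}$, hence $\OO_{F_{b_1}}$, has unbounded geometry. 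The delicate points, rather than this last assembly, are: (i) \emph{uniformity in the parameter} — the contraction $\si$, the universal function $a$, the rate $\rho$, the $C^r$ bounds on $\bxi_n$, and the comparison constants in $-\,{}_{2d}t_{*,k}\asymp b_1^{2^k}$ must be taken uniform over the compact interval $[b_\circ,b_\bullet]$, which is possible because $\|\di_{\Bz}\bde\|\ll b_\circ$ makes the cone-field and invariant-surface hypotheses of Lemma \ref{Invariance of cone filed perturbation infty} and Proposition \ref{invariant surfaces on each deep level} hold uniformly and the renormalization data vary continuously with exponential control; and (ii) re-establishing the \cite{CLM} link ``unbounded geometry $\Longleftrightarrow$ the ratio $b^{2^k}/\si^{n-k}$ is infinitely often comparable to $1$'' in the present $C^r$, higher-dimensional setting, i.e.\ tracking precisely how the shear from the tilt term degrades the boxes. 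Everything else is a bookkeeping combination of Theorems \ref{Universality of Cr Henon maps} and \ref{universal estimation of scaling maps}, Proposition \ref{invariant surfaces on each deep level}, and the quoted measure-theoretic statement.
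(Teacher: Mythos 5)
Your proposal follows essentially the same route as the paper: reduce via the invariant $C^r$ surfaces to the two-dimensional family $F_{2d,\,b_1}$, use the universality theorems to get the tilt asymptotics $-t_{*,k}\asymp b_1^{2^k}$ governing the geometry of deep pieces, and apply the \cite{HLM} full-measure parameter lemma; the paper's own proof is just a terser version of this, deferring the tilt-versus-geometry computation to Theorem 6.3 of \cite{Nam3} exactly where you flag it as the remaining delicate point. The argument is correct and, if anything, more explicit than the published proof about where the $C^r$/higher-dimensional adaptations must be checked.
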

\begin{proof}
The comparison of minimal distances between two adjacent boxes and the diameter of each boxes for every level. Two dimensional invariant surface under $ F_{b_1} $ enable us to apply the proof of two dimensional  H\'enon-like maps. See the proof of Theorem 6.3 in \cite{Nam3} for unbounded geometry of the critical Cantor set.  
\end{proof}
 
\bsk




\end{document}